\documentclass[11pt]{article}

\usepackage{amssymb,amsfonts,amsthm,amsmath}

\usepackage{booktabs}

\usepackage{graphicx, verbatim, centernot}
\usepackage{caption}
\usepackage{xcolor}

\usepackage{multicol}

\usepackage{enumitem}
\usepackage[normalem]{ulem}

\linespread{1.1}
\usepackage[a4paper, hmargin=2cm, vmargin={3cm, 3cm}]{geometry}
\usepackage{fancyhdr}

\pagestyle{headings}

\usepackage{hyperref}
\hypersetup{
	colorlinks=true,
	linkcolor=blue,
	citecolor=purple,
	filecolor=magenta,      
	urlcolor=purple,
}

\usepackage[
backend=bibtex,
style=alphabetic,
sortlocale=auto,
uniquelist=false,
natbib=false,
url=false, 
doi=false,
eprint=false,
safeinputenc=false,
giveninits=true,
isbn=false,
maxbibnames=10,
]{biblatex}

\renewbibmacro{in:}{}

\DeclareFieldFormat*[inbook]{citetitle}{#1}
\DeclareFieldFormat*{title}{#1}

\addbibresource{random_polynomials.bib}

\newtheorem{theorem}{Theorem}


\newtheorem{lemma}[theorem]{Lemma}

\newtheorem{proposition}[theorem]{Proposition}
\newtheorem{corollary}[theorem]{Corollary}

\newtheorem{claim}[theorem]{Claim}

\theoremstyle{definition}
\newtheorem*{definition*}{Definition}
\newtheorem{definition}[theorem]{Definition}

\newcommand{\theoremname}{testing}
\theoremstyle{remark}
\newtheorem*{remark*}{Remark}
\newtheorem{remark}[theorem]{Remark}

\numberwithin{theorem}{section}

\def\eps{{\varepsilon}}

\def\Ga{{\Gamma}}
\def\la{{\lambda}}

\newcommand{\bR}{\mathbb R}
\newcommand{\bC}{\mathbb C}
\newcommand{\bZ}{\mathbb Z}
\newcommand{\bD}{\mathbb D}

\newcommand{\bN}{\mathbb N}

\newcommand{\bP}{\mathbb P}
\newcommand{\bS}{\mathbb S}
\newcommand{\bE}{\mathbb E}
\newcommand{\cJ}{\mathcal J}

\renewcommand{\P}{\bP}

\newcommand{\footremember}[2]{%
	\footnote{#2}
	\newcounter{#1}
	\setcounter{#1}{\value{footnote}}%
}

\AtBeginDocument{\pagestyle{plain}}

\setcounter{tocdepth}{1}

\begin{document}
	
	\title{Limit law for root separation in random polynomials}
	
	\date{}
	\author{%
		Marcus Michelen\footremember{UIC}{University of Illinois, Chicago. \emph{Email address}: {\tt michelen.math@gmail.com}}%
		\and Oren Yakir\footremember{Stanford}{Stanford University. \emph{Email address}: {\tt oren.yakir@gmail.com}}%
	}
	
	\maketitle
	\begin{abstract}
		Let $f_n$ be a random polynomial of degree $n\ge 2$ whose coefficients are independent and identically distributed random variables. We study the separation distances between roots of $f_n$ and prove that the set of these distances, normalized by $n^{-5/4}$, converges in distribution as $n\to \infty$ to a non-homogeneous Poisson point process. As a corollary, we deduce that the minimal separation distance between roots of $f_n$, normalized by $n^{-5/4}$  has a non-trivial limit law.
        In the course of the proof, we establish a related result which may be of independent interest: a Taylor series with random i.i.d.\ coefficients almost-surely does not have a double zero anywhere other than the origin.
	\end{abstract}
	
	\thispagestyle{empty}
	
	{ 
    \tableofcontents }
	\
	
	\thispagestyle{empty}

	\section{Introduction}
	Consider the random polynomial
	\begin{equation}
		\label{eq:intro_def_f_n}
		f_n(z) = \sum_{k=0}^{n} \xi_k z^k \, ,
	\end{equation}
	where $\xi_0,\ldots,\xi_n$ are i.i.d.\ random variables. This model,  usually referred to as the 
    \emph{Kac polynomial} after Mark Kac~\cite{Kac}, was studied in classical works by Bloch-P\'olya~\cite{Bloch-Polya}, Littlewood-Offord~\cite{Littlewood-Offord}, Salem-Zygmund~\cite{Salem-Zygmund}, Erd\H{o}s-Offord~\cite{Erdos-Offord}, Konyagin-Schlag~\cite{Konyagin-Schlag}, Tao-Vu~\cite{Tao-Vu-IMRN}, and many more; this list is by no means complete. As the coefficients of~\eqref{eq:intro_def_f_n} are random, it is natural to ask about the typical spatial distribution of the roots as the degree $n$ grows large.  The case when the coefficients follow the Gaussian distribution often leads to the most tractable analysis, with exact formulas for correlations of roots.  It is widely believed (and only sometimes proved) that many properties of the random roots do not depend on the specific choice of coefficients distribution, as $n\to \infty$. This type of meta-conjecture is commonly referred to as the \emph{universality} phenomena.  In the special case of when $\xi_0$ takes the values $\{-1,+1\}$ with equal probability, $f_n$ is called a random Littlewood polynomial in part due to the works \cite{Littlewood-Offord, Littlewood}.  Although we expect similar asymptotic phenomena, studying the roots of a random Littlewood polynomial is, generally speaking, a more challenging task than its Gaussian counterpart. 
	
	A striking and now-classical fact 
	is that under mild integrability conditions on $\xi_0$, most of the roots of $f_n$ tend to cluster uniformly around the unit circle as $n\to \infty$. Special cases of this phenomenon were shown by 
	Erd\H{o}s-Tur\'an~\cite{Erdos-Turan}, \v Sparo-\v Sur~\cite{Sparo-Sur}, Hammersley~\cite{Hammersley}, and in great generality by the work of Ibragimov--Zaporozhets~\cite{Ibragimov-Zaporozhets}. On a finer scale, this clustering was shown by Shepp-Vanderbei~\cite{Shepp-Vanderbei} and Ibragimov-Zeitouni~\cite{Ibragimov-Zeitouni} to hold on the scale of $n^{-1}$, meaning that most of the roots are present in the annulus of radius $\{1-K/n \le |z| \le 1+K/n\}$ as soon as $K$ is large but fixed and $n\to \infty$ (see \cite{Michelen-Sahasrabudhe,Cook-Nguyen-Yakir-Zeitouni} for finer points on the distance of roots to the unit circle).  
    
    Additionally, the roots of random polynomials experience \emph{repulsion}, although a precise quantitative statement of this is  rather technical (see Figure \ref{fig:roots_vs_uniform} for a visual demonstration). In the case of Gaussian coefficients, this can be understood by looking at pair correlations.  This amounts to asking for the probability density of having roots at points $z$ and $w$ and demonstrating decay as $z$ and $w$ become close. The work of Shiffman-Zelditch~\cite[Theorem~4]{Shiffman-Zelditch-IMRN} 
    demonstrated repulsion for the case of Gaussian coefficients and local universality results for random polynomials show that repulsion occurs for a wide range of coefficients (see, e.g., \cite{Tao-Vu-IMRN}).  Repulsion of roots of random polynomials has also been studied in the physics literature,  see for example~\cite{Bogomolny-Bohigas-Leboeuf-JSP}.

	In this paper we address the following question: \emph{What is the typical separation distance between the roots of $f_n$?} Throughout the paper, we will assume that $\xi_0$ is a mean-zero, sub-Gaussian\footnote{That is, $\bP\big[|\xi_0|\ge t\big]\le 2\exp(-ct^2)$ for some $c>0$;} random variable. 
	Denote by $\alpha_1,\ldots,\alpha_n$ the (random) roots of $f_n$, taken with multiplicities and in an arbitrary order. Our main result is the following:
	\begin{theorem}
		\label{thm:poisson_limit_for_close_roots}
		Let $f_n$ be given by~\eqref{eq:intro_def_f_n}, and suppose $\xi_0$ is a mean-zero, sub-Gaussian random variable satisfying $\P[\xi_0 = 0] = 0$. Then the point process
 	\[
		\Big\{ n^{5/4} \, |\alpha_j - \alpha_{j^\prime}| \, : \, 1\le j < j^\prime \le n \Big\}
		\]
		converges in distribution (with respect to the vague topology) as $n\to \infty $ to a non-homogeneous Poisson point process on $\bR_{\ge 0}$ with intensity $\mathfrak{c}_\ast t^3$, for some $\mathfrak{c}_\ast >0$.  
	\end{theorem}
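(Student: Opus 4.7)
My strategy is the method of factorial moments. Writing
\[
N_n^{(t)} := \#\{1\le j<j'\le n : n^{5/4}|\alpha_j-\alpha_{j'}|\le t\},
\]
vague convergence to a Poisson process on $\bR_{\ge 0}$ with intensity $\mathfrak{c}_\ast s^3$ follows from showing that for each $t>0$ and each integer $k\ge 1$,
\[
\E\big[N_n^{(t)}(N_n^{(t)}-1)\cdots(N_n^{(t)}-k+1)\big]\;\longrightarrow\;\left(\frac{\mathfrak{c}_\ast t^4}{4}\right)^{k}.
\]
First I would localize: by classical root concentration estimates (Ibragimov--Zeitouni, Konyagin--Schlag) all roots of $f_n$ lie in the thin annulus $\mathcal{A}_n=\{z : 1 - C\log n/n \le |z| \le 1+C\log n/n\}$ with probability $1-o(1)$, so only pairs inside $\mathcal{A}_n$ can contribute. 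I would then partition $\mathcal{A}_n$ into angular windows of arc length $\ell/n$ with $\ell=\ell(n)\to\infty$ slowly, and observe that since $n^{-5/4}\ll \ell/n$ each close pair sits in a single window with overwhelming probability.

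For the first moment, in each window centered at $z_0=e^{i\theta_0}$ I would rescale by $z = z_0 + u/n$ and set $F_n(u) = f_n(z)/\sqrt{\Var f_n(z_0)}$. A multivariate Lindeberg CLT shows $F_n$ converges locally in distribution to a translation-invariant Gaussian analytic function $G_{\theta_0}$, whose zero set enjoys the universal quadratic repulsion $\rho_2^{\theta_0}(u,v)\sim c(\theta_0)|u-v|^2$ as $v\to u$. A Kac--Rice computation, combined with a truncation argument that upgrades weak convergence to convergence of expected pair counts (the hypothesis $\P[\xi_0=0]=0$ together with sub-Gaussianity supplies the needed uniform integrability of the Jacobian $|f_n'(\alpha_j)f_n'(\alpha_{j'})|$), then gives $\E[N_n^{(t)}]\to \mathfrak{c}_\ast t^4/4$ with $\mathfrak{c}_\ast$ proportional to $\int_0^{2\pi} c(\theta)\,d\theta$. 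The $n^{-5/4}$ scale and cubic intensity are forced by counting: there are $\sim n/\ell$ windows, each containing a $u$-strip of area $\sim\ell$, the separation in $u$-coordinates is $\delta=t n^{-1/4}$, and the repulsion factor contributes $\delta^4$.

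For higher factorial moments I would split the sum over $k$-tuples of close pairs into \emph{off-diagonal} tuples (whose pairs lie in distinct windows) and \emph{diagonal} tuples (two or more pairs share a window or a root). A diagonal tuple witnesses a cluster of $m\ge 3$ roots within distance $n^{-5/4}$, which carries an excessively strong Vandermonde-type repulsion factor and contributes $o(1)$. For off-diagonal tuples I would exploit asymptotic independence of $F_n$ on well-separated windows --- decay of the local covariance kernel in the Gaussian case, together with a cumulant-type universality argument to transfer this to general sub-Gaussian coefficients --- so that the $k$-th factorial moment factors into a $k$-fold product of single-window first moments, yielding $(\mathfrak{c}_\ast t^4/4)^{k}$.

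The principal obstacle I expect is the interaction between the fine scale $n^{-5/4}$ and the absence of an atom at $0$ in the limit: standard replacement techniques (in the style of Tao--Vu) do not by themselves give moment control on the Jacobian $|f_n'(\alpha_j)f_n'(\alpha_{j'})|$ sharp enough to preclude two roots collapsing into a genuine double zero. This is where the auxiliary result announced in the abstract --- a random Taylor series $\sum_k \xi_k z^k$ almost surely has no double zero away from the origin --- should enter, supplying the uniform tightness bound $\limsup_{n\to\infty}\P[\exists\, j\ne j':\ |\alpha_j-\alpha_{j'}|\le \varepsilon n^{-5/4}]\to 0$ as $\varepsilon\to 0$. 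Assembling this ``no atom at $0$'' input with the local Kac--Rice estimate and the window-wise decoupling constitutes, I believe, the bulk of the technical work.
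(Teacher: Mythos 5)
Your high-level blueprint (factorial moments, localization near the unit circle, local rescaling, repulsion of pair correlations, window-wise decoupling) is sound and resembles the paper's outline, but there are three substantive issues, and the central technical difficulty is not addressed.

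First, your localization step is wrong. It is not true that all roots of $f_n$ lie within $O(\log n /n)$ of the unit circle with probability $1-o(1)$. A fixed fraction of the roots remain at macroscopic distance (the limit random function $F=\sum a_k z^k$ has a positive expected number of zeros in any fixed $r_0\bD$, and these persist for finite $n$ by Hurwitz). The paper handles this by a three-region split: the annulus $\{1-K/n\le |z|\le 1+K/n\}$, the intermediate region $\{r_0\le|z|\le 1-K/n\}$ (via a dyadic decomposition and Proposition~\ref{prop:no_close_double_roots_in_the_bulk}), and the fixed disk $\{|z|\le r_0\}$. Theorem~\ref{thm:almost_sure_double_zeros_in_disk} is used precisely for that innermost region: a.s.\ $F$ has no double zero in $r_0\bD$, so by Hurwitz the roots of $f_n$ there stay uniformly separated. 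You instead invoke Theorem~\ref{thm:almost_sure_double_zeros_in_disk} to supply tightness as $\varepsilon\to 0$ in the annulus; that tightness actually comes from the small-ball bound of Lemma~\ref{lemma:small_ball_probability_smooth_points_bound_single_point} (equivalently, from the vanishing of the limiting intensity $\mathfrak{c}_\ast t^3$ at $t=0$), not from the power-series result.

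Second, and more seriously, your Kac--Rice/local-CLT route does not engage with the actual obstruction to universality. For discrete coefficients such as $\xi_0\in\{-1,+1\}$, the vector $(f_n(z),f_n'(z))$ has no density, so the Kac--Rice formula is unavailable in its standard form; moreover, ``replacement'' in the style of Tao--Vu is exactly what the paper avoids, because the pair-correlation estimate genuinely \emph{fails} at points $z$ whose argument is close to a rational multiple of $\pi$ with small denominator (e.g.\ $z=e^{i\pi/2}$ for the $\{-1,0,1\}$ distribution). The heart of the paper (Sections~\ref{sec:small_ball_for_general_points}--\ref{sec:gaussian_comparison_for_tuples}) is an inverse Littlewood--Offord-type analysis that isolates ``smooth'' sample points, proves quantitative small-ball bounds and Gaussian comparison only for those, and then shows the remaining (non-smooth) points contribute negligibly via a separate union bound. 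Your proposal has no mechanism for this, and the phrase ``cumulant-type universality argument to transfer'' glosses over where the argument must actually take place.

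Third, a smaller point: the paper does not reduce to a Kac--Rice pair-correlation integral at all. It reduces the event of two close roots to a small-ball condition on the random vector $(f_n(z),f_n'(z),f_n''(z))$ over a net of mesh $n^{-5/4-\beta}$ (linearization gives one root, then quadratic expansion gives the second), and then controls the probabilities via the exponentially tilted Esseen inequality, comparing to the Gaussian case only in frequency space. That deterministic reduction (Claims~\ref{claim:close_roots_implies_ratio_of_derivative_and_second_derivative_to_be_small}--\ref{claim:root_with_small_derivative_implies_smallness_in_net_point}, Lemma~\ref{lemma:quadratic_approximation_predicts_second_root}) is not present in your sketch and cannot be replaced by a Kac--Rice identity when the coefficients are atomic.
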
 
    
    The value of $\mathfrak{c}_{\ast}>0$ from Theorem~\ref{thm:poisson_limit_for_close_roots} is somewhat explicit and is given by~\eqref{eq:limiting_constant_for_intensity} below. We remark that we may drop the assumption~$\bP[\xi_0 = 0] = 0$, with the cost of adding more wording to our main result. Indeed, in the latter case the probability of having a double root at the origin is exactly $\bP[\xi_0 = 0 ]^2$, so the limiting Poisson process is super-positioned by an independent unit delta-mass at the origin, which occurs independently of the Poisson process with probability $\bP[\xi_0 = 0 ]^2$. To avoid this technicality,  throughout the paper we adopt the assumption
	\begin{equation}
		\label{eq:intro_no_atom_at_origin}
		\bP\big[\xi_0 = 0 \big] = 0 \, .
	\end{equation}  
	We also remark that with minor modifications of the proof, Theorem~\ref{thm:poisson_limit_for_close_roots} continues to hold even if $\bE[\xi_0] \not=0$, but to simplify the presentation we adopt the mean-zero assumption throughout the paper.

	\subsection{Heuristic explanation for the scaling exponent}
    The scaling exponent $5/4$ in Theorem~\ref{thm:poisson_limit_for_close_roots} reflects the repulsion between the random roots. To better explain this point, we consider a simple toy model where we pretend the roots of $f_n$ do not interact. Suppose that $X_1,\ldots,X_n$ are i.i.d.\ random variables, all uniformly distributed in the annulus $$\mathcal{A} = \mathcal{A}_K = \Big\{ 1-\frac{K}{n} \le |z|\le 1+\frac{K}{n} \Big\}$$ for some $K\ge 1$ large but fixed.  We recall that if $K$ is a large constant, then the vast majority of roots of $f_n$ lie in this annulus \cite{Shepp-Vanderbei,Ibragimov-Zeitouni}, and so this is indeed a toy model for the roots of $f_n$ without interaction between roots. 
	Note that for any $1\le j < j^\prime \le n$ we have
	\[
	\bP\big[ |X_j - X_{j^\prime}| \le \eps \big] \approx n \eps^2 \, , 
	\] 
	since the disk of radius $\eps$ around $X_j$ is of area $\approx \eps^2$, and $\mathcal{A}$ has area $\approx n^{-1}$.  Linearity of expectation then shows	
	\[
	\bE\Big[\#\big\{ \text{pairs } j\not=j^\prime \ \text{such that } X_j, X_{j^\prime} \ \text{are } \eps\text{-close}\big\}\Big] \approx n^3 \eps^2 \, ,
	\]
	and hence, we expect some non-trivial scaling limit when $\eps = n^{-3/2}$.  Let us now contrast this with the roots $\alpha_1,\ldots,\alpha_n$ of the random polynomial $f_n$ given by~\eqref{eq:intro_def_f_n}. Letting $\bD(x,\eps)$ denote the disk of radius $\eps>0$ centered at $x\in \mathcal{A}$, we have that 
	\begin{equation*}
		\bE\Big[\#\big\{  j :  X_j \in \bD(x,\eps)\big\}\Big] \approx (n \eps)^2 \quad \text{ and } \quad 	\bE\Big[\#\big\{  j :  \alpha_j \in \bD(x,\eps)\big\}\Big] \approx (n\eps)^2 \,.
	\end{equation*}
	In other words, the first intensity of the point process $X_1,\ldots,X_n$ is comparable to that of random roots.  The key difference between the i.i.d.\ points $X_1,\ldots,X_n$ and the random roots $\alpha_1,\ldots,\alpha_n$ comes in at the $2$-point intensity (sometimes referred to in the literature as the pair correlations). By independence, for $\eps \ll n^{-1}$, one can compute 
	\begin{equation} \label{eq:intro_prob_for_two_uniform_pts_in_disk}
		\bP\Big[\exists j\not= j^\prime \ \text{such that} \ X_j, \, X_{j^\prime} \in \bD(x,\eps) \Big] \approx n^2 \times \big(n \eps^2\big)^2\,.
	\end{equation}
	In contrast, for random roots one has 
	\begin{equation} \label{eq:intro_prob_for_two_roots_in_disk}
		\bP\Big[\exists \ \text{two distinct roots of } f_n \in \bD(x,\eps) \Big] \approx n^2 \times \big(n \eps^2\big)^2 \times (n\eps)^2 \, ,
	\end{equation}
	instead of~\eqref{eq:intro_prob_for_two_uniform_pts_in_disk}. The extra $(n\eps)^2$ term in~\eqref{eq:intro_prob_for_two_roots_in_disk} can be thought of as a repulsion term between the roots, see Figure~\ref{fig:roots_vs_uniform} for a related simulation. 
	\captionsetup[figure]{labelfont={bf},labelformat={default},labelsep=period,name={Fig.}}
	\begin{figure}[!htbp]
		\begin{center}
			\qquad \qquad \scalebox{0.5}{\includegraphics{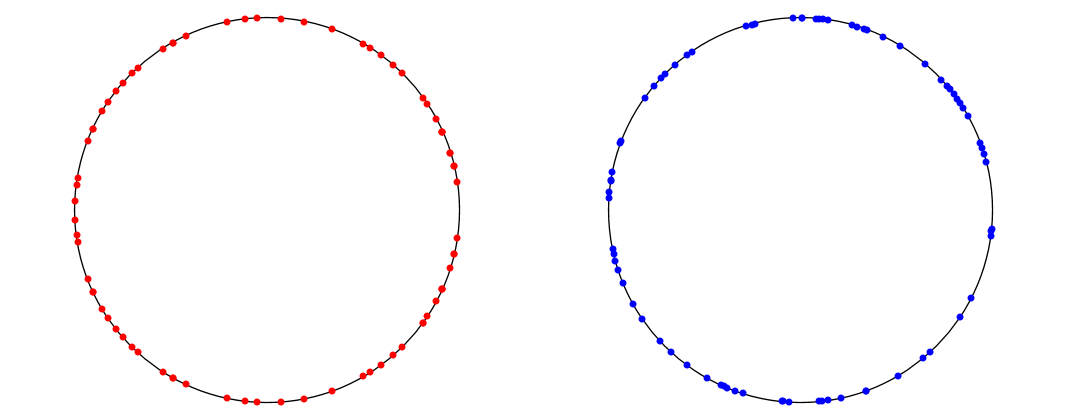}}
		\end{center}
		\caption{Left: roots on the unit circle of $\text{Re}(f_n)$ for random Littlewood $f_n$ (red points); Right: the same number of i.i.d.\ uniform points on the unit circle (blue points). }
		\label{fig:roots_vs_uniform}
	\end{figure}
	When the coefficients are Gaussian, one can formally derive~\eqref{eq:intro_prob_for_two_roots_in_disk} via the Kac-Rice formula for the 2-point function of the random roots, see for example~\cite[Chapter~3.4]{GAFbook} or~\cite[Theorem~4]{Shiffman-Zelditch-IMRN}. One of the main achievements of this paper is obtaining the asymptotic~\eqref{eq:intro_prob_for_two_roots_in_disk} for a large class of coefficients distribution 
    for $\eps$ which is polynomially small in $n$, including in the Littlewood case (see Section~\ref{subsection:outline_of_prood} below, and~\eqref{eq:intro-probability_of_local_event} in particular). Once~\eqref{eq:intro_prob_for_two_roots_in_disk} is established, union bounding over an $\eps$-net of $\mathcal{A}$ of size $n^{-1} \eps^{-2}$ yields
	\[
	\bE\Big[\#\big\{ \text{pairs of roots of } f_n  \ \text{which are } \eps\text{-close}\big\}\Big] \approx n^5 \eps^4 \, ,
	\]
	and hence the scaling $\eps= n^{-5/4}$ is a natural one for our problem.
	\subsection{Double roots of random polynomials}
	Recalling that $\alpha_1,\ldots,\alpha_n$ are the roots of $f_n$ given by~\eqref{eq:intro_def_f_n}, we denote by
	\[
	m_n = \min\big\{|\alpha_j - \alpha_{j^\prime}|\, : \, 1\le j < j^\prime \le n  \big\} 
	\]
	the distance between the closest distinct roots of $f_n$. Clearly, $m_n = 0$ if and only if $f_n$ has a double root. An immediate consequence of Theorem~\ref{thm:poisson_limit_for_close_roots} is the following limit law for the minimal separation between the roots. 
	\begin{corollary}
		\label{cor:no_double_roots_whp}
		Let $f_n$ be given as in~\eqref{eq:intro_def_f_n} with mean-zero and sub-Gaussian coefficients, and assume that~\eqref{eq:intro_no_atom_at_origin} holds. Then for all $s\ge 0$ we have
		\[
		\lim_{n\to \infty} \bP\Big[n^{5/4} m_n \ge s\Big] = \exp\Big(-\frac{\mathfrak{c}_\ast}{4} \, s^4\Big) \, .
		\]
		In particular, the probability that $f_n$ has a double root tends to zero as $n\to \infty$.
	\end{corollary}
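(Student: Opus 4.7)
The plan is to obtain the corollary directly from Theorem~\ref{thm:poisson_limit_for_close_roots} by a standard void-probability computation. Set
\[
\Xi_n := \Big\{ n^{5/4} |\alpha_j - \alpha_{j'}| : 1\le j < j' \le n \Big\}
\]
and let $\Xi$ denote the limiting Poisson point process on $\bR_{\ge 0}$ with intensity $\mathfrak{c}_\ast t^3\,dt$. The observation driving everything is the tautology
\[
\big\{ n^{5/4} m_n \ge s \big\} = \big\{ \Xi_n\big([0,s)\big) = 0 \big\},
\]
which reduces the problem to controlling the probability that $\Xi_n$ has no point in $[0,s)$.

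Next I would invoke the portmanteau theorem for vague convergence of point processes: for any bounded Borel set $B \subset \bR_{\ge 0}$ which is almost surely a continuity set for $\Xi$ (i.e.\ $\Xi(\partial B) = 0$ a.s.), the integer-valued random variables $\Xi_n(B)$ converge in distribution to $\Xi(B)$, and since both take values in $\bZ_{\ge 0}$ this amounts to pointwise convergence of the probability mass functions. In our setting the intensity $\mathfrak{c}_\ast t^3\,dt$ is absolutely continuous, so $\Xi(\{0\}) = \Xi(\{s\}) = 0$ almost surely for every $s > 0$, and hence $[0,s)$ is a continuity set. Combined with the standard formula $\bP[\Xi(B) = 0] = \exp\big(-\int_B \mathfrak{c}_\ast t^3\,dt\big)$ for Poisson void probabilities, this gives
\[
\lim_{n\to\infty} \bP\big[n^{5/4} m_n \ge s\big] = \exp\Big(-\int_0^s \mathfrak{c}_\ast t^3\,dt\Big) = \exp\Big(-\frac{\mathfrak{c}_\ast}{4} s^4\Big),
\]
which is the first assertion (the case $s=0$ being trivial).

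For the final conclusion, I would note that $\{m_n = 0\} \subseteq \{n^{5/4} m_n < s\}$ for every $s > 0$, so
\[
\limsup_{n\to\infty} \bP[m_n = 0] \le 1 - \exp\big(-\mathfrak{c}_\ast s^4 / 4\big),
\]
and sending $s\to 0^+$ yields $\bP[m_n = 0] \to 0$. There is no substantive obstacle here: all the work is already contained in Theorem~\ref{thm:poisson_limit_for_close_roots}, and the only thing to verify for the deduction is that $[0,s)$ is a continuity set for $\Xi$, which is immediate from the absolute continuity of the intensity measure.
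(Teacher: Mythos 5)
Your argument is correct and follows the same route as the paper: reduce to the void probability of the limiting Poisson process on $[0,s]$ via the convergence in Theorem~\ref{thm:poisson_limit_for_close_roots}, then evaluate $\exp\big(-\int_0^s \mathfrak{c}_\ast t^3\,dt\big)$. The only difference is that you spell out the portmanteau/continuity-set step and the $s\to 0^+$ limiting argument for the ``no double root'' conclusion, which the paper leaves implicit.
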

	\begin{proof}
		For a Poisson process with intensity $\mathfrak{c}_\ast t^3$, the number of points in $[0,s]$ has a Poisson distribution with parameter
		\[
		\int_{0}^{s} \mathfrak{c}_\ast t^3  \, {\rm d} t =  \frac{\mathfrak{c}_\ast}{4}s^4 \, .
		\]
		Hence, the probability there are no points present in this interval is $\exp\big(-\frac{\mathfrak{c}_\ast}{4} \, s^4\big)$, and the corollary follows from Theorem~\ref{thm:poisson_limit_for_close_roots}. 
	\end{proof}
	For polynomials of the form~\eqref{eq:intro_def_f_n} with random integer coefficients, the probability of having a double root has been studied in prior works. For random Littlewood polynomials, Peled, Sen and Zeitouni~\cite{Peled-Sed-Zeitouni} computed the asymptotic of this probability as $n\to \infty$. In particular, they showed that the rate of decay depends on divisibility properties of the degree $n$. Later on, Feldheim and Sen~\cite{Feldheim-Sen} extended this result to more general integer-valued distributions $\xi_0$, and obtained similar findings. We remark that the problem of investigating the minimal gap between the roots was also suggested in~\cite[Section~7]{Feldheim-Sen}. It is worth noting that the approach in both~\cite{Peled-Sed-Zeitouni,Feldheim-Sen} are highly algebraic, with heavy use of the fact that the coefficients are integers. Additionally, a key assumption in both works is that the maximal atom of $\xi_0$ has probability at most $1/\sqrt{3}$ and that the $\xi_0$ has bounded support. To the best of our knowledge, Corollary~\ref{cor:no_double_roots_whp} appears to be the first result in the literature that establishes the absence of double roots with high probability for Kac polynomials for a wide class of non-degenerate coefficient distributions.

	\subsection{Double zeros of random analytic functions in the disk}
	En route of proving our main result Theorem~\ref{thm:poisson_limit_for_close_roots}, we must address the following natural question: \emph{Can a random analytic function in the disk 
	have a double zero?} Consider an infinite sequence of i.i.d.\ (possibly complex) random variables $\{a_k\}$, and assume that
	\begin{equation}
		\label{eq:intro-log_moment_assumption}
		\bE\big[\log(1+|a_0|)\big] < \infty \, .
	\end{equation}
	By applying the Borel-Cantelli lemmas, one can verify that the condition~\eqref{eq:intro-log_moment_assumption} is both necessary and sufficient for the random power series
	\begin{equation}
		\label{eq:intro-def_of_infinite_power_series}
		F(z) = \sum_{k=0}^\infty a_k z^k
	\end{equation}
	to almost surely define a random analytic function in the unit disk $\bD$. 
	
	Random analytic functions of the form~\eqref{eq:intro-def_of_infinite_power_series} naturally emerge as distributional limits of random Kac polynomials on compact subsets of $\bD$. In fact, Ibragimov and Zaporozhets~\cite{Ibragimov-Zaporozhets} showed that~\eqref{eq:intro-log_moment_assumption} is also necessary and sufficient for the clustering to the unit circle of the random roots of the corresponding Kac polynomial as the degree tends to infinity. The study of the behavior of the zeros of $F$ has a rich history. Rather than providing a comprehensive overview, we refer the interested reader to the classical book of Kahane~\cite{Kahane} to learn more about this subject. Our focus here is on the question of double zeros of $F$, which leads to the following result, proved in Section~\ref{sec:proof_of_almost_sure_result_in_the_disk}.
	\begin{theorem}
		\label{thm:almost_sure_double_zeros_in_disk}
		Let $F$ be a random Taylor series defined via~\eqref{eq:intro-def_of_infinite_power_series}, 
        assuming that~\eqref{eq:intro-log_moment_assumption} holds. Then
		\[
		\bP\Big[\exists \alpha\in \bD  \ \text{such that } \, F(\alpha) = F^\prime(\alpha) = 0\Big] =  \bP\Big[F(0) = F^\prime(0) = 0\Big]  = \big(\bP[a_0 = 0]\big)^2 \, .
		\]
	\end{theorem}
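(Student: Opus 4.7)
My plan is to establish the equality at the origin separately and then to reduce the question for $\bD\setminus\{0\}$ to a covering argument combined with a one-coefficient conditioning. At $z=0$ we have $F(0)=a_0$ and $F'(0)=a_1$, so a double zero there is equivalent to $\{a_0=a_1=0\}$, which by independence has probability $\bP[a_0=0]^2$. Exhausting $\bD\setminus\{0\}$ by countably many compact sets $K$, it then suffices to show $\bP[E_K]=0$ with $E_K:=\{\exists\,\alpha\in K\colon F(\alpha)=F'(\alpha)=0\}$, assuming the nontrivial case $\bP[a_0=0]<1$.

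Fix such a $K$ and condition on $\sigma(a_1,a_2,\ldots)$. The tail $F_1(z):=\sum_{k\ge 1}a_kz^{k-1}$ (which has the same law as $F$) is then determined, and a double zero at $\alpha\ne 0$ is equivalent to $a_0=-\alpha F_1(\alpha)$ together with $F_1(\alpha)+\alpha F_1'(\alpha)=0$. The second equation restricts $\alpha$ to the discrete zero set of the analytic function $(zF_1)'$, so the admissible values of $a_0$ form a countable random set; only atoms of $\mu$ contribute, and the condition is equivalent to $U_v(z):=v+zF_1(z)$ having a double zero in $K$. Summing yields
\[
\bP[E_K]\;=\;\sum_{v\in A}\mu(\{v\})\,\bP\bigl[U_v\text{ has a double zero in }K\bigr],
\]
where $A$ denotes the set of atoms of $\mu$. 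For $v=0$, $U_0=zF_1$ has a double zero at $\alpha\ne 0$ precisely when $F_1$ does, and $F_1\stackrel{d}{=}F$, so the $v=0$ term equals $\mu(\{0\})\,\bP[E_K]$. Under $\mu(\{0\})<1$ this rearranges to show that it is enough to prove $\bP[U_v\text{ has a d.z.\ in }K]=0$ for each $v\in A\setminus\{0\}$.

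For $v\ne 0$, I would iterate the same one-coefficient reduction on $U_v$, varying $a_1,a_2,\ldots$ in turn. After $n$ iterations, the probability is dominated by a sum over atom sequences of the probability that $p(z)+z^n\tilde F(z)$ has a double zero in $K$, where $p$ is a deterministic polynomial of degree $<n$ built from the atoms and $\tilde F\stackrel{d}{=}F$ is an independent tail. Because $K\subset\{|z|\le r\}$ with $r<1$, the log-moment hypothesis implies $\|\tilde F\|_K$ is a.s.\ finite, and hence the perturbation $z^n\tilde F$ has uniform size $O(r^n)$. For $n$ large this perturbation is too small to create double zeros of $p$ if $p$ has none in $K$ (by Rouch\'e); if $p$ has double zeros at some $\beta_1,\ldots,\beta_m\in K$, any double zero of the perturbed function near $\beta_j$ forces $(\tilde F(\beta_j),\tilde F'(\beta_j))$ to lie within $O(r^n)$ of a specific pair, and the probability of this vanishes once we know the joint law of $(\tilde F(\beta_j),\tilde F'(\beta_j))$ is atom-free.

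The main obstacle is this anti-concentration step. I would establish atom-freeness of $(F(\beta),F'(\beta))$ for $\beta\in\bD\setminus\{0\}$ by a self-similarity argument: writing $F=a_0+zF_1$ gives the distributional identity $(F(\beta),F'(\beta))\stackrel{d}{=}(a_0+\beta X,\,X+\beta Y)$ with $(X,Y)\stackrel{d}{=}(F(\beta),F'(\beta))$ independent of $a_0$. Any atom $(x^*,y^*)$ of the joint law would, for each atom $a\in A$, force an atom of the same mass at the affine image $((x^*-a)/\beta,\,(\beta y^*-x^*+a)/\beta^2)$, and iterating this shift produces infinitely many atoms of the same positive mass at locations growing like $|\beta|^{-k}\to\infty$, contradicting the summability of atom masses. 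Combined with dominated convergence on the branching iteration above, this closes the proof.
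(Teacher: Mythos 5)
Your proposal follows a genuinely different route from the paper. The paper's proof truncates $F=P_m+T_m$, uses Rouch\'e around an actual double zero $z_0$ (with $F''(z_0)\ne 0$) to produce a root $w$ of $P_m'$ on which $|F(w)|$ is exponentially small, bounds the number of such $w$ by a Jensen-type estimate, and applies the quantitative L\'evy--Kolmogorov--Rogozin anti-concentration bound to $T_m^{(k_*-2)}(w)$, yielding a uniform $O(m^{-1/2})$ estimate; zeros of multiplicity $\ge 3$ are handled in one stroke by the minimality argument on $k_*$. You instead condition iteratively on $a_0,a_1,\dots$, reducing the event to a sub-probability sum over atom sequences $\mathbf{v}$, and then view $p_{\mathbf{v},n}+z^n\tilde F$ as a small perturbation of the limiting series $P_{\mathbf{v}}$; the anti-concentration input is the soft statement that $F(\beta)$ (or the pair $(F(\beta),F'(\beta))$) has an atom-free law for $\beta\in\bD\setminus\{0\}$. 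Your approach has the appeal that it isolates a clean ``self-similarity'' mechanism for anti-concentration, whereas the paper's Rogozin lemma is more quantitative and makes the union bound over critical points of $P_m'$ explicit; the paper's route also makes the role of the log-moment hypothesis crisper (it feeds the $\mathcal{E}_{K,\beta}$ bounds). Both proofs share the core idea of freezing almost all the randomness and pushing a small-ball estimate through one remaining block.

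That said, there are two gaps you should close. First, your perturbation step treats only double zeros $\beta_j$ of $p$ with $p''(\beta_j)\ne 0$ implicitly; if the limiting function $P_{\mathbf{v}}$ has a zero in $K$ of multiplicity $k\ge 3$, the constraint ``$\tilde F(\beta_j)$ within $O(r^n)$ of a specific value'' degrades to $O(r^{n/(k-1)})$ and the Rouch\'e splitting of the $k$ clustered zeros into a possible double-plus-simple configuration must be analyzed; this is precisely what the paper's $k_*$ induction packages. The adaptation works since $P_{\mathbf{v}}$ is a fixed nontrivial analytic function on $\bD$ (so its zeros in $K$ have bounded multiplicity), but it is not automatic from what you wrote. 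Second, your atom-freeness argument as stated is incomplete: ``iterating this shift produces infinitely many atoms at locations growing like $|\beta|^{-k}$'' fails verbatim if the candidate atom sits at the unique fixed point $(a/(1-\beta),a/(1-\beta)^2)$ of the shift $T_a$, and for $\mu$ purely atomic with multiple atoms the iterates $T_{a_k}\cdots T_{a_1}(x^*,y^*)$ can collide because $(a_1,\dots,a_k)\mapsto\sum_j a_j\beta^{j-1}$ need not be injective. One can patch this (e.g., note that a finite set forward-invariant under an IFS of expanding affine maps with distinct fixed points cannot exist, or simply invoke L\'evy's continuity criterion for the infinite convolution $\sum a_k\beta^k$, which gives atom-freeness of $F(\beta)$ whenever $\max_x\mu(\{x\})<1$), but as written the step is not yet a proof. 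Finally, be careful to phrase the dominated-convergence step against the limit $P_{\mathbf{v}}$ rather than the finite truncation $p_n$: the relevant ``gap'' $\min_K(|P_{\mathbf v}|+|P_{\mathbf v}'|)$ is positive and independent of $n$, while $\|p_n-P_{\mathbf v}\|_K$ and $\|z^n\tilde F\|_K$ are both exponentially small a.s.\ under the atom measure, which is what lets Rouch\'e run.
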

	In the special case when $a_0$ is Gaussian, Theorem \ref{thm:almost_sure_double_zeros_in_disk} was proven in \cite[Lemma 28]{Peres-Virag} (see also \cite[Lemma 2.4.1]{GAFbook}).  Among the main challenges of proving Theorem \ref{thm:almost_sure_double_zeros_in_disk} is that the behavior of zeros of $F$ away from the unit circle is \emph{not} universal.  As a basic example, if $a_j \in \{-1,1\}$ for all $j$, then $F$ has no roots of modulus less than $1/2$; conversely, in the Gaussian case $F$ can have a root arbitrarily close to the origin. The proof of Theorem~\ref{thm:almost_sure_double_zeros_in_disk} is self-contained and we outline the argument more extensively in Section \ref{sec:proof_of_almost_sure_result_in_the_disk}. We  also point out that Section~\ref{sec:proof_of_almost_sure_result_in_the_disk} can be read independently from all other parts of the paper, yet is necessary for the proof of Theorem \ref{thm:poisson_limit_for_close_roots}.

	\subsection{Outline of the proof}
	\label{subsection:outline_of_prood}
	We already mentioned that most of the roots of $f_n$ are present in the annulus $$\mathcal{A}_K = \Big\{ 1-\frac{K}{n}\le |z|\le 1+\frac{K}{n}\Big\}$$ as soon as $K$ is large but fixed and $n\to \infty$. Indeed, the main contribution to the limiting Poisson process described in Theorem~\ref{thm:poisson_limit_for_close_roots} is from pairs of roots from $\mathcal{A}_K$. Broadly speaking, the proof is split into three main steps:
	\begin{enumerate}
		\item[(A)] Proving that the distances between the roots in $\mathcal{A}_K$ converge to a Poisson point process at scale $n^{-5/4}$ with intensity $\mathfrak{c}_\ast(K) t^3 {\rm d}t$ on non-negative reals $\bR_{\ge 0}$ as $n\to \infty$. We then also verify that $\displaystyle \mathfrak{c}_\ast = \lim_{K\to\infty } \mathfrak{c}_\ast(K)$ exists and is non-zero. See Theorem~\ref{thm:poisson_convergence_near_unit_circle} for a precise formulation of this step. 
		\item[(B)] Proving that roots which are present in $\big\{r_0\le |z| \le 1-\frac{K}{n}\big\}$ do not contribute to the limiting Poisson point process described above, with high probability as $n\to \infty$ followed by $K\to \infty$ and then $r_0 \to 1$. See Proposition~\ref{prop:no_close_double_roots_in_the_bulk} for a precise formulation of this step.
		\item[(C)] Finally, apply Theorem~\ref{thm:almost_sure_double_zeros_in_disk} to show there are no close roots in the disk $\{|z|\le r_0\}$ which contribute to the limiting process as $n\to \infty$, for all $r_0<1$ fixed. This step follows from a simple monotonicity argument, while the proof of Theorem~\ref{thm:almost_sure_double_zeros_in_disk} is given in Section~\ref{sec:proof_of_almost_sure_result_in_the_disk}.
	\end{enumerate}
	We remark that since $f_n(z)$ has the same law as the polynomial $z^n f_n(z^{-1})$, then steps (B) and (C) in the above program automatically take care of roots present in $\{|z| \geq 1 + K/n\}$.
    
    The main step in the above program is (A), and to explain it the reader can assume for the moment that $f_n$ is in fact a Gaussian polynomial. The covariance kernel of $f_n$ is given by
	\[
	\bE\big[f_n(z) \overline{ f_n (w)} \big] = \sum_{k=0}^{n} (z\overline w)^k =  \frac{1-(z\overline{w})^{n+1}}{1-z\overline w} \, .
	\] 
	Since $\bE[|f_n(z)|^2] \asymp n$ for $z\in \mathcal{A}_K$, this implies in particular that for $z,w\in \mathcal{A}_K$ for which $|z-w| = \omega(n^{-1})$, the Gaussian variables $f_n(z)$ and $f_n(w)$ are roughly independent. As we are after a ``local statistic" (specifically, finding roots which are very close), a natural idea is to consider a sufficiently dense net of points, with the hope that the random vector $\big(f_n(z),f_n^\prime(z),f_n^{\prime\prime}(z)\big)$ for a net point $z\in \mathcal{A}_K$ will dictate whether or not there exist close roots nearby. The reason for considering the polynomial and its first two derivatives (and nothing more) is that under a typical event for third derivative, the polynomial is close to its quadratic approximation on scales $\ll n^{-1}$, which in principle can predict the existence of a pair of close roots. Although the above reasoning is formally correct, to get the asymptotic~\eqref{eq:intro_prob_for_two_roots_in_disk} we need to use the randomness to show that typically, solving the quadratic approximation is similar to solving two \emph{linear approximations}.  The latter is much better suited for computing small-ball probabilities. 
	
	To be more precise, we will consider a $\delta$-net of points in $\mathcal{A}_K$, where $\delta = n^{-5/4 - \beta}$ and $\beta>0$ is small but fixed. Let $\eps = n^{-5/4}$. To capture roots which are $\eps$-separated, we will first use $(f_n(z),f_n^\prime(z))$ to capture roots which are $\delta$-close to a given net point. By applying a standard linearization, we predict a root should occur near
	\[
	\alpha = z- \frac{f_n(z)}{f_n^\prime(z)} \, ,
	\]
	and hence
	\[
	\Big\{ \exists \alpha\in \bD(z,\delta) \, : \, f_n(\alpha) = 0 \Big\} \approx \Big\{ \frac{f_n(z)}{f_n^\prime(z)} \in \bD(0,\delta) \Big\}	
	\] 
	where by $\approx$ we just mean that with high-probability both event happen (or don't happen) simultaneously. Once we know there is a root $\alpha\in \bD(z,\delta)$, we will Taylor expand \emph{at the root}, which gives the prediction for another root at
	\[
	\alpha^\prime = \alpha - \frac{2f_n^\prime(\alpha)}{f_n^{\prime\prime}(\alpha)}\, .
	\]
	On a typical event for close roots to occur, we have the asymptotic 
	\[
	\alpha^\prime \approx z+\frac{f_n(z)}{f_n^\prime(z)} - \frac{2 f_n^\prime(z)}{f_n^{\prime\prime}(z)}
	\]
	(see Lemma~\ref{lemma:quadratic_approximation_predicts_second_root}) and by neglecting multiplicative constants, we conclude that
	\begin{equation}
		\label{eq:intro-explanation_how_to_compute_probability_double_roots}
		\bigg\{\exists \alpha\in \bD(z,\delta) \, , \exists \alpha^\prime\in \bD(z,\eps)\setminus \{\alpha \} \, : \, f_n(\alpha) = f_n(\alpha^\prime) = 0 \bigg\} \approx \bigg\{\Big|\frac{f_n(z)}{f_n^\prime(z)}\Big| \le \delta \, , \, \Big|\frac{2f_n^\prime(z)}{f_n^{\prime\prime}(z)}\Big| \le \eps \bigg\} \, .
	\end{equation}
	For Gaussian polynomials, one can compute to first order the probability of the event on the right-hand side of~\eqref{eq:intro-explanation_how_to_compute_probability_double_roots} (see Section~\ref{sec:limiting_intensity_for_gaussian}) and get that
	\begin{equation}
		\label{eq:intro-probability_of_local_event}
		\bP\bigg[ \, \Big|\frac{f_n(z)}{f_n^\prime(z)}\Big| \le \delta \, , \, \Big|\frac{2f_n^\prime(z)}{f_n^{\prime\prime}(z)}\Big| \le \eps \bigg] \approx \eps^4 n^6 \delta^2 = n^{-3/2-2\beta} \, .
	\end{equation}
	Since the number of net points is $n^{-1} \delta^{-2} = n^{3/2 + 2\beta}$, we get that the number of $\eps$-separated roots is of constant order. The full Poisson limit follows from the method of moments, so we need to compute the probability of the event~\eqref{eq:intro-explanation_how_to_compute_probability_double_roots} intersected over finite collections of net points in $\mathcal{A}_K$. We also remark that if a net point predicts two close roots nearby then all other net points of distance $o(n^{-1})$ are likely not to predict two close roots (see Lemma~\ref{lemma:existence_of_two_roots_implies_macroscopic_separation}). Therefore, the contribution from the net sum to the set of close roots are asymptotically independent, which is crucial for a Poisson limit to hold.  
	
	For non-Gaussian polynomials, the asymptotic~\eqref{eq:intro-probability_of_local_event} is not true for certain points $z\in \mathcal{A}_K$ with bad arithmetic properties. As a toy example, consider the (much simpler) event that $\{|f_n(z)| \leq \sqrt{n} \gamma \}$ for some small $\gamma>0$.  When the coefficients are Gaussian and $z \in \mathcal{A}_K$, then one has $f_n(z)/\sqrt{n}$ is a (uniformly non-degenerate) two-dimensional Gaussian and so one anticipates \begin{equation}\label{eq:gaussian-small-ball}
		\P\big[|f_n(z)| \leq \sqrt{n} \gamma\big] \asymp \gamma^2
	\end{equation} for all $z \in \mathcal{A}_K$ (away from the real axis) and all $\gamma > 0$.  On the other hand, in the case the coefficients are, say, uniformly distributed in $\{-1,0,1\}$, one can show that \begin{equation*}
		\P\big[f_n(e^{i\pi/2}) = 0\big] \asymp \frac{1}{n} \, .
	\end{equation*}
	That is, at the point $e^{i\pi/2} \in \mathcal{A}_K$ the Gaussian heuristic \eqref{eq:gaussian-small-ball} fails for small $\gamma$. If we have any hope of proving~\eqref{eq:intro-probability_of_local_event}, which handles a much more complicated event than \eqref{eq:gaussian-small-ball}, we need to remove poorly-behaved points such as $e^{i\pi/2}$. By now, it is well-understood in the random polynomial literature that the key property of $e^{i\pi/2}$ that causes the failure of \eqref{eq:gaussian-small-ball} is that its argument is close to a rational multiple of $\pi$ with small denominator (of course, $\frac{\pi}{2}$ is genuinely equal to such a multiple). 
    
    The relationship between arithmetic structure and small-ball probabilities is a recurring  theme in the study of both random polynomials and random matrices.  Investigation into this relationship goes back at least to Hal\'asz \cite{Halasz} and is now often referred to as the \emph{inverse Littlewood-Offord problem} (see, e.g., \cite{Tao-Vu-RSA,Nguyen-Vu-survey}). We will need to remove these points with bad arithmetic properties from our net and prove~\eqref{eq:intro-probability_of_local_event} for \emph{smooth points} (see Definition~\ref{def:smooth_angle}). In turn, this follows from proving a quantitative local Gaussian comparison for those smooth points (and for tuples of well-separated smooth points), see Theorem~\ref{thm:small_ball_comparison_to_Gaussian_for_tuples}. The analysis here is based on an idea by Konyagin and Schlag~\cite{Konyagin-Schlag} which was later generalized substantially by Cook-Nguyen in~\cite{Cook-Nguyen}. For our application, we need to push the method a bit further, and prove the local Gaussian comparison also for points which are \emph{not} on the unit circle (both \cite{Konyagin-Schlag, Cook-Nguyen} work only on the unit circle). Proving different versions of this local Gaussian comparison spans through Sections~\ref{sec:small_ball_for_general_points}, \ref{sec:small_ball_bounds_for_smooth_points} and \ref{sec:gaussian_comparison_for_tuples} of this paper.
	
	To prove item (B) of our program, we follow similar steps as in item (A). The key observation is that in the annulus
	\[
	\big\{r\le |z| \le r+ \frac{1-r}{2}\big\}
	\]
	the original polynomial $f_n$ ``behaves" effectively like a random Kac polynomial with degree $(1-r)^{-1}$. Therefore, to show there are no close roots in the annulus $\{r_0\le |z|\le 1-K/n\}$, we sum dyadically over concentric annuli, and show that the sum of contributions is arbitrary small as $r_0\to 1$ and $K\to \infty$. En route of proving item (B), we will need to prove small ball probability bounds for the polynomial at arbitrary points $z\in \bD$ which takes into account the effective degree of the polynomial at the point $d_n(z) = \min\{n,(1-|z|)^{-1}\}$. These bounds are developed in Section~\ref{sec:small_ball_for_general_points} and Section~\ref{sec:small_ball_bounds_for_smooth_points}. As we want to ``push" our results for discrete coefficients as well, we will again need to remove from consideration non-smooth points and obtain the desired small-ball bounds (of the form~\eqref{eq:intro-probability_of_local_event}) for smooth points, see Lemma~\ref{lemma:small_ball_probability_smooth_points_bound_single_point}. 
	
	Finally, we need to complete item (C), that is, we need to deal with those roots which stay strictly inside the unit disk as $n\to \infty$.  As there is no Gaussian-like behavior strictly inside the unit disk (unless, of course, the coefficients $\xi_k$ are Gaussian) we need to come up with a different strategy. The idea is to consider the ``full" Taylor series instead of the finite polynomial (that is, $n=\infty$) and apply a perturbation argument to show that this Taylor series (of the form~\eqref{eq:intro-def_of_infinite_power_series}) does not have a double zero almost surely. This is precisely Theorem~\ref{thm:almost_sure_double_zeros_in_disk}, which we prove in Section~\ref{sec:proof_of_almost_sure_result_in_the_disk}. We also refer the reader to the beginning of Section~\ref{sec:proof_of_almost_sure_result_in_the_disk} for a more elaborate sketch of proof for Theorem~\ref{thm:almost_sure_double_zeros_in_disk}. Once Theorem~\ref{thm:almost_sure_double_zeros_in_disk} is established, we can argue  that with high probability, all roots of $f_n$ in the disk will be uniformly separated as $n\to \infty$, which concludes item (C).
	
	\subsection{Organization of the paper and notation}
	To ease the readability, we outline a bird's eye view for each section below:  
	\begin{itemize}
		\item[$\circ$] In Section~\ref{sec:proof_of_almost_sure_result_in_the_disk} we give the proof of Theorem~\ref{thm:almost_sure_double_zeros_in_disk} concerning double roots of random power series. As we already mentioned, this section can be read independently of all other parts of the paper. 
		
		\item[$\circ$] In Section~\ref{sec:reducing_main_result_to_a_net_argument} we state our program more precisely and formulate Proposition~\ref{prop:no_close_double_roots_in_the_bulk} and Theorem~\ref{thm:poisson_convergence_near_unit_circle} described above.  This is where we describe how to reduce the proof of Theorem \ref{thm:poisson_limit_for_close_roots} to a Poisson limit for a sum over a net.  In this section, we state the relevant small-ball probability bounds/comparisons that will be used throughout the paper.  
		
		\item[$\circ$] Section~\ref{sec:reducing_to_the_annulus} is devoted to the proof of Proposition~\ref{prop:no_close_double_roots_in_the_bulk} which allows us to reduce our focus to the annulus $\mathcal{A}_K$ for large but fixed $K$.  This amounts to eliminating close pairs of roots that are more than distance $1/n$ from the unit circle. 
		
		\item[$\circ$] Section~\ref{sec:reducing_poisson_limit_to_count_over_a_net} is the starting point for our net argument described above. Here, we reduce the question of existence of close roots to a question about local events in $(f_n,f_n^\prime,f_n^{\prime\prime})$ on an appropriately chosen net of points in the annulus $\mathcal{A}_K$. 
		
		\item[$\circ$] In Section~\ref{sec:poisson_limit_for_sum_over_the_smooth_net} we show that the sum over the net points indeed has a Poisson scaling limit, assuming that the small-ball probability results from Section~\ref{sec:reducing_main_result_to_a_net_argument} hold. From this point onward, the rest of the paper is devoted to proving these small-ball bounds.  
		
		\item[$\circ$] In Section~\ref{sec:small_ball_for_general_points} we derive small-ball bounds which are valid to all sample points in $\bD$. As such, these small-balls are relatively weak, and do not allow us to compute to first order events of the form~\eqref{eq:intro-explanation_how_to_compute_probability_double_roots}.
		
		\item[$\circ$] In Section~\ref{sec:small_ball_bounds_for_smooth_points} we improve the small-ball bound from the previous section, in the case where the sample point $z\in \bD$ is \emph{smooth} (i.e.\ has some favorable arithmetic property, see Definition~\ref{def:smooth_angle} for a precise formulation). This improvement allows us to  estimate the probability of~\eqref{eq:intro-probability_of_local_event} for the remaining sample points accurately. 
		
		\item[$\circ$] In Section~\ref{sec:gaussian_comparison_for_tuples} we take one step further, and prove that small-ball probabilities for tuples of smooth, separated points in $\mathcal{A}_K$, scale as in the case of Gaussian coefficients. This type of Gaussian-comparison is crucial when computing moments of the sum of indicators over the net, and allow us to compute the actual intensity (and, in particular, pin-down the non-homogeneous effect) for the limiting Poisson point process. 
		
		\item[$\circ$] Finally, in Section~\ref{sec:limiting_intensity_for_gaussian} we explicitly compute the limiting intensity for the Poisson point process. By the program described above, this task is reduced to computing the probability of the event~\eqref{eq:intro-probability_of_local_event} to first asymptotic order in the case of Gaussian coefficients.  
	\end{itemize}
	
	We end the introduction by listing some notations that we will use freely across the paper:
	\begin{itemize}
		\item $\bC, \bR$ the complex plane and the real line. We will always identify $\bC^d \simeq \bR^{2d}$, $m$ the Lebesgue measure on $\bR^d$ (the dimension should be clear from the context);
		\item $\bR_{\ge 0}$ the non-negative reals; $\bD$ the unit disk; $\mathbb{H}$ the upper-half plane;
		\item $\bD(z,r)$ a disk centered at $z$ with radius $r$; $\mathcal{A}(a,b) = \bD(0,b)\setminus \bD(0,a)$; 
		\item $\xi_0$ the coefficient distribution of the random polynomial $f_n$ given by~\eqref{eq:intro_def_f_n}; $\varphi(t) = \bE\big[\exp(it\xi_0)\big]$ the characteristic function of $\xi_0$;
		\item For $z\in \bD$, the \emph{effective degree} of $f_n$ at $z$
		\begin{equation}
			\label{eq:def_of_effective_degree_intro}
			d_n(z) = \min\big\{n,(1-|z|)^{-1}\big\} ;
		\end{equation} 
		\item $\tau>\beta>0$ are absolute, sufficiently small constants (taking $\tau = 10^{-4}$ and $\beta = \tau/20$ is fine). 
	\end{itemize}
	We will use freely the Landau notations $O(\cdot),o(\cdot),\omega(\cdot),\Theta(\cdot)$ to denote inequalities up to non-asymptotic constants. We will also write $a\lesssim b$ if $a= O(b)$.
	
	\subsection*{Acknowledgments}
	We would like to thank Alon Nishry and Mikhail Sodin for helpful discussions. 
	M.M.\ is supported in part by NSF CAREER grant DMS-2336788 as well as DMS-2246624. O.Y.\ is supported in part by NSF postdoctoral fellowship DMS-2401136.

	\section{Almost sure result in the disk: Proof of Theorem~\ref{thm:almost_sure_double_zeros_in_disk}}
	\label{sec:proof_of_almost_sure_result_in_the_disk}
	
	Throughout this section we are concerned with the zeros of the random power series $F$ defined at \eqref{eq:intro-def_of_infinite_power_series}, whose i.i.d.\ coefficients satisfy \eqref{eq:intro-log_moment_assumption}.  
	To prove Theorem~\ref{thm:almost_sure_double_zeros_in_disk}, it is enough to show that for each $\delta >0$, almost surely there is no double zero of $F$ in the annulus $\mathcal{A}(\delta,1-\delta) = \{\delta < |z| < 1-\delta\}$. 
	
	The main idea is that if $F$ has a double zero at some $z_0 \in \mathcal{A}(\delta,1-\delta)$ for which $|F''(z_0)| > 0$, then we decompose $$F(z) = \sum_{k = 0}^m a_k z^k + \sum_{k > m} a_k z^k =: P_m(z) + T_m(z)$$ and show that $P_m^\prime$ has a root close to $z_0$ on which $F$ is atypically small (provided $m$ is large enough). To omit the possibility of a double zero, we first reveal (i.e.\ condition on) the polynomial $P_m$. On the event that $F$ has a double zero, it must be the case that among the finitely many roots of $P_m^\prime$, we have that $T_m$ nearly cancels $P_m$. We show that this latter event is rare by using a basic anti-concentration fact due to L\'evy-Kolmogorov-Rogozin (see Lemma~\ref{lemma:Levy-Kolmogorov-Rogozin}), which is a variant on Erd\H{o}s solution to the Littlewood-Offord problem.  
	
	A wrinkle to this story is that for this argument to work, we need $|F''(z_0)| > 0$.  To get around this, we observe that with a basic quasirandomness condition on the coefficients, there does not exists a zero with extremely high multiplicity, i.e.\ for a large $k$ it is not likely for there to exist a point $z_0$ for which $$F(z_0) = F'(z_0) = F''(z_0) = \cdots = F^{(k)}(z_0) = 0\,.$$
	See Claim \ref{claim:jensen_bound_on_the_number_of_zeros_E_K_beta_ell} for a precise statement.  We then let $k_\ast$ be the smallest $k$ for which there is positive probability that there is some zero of multiplicity $k_\ast$.  If $k_\ast \geq 2$, then we run the above argument with $G = F^{(k_\ast - 2)}$, and by the minimality of $k_\ast$ we have that each root of $F$ of multiplicity $k_\ast$ must be a double root of $G$ and have non-zero second derivative.

	\subsection{Preparations}
	In what follows, it will be convenient to assume bounds on the growth of the random coefficients, which will be shown to hold with high probability. For fixed constants $K,\beta>0$ define the set of analytic functions in the disk
	\begin{equation}
		\label{eq:def_of_E_k_beta}
		\mathcal{E}_{K,\beta} = \bigg\{ F(z) = \sum_{k=0}^{\infty} a_k z^k : \, |a_k| \le K e^{\beta k} \ \text{for all } k\ge 0\bigg\} \, .
	\end{equation} 
	\begin{claim}
		\label{claim:F_is_likely_in_E_K_beta}
		Let $\{a_k\}_{k\ge 0}$ be an i.i.d.\ sequence which satisfies~\eqref{eq:intro-log_moment_assumption}. Then for each $\beta>0$ and for all $\eps>0$ there exists $K\ge 1$ large enough so that $\bP\big[F\in \mathcal{E}_{K,\beta}\big] \ge 1-\eps$.
	\end{claim}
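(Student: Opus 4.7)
The plan is a direct first-Borel--Cantelli argument, using the log-moment hypothesis $\bE[\log(1+|a_0|)]<\infty$ in a quantitative form. First I would unpack the complementary event by a union bound:
\[
\bP\bigl[F\notin \mathcal{E}_{K,\beta}\bigr] \le \sum_{k\ge 0}\bP\bigl[|a_k|>Ke^{\beta k}\bigr] = \sum_{k\ge 0}\bP\bigl[|a_0|>Ke^{\beta k}\bigr],
\]
so the task reduces to showing that this tail sum tends to $0$ as $K\to\infty$ (for fixed $\beta>0$), uniformly in nothing other than the distribution of $a_0$.

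To control the sum, I would pass to the $\log(1+|a_0|)$ scale, where the hypothesis lives. For $K\ge 1$, the event $\{|a_0|>Ke^{\beta k}\}$ is contained in $\{\log(1+|a_0|)>\log K+\beta k\}$. Since $t\mapsto \bP\bigl[\log(1+|a_0|)>t\bigr]$ is non-increasing, comparing the sum to the corresponding integral gives
\[
\sum_{k\ge 0}\bP\bigl[\log(1+|a_0|)>\log K + \beta k\bigr] \le \bP\bigl[\log(1+|a_0|)>\log K\bigr] + \frac{1}{\beta}\,\bE\Bigl[\bigl(\log(1+|a_0|)-\log K\bigr)_+\Bigr].
\]
Both terms on the right vanish as $K\to\infty$: the first by Markov applied to $\log(1+|a_0|)$, and the second by dominated convergence with integrable envelope $\log(1+|a_0|)$. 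Choosing $K$ large enough so that the sum of the two terms is at most $\eps$ finishes the proof.

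There is no real obstacle here beyond checking that the log-moment assumption is exactly the right quantitative strengthening of the Borel--Cantelli argument used in~\cite{Ibragimov-Zaporozhets} to guarantee that $F$ is almost surely analytic on $\bD$: that argument produces a random $K(\omega)<\infty$ with $|a_k(\omega)|\le K(\omega)e^{\beta k}$, whereas the claim we want is just the (immediate) statement that the distribution function of $K(\omega)$ is proper, i.e.\ $\bP[K(\omega)\le K]\to 1$ as $K\to\infty$.
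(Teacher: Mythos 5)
Your proof is correct and uses essentially the same engine as the paper's: pass to the $\log(1+|a_0|)$ scale, compare the tail sum to an integral, and use the log-moment assumption \eqref{eq:intro-log_moment_assumption} to control it. The only stylistic difference is that the paper first proves summability at $K=1$, invokes Borel--Cantelli to conclude that $\sup_k |a_k|e^{-\beta k}$ is a.s.\ finite, and then chooses $K$ from the (automatically proper) distribution function of that random variable; you instead bound the union-bound sum directly for general $K$ and send $K\to\infty$ via Markov and dominated convergence, which is a hair more quantitative but amounts to the same thing.
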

	\begin{proof}
		By the moment assumption~\eqref{eq:intro-log_moment_assumption} on the random coefficients we have
		\begin{equation*}
			\sum_{k\ge 0} \bP\big[|a_k| \ge e^{\beta k}\big] \le \sum_{k\ge 0} \bP\big[\log \big(1+ |a_0|\big) \ge \beta k\big] \lesssim \beta^{-1} \bE\log\big(1+|a_0|\big) < \infty \, .
		\end{equation*}
		Therefore, by the Borel-Cantelli lemma, the random variable $\max_{k\ge 0} |a_k| e^{-\beta k}$ is finite almost surely. Taking $K$ sufficiently large now gives
		\[
		\bP\big[\max_{k\ge 0} |a_k| e^{-\beta k} \ge K\big] \le \eps 
		\]
		as desired. 
	\end{proof}
	For what follows, we would also like to omit functions with deep zeros at the origin. Let $c_0>0$ be chosen so that $\tau = \bP\big[|a_0| \ge c_0\big]>0$ (this is possible as we assume that $a_0$ is non-degenerate). For $\ell \ge 0$ we define another set of analytic functions 
	\begin{equation}
		\label{eq:def_of_F_ell}
		\mathcal{F}_\ell = \bigg\{ F(z) = \sum_{k=0}^{\infty} a_k z^k : \, \exists j\in \{0,\ldots,\ell\} \ \text{such that } |a_j|\ge c_0 \bigg\}\, .
	\end{equation}
	\begin{claim}
		\label{claim:F_and_its_derivatives_are_in_F_ell}
		With the notations described above, for all $k\ge 0$ and $\ell \ge 0$ we have $$\bP\big[F^{(k)} \in \mathcal{F}_\ell \big] \ge 1-e^{-\tau \ell}\, .$$  
	\end{claim}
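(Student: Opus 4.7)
The plan is to reduce the statement to an elementary independence argument, using the fact that differentiation only increases the moduli of the Taylor coefficients. Write out $F^{(k)}(z) = \sum_{j\ge 0} \frac{(j+k)!}{j!}\, a_{j+k}\, z^j$, so that the $j$-th Taylor coefficient of $F^{(k)}$ has modulus $\frac{(j+k)!}{j!}|a_{j+k}| \ge |a_{j+k}|$ for every $j\ge 0$ and $k\ge 0$. Consequently, a sufficient condition for $F^{(k)}\in \mathcal{F}_\ell$ is that there exists some index $m\in\{k,k+1,\ldots,k+\ell\}$ with $|a_m|\ge c_0$.

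By independence of the $a_m$'s and the definition $\tau = \bP[|a_0|\ge c_0]$, the probability that $|a_m|<c_0$ for all $\ell+1$ indices $m\in\{k,\ldots,k+\ell\}$ is exactly $(1-\tau)^{\ell+1}$. Hence
\[
\bP\big[F^{(k)}\notin \mathcal{F}_\ell\big] \le (1-\tau)^{\ell+1} \le e^{-\tau(\ell+1)} \le e^{-\tau \ell},
\]
using $1-\tau\le e^{-\tau}$. This is exactly the claimed bound, and it is uniform in $k$, which is the key point we will invoke later (applying the claim to the derivative $F^{(k_\ast-2)}$).

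There is really no obstacle here — the only mild subtlety is to notice that the combinatorial prefactor $\frac{(j+k)!}{j!}$ is always $\ge 1$, so that the event defining $\mathcal{F}_\ell$ for $F^{(k)}$ is implied by (not just comparable to) the event that one of $a_k,\ldots,a_{k+\ell}$ is large in modulus. This is what makes the bound independent of $k$.
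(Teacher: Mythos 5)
Your proof is correct and is essentially identical to the paper's argument: both reduce to the observation that the $j$-th Taylor coefficient of $F^{(k)}$ has modulus at least $|a_{j+k}|$, then apply independence to bound $\bP[F^{(k)}\notin\mathcal{F}_\ell]\le(1-\tau)^{\ell+1}\le e^{-\tau\ell}$. You have merely made explicit the combinatorial prefactor $\frac{(j+k)!}{j!}\ge 1$ that the paper leaves implicit in the phrase ``simply note that.''
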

	\begin{proof}
		Simply note that
		\[
		\bP\big[F^{(k)} \not\in \mathcal{F}_\ell \big] \le \bP\big[|a_{k+j}|<c_0 \ \text{for all } j=0,\ldots,\ell \, \big] = \Big(1-\bP\big[|a_0|\ge c_0\big]\Big)^{\ell +1}
		\]
		and the bound follows from the definition of $\tau = \bP\big[|a_0|\ge c_0\big]$. 
	\end{proof}
	We will make use of the Cauchy estimates which we now recall. For $0<s<r$ and $F$ an analytic function in the disk $r\bD$, we have
	\begin{equation}
		\label{eq:cauchy_estimates}
		\max_{z\in s\bD} |F^{(k)}(z)| \le \frac{k!}{(r-s)^k} \max_{w\in r\bD} |F(w)| \, .
	\end{equation}
	In the next claim, we upper bound the number of zeros in $(1-\delta)\bD$ of analytic functions from the set $\mathcal{E}_{K,\beta,\ell} = \mathcal{E}_{K,\beta} \cap \mathcal{F}_\ell$. 
	\begin{claim}
		\label{claim:jensen_bound_on_the_number_of_zeros_E_K_beta_ell}
		For each $K,\ell ,\beta, \delta>0$ such that $1-\delta/2 < e^{-\beta}$ there is a constant $M=M(K,\ell,\beta,\delta)$ such that each function $h\in \mathcal{E}_{K,\beta,\ell}$ has at most $M$ zeros (counting multiplicity) in $(1-\delta)\bD$.
	\end{claim}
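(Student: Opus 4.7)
The idea is to apply Jensen's formula, but \emph{not} centered at the origin. A direct application at $0$ would involve $\log|a_m|$, where $m$ is the order of vanishing of $h$ at the origin, and the membership $h\in\mathcal{F}_\ell$ only guarantees that $|a_{j^*}|\ge c_0$ for some $j^*\in\{0,\ldots,\ell\}$ (not necessarily $j^*=m$), so $|a_m|$ can be arbitrarily small and such a bound would not depend only on $K,\beta,\ell,\delta$. Instead, I will use Parseval's identity on a small circle to locate a point $z_0$ at which $|h(z_0)|$ is bounded below by a quantity depending only on these parameters, and then run Jensen centered at $z_0$.

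Set $\eta := e^{-\beta} - (1-\delta)$, which is positive by the hypothesis $1-\delta/2<e^{-\beta}$, and choose
\[
	r \, := \, \eta/4 \qquad \text{and} \qquad R \, := \, (1-\delta) + \eta/2 \, .
\]
These satisfy $r+R<e^{-\beta}$ (so that the closed disk $\bD(z_0,R)$ lies inside the disk of absolute convergence of $h$ whenever $|z_0|=r$) and $R > r+(1-\delta)$ (so that $\bD(z_0,R)$ contains $(1-\delta)\bD$). By Parseval on the circle $|z|=r$, together with $r\le 1$ and $j^*\le\ell$,
\[
	\frac{1}{2\pi}\int_0^{2\pi}|h(re^{i\theta})|^2\,d\theta \, = \, \sum_k |a_k|^2 r^{2k} \, \ge \, |a_{j^*}|^2 r^{2j^*} \, \ge \, c_0^2 r^{2\ell} \, ,
\]
so there exists $\theta^*\in[0,2\pi)$ for which $z_0 := r e^{i\theta^*}$ satisfies $|h(z_0)|\ge c_0 r^\ell > 0$. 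On the other hand, the coefficient bound $|a_k|\le K e^{\beta k}$ yields $\max_{|z|\le r+R}|h(z)|\le K/(1-(r+R)e^{\beta}) =: M_1$.

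Finally, apply Jensen's formula to $h$ on $\bD(z_0,R)$. Since $h(z_0)\ne 0$, the sum $\sum_{\alpha}\log(R/|\alpha-z_0|)$ over zeros $\alpha$ of $h$ in $\bD(z_0,R)$ is bounded above by $\log M_1 - \log|h(z_0)|$. Every zero of $h$ in $(1-\delta)\bD$ lies in $\bD(z_0, r+(1-\delta))\subset\bD(z_0,R)$ by the triangle inequality, and so contributes at least $\log(R/(r+(1-\delta)))>0$ to the sum. Therefore the number of zeros of $h$ in $(1-\delta)\bD$ is at most
\[
	M \, := \, \frac{\log M_1 + \log(1/c_0) + \ell\log(1/r)}{\log\big(R/(r+(1-\delta))\big)} \, ,
\]
a quantity depending only on $K,\beta,\ell,\delta$ (and the absolute constant $c_0$ from the definition of $\mathcal{F}_\ell$), establishing the claim. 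The main obstacle to this argument, and the reason a direct Jensen at the origin fails, is the possibility that $|a_m|$ is arbitrarily small; the $L^2$-averaging via Parseval bypasses this issue by replacing the point evaluation $h(0)$ with a quantity that is controlled by the coefficient $a_{j^*}$ whose modulus is guaranteed to be at least $c_0$.
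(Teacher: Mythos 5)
Your proof is correct, and it takes a genuinely different route from the paper. The paper establishes the Blaschke-factor inequality $N \le C_\delta \log \frac{\max_{(1-\delta/2)\bD}|h|}{\max_{(1-\delta)\bD}|h|}$, bounds the numerator directly from the coefficient growth condition $|a_k|\le Ke^{\beta k}$ (using $e^{\beta}(1-\delta/2)<1$), and bounds the denominator from below via the Cauchy estimate, which gives $\max_{(1-\delta)\bD}|h|\ge |a_{j^*}|(1-\delta)^{j^*}\ge c_0(1-\delta)^\ell$ for the index $j^*\le\ell$ witnessing $h\in\mathcal{F}_\ell$. You instead run Jensen's formula directly, centered at a non-origin base point $z_0$ on a small circle $|z|=r$, and you manufacture the required lower bound $|h(z_0)|\ge c_0 r^\ell$ by a Parseval argument rather than a Cauchy estimate. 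The two are equivalent in strength here (both convert the coefficient lower bound $|a_{j^*}|\ge c_0$ into a lower bound on $|h|$ somewhere on the circle $|z|=r$; indeed the Cauchy estimate already gives $\max_{|z|=r}|h|\ge c_0 r^\ell$, so Parseval is not strictly needed). What the paper's Blaschke/ratio-of-maxima form buys is that one never has to exhibit a specific base point and hence avoids your $z_0$ bookkeeping; what your shifted-Jensen form buys is that it works directly from the definition of Jensen's formula without invoking the Blaschke-product lemma. Your opening observation that a naive Jensen at the origin fails because the leading nonzero coefficient $a_m$ need not be among the ones bounded below by $c_0$ is exactly right, and it is the same obstruction the paper's $\max_{(1-\delta)\bD}|h|$-based bound is designed to circumvent.
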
 
	\begin{proof}
		Fix some analytic function in the disk $h$, and denote by $N$ the number of zeros it has in $(1-\delta)\bD$ with multiplicities. The classical Jensen-type bound gives
		\begin{equation}
			\label{eq:jensen_bound_for_analytic_functions}
			N \le C_\delta \log \frac{\max_{(1-\delta/2)\bD} |h|}{\max_{(1-\delta)\bD} |h|} \, .
		\end{equation}
		For the reader's convenience, we recall the proof of~\eqref{eq:jensen_bound_for_analytic_functions}. Let $\zeta_1,\ldots,\zeta_N$ be the multiple zeros of $h$ and recall that the Blaschke factor is given by
		\[
		B_\zeta(z) = \frac{\rho(z-\zeta)}{\rho^2 - z\overline\zeta} \, , \qquad \rho = 1-\delta/2 \, .
		\]
		Note that $|B_\zeta(z)|=1$ for $|z|= \rho$ and that $\max_{(1-\delta)\bD}|B_\zeta| \le \frac{2-2\delta}{4-3\delta} =: C_\delta^\prime.$ Now put $h= B_{\zeta_1} \cdots B_{\zeta_N} g$ and observe that by the maximum principle
		\begin{equation*}
			\max_{(1-\delta)\bD} |h| \le C_\delta^N \max_{(1-\delta)\bD} |g| \le C_\delta^N \max_{(1-\delta/2)\bD} |g| = (C_\delta^\prime)^N \max_{(1-\delta/2)\bD} |h| \, ,
		\end{equation*}
		which yields~\eqref{eq:jensen_bound_for_analytic_functions}. To conclude the proof, we note that for each $h\in \mathcal{F}_\ell$ the Cauchy estimates~\eqref{eq:cauchy_estimates} imply that
		\begin{equation*}
			c_0 \le \max_{j\le \ell} |a_j| = \max_{j\le \ell} \frac{|F^{(j)}(0)|}{j!} \le \frac{\max_{(1-\delta)\bD} |h|}{2\pi} \, ,
		\end{equation*}
		whereas for each $h\in \mathcal{E}_{K,\beta}$ we have
		\[
		\max_{(1-\delta/2)\bD}|h| \le K\sum_{j=0}^{\infty} (e^\beta(1-\delta/2))^j \, .
		\]
		Plugging both these inequalities into~\eqref{eq:jensen_bound_for_analytic_functions} yields the claim.
	\end{proof}
	
	\subsection{The perturbative argument}
	In the proof of Theorem~\ref{thm:almost_sure_double_zeros_in_disk}, we will make use of a standard anti-concentration inequality, which may be viewed as a generalization of Erd\H{o}s's solution to the Littlewood-Offord problem to arbitrary non-degenerate random variables.  A version in this generality follows from the L\'evy-Kolmogorov-Rogozin inequality, see for example~\cite{Esseen1968}.
	
	\begin{lemma}({\normalfont \cite[Corollary~1]{Esseen1968}})
		\label{lemma:Levy-Kolmogorov-Rogozin}
		Let $a_j$ be an i.i.d.\ sequence of non-degenerate (complex) random variables and let $c_j \in \bC$ be such that $|c_j| \ge 1$. There is a constant $C>0$ depending only on the distribution of $a_0$ so that for all $R\ge 1$ we have
		\[
		\sup_{ x \in \bC} \bP\bigg[\Big|\sum_{j=1}^{n} a_j c_j - x\Big| \le R\bigg] \le \frac{CR}{\sqrt{n}} \, .
		\]
	\end{lemma}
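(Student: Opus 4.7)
This is the classical L\'evy-Kolmogorov-Rogozin inequality, whose proof goes via Fourier analysis. My plan is to adapt Esseen's smoothing argument to the plane $\bC$. First, I would establish the two-dimensional Esseen inequality: for any $\bC$-valued random variable $Z$ and $R > 0$,
\[
Q(Z; R) := \sup_{x \in \bC} \bP\bigl[|Z - x| \le R\bigr] \le C R^2 \int_{|t| \le 1/R} |\varphi_Z(t)| \, {\rm d} m(t),
\]
where $\varphi_Z(t) = \bE[\exp(i\operatorname{Re}(\bar t Z))]$. This follows by constructing a non-negative function $\phi \ge \mathbf{1}_{\bD(0, R)}$ with $\widehat{\phi}$ supported in $\bD(0, 1/R)$ and $\|\widehat{\phi}\|_1 \lesssim R^2$, and then Fourier-inverting the bound $\bP[|Z-x|\le R] \le \bE[\phi(Z - x)]$.

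Applying this to $S = \sum_{j=1}^n c_j a_j$, independence gives the factorization $|\varphi_S(t)| = \prod_{j=1}^n |\varphi_{a_0}(\bar c_j t)|$. Since $a_0$ is non-degenerate, a standard symmetrization — writing $|\varphi_{a_0}(s)|^2 = \bE[\cos(\operatorname{Re}(\bar s(a_0 - a_0')))]$ for an independent copy $a_0'$ and using $1 - \cos \theta \ge c\min(\theta^2, 1)$ — yields constants $\delta, \eta > 0$ with $|\varphi_{a_0}(s)| \le \exp\bigl(-\delta \min(|s|^2, 1)\bigr)$ on a certain set of directions in $\bC$. In the generic case where the law of $a_0$ has full two-dimensional support, this holds in every direction, and combined with $|\bar c_j t| \ge |t|$ (the hypothesis $|c_j| \ge 1$), one obtains $|\varphi_S(t)| \le \exp(-c n \min(|t|^2, 1))$. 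Plugging this into the Esseen inequality yields $Q(S; R) \lesssim R^2/n \le R/\sqrt n$ for $R \ge 1$.

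To handle the degenerate case in which $a_0$ is supported on a real line (e.g.\ $a_0$ real-valued), I would reduce to a one-dimensional projection: for any unit vector $u \in \bC$, $Q(S; R) \le Q(\operatorname{Re}(\bar u S); R)$, and $\operatorname{Re}(\bar u S) = \sum_j \operatorname{Re}(\bar u c_j a_j)$ is a sum of i.i.d.\ real random variables. Averaging over $u$ on the unit circle and using $|c_j| \ge 1$, a pigeonhole argument produces a direction $u$ for which at least a constant fraction of the projected coefficients (in the non-degenerate direction of $a_0$) have modulus bounded below by a constant. For such $u$, the corresponding 1D summands are uniformly non-degenerate at scale $R \ge 1$, and the classical one-dimensional L\'evy-Kolmogorov-Rogozin inequality — itself provable by the 1D version of the above Esseen argument — then gives $Q(\operatorname{Re}(\bar u S); R) \le C R/\sqrt n$.

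The main technical obstacle is the existence of the favorable projection direction $u$ in the degenerate case; the hypothesis $|c_j| \ge 1$ is precisely what prevents all $c_j$ from simultaneously aligning with the degenerate direction of $a_0$, and an averaging argument over $u \in \partial \bD$ then delivers the good direction. Tracking how the constant $C$ depends on $a_0$ (through $\eta$, $\delta$, and the measure of the non-degenerate directions) is routine.
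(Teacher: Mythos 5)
The paper does not prove this lemma; it cites it directly as \cite[Corollary~1]{Esseen1968}, so there is no in-paper argument to compare against. Your sketch is a correct reconstruction of the standard proof (the same Fourier/Esseen-smoothing route that underlies Esseen's result), and the case split into ``genuinely two-dimensional'' $a_0$ versus $a_0$ supported on a line, with a pigeonhole over projection directions exploiting $|c_j| \ge 1$ in the latter case, is a legitimate way to handle the possible degeneracy of the coefficient law.

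Two minor inaccuracies worth flagging. First, in the reduction $\operatorname{Re}(\bar u S) = \sum_j \operatorname{Re}(\bar u c_j a_j)$ the summands are independent but \emph{not} identically distributed, since the $c_j$ vary with $j$; this is harmless because the one-dimensional Kolmogorov--Rogozin inequality you invoke applies to independent, not necessarily identically distributed, summands, but the phrase ``i.i.d.'' is wrong as written. Second, the chain $Q(S;R) \lesssim R^2/n \le R/\sqrt{n}$ does not hold as a pointwise inequality for all $R\ge 1$ --- the second step requires $R\le\sqrt{n}$ --- though the claimed conclusion $Q(S;R)\lesssim R/\sqrt n$ is still correct once one uses the trivial bound $Q\le 1 \le R/\sqrt n$ in the complementary regime $R>\sqrt n$. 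Neither issue affects the substance of the argument.
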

	
	For $m\ge 1$ we will decompose $F(z) = P_m(z) + T_m(z)$, where
	\begin{equation}
		\label{eq:taylor_polynomial_of_F_around_origin}
		P_m(z) = \sum_{k=0}^{m} a_k z^k 
	\end{equation}
	is the Taylor polynomial of degree $m$ of $F$ around the origin.  We first deduce uniform anti-concentration for the first $m$ derivatives of $T_m$ using Lemma \ref{lemma:Levy-Kolmogorov-Rogozin}: 
	\begin{claim}
		\label{claim:Levy_concentration_for_derivatives_of_tail}
		There exists $C>0$ depending only on the distribution of $a_0$ so that for all $z\in \bD$, $\eps\in(0,1)$ and $0\le k\le m$ we have
		\[
		\sup_{x \in \bC} \bP \Big[\big|T_m^{(k)}(z) - x \big| \le 
		\eps |z|^m\Big] \le C \big(\log (1/|z|) / \log (1/\eps)\big)^{-1/2} \, .
		\]
	\end{claim}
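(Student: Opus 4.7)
The plan is to apply the L\'evy--Kolmogorov--Rogozin inequality (Lemma~\ref{lemma:Levy-Kolmogorov-Rogozin}) to a finite truncation of $T_m^{(k)}(z)$, after extracting a leading power of $z$. Set $r = |z|$ (we may assume $r > 0$, otherwise the bound is meaningful only if interpreted as $0 = 0$). Re-indexing $j = m+i$ and using $k \le m$, every term in $T_m^{(k)}$ carries a strictly positive power of $z$, so
\[
T_m^{(k)}(z) = z^{m+1-k} W(z), \qquad W(z) := \sum_{i=1}^{\infty} a_{m+i} \, e_i \, z^{i-1}, \qquad e_i := \frac{(m+i)!}{(m+i-k)!} \ge 1.
\]
Thus $\{|T_m^{(k)}(z) - x| \le \eps r^m\}$ coincides with $\{|W(z) - y| \le \eps r^{k-1}\}$ for $y := x/z^{m+1-k}$, and it suffices to bound $\sup_{y \in \bC} \bP[\,|W(z) - y| \le \eps r^{k-1}\,]$.

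For an integer $L \ge 1$ to be chosen, write $W(z) = W_L(z) + R_L(z)$ with $W_L = \sum_{i=1}^L$ and $R_L = \sum_{i > L}$; these depend on disjoint coefficient batches, so conditioning on $R_L(z)$ and taking a supremum over the conditional shift reduces the problem to bounding $\sup_{y' \in \bC} \bP[\,|W_L(z) - y'| \le \eps r^{k-1}\,]$. Writing $W_L(z) = \sum_{i=1}^L a_{m+i} \, c_i$ with $c_i := e_i z^{i-1}$ and $|c_i| = e_i r^{i-1} \ge r^{L-1}$, and normalizing by $r^{L-1}$ so that all coefficients have modulus at least one, Lemma~\ref{lemma:Levy-Kolmogorov-Rogozin} yields
\[
\sup_{y'} \bP\big[\,|W_L(z) - y'| \le \eps r^{k-1}\,\big] \le \frac{C \, \eps \, r^{k-1} \cdot r^{1-L}}{\sqrt{L}} = \frac{C \, \eps \, r^{k-L}}{\sqrt{L}}.
\]

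Finally, choose $L := \lfloor \log(1/\eps)/\log(1/r) \rfloor$. If this is less than $1$ then $\sqrt{\log(1/r)/\log(1/\eps)} \ge 1$ and the bound follows (after enlarging $C$) from the trivial estimate $\bP[\cdots] \le 1$. Otherwise the choice gives $r^{L} \ge \eps$, hence $\eps r^{k-L} \le r^k \le 1$, and the upper bound collapses to $C/\sqrt{L} \le C \sqrt{\log(1/r)/\log(1/\eps)}$ as required.

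The main obstacle is selecting $L$ to balance two opposing forces: the normalization factor $r^{1-L}$ in the L\'evy--Kolmogorov--Rogozin bound pushes us to take $L$ small, while the $1/\sqrt{L}$ anti-concentration gain pushes us to take $L$ large. The interplay with $k$ is benign because $k \ge 0$ only ever improves the bound via $r^k \le 1$, so a single choice of $L$ depending only on $r$ and $\eps$ works uniformly in $k \le m$.
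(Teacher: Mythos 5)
Your argument is correct and follows essentially the same path as the paper: truncate the tail at roughly $\log(1/\eps)/\log(1/|z|)$ terms, verify the coefficients have modulus at least one, and apply Lemma~\ref{lemma:Levy-Kolmogorov-Rogozin}; the paper achieves the normalization in one step by dividing through by $\eps|z|^m$ and taking $N = \lfloor \log(\eps^{-1})/2\log(|z|^{-1}) \rfloor$ so that the rescaled coefficients are automatically at least $1$, whereas you extract the power $z^{m+1-k}$ first and normalize by $r^{L-1}$ afterward, a cosmetic difference. One small wrinkle: Lemma~\ref{lemma:Levy-Kolmogorov-Rogozin} is stated only for $R\ge 1$, and your normalized radius $\eps r^{k-L}$ is typically $\le 1$, so you should monotonize to radius $1$ before invoking the lemma rather than assert the bound $C\eps r^{k-L}/\sqrt{L}$ directly; the resulting bound $C/\sqrt{L}$ is exactly what you use, so the conclusion is unaffected.
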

	\begin{proof}
		Let $N = \lfloor\log(\eps^{-1})/2\log(|z|^{-1})\rfloor$, and note that with this choice of $N$ for each $j \in\{m+1,\ldots,m+N\}$ we have 
		\[
		\eps^{-1} |z|^{j-k - m} \frac{j!}{(j-k)!} \ge \eps^{-1} |z|^{j-m} \ge 1 \, .
		\] 
		We thus have
		\begin{align*}
			\sup_{x \in \bC} \bP \Big[\big|T_m^{(k)}(z) - x \big| \le 
			\eps |z|^m\Big] &= \sup_{x \in \bC} \bP \left[\left|\sum_{j=m+1}^{\infty} a_j \frac{j!}{(j-k)!} z^{j-k} - x \right| \le 
			\eps |z|^m\right] \\ & \le \sup_{x \in \bC} \bP \left[\left|\sum_{j=m+1}^{m+N} a_j \frac{j!}{(j-k)!} \eps^{-1} |z|^{-m} z^{j-k} - x \right| \le 
			1\right] \stackrel{\text{Lemma}~\ref{lemma:Levy-Kolmogorov-Rogozin}}{\le} \frac{C}{\sqrt{N}} 
		\end{align*}
		which, by our choice of $N$, completes the proof.
	\end{proof}
	We want to show that if $F$ has a double zero, then for large $m$ the polynomial $P_m^\prime$ has a root nearby on which $F$ is small. This takes the form of the following lemma, which provides the engine behind the proof of Theorem~\ref{thm:almost_sure_double_zeros_in_disk}.  
	\begin{lemma}
		\label{lemma:double_zeros_and_large_second_derivative_implies_smallness_on_critical_point}
		Let $h(z) = \sum_{k \geq 0} h_k z^k\in \mathcal{E}_{K,\beta}$ for some $K>0$ and $\beta\in(0,1)$, and for $m\ge 1$ let $
		p_m(z) = \sum_{k=0}^{m} h_k z^k$ denote 
		its Taylor polynomial around the origin. For each $\gamma>0$ there exists $C=C(\gamma,K,\beta)$ so that the following holds. If 
		\[
		h(z_0) = h^\prime(z_0) = 0 \quad \text{and} \quad |h^{\prime\prime}(z_0)| \ge \gamma 
		\]
		for some $z_0\in \mathcal{A}(\beta,e^{-8\beta})$, then for each $m\ge C$ there exists $w\in \mathcal{A}(\beta e^{-\beta},e^{-7\beta})$ such that $p_m^\prime(w) = 0$ and $|h(w)| \le C e^{-\beta m} |w|^m$.  
	\end{lemma}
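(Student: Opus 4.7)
The plan is to combine Rouch\'e's theorem on a disk around $z_0$ with the factorization $h(z) = (z-z_0)^2 g(z)$ encoding the double zero, where $g$ extends analytically to $e^{-\beta}\bD$ with $g(z_0) = h''(z_0)/2$. The hope is to use Rouch\'e to produce a zero $w$ of $p_m'$ in a small disk around $z_0$, and then exploit the vanishing of $h$ to second order at $z_0$ together with a sharp bound for $T_m' := h' - p_m'$ to control $|h(w)|$.

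First, I would fix a radius $r_0 = r_0(\gamma, K, \beta)$ with $r_0 \lesssim \beta^2$, small enough that $\overline{\bD}(z_0, r_0) \subset \mathcal{A}(\beta e^{-\beta}, e^{-7\beta})$. Using the Taylor expansion $h'(z) = h''(z_0)(z - z_0) + O(|z-z_0|^2)$ together with Cauchy estimates on the higher derivatives of $h \in \mathcal{E}_{K,\beta}$, I would further shrink $r_0$ so that $|h'(z)| \ge (\gamma/2)|z - z_0|$ on this disk. The coefficient bound $|h_k| \le K e^{\beta k}$ then yields, for all $|z| < e^{-\beta}$,
\[
|T_m'(z)| \le \frac{K(m+1)\, e^{\beta(m+1)}\,|z|^m}{(1 - e^\beta|z|)^2}, \qquad T_m := h - p_m,
\]
which on $\overline{\bD}(z_0, r_0)$ is bounded by $C(K,\beta)\, m\, e^{-6\beta m}$. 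For $m$ sufficiently large in terms of $\gamma, K, \beta$, this is strictly less than $|h'(z)| \ge \gamma r_0 / 2$ on the circle $|z - z_0| = r_0$, and Rouch\'e produces a zero $w$ of $p_m' = h' - T_m'$ inside $\bD(z_0, r_0)$, which automatically lies in the target annulus.

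The key step is converting $p_m'(w) = 0$, equivalently $h'(w) = T_m'(w)$, into a sharp bound on $|w - z_0|$ via the factorization. Differentiating $h = (z-z_0)^2 g$ gives
\[
(w-z_0)\bigl(2g(w) + (w-z_0)g'(w)\bigr) = T_m'(w),
\]
and since $|g(z_0)| \ge \gamma/2$ while $g, g'$ are bounded on $\overline{\bD}(z_0, r_0)$ by Cauchy's inequalities, shrinking $r_0$ once more ensures the parenthesised factor has modulus at least $\gamma/4$. Hence $|w - z_0| \le (4/\gamma)\,|T_m'(w)|$. Invoking the pointwise bound at $w$: since $|w| \le e^{-7\beta}$, the denominator $(1 - e^\beta|w|)^2$ is a $\beta$-dependent constant, and
\[
|T_m'(w)| \le C(K,\beta)\, m\, e^{\beta m}|w|^m, \qquad \text{so}\qquad |w-z_0| \le C(\gamma,K,\beta)\, m\, e^{\beta m}|w|^m.
\]
Squaring and using $|g(w)| \lesssim 1$ yields $|h(w)| = |w - z_0|^2\,|g(w)| \le C\, m^2 e^{2\beta m}|w|^{2m}$; since $|w|^m e^{3\beta m} \le e^{-4\beta m}$ and thus $m^2 e^{3\beta m}|w|^m \le m^2 e^{-4\beta m} \le 1$ for $m$ large, this rearranges to $|h(w)| \le C e^{-\beta m}|w|^m$, as desired. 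The main obstacle is precisely this last step: a naive application of Rouch\'e on the constant-size disk $\bD(z_0, r_0)$ only gives $|h(w)| \le C r_0^2$, which is a constant and far too weak. The saving idea is to use the factorization to trade the Rouch\'e radius for the exponentially small quantity $|T_m'(w)|$, whose decay in $m$ is what ultimately drives the bound.
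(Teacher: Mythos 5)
Your proof is correct, but it takes a genuinely different route from the paper's. In the paper, Rouch\'e is applied on a disk of \emph{exponentially small, $m$-dependent} radius $\eta \asymp \gamma^{-1} m\,|z_0 e^{2\beta}|^m$, calibrated so that the linear lower bound $|h'(z_0+y)| \gtrsim \gamma|y|$ on the circle $|y|=\eta$ dominates the tail estimate $\max_{|y|=\eta}|(h-p_m)'(z_0+y)| \lesssim |z_0 e^{2\beta}|^m$; the bound on $|h(w)|$ then falls out immediately from $|w-z_0|\le \eta$ and a second-order Taylor expansion, since $\eta^2$ is already the right order. You instead run Rouch\'e on a disk of radius $r_0 = r_0(\gamma,K,\beta)$ \emph{independent of $m$}, which by itself gives no useful control on $|w-z_0|$, and you recover the exponential smallness a posteriori by writing $h=(z-z_0)^2 g$ and using $p_m'(w)=0$, i.e.\ $(w-z_0)\bigl(2g(w)+(w-z_0)g'(w)\bigr)=T_m'(w)$, with the bracketed factor bounded below by $\gamma/4$ on a suitably small constant disk (this step is fine: $g$ and $g'$ are bounded on a fixed disk about $z_0$ by Cauchy estimates applied at a $\beta$-dependent radius, so the constraint on $r_0$ is noncircular). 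Both arguments land on the same quantitative estimate $|w-z_0| \lesssim m\,e^{\beta m}|w|^m$, and the remaining arithmetic to reach $|h(w)| \le C e^{-\beta m}|w|^m$ is identical. The paper's version is slightly more compact; yours is arguably more robust, since the Rouch\'e radius need not be tuned against the tail bound — one constant disk works for all large $m$, and the dependence on $T_m'(w)$ is isolated transparently through the factorization rather than being baked into the choice of $\eta$.
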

	\begin{proof}
		As $h\in \mathcal{E}_{K,\beta}$, we have that
		\begin{equation}
			\label{eq:bound_on_derivatives_of_h}
			\max_{0\le j\le 3} \max_{z \in e^{-2\beta} \bD} |h^{(j)}(z)|\le K\sum_{k=3}^{\infty} k(k-1)(k-2) e^{-\beta k} \le 6K\frac{e^{3\beta}}{(1-e^{-\beta})^4} =: C_0\, ,
		\end{equation}
		and similarly, for all $z\in e^{-2\beta} \bD$
		\begin{equation*}
			|(h-p_m)^{\prime}(z)|\le \frac{Ke^{2\beta}}{1-e^{-\beta}} m\, |ze^{\beta}|^m \le C_0 |ze^{\beta}|^m  \, .
		\end{equation*}
		By setting $\eta = 4C_0 \gamma^{-1}  m \,  |z_0 e^{2\beta}|^m$, we see that
		\begin{equation}
			\label{eq:bound_on_derivative_of_tail_on_circle}
			\max_{|y|=\eta} \big|(h-p_m)^\prime(z_0+y)\big| \le C_0 (|z_0| e^{\beta} + \eta)^m \le C_0 |z_0 e^{2\beta}|^m 
		\end{equation}
		for all $m$ large enough. Since $h^\prime(z_0) = 0$, for each $y$ on the circle $\{|y|=\eta\}$, we can bound
		\[
		|h^\prime(z_0 + y)| \stackrel{\eqref{eq:bound_on_derivatives_of_h}}{\ge} \gamma |y| - C_0|y|^2 \ge \frac{\gamma}{2} \eta = 2C_0 m \, |z_0 e^{2\beta}|^m \stackrel{\eqref{eq:bound_on_derivative_of_tail_on_circle}}{>} \max_{|y|=\eta} |(h-p_m)^{\prime}(z_0 + y)|
		\]
		where the second inequality is for all $m$ large enough. 
		Rouch\'e's theorem now implies that $p_m^\prime$ has a root $w$ in the disk of radius $\eta$ centered at $z_0$. Furthermore, note that by Taylor's theorem
		\[
		|h(w)| \le \frac{\eta^2}{2} \max_{e^{-3\beta} \bD} |h^{(2)}| \stackrel{\eqref{eq:bound_on_derivatives_of_h}}{\le} \frac{\eta^2}{2} C_0 = 8 C_0^3 \gamma^{-2} m^2  |z_0 e^{2\beta}|^{2m} \, .
		\]   
		We can take $m$ sufficiently large (depending on $\beta$) so that we can guarantee $w\in \mathcal{A}\big(e^{-\beta}|z_0|, e^{\beta}|z_0|\big)$, which in turn shows
		\[
		|h(w)| \le C_0 |w e^{3\beta}|^{2m} \le C_0 e^{-\beta m} |w|^m\, ,
		\]
		which completes the proof of the lemma.
	\end{proof}
	We are now ready to prove the main result of this section.
	\begin{proof}[Proof of Theorem~\ref{thm:almost_sure_double_zeros_in_disk}]
		As we already mentioned, it suffices to show that for each $\delta>0$ we have
		\begin{equation*}
			\bP\Big[\exists \alpha\in \mathcal{A}(\delta,1-\delta) \ \text{such that } \ F(\alpha) = F^\prime(\alpha) = 0\Big] = 0 \, .
		\end{equation*}
		Recall that $\mathcal{E}_{K,\beta,\ell} = \mathcal{E}_{K,\beta} \cap \mathcal{F}_\ell$ where the events $\mathcal{E}_{K,\beta}$ and $\mathcal{F}_\ell$ are defined by~\eqref{eq:def_of_E_k_beta} and~\eqref{eq:def_of_F_ell}, respectively. By combining Claim~\ref{claim:F_is_likely_in_E_K_beta} and Claim~\ref{claim:F_and_its_derivatives_are_in_F_ell}, the above equality will follow once we show that for all $K,\ell \ge 1$ and $\beta>0$ small enough we have
		\begin{equation*}
			\bP\Big[F\in \mathcal{E}_{K,\beta,\ell} \ \text{and there exists} \ \alpha\in \mathcal{A}(\delta,1-\delta) \ \text{such that } \ F(\alpha) = F^\prime(\alpha) = 0\Big] = 0 \, .
		\end{equation*}
		To be slightly more general, let us define for $k\ge 0$ the event $\mathcal{G}_k=\mathcal{G}_k(K,\beta,\ell, \delta)$ by
		\begin{equation*}
			\mathcal{G}_k = \Big\{F\in \mathcal{E}_{K,\beta,\ell} \ \text{and there exists} \ \alpha\in \mathcal{A}(\delta,1-\delta) \ \text{s.t.} \ F(\alpha) = F^\prime(\alpha) = \ldots=F^{(k)}(\alpha) = 0 \Big\} \, .
		\end{equation*}
		Denoting by 
		\begin{equation}
			\label{eq:def_of_k_ast}
			k_\ast = \min\big\{k\ge 0 \, : \ \bP\big[\mathcal{G}_{k}\big] = 0 \big\} \, ,
		\end{equation}
		we see that the proof of Theorem~\ref{thm:almost_sure_double_zeros_in_disk} will follow once we show that $k_{\ast}\le 1$. We note that by Claim~\ref{claim:jensen_bound_on_the_number_of_zeros_E_K_beta_ell} we have $k_\ast \le M$, as the total number of zeros for $F\in \mathcal{E}_{K,\beta,\ell}$ is at most $M$. Seeking a contradiction, assume that $k_\ast \ge 2$. We have 
		\begin{align}
			\label{eq:probabaility_of_G_k_ast_minus1_is_positive}
			0 &< \bP\big[\mathcal{G}_{k_\ast - 1}\big] \\ \nonumber &= \bP\big[F\in \mathcal{E}_{K,\beta,\ell} \ \text{and} \ \exists\alpha\in \mathcal{A}(\delta,1-\delta) \ \text{s.t.} \ F(\alpha) = F^\prime(\alpha) = \ldots=F^{(k_\ast - 1)}(\alpha) = 0 \big]  \\ \nonumber &= \bP\big[F\in \mathcal{E}_{K,\beta,\ell} \ \text{and} \ \exists \alpha\in \mathcal{A}(\delta,1-\delta) \ \text{s.t.} \ F(\alpha) = F^\prime(\alpha) = \ldots=F^{(k_\ast - 1)}(\alpha) = 0 \, , \ |F^{(k_\ast)}(\alpha)|>0 \big]  
		\end{align}
		where the last equality is by the definition~\eqref{eq:def_of_k_ast} of $k_\ast$. As the sequence of events
		\[
		\Big\{F\in \mathcal{E}_{K,\beta,\ell} \ \text{and} \ \exists \alpha\in \mathcal{A}(\delta,1-\delta) \ \text{s.t.} \ F(\alpha) = F^\prime(\alpha) = \ldots=F^{(k_\ast - 1)}(\alpha) = 0 \, , \ |F^{(k_\ast)}(\alpha)|\ge \gamma \Big\}
		\] 
		increase as $\gamma \downarrow 0$, we conclude from~\eqref{eq:probabaility_of_G_k_ast_minus1_is_positive} there exists some $\gamma>0$ for which
		\[
		\bP\Big[F\in \mathcal{E}_{K,\beta,\ell} \ \text{and} \ \exists \alpha\in \mathcal{A}(\delta,1-\delta) \ \text{s.t.} \ F(\alpha) = F^\prime(\alpha) = \ldots=F^{(k_\ast - 1)}(\alpha) = 0 \, , \ |F^{(k_\ast)}(\alpha)|\ge \gamma \Big]  > 0 \, .
		\]
		In particular, we have
		\begin{equation}
			\label{eq:by_contradition_positive_probability_for_double_zero}
			\bP\Big[F\in \mathcal{E}_{K,\beta,\ell} \ \text{and} \ \exists \alpha\in \mathcal{A}(\delta,1-\delta) \ \text{s.t.} \ F^{(k_\ast - 2)}(\alpha) = F^{(k_\ast-1)}(\alpha) = 0 \, , \ |F^{(k_\ast)}(\alpha)|\ge\gamma \Big]  > 0
		\end{equation}
		which, as we will now show, is a contradiction for $k_\ast \ge 2$. 
		
		We start by noting that for $F\in \mathcal{E}_{K,\beta,\ell}$, we may decrease $\beta$ to some $\beta^\prime$ and increase $K,\ell$ to some $K^\prime,\ell^\prime$ and get that $F^{(k_\ast-2)} \in \mathcal{E}_{K^\prime,\beta^\prime,\ell^\prime}$. Furthermore as the event in~\eqref{eq:by_contradition_positive_probability_for_double_zero} increases as $\beta$ decreases, we may also assume that $\beta^\prime<\delta<1-\delta<e^{-8\beta^\prime}$. Recalling the decomposition $F(z) = P_m(z) + T_m(z)$ given by~\eqref{eq:taylor_polynomial_of_F_around_origin}, we can apply Lemma~\ref{lemma:double_zeros_and_large_second_derivative_implies_smallness_on_critical_point} with $h = F^{(k_\ast-2)}$ and deduce that
		\begin{align}
			\label{eq:inclusion_of_events_existence_of_critical_point_where_F_is_small}
			\Big\{ &F\in \mathcal{E}_{K,\beta,\ell} \ \text{and} \ \exists \alpha\in \mathcal{A}(\delta,1-\delta) \ \text{s.t.} \ F^{(k_\ast - 2)}(\alpha) = F^{(k_\ast-1)}(\alpha) = 0 \, , \ |F^{(k_\ast)}(\alpha)|\ge\gamma \Big\} \\ \nonumber &\subset \Big\{ F^{(k_\ast-2)} \in \mathcal{E}_{K^\prime,\beta^\prime,\ell^\prime} \Big\} \cap \Big\{ \exists w\in \mathcal{A}(\beta^\prime e^{-\beta^\prime},e^{-7\beta^\prime}) \ :\ P_m^{(k_\ast - 1)}(w) = 0\, , \ |F^{(k_\ast - 2)}(w)| \le Ce^{-\beta^\prime m} |w|^m \Big\}
		\end{align}
		for all $m\ge C(K^\prime,\beta^\prime,\ell^\prime)$ large enough. For $F^{(k_\ast-2)} \in \mathcal{E}_{K^\prime,\beta^\prime,\ell^\prime}$, we set $$\mathcal{W} = \big\{w\in\mathcal{A}(\beta^\prime e^{-\beta^\prime},e^{-7\beta^\prime}) \, :  \, P_m^{(k_\ast - 1)}(w) = 0 \big\}$$ and note that by Claim~\ref{claim:jensen_bound_on_the_number_of_zeros_E_K_beta_ell} we have $|\mathcal{W}| \le M$. Since $P_m$ and $T_m$ are independent, we have
		\begin{align*}
			\bP\bigg[&\Big\{ F^{(k_\ast-2)} \in \mathcal{E}_{K^\prime,\beta^\prime,\ell^\prime} \Big\} \cap \Big\{ \exists w\in \mathcal{A}(\beta^\prime e^{-\beta^\prime},e^{-7\beta^\prime}) \ :\ P_m^{(k_\ast - 1)}(w) = 0\, , \ |F^{(k_\ast - 2)}(w)| \le Ce^{-\beta^\prime m} |w|^m \Big\} \bigg] \\ & \le \bE_{P_m} \bigg[\mathbf{1}_{\{P_m^{(k_\ast-2)} \in \mathcal{E}_{K^\prime,\beta^\prime,\ell^\prime}\}} \bP_{T_m} \Big[\exists w\in \mathcal{W} \, : \ \big|T_m^{(k_\ast-2)}(w) + P_m^{(k_\ast-2)}(w)\big| \le Ce^{-\beta^\prime m} |w|^m  \Big] \, \bigg] \\ & \le \bE_{P_m} \bigg[\mathbf{1}_{\{P_m^{(k_\ast-2)} \in \mathcal{E}_{K^\prime,\beta^\prime,\ell^\prime}\}} \sum_{w\in \mathcal{W}}\bP_{T_m} \Big[\big|T_m^{(k_\ast-2)}(w) + P_m^{(k_\ast-2)}(w)\big| \le Ce^{-\beta^\prime m} |w|^m  \Big] \, \bigg] \\ & \le M \max_{w\in \mathcal{A}(\beta^\prime e^{-\beta^\prime},e^{-7\beta^\prime})} \max_{x\in \bC} \, \bP\Big[\big|T_m^{(k_\ast-2)}(w) - x\big| \le Ce^{-\beta^\prime m} |w|^m \Big] \, .
		\end{align*}
		By Claim~\ref{claim:Levy_concentration_for_derivatives_of_tail}, there exists a constant $C^\prime$ so that
		\[
		\max_{w\in \mathcal{A}(\beta^\prime e^{-\beta^\prime},e^{-7\beta^\prime})} \max_{x\in \bC} \, \bP\Big[\big|T_m^{(k_\ast-2)}(w) - x\big| \le Ce^{-\beta^\prime m} |w|^m \Big] \le \frac{C^\prime}{\sqrt{m}} \, ,
		\]
		which, by combining with~\eqref{eq:inclusion_of_events_existence_of_critical_point_where_F_is_small} implies that
		\[
		\bP\Big[F\in \mathcal{E}_{K,\beta,\ell} \ \text{and} \ \exists \alpha\in \mathcal{A}(\delta,1-\delta) \ \text{s.t.} \ F^{(k_\ast - 2)}(\alpha) = F^{(k_\ast-1)}(\alpha) = 0 \, , \ |F^{(k_\ast)}(\alpha)|\ge\gamma \Big] \lesssim m^{-1/2}
		\]
		for arbitrarily large $m$. This implies that
		\[
		\bP\Big[F\in \mathcal{E}_{K,\beta,\ell} \ \text{and} \ \exists \alpha\in \mathcal{A}(\delta,1-\delta) \ \text{s.t.} \ F^{(k_\ast - 2)}(\alpha) = F^{(k_\ast-1)}(\alpha) = 0 \, , \ |F^{(k_\ast)}(\alpha)|\ge\gamma \Big] = 0
		\]
		which is a contradiction to~\eqref{eq:by_contradition_positive_probability_for_double_zero}. This shows that $k_\ast \le 1$ which completes the proof. 
	\end{proof}
	\section{Reducing Theorem~\ref{thm:poisson_limit_for_close_roots} to a net argument}
	\label{sec:reducing_main_result_to_a_net_argument}
	Most of the roots of our random polynomial $f_n$ lie inside the annulus $\mathcal{A}(1-K/n,1+K/n)$ with $K\ge1$ large (see for instance \cite[Theorem~1]{Ibragimov-Zeitouni}) in the sense that one typically has $1 - \eps$ proportion of the roots in $\mathcal{A}(1 - K/n, 1 + K/n)$ for $K$ large enough as a function of $\eps$. In view of that observation, the proof of our main result Theorem~\ref{thm:poisson_limit_for_close_roots} will naturally be split into two main steps (recall also the outline of the proof from Section~\ref{subsection:outline_of_prood}). First, we will show that roots of $f_n$ that are present outside of the annulus do not contribute to the limiting Poisson process. This is seen by the following proposition.
	\begin{proposition}
		\label{prop:no_close_double_roots_in_the_bulk}
		For each $\eps>0$ and $L\ge1$, there is a $K\ge 1$ so that
		\[
		\limsup_{n\to \infty} \bP\Big[\exists \alpha \in \bD\big(0,1-K/n\big) \, , \ \exists \alpha^\prime\in \bD(\alpha,Ln^{-5/4})\setminus\{\alpha\} \ {\normalfont \text{such that }} \ f_n(\alpha) = f_n(\alpha^\prime) = 0 \Big] \le \eps\, .
		\]
	\end{proposition}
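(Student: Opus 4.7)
The plan is to dyadically peel the inner disk $\bD(0, 1 - K/n)$ into collar annuli near the unit circle plus one fixed-radius bulk disk, and to treat these two regions by entirely different methods. Fix some $r_0 \in (0,1)$ and cover
\[
\bD\bigl(0, 1 - \tfrac{K}{n}\bigr) \subseteq \bD(0, r_0) \cup \bigcup_{j \ge 0} \mathcal{A}_j, \qquad \mathcal{A}_j := \Big\{ 1 - \tfrac{2^{j+1}K}{n} \le |z| \le 1 - \tfrac{2^j K}{n} \Big\},
\]
where $j$ ranges over integers for which $\mathcal{A}_j$ meets $\bD(0, r_0)^c$. On each shell $\mathcal{A}_j$ the effective degree is $d_n(z) \asymp d_j := n/(2^j K)$, and as anticipated in Section~\ref{subsection:outline_of_prood}, $f_n$ locally behaves like a Kac polynomial of degree $d_j$ with local length scale $1/d_j = 2^j K/n$.

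On each shell $\mathcal{A}_j$ I would run the net argument outlined in the introduction but with $d_j$ in place of $n$. Place a $\delta_j$-net of spacing $\delta_j \asymp d_j^{-5/4 - \beta}$ in $\mathcal{A}_j$; its cardinality is $\asymp \mathrm{area}(\mathcal{A}_j) \delta_j^{-2} \asymp d_j^{3/2 + 2\beta}$. After controlling $\|f_n^{(3)}\|$ on $\mathcal{A}_j$ by standard moment estimates (rescaled to the effective degree), the two-root event on $\mathcal{A}_j \cap \bD(z, \delta_j)$ at separation $\le Ln^{-5/4}$ is contained in
\[
\Big\{\bigl|f_n(z)/f_n'(z)\bigr| \lesssim \delta_j\Big\} \cap \Big\{\bigl|2 f_n'(z)/f_n''(z)\bigr| \lesssim Ln^{-5/4}\Big\},
\]
and the effective-degree smooth-point small-ball estimate (the off-circle analogue of~\eqref{eq:intro-probability_of_local_event} recorded in Lemma~\ref{lemma:small_ball_probability_smooth_points_bound_single_point}) bounds the right-hand event by $\lesssim \delta_j^2 (Ln^{-5/4})^4 d_j^6$ at smooth $z$. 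Union-bounding over smooth net points contributes
\[
d_j^{3/2 + 2\beta} \cdot \delta_j^2 (Ln^{-5/4})^4 d_j^6 \;\asymp\; L^4\, (2^j K)^{-5},
\]
which is dyadically summable and totals $\lesssim L^4/K^5 \le \eps/2$ for $K$ large. The contribution of the \emph{non}-smooth net points (an angularly sparse set) is the main technical subtlety on this step: here I would appeal to the cruder but universal small-ball bounds of Section~\ref{sec:small_ball_for_general_points} combined with the sparsity of non-smooth angles, and verify that their contribution again sums to $O(L^4/K^5)$, mirroring the strategy used in the principal annulus $\mathcal{A}_K$.

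For the inner disk $\bD(0, r_0)$ I would instead invoke Theorem~\ref{thm:almost_sure_double_zeros_in_disk}. Under the sub-Gaussian hypothesis, a routine Borel--Cantelli argument shows $f_n \to F$ uniformly on $\overline{\bD(0, r_0)}$ almost surely, where $F(z) = \sum_{k \ge 0} \xi_k z^k$. By Theorem~\ref{thm:almost_sure_double_zeros_in_disk}, $F$ has no double zero in $\bD$ almost surely, so by compactness the (finitely many) zeros of $F$ in $\overline{\bD(0, r_0)}$ are pairwise separated by some a.s.\ positive random distance $\mu > 0$; Hurwitz's theorem plus the uniform convergence transfers this separation, up to $o(1)$, to the zeros of $f_n$, ruling out any two lying within $Ln^{-5/4}$ for all $n$ large. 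Thus
\[
\lim_{n \to \infty} \P\big[\exists\text{ close root pair of } f_n \text{ in } \bD(0, r_0)\big] = 0,
\]
and choosing $K$ large first (for the dyadic bound) and then $n$ large completes the proof.

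The main obstacle throughout is the effective-degree smooth-point small-ball estimate itself: it must be uniform in $d_n(z)$ ranging over $[1, n]$ and non-Gaussian in the coefficients, which in the Littlewood regime requires the off-circle extension of the Konyagin--Schlag / Cook--Nguyen framework carried out in Sections~\ref{sec:small_ball_for_general_points}--\ref{sec:small_ball_bounds_for_smooth_points}. The inner-disk step is, by contrast, purely structural and relies only on Theorem~\ref{thm:almost_sure_double_zeros_in_disk} together with the elementary fact that $f_n \to F$ uniformly on compact subsets of $\bD$.
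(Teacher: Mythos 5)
Your decomposition (dyadic collar annuli keyed to effective degree plus a fixed inner disk), net spacing $d_j^{-5/4-\beta}$, use of Lemma~\ref{lemma:small_ball_probability_smooth_points_bound_single_point} at smooth net points, and the inner-disk step via Theorem~\ref{thm:almost_sure_double_zeros_in_disk} and Hurwitz all match the paper's proof. The smooth-point arithmetic (net size $\asymp d_j^{3/2+2\beta}$ times a per-point bound $\asymp \delta_j^2(Ln^{-5/4})^4 d_j^6$ giving $L^4(2^jK)^{-5}$ per shell, summable to $O(L^4/K^5)$) is also correct in outline.

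There are, however, two linked gaps. First, the containment of the two-root event in
\[
\Big\{\big|f_n(z)/f_n'(z)\big| \lesssim \delta_j\Big\} \cap \Big\{\big|2f_n'(z)/f_n''(z)\big| \lesssim Ln^{-5/4}\Big\}
\]
at a nearby net point $z$ is not unconditional: passing from the root $\alpha$ to the net point $z$ by Taylor expansion requires a \emph{lower} bound on $|f_n'(\alpha)|$, without which the error terms swamp both ratios (this is precisely the hypothesis $|f'(\alpha)| \ge \eps d^{5/4}$ in Claims~\ref{claim:close_roots_implies_ratio_of_derivative_and_second_derivative_to_be_small} and~\ref{claim:root_with_small_derivative_implies_smallness_in_net_point}). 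The paper therefore runs a \emph{separate} net argument first (Lemma~\ref{lemma:with_high_probability_no_roots_with_too_small_derivative}) to bound $\P[\exists \alpha \in \mathcal{A}_j : f_n(\alpha)=0,\ |f_n'(\alpha)| \le \eps_j d_j^{5/4}]$ by $O(\eps_j)$, and only on the complement of this event can the ratio reduction be made. This derivative-floor event contributes $\asymp \eps_j = d_j^{-\Theta(\beta)}$ per shell, which is $K$-independent and sums over shells to a small power of $(1-r_0)$. Second, and relatedly, your claim that the non-smooth contribution also sums to $O(L^4/K^5)$ cannot hold: the net points nearest the real axis (the paper's $\mathsf{N}_{\tt ns}^{(3)}$) are controlled only by the scalar Littlewood--Offord bound $\P[|f_n(z)| \le 1] \lesssim d_j^{-1/2}$, which involves no $L$ or $K$; multiplied by the net count this gives $\asymp d_j^{2\beta-\tau/4}$, again summing to a power of $(1-r_0)$ with no $K$-decay. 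The true per-shell bound is of the form $d_j^{-\Theta(\beta)} + L^4(d_j/n)^5$, so the annular total is $\lesssim (1-r_0)^{\Theta(\beta)} + L^4/K^5$, and one must send $r_0 \to 1$ \emph{in addition to} taking $K$ large. Your proof fixes $r_0$ once and only takes $K$ large, so the argument as written does not close; adding the derivative-floor lemma, correcting the non-smooth estimate, and taking $r_0 \to 1$ last (which your inner-disk step does permit, since it works for every fixed $r_0 < 1$) would repair it.
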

	Next, we will show that the close roots from inside the annulus $\mathcal{A}(1-K/n,1+K/n)$, scaled appropriately, converge in law to a Poisson point process as $n\to \infty$. Before that, we get a technical issue out of the way. Since $f_n$ has real coefficients, we have $f_n(\overline z) = \overline{f_n(z)}$, which in turn implies that the set of roots is symmetric with respect to complex conjugate. To avoid this redundancy, we will only consider the roots of $f_n$ which lie in the upper-half plane $\mathbb{H}$.  For $K\ge 1$ fixed and $U\subset \bR_{\ge 0}$ a finite union of intervals, we set 
	\begin{equation}
		\label{eq:def_of_Omega_k}
		\Omega_K = \mathbb{H} \cap \mathcal{A}(1-K/n,1+K/n)
	\end{equation}
	and 
	\begin{equation}
		\label{eq:def_of_X_n_U}
		X_n(U) = \# \Big\{ \alpha \in \Omega_K \, : \, \exists \alpha^\prime \ \, \text{with} \ \, |\alpha-\alpha^\prime| \in n^{-5/4} U \ \, \text{and } \ f_n(\alpha) = f_n(\alpha^\prime) = 0 \Big\} \, .
	\end{equation} 
	\begin{theorem}
		\label{thm:poisson_convergence_near_unit_circle}
		For all $K\ge 1$ fixed, we have $X_n(U) \xrightarrow{ \ d \ } \text{\normalfont Poisson}(\la_{K,U})$ as $n\to \infty$, with 
		\begin{equation}
			\label{eq:def_of_la_K_U}
			\la_{K,U} = \mathfrak{c}_\ast(K) \int_U t^3 \, {\rm d} t 
		\end{equation}
		and $\mathfrak{c}_\ast(K)>0$ is given by~\eqref{eq:intensity_constant_for_K_annulus} below.
	\end{theorem}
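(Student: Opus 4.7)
The plan is to reduce the count $X_n(U)$ to a sum of indicators of \emph{local events} over a fine net in $\Omega_K$, and then identify its distributional limit by matching factorial moments with those of a Poisson variable of parameter $\la_{K,U}$. Fix $\delta = n^{-5/4-\beta}$ and choose a $\delta$-separated net $\mathcal{N} \subset \Omega_K$ of size $\asymp n^{-1}\delta^{-2}$. Following the outline in Section~\ref{subsection:outline_of_prood}, on a high-probability event controlling $f_n^{(3)}$ on a fixed ball and bounding $|f_n^\prime|$ from below, Taylor expansion and Rouch\'e imply that the existence of two roots $\alpha,\alpha^\prime \in \bD(z,O(\delta))$ with $|\alpha-\alpha^\prime| \in n^{-5/4}U$ is equivalent to a local event $A_z = A_z(U)$ depending only on $(f_n(z),f_n^\prime(z),f_n^{\prime\prime}(z))$. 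Concretely, linearizing $\alpha \approx z - f_n(z)/f_n^\prime(z)$ and then $\alpha^\prime \approx \alpha - 2f_n^\prime(\alpha)/f_n^{\prime\prime}(\alpha)$ (cf.\ Lemma~\ref{lemma:quadratic_approximation_predicts_second_root}) turns the separation $|\alpha-\alpha^\prime|$ into $|2f_n^\prime(z)/f_n^{\prime\prime}(z)|$ up to negligible corrections.

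Next I would split $\mathcal{N} = \mathcal{N}_{\text{sm}} \sqcup \mathcal{N}_{\text{bad}}$, where $\mathcal{N}_{\text{bad}}$ contains net points with bad arithmetic structure (Definition~\ref{def:smooth_angle}). The set $\mathcal{N}_{\text{bad}}$ is sparse, and the weaker all-purpose small-ball bound from Section~\ref{sec:small_ball_for_general_points} is sufficient to show that
\[
\sum_{z \in \mathcal{N}_{\text{bad}}} \bP[A_z] \longrightarrow 0,
\]
where the polynomial saving in $\beta$ kills the contribution. For the first moment on $\mathcal{N}_{\text{sm}}$ I invoke Theorem~\ref{thm:small_ball_comparison_to_Gaussian_for_tuples} (single-point case) to replace the coefficient distribution by the Gaussian one, and then use the explicit computation in Section~\ref{sec:limiting_intensity_for_gaussian} to obtain $\bP[A_z] \sim \rho_n(z)\cdot n^{-5}\delta^2 \int_U t^3\,{\rm d}t$ for an appropriate local density $\rho_n(z)$. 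Summing over $\mathcal{N}_{\text{sm}}$ becomes a Riemann sum on $\Omega_K$, yielding $\bE\big[\sum_{z\in \mathcal{N}_{\text{sm}}}\mathbf{1}_{A_z}\big] \to \la_{K,U}$, with $\mathfrak{c}_\ast(K)$ arising as the integral of $\rho_n(z)$ against $dz$ on the scaled annulus.

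The core input for a Poisson limit (and the step I expect to be the main obstacle) is control of the higher factorial moments. For a $k$-tuple $(z_1,\ldots,z_k)$ of distinct net points, I would partition according to whether any two are within $n^{-1-\tau}$ of each other. Tuples with a close pair are ruled out with $o(1)$ probability by the ``macroscopic separation'' statement Lemma~\ref{lemma:existence_of_two_roots_implies_macroscopic_separation}: once two roots are detected near $z_j$, the random polynomial becomes atypical on a whole mesoscopic disk. For tuples of pairwise $(n^{-1-\tau})$-separated smooth points, the tuple version of Theorem~\ref{thm:small_ball_comparison_to_Gaussian_for_tuples} gives the asymptotic factorization
\[
\bP\Big[\bigcap_{j=1}^{k} A_{z_j}\Big] \sim \prod_{j=1}^{k} \bP[A_{z_j}],
\]
which turns the $k$-th factorial moment of $\sum_{z\in\mathcal{N}_{\text{sm}}}\mathbf{1}_{A_z}$ into a $k$-fold Riemann sum converging to $\la_{K,U}^k$. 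The difficulty here is genuine: the Konyagin--Schlag--Cook--Nguyen machinery in prior work only produces Gaussian comparison for a \emph{single} point on the unit circle, and we need a multidimensional version valid for tuples of smooth points in the full annulus $\mathcal{A}_K$; this is exactly what Sections~\ref{sec:small_ball_bounds_for_smooth_points} and~\ref{sec:gaussian_comparison_for_tuples} are devoted to establishing.

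Once the factorial moments $\bE[(X_n(U))_{(k)}] \to \la_{K,U}^k$ are verified (modulo a vanishing contribution from bad/close tuples and from pairs of roots not captured by $\mathcal{N}_{\text{sm}}$, both controlled above), the method of moments applied to bounded integer-valued random variables yields $X_n(U) \xrightarrow{d} \text{Poisson}(\la_{K,U})$. The positivity $\mathfrak{c}_\ast(K)>0$ follows from the explicit Gaussian computation in Section~\ref{sec:limiting_intensity_for_gaussian}, where the integrand $\rho_n(z)$ is strictly positive on $\Omega_K$.
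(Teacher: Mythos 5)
Your outline follows essentially the same route as the paper (net of $\delta$-separated points, smooth/non-smooth split, Gaussian comparison for spread smooth tuples, method of moments), and you correctly identify the tuple Gaussian comparison off the unit circle as the key technical input. However there are two genuine gaps in the execution.

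First, the reduction from ``$X_n(U)$'' to ``$\sum_{z\in\mathcal{N}_{\mathrm{sm}}}\mathbf{1}_{A_z}$'' is not an asymptotic equality: each pair of close roots $\{\alpha,\alpha'\}$ is detected by \emph{two} net points (the net point whose rectangle contains $\alpha$ and the one whose rectangle contains $\alpha'$), so a naive sum over single net points counts every pair roughly twice. The paper deals with this by working with a sum over \emph{pairs} $\{z,w\}\in\binom{\mathsf{N}_{\mathtt s}}{2}$ with $|z-w|\le\log n/n^{5/4}$ and the indicator $\mathbf{1}_{A_z^\pm\cap A_w^\pm}$ (see the definition of $X_n^\pm$ in \eqref{eq:def_of_X_n_pm}), which encodes the factor $\tfrac12$. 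If you do not do this, your first moment already lands on $2\lambda_{K,U}$ rather than $\lambda_{K,U}$, and the higher moments inherit the same miscount.

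Second, the equivalence ``two close roots near $z$'' $\Leftrightarrow$ ``$A_z$ occurs'' is only true up to boundary effects: a root can sit near $\partial\mathsf{R}_z$, or the predicted second root $z+f_n(z)/f_n'(z)-2f_n'(z)/f_n''(z)$ can land on the boundary of some rectangle, in which case no net point reliably detects the pair. This forces a genuine sandwich $X_n^-(U)\le X_n(U)\le X_n^+(U)$ on a high-probability event, built with the enlarged/shrunk sets $U^\pm$ and $\mathsf{R}_z^\circ\subset\mathsf{R}_z^\sharp\subset\mathsf{R}_z$ (Proposition~\ref{prop:two_sided_inequality_for_X_n}), together with estimates showing that roots near these boundaries are rare (the event $\mathcal{B}_3$ and the event $\Upsilon_z$ in \eqref{eq:def_of_event_upsilon}, controlled in Claim~\ref{claim:eliminating_roots_near_boundaries} and Lemma~\ref{lemma:bound_on_probability_of_tuple_with_upsilon}). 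Your outline asserts an equivalence in a single step; without the two-sided reduction and the accompanying boundary-rarity lemmas, the lower bound on the factorial moments in particular has no justification. You should also be careful that the factorial moment method of moments you invoke does not require boundedness of $X_n(U)$---what is needed, and what the paper proves, is convergence $\bE[(X_n^\pm(U))_m]\to\lambda_{K,U}^m$ for every fixed $m$, together with the deterministic ordering $X_n^-\le X_n\le X_n^+$.
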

	Before we dive into the actual details, we first show how Proposition~\ref{prop:no_close_double_roots_in_the_bulk} combined with Theorem~\ref{thm:poisson_convergence_near_unit_circle} yields the proof of our main result of the paper.
	\begin{proof}[Proof of Theorem~\ref{thm:poisson_limit_for_close_roots}]
		Denoting by $\alpha_1,\ldots,\alpha_m$ the roots of $f_n$ in $\mathbb{H}$, $m\le n$, we consider the random measure
		\[
		\mu_n = \sum_{1\le j<j^\prime \le m} \delta_{n^{5/4} \, |\alpha_j - \alpha_{j^\prime}| } \, .
		\]
		Furthermore, for $K\ge 1$, we set 
		\[
		\mu_n^K = \sum_{1\le j<j^\prime \le m} \delta_{n^{5/4} \, |\alpha_j - \alpha_{j^\prime}| } \mathbf{1}_{\{\alpha_j,\alpha_{j^\prime} \in \Omega_K\}} \, .
		\]
		Note that the polynomial $g_n(z) = z^{n} f_n(z^{-1})$ has the same distribution of $f_n$ and has roots $\{\alpha_j^{-1}\}$. Further, for $\alpha_i, \alpha_j \in \bD$ we have that $|\alpha_i - \alpha_j| < |\alpha_j^{-1} - \alpha_i^{-1}|$.  As such, Proposition \ref{prop:no_close_double_roots_in_the_bulk} also eliminates pairs of close roots in $\bC \setminus \{\Omega_K \cap \bD\}$. Thus, for any $U\subset \bR_{\ge 0}$ a finite union of intervals, Proposition~\ref{prop:no_close_double_roots_in_the_bulk} implies that
		\[
		\limsup_{n\to \infty}\bP\Big[\big|\mu_n(U) - \mu_n^K(U)\big| \ge 1\Big] \le \eps 
		\] 
		for all $K\ge1$ large. Furthermore, Theorem~\ref{thm:poisson_convergence_near_unit_circle} implies that $\mu_n^K(U) = X_n(U)$ converges in distribution to a Poisson random variable with parameter $\la_{K,U}$. Since
		\[
		\lim_{K\to \infty} \la_{K,U} = \lim_{K\to \infty} \mathfrak{c}_\ast(K) \int_{U} t^3 \, {\rm d}t = \mathfrak{c}_\ast \int_{U} t^3 \, {\rm d}t
		\] 
		(see Claim~\ref{claim:integral_properties_of_intensity_mathfrak_F}) where $\mathfrak{c}_\ast>0$ is given by~\eqref{eq:limiting_constant_for_intensity} below, we observe that for all $\ell \in \bZ_{\ge 0}$,
		\begin{align*}
			\lim_{n\to \infty} \bP\big[\mu_n(U) = \ell \big] &= \lim_{K\to \infty} \lim_{n\to \infty}   \bP\big[\mu_n^K(U) = \ell \big] \\ &= \lim_{K\to \infty} e^{-\la_{K,U}} \frac{\la_{K,U}^\ell}{\ell!} = \exp\Big(-\mathfrak{c}_\ast \int_{U} t^3 \, {\rm d}t\Big) \frac{1}{\ell!} \Big(\mathfrak{c}_\ast \int_{U} t^3 \, {\rm d}t\Big)^\ell \, .
		\end{align*}  
		That is, $\mu_n(U)$ converge in distribution to a Poisson random variable with intensity $\mathfrak{c}_\ast \int_{U} t^3 \, {\rm d}t$. Since this is true for all $U\subset \bR_{\ge 0}$ a finite collection of intervals, a standard compactness argument (see, for instance, Kallenberg~\cite[Theorem~4.7]{Kallenberg}) implies that the sequence of point processes $\{\mu_n\}$ converge in the vague topology to a Poisson point process with intensity $\mathfrak{c}_\ast t^3 {\rm d}t$, and we are done. 
	\end{proof}

	\subsection{Making use of the sub-Gaussian assumption}
	
   In this section we show how the sub-Gaussian assumption on the coefficients is used in our argument. First, to ensure that the net argument indeed captures roots which are relatively close, we will also need a control on the maximum of $f_n$ along with its first few derivatives inside the disk. This is made possible by a variant of the classical Salem-Zygmund inequality, see for example~\cite[Chapter~6]{Kahane}.
	\begin{lemma}
		\label{lemma:control_on_maximum_of_polynomial_and_derivatives}
		For $r\in(\tfrac{1}{2},1)$ we write $d= \min\{n,(1-r)^{-1} \}$. There exists $d_0\ge 1$ so that for all $d\ge d_0$, the probability that
		\begin{equation*}
			\max_{z\in r\bD} \big|f_n^{(\ell)}(z)\big| \le d^{1/2+\ell} \log^2 d \, , \qquad \text{for} \ \ell\in\{0,1,2,3\} 
		\end{equation*}
		is at least $1-2\exp\left(- \log^2  d\right)$.
	\end{lemma}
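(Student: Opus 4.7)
The plan is to first establish the $\ell=0$ bound by a Salem--Zygmund-style chaining argument that exploits the sub-Gaussian coefficients, and then deduce the bounds for $\ell\in\{1,2,3\}$ from Cauchy's estimates on a slightly enlarged disk whose effective degree is still comparable to $d$.

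\textbf{Reduction to $\ell=0$.} Pick $\rho\in(r,1)$ with $\rho-r = \tfrac12\min\{1-r,\,1/n\}$, so that $\rho-r \asymp 1/d$ and $d(\rho):=\min\{n,(1-\rho)^{-1}\} \asymp d$. Cauchy's integral formula then gives
\[
\max_{z\in r\bD}\big|f_n^{(\ell)}(z)\big| \;\le\; \frac{\ell!}{(\rho-r)^\ell}\max_{w\in\rho\bD}|f_n(w)| \;\lesssim\; d^\ell\max_{w\in\rho\bD}|f_n(w)|.
\]
Once the $\ell=0$ bound is proved uniformly in the radius, applying it at $\rho$ and substituting above produces the $\ell\ge 1$ bounds, losing only an absolute multiplicative constant that can be absorbed into the $\log^2 d$ factor for $d\ge d_0$. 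Note that the high-probability event is used only once (for $\ell=0$ at radius $\rho$); the higher derivative bounds then follow deterministically.

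\textbf{The case $\ell=0$.} By the maximum modulus principle it suffices to bound $\sup_{\theta\in[0,2\pi)}|f_n(re^{i\theta})|$. For each fixed $\theta$, the variance proxy of the sub-Gaussian sum $f_n(re^{i\theta})=\sum_k \xi_k r^k e^{ik\theta}$ is $\sum_k r^{2k}\asymp d$; splitting real and imaginary parts yields the pointwise tail
\[
\bP\big[|f_n(re^{i\theta})|\ge t\sqrt{d}\big]\le 4\exp(-ct^2).
\]
For increments, $f_n(re^{i\theta})-f_n(re^{i\theta'})$ is sub-Gaussian with variance proxy
\[
\sum_{k=0}^n r^{2k}\big|e^{ik\theta}-e^{ik\theta'}\big|^2 \;\lesssim\; \min\big(s^2 d^3,\,d\big),\qquad s=|\theta-\theta'|.
\]
Dudley's entropy bound applied to this sub-Gaussian process on the circle (with $\varepsilon$-covering number $\lesssim d^{3/2}/\varepsilon$ up to diameter $\sqrt{d}$) gives $\bE\sup_\theta|f_n(re^{i\theta})| \lesssim \sqrt{d\log d}$, and sub-Gaussian concentration of the supremum around its mean yields
\[
\bP\Big[\sup_\theta|f_n(re^{i\theta})|\ge C\sqrt{d\log d}+u\sqrt{d}\Big]\le 2\exp(-cu^2).
\]
Taking $u=\log d$, together with $d\ge d_0$ large enough that $C\sqrt{d\log d}+\sqrt{d}\log d\le \sqrt{d}\log^2 d$, produces the desired probability bound.

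\textbf{Main obstacle.} The subtle point is that the bound must scale with $d$ and not with $n$, even though $f_n$ has $n+1$ potentially large coefficients. This is made possible by the exponential suppression $r^{2k}$, which forces the pointwise variance proxy of $f_n(z)$ on $r\bD$ to be only $\asymp d$; consequently the sub-Gaussian tail at scale $\log d$ already beats any polynomial-in-$d$ covering number, entirely independently of $n$. A second delicate point arises in the Cauchy-reduction step: when $r$ is extremely close to $1-1/n$ one cannot enlarge by a fixed fraction of $1-r$ (since this loses control of the effective degree), and the enlargement $\rho-r\asymp 1/d$ is dictated by the requirement that $d(\rho)\asymp d(r)$.
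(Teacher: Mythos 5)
Your approach differs from the paper's: the paper simply cites the Salem--Zygmund inequality for each $f_n^{(\ell)}$ with $\ell\in\{0,1,2,3\}$ separately, whereas you prove the $\ell=0$ case by Dudley chaining and then try to deduce $\ell\ge 1$ via Cauchy's estimate on an enlarged disk. The $\ell=0$ chaining argument is correct, but the Cauchy reduction as written has a genuine error.

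The problem is your choice $\rho-r=\tfrac12\min\{1-r,1/n\}$. Note that
\[
\rho-r \;=\; \tfrac12\min\{1-r,1/n\} \;=\; \frac{1}{2\max\{(1-r)^{-1},n\}}\,,
\qquad\text{while}\qquad
\frac{1}{d} \;=\; \frac{1}{\min\{n,(1-r)^{-1}\}} \;=\; \max\{1-r,1/n\}\,.
\]
These are comparable only when $(1-r)^{-1}\asymp n$. In the regime $r\le 1-1/n$ (so $d=(1-r)^{-1}$, which may be much smaller than $n$), you get $\rho-r=1/(2n)$, while $1/d=1-r$, so $\rho-r$ can be smaller than $1/d$ by a factor of $n/d$. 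Consequently, the Cauchy estimate produces $\ell!/(\rho-r)^\ell\asymp n^\ell$, not $d^\ell$, and the final bound reads $n^\ell\sqrt d\log^2 d$, which for $\ell\ge1$ and $d\ll n$ far exceeds the claimed $d^{1/2+\ell}\log^2 d$. The correct choice is $\rho-r=c/d=c\max\{1-r,1/n\}$ for a small constant $c$; note this means you should \emph{not} insist on $\rho<1$ (when $d=n$ and $1-r<c/n$ you get $\rho>1$), but this is harmless for a polynomial of degree $n$: for $1\le\rho\le1+c/n$ one still has $\sum_{k=0}^n\rho^{2k}\lesssim n$, so the chaining bound $\max_{|w|=\rho}|f_n(w)|\lesssim\sqrt d\,\log^2 d$ continues to hold. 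The remaining ingredient — that $d(\rho)\asymp d$ for this corrected $\rho$ — is easy to check in both regimes, and then the Cauchy step yields the right power $d^\ell$.

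With that fix the argument goes through and is a valid (if slightly longer) alternative to the paper's direct appeal to Salem--Zygmund. The paper's route avoids the Cauchy step entirely and hence never has to worry about the enlargement radius at all, which is why it is the more economical choice here.
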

	\begin{proof}
		For $\ell \ge 0$, the Salem-Zygmund inequality~\cite[Chapter~6, Theorem~1]{Kahane} applied with $\kappa = e^{\log^2 d}$ implies that
		\[
		\bP\bigg[ \max_{|z| = r} \big|f_n^{(\ell)}(z)\big| \ge C_\ell \Big(\sum_{k=0}^{n} k^{2\ell} r^{2k}\Big)^{1/2} \log d  \bigg] \le 2 \exp\big(-\log^2 d\big) \, ,
		\]
		for some constant $C_\ell>0$. Now, by taking $d_0$ to be large enough so that 
		\[
		C_\ell \Big(\sum_{k=0}^{n} k^{2\ell} r^{2k}\Big)^{1/2} \le d^{1/2+\ell} \log d
		\]
		for all $d\ge d_0$ we get what we wanted.
	\end{proof}
	
	The proofs of both Proposition~\ref{prop:no_close_double_roots_in_the_bulk} and of Theorem~\ref{thm:poisson_convergence_near_unit_circle} are based on net arguments in the respective annulus. To make those net arguments work, we will need to prove effective small-ball probability bounds for samples of $(f_n(z),f_n^\prime(z),f_n^{\prime\prime}(z))$ along net points $z$.  
	The standard tool for bounding small-ball probabilities via the corresponding characteristic function is through the Esseen inequality~\cite{Esseen}: for any random vector $X$ with characteristic function $\varphi_{X}(\xi) = \bE[e^{i\langle X,\xi \rangle}]$ we have
	\begin{equation}
		\label{eq:classical_esseen_inequality}
		\sup_{x\in X} \bP\big[|X-x| \le \delta \big] \lesssim  m\big(B(0,\delta)\big) \int_{B(0,\delta^{-1})} |\varphi_{X}(\xi)|\, {\rm d} m(\xi) \, .
	\end{equation}
	In what follows, a typical application of~\eqref{eq:classical_esseen_inequality} would be with $X= (f_n(z),d^{-1} f_n^\prime(z),d^{-2} f_n^{\prime\prime}(z))$, where $d=d_n(z)=\min\{n,(1-|z|)^{-1}\}$ is the effective degree of the polynomial $f_n$ at $z\in \bD$. In fact, for our specific application we will also need an `exponentially-tilted' version of the Esseen inequality, which takes the form of Proposition~\ref{propsition:exponentially_tilded_esseen_ineq} below and is inspired by~\cite[Lemma~5.2]{Campos-Jenssen-Michelen-Sahasrabudhe}. 
	
	Recall that the random coefficients of $f_n$ are given by $\{\xi_k\}$, and let $\{\tilde\xi_k\}$ and independent copy of $\{\xi_k\}$. Let $\eta_k$ be the random variable which is equal to $0$ with probability $1/2$ and to $\xi_k - \tilde \xi_k$ with probability $1/2$. With that, we can define a new random polynomial
	\begin{equation}
		\label{eq:symmetrized_version_of_f_n}
		g_n(z) = \sum_{k=0}^{n} \eta_k z^k 
	\end{equation}
	which is also a Kac polynomial. Let $\varphi_{\eta_0}$ denote the characteristic function of $\eta_0$.

	\begin{proposition}
		\label{propsition:exponentially_tilded_esseen_ineq}
		There exists a constants $C,c>0$ such that for all $z\in \mathcal{A}(\tfrac{1}{2},2)$ we have
		\begin{multline*}
			\max_{\alpha,\beta\in \bC} \bP \Big[|f_n(z)-\alpha| \le \delta d^{1/2} \, , \, |f_n^\prime(z)-\beta| \le \delta  d^{3/2} \, , \, |f_n^{\prime \prime}(z)| \geq t d^{5/2} \Big] \\ \le C e^{-ct^2} \delta^4 \, \int_{\bR^4} \Bigg(\prod_{k=0}^{n} \varphi_{\eta_0}\Big( d^{-1/2} \Big\langle \xi , \begin{pmatrix}
				z^k \\ \tfrac{k}{d} z^k
			\end{pmatrix} \Big\rangle \Big) \Bigg)^{1/4} e^{-c\delta^2|\xi|^2} \, {\rm d}m(\xi) \, ,
		\end{multline*}
		for all $\delta>0$ and $t\ge 0$, where $d=d_n(z)$ is given by~\eqref{eq:def_of_effective_degree_intro}.
	\end{proposition}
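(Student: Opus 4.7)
The argument extends the classical Esseen inequality (with a quartic symmetrization) to accommodate the sub-Gaussian tail event $\{|f_n''(z)| \ge t d^{5/2}\}$. Throughout write $X = (f_n(z)/d^{1/2}, f_n'(z)/d^{3/2}) \in \bR^4$ and $Y = f_n''(z)/d^{5/2}$. A direct variance computation ($\sum_k k^2(k-1)^2|z|^{2k-4} \asymp d^5$ for $z \in \mathcal{A}(\tfrac12,2)$) together with sub-Gaussianity of the $\xi_k$'s shows that $Y$ has a uniform sub-Gaussian constant, so $\bE[e^{c_0|Y|^2}] = O(1)$ for some absolute $c_0>0$. The tail event is then absorbed via the pointwise bound $\done_{|Y|\ge t} \le e^{-c_0 t^2} e^{c_0|Y|^2}$, producing the $e^{-ct^2}$ factor and reducing the proposition to bounding $\bE\bigl[\done_{|X-x|\le\delta}\,e^{c_0|Y|^2}\bigr]$ by the integral on the right-hand side (without the $e^{-ct^2}$).

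The next step is a Hubbard--Stratonovich linearization: introduce an independent standard complex Gaussian $W$ with $\bE|W|^2 = 1$ and write $e^{c_0|Y|^2} = \bE_W\bigl[e^{2\sqrt{c_0}\,\operatorname{Re}(\bar W Y)}\bigr]$. For each fixed $W$ the new weight factors across the independent $\xi_k$'s as $\prod_k e^{s_k(W)\xi_k}$, where $s_k(W) = 2\sqrt{c_0}\,\operatorname{Re}\!\bigl(\bar W\, k(k{-}1)z^{k-2}\bigr)/d^{5/2}$. Applying the Esseen argument under the tilt (bound $\done_{|X-x|\le\delta}\le e\cdot e^{-|X-x|^2/\delta^2}$ and invoke Fourier inversion), one gets, after the tilt normalizations cancel,
\[
\bE\bigl[\done_{|X-x|\le\delta}\,e^{c_0|Y|^2}\bigr] \lesssim \delta^4 \int_{\bR^4} e^{-c\delta^2|\xi|^2}\, \bE_W\!\Bigl[\prod_{k=0}^n \bigl|\varphi_{\xi_0}\bigl(t_k - is_k(W)\bigr)\bigr|\Bigr] \,dm(\xi),
\]
with $t_k = d^{-1/2}\langle \xi,(z^k,(k/d)z^k)\rangle$ and $\varphi_{\xi_0}$ extended to complex arguments by sub-Gaussianity.

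It remains to bound $\bE_W\!\bigl[\prod_k|\varphi_{\xi_0}(t_k - is_k(W))|\bigr]$ by $\prod_k \varphi_{\eta_0}(t_k)^{1/4}$ (up to $O(1)$). Cauchy--Schwarz in $W$ passes to the $L^2$-norm; the Gaussian $W$-integration can then be carried out explicitly, converting the complex-argument tilts into a single Gaussian factor $e^{c_0|V|^2}$ with $V = (f_n''(z) + \widetilde{f_n}''(z))/d^{5/2}$ built from an independent symmetrization copy $\widetilde{f_n}$. The sub-Gaussianity of the symmetrized $\xi_k$'s ensures that this factor has finite moments, and is absorbed into an $O(1)$ constant. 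What survives is a product of untilted symmetrized characteristic functions $|\varphi_{\xi_0-\tilde\xi_0}(t_k)|^{1/2} = |\varphi_{\xi_0}(t_k)|$, to which we apply the quartic-symmetrization identity
\[
|\varphi_{\xi_0}(t)|^4 = \bE\!\bigl[\cos(t(\xi_0{-}\tilde\xi_0))\bigr]^2 \le \bE\!\bigl[\cos^2(t(\xi_0{-}\tilde\xi_0))\bigr] = \tfrac{1 + \varphi_{\xi_0-\tilde\xi_0}(2t)}{2} = \varphi_{\eta_0}(2t).
\]
This yields $|\varphi_{\xi_0}(t)| \le \varphi_{\eta_0}(2t)^{1/4}$, and the change of variables $\xi \mapsto \xi/2$ absorbs the factor of $2$ into the constant $c$, producing the claimed bound.

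The main technical obstacle is the fourth step: the Gaussian $W$-integration has to be performed \emph{before} bounding in absolute value, so that the oscillation in $t_k$ (carried by the symmetrized variable $\xi_k - \tilde\xi_k$) cleanly decouples from the exponential growth induced by the tilt (carried by the symmetric variable $\xi_k + \tilde\xi_k$, ultimately absorbed into an $O(1)$ sub-Gaussian moment). A naive Cauchy--Schwarz against $e^{c_0|Y|^2}$ performed before integrating $W$ would destroy the oscillation and hence the small-ball strength. The order of operations, following the strategy of~\cite[Lemma~5.2]{Campos-Jenssen-Michelen-Sahasrabudhe}, together with the quartic-symmetrization inequality $|\varphi_{\xi_0}(t)|^4 \le \varphi_{\eta_0}(2t)$, is precisely what produces the characteristic $1/4$ exponent in the proposition.
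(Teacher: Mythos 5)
Your proof takes a genuinely different route from the paper's, and in one respect a cleaner one. The paper keeps the weight $\exp(\tfrac{c}{2}|Y|^2)$ intact, squares the expectation to introduce an independent copy $\tilde\xi$, replaces $\prod_k e\big(t_k(\xi_k-\tilde\xi_k)\big)$ by $\prod_k\cos\big(t_k(\xi_k-\tilde\xi_k)\big)$, and then applies Cauchy--Schwarz. You instead linearize $e^{c_0|Y|^2}$ by Hubbard--Stratonovich, Cauchy--Schwarz in the auxiliary Gaussian $W$, and integrate $W$ out, producing a weight $e^{c_0|V|^2}$ with $V=(f_n''+\tilde f_n'')/d^{5/2}$ a linear statistic of the symmetric combinations $\xi_k+\tilde\xi_k$ alone. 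This buys something concrete. Replacing $\prod_k e^{i\phi_k}$ by $\prod_k\cos\phi_k$ inside an expectation against a weight $F(\xi,\tilde\xi)$ amounts to killing every cross-term containing a $\sin\phi_j$ factor via the transposition $\xi_j\leftrightarrow\tilde\xi_j$; this works if and only if $F$ is invariant under each such \emph{individual} swap. Your $e^{c_0|V|^2}$ has this invariance because each $\xi_k+\tilde\xi_k$ does. The paper's weight $\exp\big(\tfrac{c}{2}(|Y|^2+|\tilde Y|^2)\big)$, by contrast, is invariant only under the \emph{global} swap of all indices at once, which produces $\cos(\sum_k\phi_k)$ rather than $\prod_k\cos\phi_k$. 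The step the paper labels ``symmetry of $\xi_k-\tilde\xi_k$'' is therefore more delicate than the display suggests, and your Gaussian linearization is precisely the device that makes that passage airtight.

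One step in your write-up should be fixed, although your final bound is correct. After the $W$-integration you claim $e^{c_0|V|^2}$ is ``absorbed into an $O(1)$ constant,'' leaving $\prod_k\varphi_{\xi_0-\tilde\xi_0}(t_k)$. That would require $V$ (a function of $\{\xi_k+\tilde\xi_k\}$) to be independent of $\{\xi_k-\tilde\xi_k\}$, which fails for non-Gaussian $\xi_0$. For a single $k$ with $\xi_0$ Rademacher, $(\xi_0+\tilde\xi_0)^2 = 4-(\xi_0-\tilde\xi_0)^2$, and at $t=\pi/2$ one has $\bE[\cos(t(\xi_0-\tilde\xi_0))]=0$ while $\bE\big[e^{c_0(\xi_0+\tilde\xi_0)^2}\cos(t(\xi_0-\tilde\xi_0))\big]>0$, so the claimed domination fails. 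The correct move is a further Cauchy--Schwarz: it yields $\big(\bE[e^{2c_0|V|^2}]\big)^{1/2}\big(\bE[\prod_k\cos^2(t_k(\xi_k-\tilde\xi_k))]\big)^{1/2}=O(1)\prod_k\varphi_{\eta_0}(2t_k)^{1/2}$, using the identity $\bE[\cos^2(t(\xi_0-\tilde\xi_0))]=\varphi_{\eta_0}(2t)$. Combined with the outer Cauchy--Schwarz in $W$ this produces the $1/4$ exponent directly; your separate quartic inequality $|\varphi_{\xi_0}(t)|^4\le\varphi_{\eta_0}(2t)$, though true, is not the mechanism by which the exponent arises here.
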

	\begin{proof}
		For the proof, we will denote by $$X = d^{-1/2}\big(f_n(z),d^{-1} f_n^\prime(z)\big)\in \bC^2\simeq \bR^4$$ and $Y= f_n^{\prime\prime}(z)/d^{5/2} \in \bC$. We will also denote by
		\begin{equation}
			\label{eq:def_of_cs_version_of_char_function}
			\Phi(t) = \bE\Big[\cos^2\big((\xi_0 - \xi_0^\prime)  t\big)\Big]  \, , \qquad t\in \bR\, .
		\end{equation}
		Then the proof will follow once we prove the bound
		\begin{equation}
			\label{eq:proof_of_esseen_ineq_what_we_want}
			\bP\Big[ |X - x| \le \delta  \, , |Y|^2 \ge t^2 \Big] \lesssim e^{-ct^2} \delta^4 \int_{\bR^4}\Bigg(\prod_{k=0}^{n} \Phi\Big( d^{-1/2} \Big\langle \xi , \begin{pmatrix}
				z^k \\ \tfrac{k}{d} z^k
			\end{pmatrix} \Big\rangle \Big) \Bigg)^{1/4} e^{-\delta^2 |\xi|^2/2} \, {\rm d}m(\xi)
		\end{equation}
		uniformly for $x\in \bR^4$. Indeed, by the elementary inequality $\cos^2(x)\le \frac{1}{2} + \frac{1}{2} \cos(2x)$ we have that
		\begin{equation*}
			\Phi(t) \le \varphi_{\eta_0} (2t) \, ,
		\end{equation*}
		and by plugging this into~\eqref{eq:proof_of_esseen_ineq_what_we_want} and changing variables $2\xi \mapsto \xi$ we get the inequality we are after.
		
		To prove~\eqref{eq:proof_of_esseen_ineq_what_we_want}, we first recall that the random coefficients $\{\xi_k\}$ are assumed to be sub-Gaussian, and hence $Y$ is a sum of independent sub-Gaussian random variables. By~\cite[Propsotion~2.6.1]{Vershynin} there exists a constant $c>0$ so that
		\begin{equation}
			\label{eq:second_derivative_is_subGaussian}
			\bE\Big[\exp\big(c|Y|^2\big)\Big] \le 4
		\end{equation}
		for all $z\in \mathcal{A}(\tfrac{1}{2},2)$. Hence, for all $\la>0$ we can apply Markov's inequality and get that
		\begin{align}
			\label{eq:esseen_ineq_proof_apply_markov}
			\nonumber 
			\bP\Big[ |X - x| \le \delta \, , |Y|^2 \ge t^2 \Big] & = \bP\Big[ e^{-\la^2|X - x|^2/2} \ge e^{-\la^2\delta^2/2} \, ,\ e^{\tfrac{c}{2}|Y|^2} \ge e^{ct^2/2} \Big] \\ & \le e^{\la^2\delta^2/2 - ct^2/2} \bE\Big[e^{-\la^2|X - x|^2/2}e^{\tfrac{c}{2}|Y|^2}\Big] \, .
		\end{align}
		Fourier inversion implies that for all $z\in \bR^4$ we have
		\[
		e^{-|z|^2/2} = \frac{1}{4\pi^2} \int_{\bR^4} e^{-|y|^2/2} e\big(\langle y, z\rangle\big) \, {\rm d}m(y) \, ,
		\]
		and by plugging in $z = \la (X-x)$ and applying Fubini we get that
		\begin{align*}
			\bE\Big[e^{-\la^2|X - x|^2/2}e^{\tfrac{c}{2}|Y|^2}\Big] &= C \, \bE \bigg[\exp(\tfrac{c}{2}|Y|^2) \int_{\bR^4} e^{-|y|^2/2} e\big(\langle \la y, X-x \rangle\big) {\rm d} m(y)\bigg] \\ & \lesssim \int_{\bR^4} \Big|\bE\Big[\exp(\tfrac{c}{2}|Y|^2) \,  e\big(\la \langle  y, X \rangle\big)\Big] \Big| \, e^{-|y|^2/2} \, {\rm d} m(y) \, .
		\end{align*}
		Plugging this into~\eqref{eq:esseen_ineq_proof_apply_markov}, we can apply the change of variables $\xi = \la y$ along with the choice $\la = \delta^{-1}$ and get that
		\begin{equation}
			\label{eq:proof_of_esseen_ineq_after_chagne_of_variables}
			\bP\Big[ |X - x| \le \delta \, , |Y|^2 \ge t^2 \Big] \lesssim \delta^4 \int_{\bR^4} \Big|\bE\Big[\exp(\tfrac{c}{2}|Y|^2) \,  e\big(\langle  \xi , X \rangle\big)\Big] \Big| \, e^{-\delta^2|\xi|^2/2} \, {\rm d} m(\xi) \, .
		\end{equation}
		It remains to bound the expectation inside the integral. Recall that $\{\tilde{\xi}_k\}$ are independent copies of $\{\xi_k\}$, and let $\tilde{X},\tilde{Y}$ be the corresponding independent copies of the random vectors $X,Y$. We have
		\begin{align*}
			\Big|\bE\Big[\exp(\tfrac{c}{2}|Y|^2) \,  e\big(\langle  \xi , X \rangle\big)\Big] \Big|^2 &= \bE\Big[\exp(\tfrac{c}{2}|Y|^2) \,  e\big(\langle  \xi , X \rangle\big)\Big] \cdot \bE\Big[\exp(\tfrac{c}{2}|\tilde Y|^2) \,  e\big(-\langle  \xi , \tilde X \rangle\big)\Big] \\ &= \bE\Big[\exp(\tfrac{c}{2}|Y|^2 + \tfrac{c}{2}|\tilde Y|^2) \,  e\big(\langle  \xi , X - \tilde X \rangle\big)\Big] \\  (\text{definition of $X$}) \qquad &= \bE\Bigg[\exp(\tfrac{c}{2}|Y|^2 + \tfrac{c}{2}|\tilde Y|^2) \,  \prod_{k=0}^{n} e\bigg( d^{-1/2} (\xi_k - \tilde \xi_k) \Big\langle  \xi , \begin{pmatrix}
				z^k \\ \tfrac{k}{d} z^k
			\end{pmatrix} \Big\rangle\bigg)\Bigg] 
			\\ (\text{symmetry of $\xi_k - \tilde \xi_k$}) \qquad &= \bE\Bigg[\exp(\tfrac{c}{2}|Y|^2 + \tfrac{c}{2}|\tilde Y|^2) \,  \prod_{k=0}^{n} \cos \bigg( d^{-1/2} (\xi_k - \tilde \xi_k) \Big\langle  \xi , \begin{pmatrix}
				z^k \\ \tfrac{k}{d} z^k
			\end{pmatrix} \Big\rangle\bigg)\Bigg]  \, .
		\end{align*}		
		Now, combining the Cauchy-Schwarz inequality with~\eqref{eq:second_derivative_is_subGaussian}, we see that
		\begin{align*}
			\Big|\bE\Big[\exp(\tfrac{c}{2}|Y|^2) \,  e\big(\langle  \xi , X \rangle\big)\Big] \Big|^2 & \lesssim \Bigg(\bE\Big[\prod_{k=0}^{n} \cos^2 \bigg( d^{-1/2} (\xi_k - \tilde \xi_k) \Big\langle  \xi , \begin{pmatrix}
				z^k \\ \tfrac{k}{d} z^k
			\end{pmatrix} \Big\rangle\bigg)\Big] \Bigg)^{1/2} \\ &= \Bigg( \prod_{k=0}^{n} \Phi\bigg( d^{-1/2} \Big\langle  \xi , \begin{pmatrix}
				z^k \\ \tfrac{k}{d} z^k
			\end{pmatrix} \Big\rangle\bigg) \Bigg)^{1/2}
		\end{align*}
		where the equality is due to the independence of the sequence $\{\xi_k-\tilde \xi_k \}$. Plugging this into~\eqref{eq:proof_of_esseen_ineq_after_chagne_of_variables} yields~\eqref{eq:proof_of_esseen_ineq_what_we_want}, and we are done.
	\end{proof}

	\subsection{``Worst case'' small-ball probability bounds}
	\label{sec:small_ball_probability_general_statements}
	We will require three small-ball probability bounds for the the tuple $(f_n,f_n')$ evaluated at arbitrary points in $\bD$.  We think of these as ``worst-case'' bounds since they require no arithmetic structure on $z$.  The three results stated in this subsection vary depending on how close $z$ is to the real axis.
	
	Recall that $d = d_n(z) = \min\{n,(1-|z|)^{-1}\}$ is the effective degree of the polynomial $f_n$ at a point $z\in \bD$.  We first state a rather weak small-ball probability bound, which is valid for all points $z\in \bD$ which are separated from the real axis.
	\begin{claim}
		\label{claim:small_ball_away_from_real_axis}
		There exists $C,c>0$ such that for all $z=re^{i \theta}$ with $\theta \in [d^{-1},\pi-d^{-1}]$ we have
		$$\max_{\alpha,\beta\in \bC} \bP \Big[|f_n(z)-\alpha| \le a d^{1/2} \, , \, |f_n^\prime(z)-\beta| \le b d^{3/2}  \Big] \leq C  (ab)^2$$
		for all $a,b\ge (\log d)/d^{1/2}$.
	\end{claim}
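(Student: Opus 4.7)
The plan is to apply a variant of the Esseen-type machinery developed in Proposition~\ref{propsition:exponentially_tilded_esseen_ineq}, adapted to two distinct scales $a,b$ and without the $f_n^{\prime\prime}$-restriction. Set $X=(d^{-1/2}f_n(z),\,d^{-3/2}f_n^\prime(z))\in\bR^4$; the claim is equivalent to showing
\[
\sup_{x_0\in\bR^4}\bP\big[|X_1-x_0^{(1)}|\le a,\ |X_2-x_0^{(2)}|\le b\big]\lesssim (ab)^2.
\]
Applying Markov's inequality to the Gaussian kernel $\exp(-\tfrac{1}{2}a^{-2}|X_1-x_0^{(1)}|^2-\tfrac{1}{2}b^{-2}|X_2-x_0^{(2)}|^2)$, inserting its Fourier representation, and symmetrizing via the random polynomial $g_n$ from~\eqref{eq:symmetrized_version_of_f_n} as in the proof of Proposition~\ref{propsition:exponentially_tilded_esseen_ineq} (simplified by dropping the Cauchy--Schwarz step that accommodates $f_n^{\prime\prime}$), one arrives at
\[
\sup_{\alpha,\beta}\bP \lesssim (ab)^2\int_{\bR^4}\Bigg(\prod_{k=0}^n\Phi\!\big(d^{-1/2}\,\mathrm{Re}\langle\xi,(z^k,\tfrac{k}{d}z^k)\rangle\big)\Bigg)^{\!1/2}\exp\!\big(-c\,a^2|\xi_1|^2-c\,b^2|\xi_2|^2\big)\,dm(\xi),
\]
with $\Phi$ as in~\eqref{eq:def_of_cs_version_of_char_function}. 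It suffices to bound this integral by an absolute constant.

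Using $\Phi(t)\le\exp(-c_0\min(t^2,1))$, a standard consequence of non-degeneracy of $\xi_0$, the integrand is at most $\exp(-c\,\Lambda(\xi)-c\,a^2|\xi_1|^2-c\,b^2|\xi_2|^2)$ where
\[
\Lambda(\xi):=\sum_{k=0}^n\min\!\Big(1,\,d^{-1}\big|\mathrm{Re}\langle\xi,(z^k,\tfrac{k}{d}z^k)\rangle\big|^2\Big).
\]
Since each summand of $\Lambda$ is non-decreasing along rays from the origin in $\bR^4$, it suffices to establish the coercivity $\Lambda(\xi)\gtrsim|\xi|^2$ on $\{|\xi|\le d^{1/2}\}$; for larger $|\xi|$, rescaling along the ray to the sphere $|\xi|=d^{1/2}$ then yields $\Lambda(\xi)\gtrsim d$. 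Given this, the integral on $\{|\xi|\le d^{1/2}\}$ is bounded by $\int_{\bR^4}e^{-c|\xi|^2}\,dm(\xi)=O(1)$, while on $\{|\xi|\ge d^{1/2}\}$ the bound $e^{-cd}$ combined with the Gaussian weight gives a contribution $\lesssim e^{-cd}/(ab)^2\ll 1$ under the hypothesis $a,b\ge (\log d)/d^{1/2}$.

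The main obstacle is the coercivity of $\Lambda$ on $\{|\xi|\le d^{1/2}\}$, where $\Lambda(\xi)=d^{-1}Q(\xi)$ for the real quadratic form $Q(\xi)=\sum_{k=0}^n r^{2k}\big(\mathrm{Re}(\bar\alpha_k e^{ik\theta})\big)^2$ with $\alpha_k:=\xi_1+(k/d)\xi_2\in\bC$. Expanding,
\[
Q(\xi)=\tfrac{1}{2}\sum_{k=0}^n r^{2k}|\alpha_k|^2+\tfrac{1}{2}\,\mathrm{Re}\sum_{k=0}^n r^{2k}\alpha_k^2 e^{-2ik\theta}.
\]
The diagonal part is a Hermitian form in $(\xi_1,\xi_2)\in\bC^2$ whose Gram matrix $\big(\sum r^{2k}(k/d)^{i+j}\big)_{i,j\in\{0,1\}}$ has entries of order $d$ and positive determinant of order $d^2$ by strict Cauchy--Schwarz, so that $\sum r^{2k}|\alpha_k|^2\gtrsim d\,|\xi|^2$. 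The oscillatory part is controlled by the geometric-sum estimate $\big|\sum_{k=0}^n r^{2k}(k/d)^j e^{-2ik\theta}\big|\lesssim 1/|1-r^2 e^{-2i\theta}|$, and the key quantitative gain is
\[
\frac{|1-r^2|^2}{|1-r^2 e^{-2i\theta}|^2}\le \frac{1}{1+c(\theta d)^2}<1\qquad\text{whenever }\theta\ge d^{-1},
\]
which bounds the oscillatory contribution strictly below the diagonal one uniformly in the regime of interest. Combining the two yields $Q(\xi)\gtrsim d\,|\xi|^2$, hence $\Lambda(\xi)\gtrsim|\xi|^2$. This coercivity, essentially an analysis of the $2\times 2$ matrices built from oscillatory moment sums, is the only substantive ingredient beyond the symmetrization machinery already developed for Proposition~\ref{propsition:exponentially_tilded_esseen_ineq}.
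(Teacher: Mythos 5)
Your overall scheme parallels the paper's: a tilted‐Esseen reduction (essentially Proposition~\ref{propsition:exponentially_tilded_esseen_ineq} without the $f_n''$ piece), followed by a split of the Fourier integral into $|\xi|\lesssim d^{1/2}$ and larger $|\xi|$. Replacing the paper's Fourier‐side CLT (Lemma~\ref{lemma:CLT_on_fourier_side_bounded_third_moment}) with the pointwise bound $\Phi(t)\le e^{-c\min(t^2,1)}$ and reducing to coercivity of $\Lambda$ is a legitimate alternative route. As in the paper, the whole thing hinges on one ingredient: a uniform lower bound on the smallest eigenvalue of the $4\times4$ covariance $V_n(z)$, which is exactly the paper's Lemma~\ref{lemma:singularity_properties_of_covariance_pointwise} and is exactly your coercivity claim $\Lambda(\xi)\gtrsim|\xi|^2$.

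This is where the proposal has a genuine gap. You write $Q(\xi)=\tfrac12\xi^*M\xi+\tfrac12\,\mathrm{Re}(\xi^TN\xi)$ with the Hermitian Gram matrix $M=(a_{i+j})$ and the complex symmetric ``relation'' matrix $N=(S_{i+j})$, and you argue that because $|S_j|\lesssim 1/|1-r^2e^{-2i\theta}|$ is strictly smaller than the diagonal scale $a_j\asymp d$ once $\theta d\ge1$, the oscillatory part is dominated and $Q\gtrsim d|\xi|^2$ follows. But an entrywise (or operator‐norm) comparison between $N$ and $M$ does not yield positivity of the full real quadratic form: the smallest eigenvalue of $M$ can be far below the size of its entries, and it is against $\lambda_{\min}(M)$, not against $a_j$, that $\|N\|_{\mathrm{op}}$ must be compared. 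Concretely, at $r=1$, $\theta=d^{-1}$, $d=n$: one finds $M/n\to\binom{1\ \ 1/2}{1/2\ \ 1/3}$ with $\lambda_{\min}(M)\approx 0.067\,n$, while $|S_0|/n\to\sin(1)\approx 0.84$, so $\lambda_{\min}(M)-\|N\|_{\mathrm{op}}<0$ and the naive comparison proves nothing. (Strict entrywise domination $|N_{ij}|\le(1-\eps)M_{ij}$ also does not imply positive‐definiteness of $\binom{M\ \ N}{\bar N\ \ \bar M}$ in general, as simple $2\times 2$ examples show.) The coercivity \emph{is} true here — with a small constant — but it requires the structured argument in Lemma~\ref{lemma:singularity_properties_of_covariance_pointwise}, where the authors apply twisted discrete differences along arithmetic progressions precisely to avoid this lossy $M$‐vs‐$N$ comparison. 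Without re‐deriving that lemma (or invoking it), the coercivity step in your proof is unjustified and the argument does not close.

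One secondary issue: the identity $\Lambda(\xi)=d^{-1}Q(\xi)$ only holds when every term satisfies $d^{-1}r^{2k}|\mathrm{Re}(\bar\alpha_k e^{ik\theta})|^2\le1$, which is guaranteed only on $|\xi|\le\eps_0 d^{1/2}$ for $\eps_0$ small, not on all of $|\xi|\le d^{1/2}$. This is easily fixed by adjusting the radius of the inner region and rescaling the ray argument accordingly, but should be stated.
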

	The proof of Claim~\ref{claim:small_ball_away_from_real_axis} is not difficult, and is based on a Gaussian approximation on the Fourier-side. We will also need a quantitative variant of the Claim~\ref{claim:small_ball_away_from_real_axis} as the evaluation point $z$ approaches the real axis, which is provided by the next claim.
	\begin{claim}
		\label{claim:small_ball_points_near_the_real_axis}
		There exists $C,c>0$ such that for all $z=re^{i\theta}$ with $\theta \in [d^{-1 - \tau/4},d^{-1}]$ we have
		$$\max_{\alpha,\beta\in \bC} \bP \Big[|f_n(z)-\alpha| \le a d^{1/2} \, , \, |f_n^\prime(z)-\beta| \le b d^{3/2}\,  \Big] \leq C d^{2\tau} (ab)^2$$
		for all $a,b\ge d^{-1/2 + 3\tau}$.
	\end{claim}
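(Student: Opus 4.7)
The plan is to follow the same template as for Claim~\ref{claim:small_ball_away_from_real_axis}: apply an anisotropic Esseen inequality to the $\bR^4$-valued vector $(f_n(z), f_n'(z))$ with box-sides $ad^{1/2}$ (for the two coordinates of $f_n$) and $bd^{3/2}$ (for those of $f_n'$), then bound the resulting Fourier integral over the dual box $Q$ of volume $(a^2 b^2 d^4)^{-1}$. Using the sub-Gaussian assumption $|\varphi_{\xi_0}(t)| \le e^{-ct^2}$ for $|t|$ in a fixed neighborhood of the origin, together with the fact that the lower bounds $a, b \ge d^{-1/2 + 3\tau}$ keep each argument $\langle \xi, w_k \rangle$ inside that neighborhood for $\xi \in Q$ (where $w_k = (z^k, k z^k) \in \bR^4$), one obtains
\[
|\varphi_{(f_n(z), f_n'(z))}(\xi)| \;\le\; \exp(-c\, \xi^T \Sigma\, \xi), \qquad \Sigma \;=\; \sum_{k=0}^n w_k w_k^T .
\]
The problem thus reduces to lower bounding $\det \Sigma$ and carrying out the resulting Gaussian integral.

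The heart of the argument is to analyze $\det \Sigma$ as $z = re^{i\theta}$ approaches the real axis. Each $w_k = r^k(\cos k\theta, \sin k\theta, k\cos k\theta, k\sin k\theta)$ is, to leading order in $\theta$, equal to $r^k(1, k\theta, k, k^2\theta)$; pulling out a factor of $\theta$ from the sine-coordinates leaves $(1, k, k, k^2)$, which spans only a three-dimensional subspace (the second and third entries coincide). The residual direction is only recovered at second order in $(k\theta)^2$, via the discrepancy $\sin(k\theta) - k\theta\cos(k\theta) = \tfrac{1}{3}(k\theta)^3 + O((k\theta)^5)$. Applying the Cauchy-Binet formula and carefully expanding each $4\times 4$ minor, one finds that in the regime $k_i \lesssim 1/\theta$,
\[
\det[w_{k_1}, w_{k_2}, w_{k_3}, w_{k_4}] \;=\; \tfrac{1}{3}\,\theta^4 \prod_{1 \le i < j \le 4}(k_j - k_i) \;+\; O(\theta^6) .
\]
Summing the squared minors weighted by $r^{2\sum k_i}$ over $k_1 < \cdots < k_4$ with $k_i \lesssim d$ (which is a Selberg-type sum) yields $\det \Sigma \asymp \theta^8\, d^{16}$.

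Combining, the Gaussian integral $\int_{\bR^4} \exp(-c\,\xi^T \Sigma\, \xi)\, dm(\xi) \lesssim (\det \Sigma)^{-1/2} \asymp \theta^{-4} d^{-8}$ multiplied by the Esseen prefactor $a^2 b^2 d^4$ gives a small-ball bound of order $a^2 b^2 /(\theta^4 d^4)$; for $\theta \ge d^{-1 - \tau/4}$ this is $\lesssim a^2 b^2 d^\tau$, comfortably within the claimed bound $Cd^{2\tau}(ab)^2$. The main obstacle is the Cauchy-Binet bookkeeping: the leading $O(\theta^0)$ and $O(\theta^2)$ contributions to each $4\times 4$ minor must be shown to cancel identically (corresponding to the rank-3 degeneracy of $(1, k, k, k^2)$ and its resolution via the cubic correction in $\sin - \theta k \cos$), so that the effective vanishing order of the minors is indeed $\theta^4$ rather than $\theta^2$. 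A secondary technical point is verifying that integrating $|\varphi|$ over $Q$ rather than over all of $\bR^4$ does not materially cut off the Gaussian mass, which is where the lower bounds $a, b \ge d^{-1/2 + 3\tau}$ play their essential role.
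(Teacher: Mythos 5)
Your approach is correct in spirit and lands on the right bound, but it takes a genuinely different technical route from the paper, so it is worth comparing carefully.

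The paper's proof of this claim starts from the exponentially-tilted Esseen bound (Proposition~\ref{propsition:exponentially_tilded_esseen_ineq}) with $t=0$, splits the Fourier integral at $|\xi| = d^{1/2-2\tau}$, kills the tail using the built-in Gaussian weight $e^{-c\delta^2|\xi|^2}$, and controls the bulk via the characteristic-function CLT comparison (Lemma~\ref{lemma:CLT_on_fourier_side_bounded_third_moment}) combined with the smallest-eigenvalue lower bound $\lambda \gtrsim (d\theta)^7$ from Lemma~\ref{lemma:singularity_properties_of_covariance_pointwise}; the final factor comes out as $(\det V)^{-1/2} \le \lambda^{-2}$. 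You instead invoke the plain anisotropic Esseen inequality with the dual box $Q$, use the sub-Gaussian pointwise bound $|\varphi_{\xi_0}(t)| \le e^{-ct^2}$ (valid since the dual-box restriction together with $a,b \ge d^{-1/2+3\tau}$ forces every argument $\langle\xi, w_k\rangle$ to be $O(d^{-3\tau})$), and then lower-bound $\det\Sigma$ directly by Cauchy--Binet. This route avoids Lemma~\ref{lemma:CLT_on_fourier_side_bounded_third_moment} entirely (you never need the third-moment comparison to a Gaussian characteristic function, only the one-dimensional decay of $\varphi_{\xi_0}$ near the origin), and it also bypasses Lemma~\ref{lemma:singularity_properties_of_covariance_pointwise}. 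What it buys is a sharper bound: you get $\det V \gtrsim (d\theta)^8$, whence $(\det V)^{-1/2} \lesssim (d\theta)^{-4} \lesssim d^{\tau}$, with substantial room to spare against the claimed $d^{2\tau}$; the paper's $\lambda^{-2}$ route gives only $(d\theta)^{-14}$. What it costs is the Cauchy--Binet bookkeeping you flag: one has to verify that the rank-3 degeneracy of $(1,k,k,k^2)$ really is resolved only at order $\theta^3$ in $\sin(k\theta)-k\theta\cos(k\theta)$ and that no unexpected cancellation kills the leading $\theta^4\prod(k_j-k_i)$ term after summing the squared minors. This does work out --- the column operation replacing the $\sin$-column by the combination $\sin(k\theta)-k\theta\cos(k\theta)$ produces exactly the $\theta^4$ Vandermonde, and since the Cauchy--Binet sum is over squared (hence non-negative) minors, well-spread tuples with $k_i \asymp d$ already give the lower bound $\det\Sigma \gtrsim \theta^8 d^{16}$ --- but this is genuine work that the paper avoids by piggy-backing on the already-proved Lemma~\ref{lemma:singularity_properties_of_covariance_pointwise}.

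Two small remarks on your write-up. First, your ``secondary technical point'' about integrating over $Q$ instead of $\bR^4$ cutting off Gaussian mass is a red herring for an upper bound: $\int_Q \le \int_{\bR^4}$ is all you need, and the condition $a,b\ge d^{-1/2+3\tau}$ is already doing its real job in guaranteeing the validity of the bound $|\varphi_{\xi_0}(\langle\xi,w_k\rangle)|\le e^{-c\langle\xi,w_k\rangle^2}$ on $Q$. Second, note that when $k\theta\asymp 1$ (the relevant regime near $\theta\asymp d^{-1}$) the Taylor expansion of $\sin(k\theta)-k\theta\cos(k\theta)$ has relative error $O((k\theta)^2)=O(1)$; this is harmless because $\sin x - x\cos x \asymp x^3$ uniformly on $[0,1]$, but it does mean the ``$+O(\theta^6)$'' accounting should be stated as a two-sided comparability rather than a strict expansion in $\theta$.
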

	The proofs of Claim~\ref{claim:small_ball_away_from_real_axis} and Claim~\ref{claim:small_ball_points_near_the_real_axis} 
    are given in Section~\ref{sec:small_ball_for_general_points}. Finally, we will also need a small-ball bound for the polynomial evaluated on points very close to the real axis. For this, the classical Littlewood-Offord bound (Lemma \ref{lemma:Levy-Kolmogorov-Rogozin}) is sufficient:
	\begin{claim}
		\label{claim:small_ball_real_points}
		There exists $C>0$ so that for all $z=re^{i\theta}\in \bD$ with $\theta\in [0,d^{-1-\tau/4}]$ we have
		$$\max_{\alpha,\beta\in \bC} \bP \Big[|f_n(z)-\alpha| \le a d^{1/2}\Big] \leq C  \big(a + d^{-1/2} \big)$$
		for all $a>0$.	
	\end{claim}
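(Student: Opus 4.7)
Claim~\ref{claim:small_ball_real_points} follows from a direct application of the L\'evy--Kolmogorov--Rogozin inequality (Lemma~\ref{lemma:Levy-Kolmogorov-Rogozin}) to the low-degree part of $f_n$, after a trivial rescaling. The angular restriction $\theta \in [0, d^{-1-\tau/4}]$ plays no role in the argument itself; it only reflects that for such $z$ the powers $z^k$ remain nearly real for $k \leq d$, so one loses the two-dimensional complex anti-concentration exploited in Claims~\ref{claim:small_ball_away_from_real_axis} and~\ref{claim:small_ball_points_near_the_real_axis}, and is left with the one-dimensional rate $a + d^{-1/2}$.

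The plan proceeds as follows. Set $m = \lfloor d \rfloor$ and decompose $f_n(z) = T(z) + R(z)$ with $T(z) = \sum_{k=0}^{m} \xi_k z^k$ and $R(z) = \sum_{k > m} \xi_k z^k$. Conditioning on $\{\xi_k\}_{k > m}$ and using translation invariance of the supremum in $\alpha$, it suffices to bound $\sup_\alpha \bP\bigl[|T(z) - \alpha| \le a d^{1/2}\bigr]$. Next I would set $\rho = r^m$ when $r \le 1$ and $\rho = 1$ otherwise. The constraint $m \le (1-r)^{-1}$ combined with the elementary inequality $\log r \ge -(1-r)/r$ yields $\rho \gtrsim 1$, so the coefficients $c_k = z^k/\rho$ satisfy $|c_k| \ge 1$ for $0 \le k \le m$. (The regime of very small $d$, which forces $r$ small, is trivial because the claim's right-hand side is then bounded below by a positive constant upon enlarging $C$.)

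Finally, writing $T(z) - \alpha = \rho\bigl(\sum_{k=0}^{m} \xi_k c_k - \alpha/\rho\bigr)$ and applying Lemma~\ref{lemma:Levy-Kolmogorov-Rogozin} with $n = m+1 \asymp d$ and threshold $R = \max(a d^{1/2}/\rho,\, 1)$ produces
\[
\sup_{\alpha \in \bC} \bP\bigl[|T(z) - \alpha| \le a d^{1/2}\bigr] \le \frac{CR}{\sqrt{m+1}} \lesssim a + d^{-1/2}.
\]
The only bookkeeping point is the replacement of $R$ by $1$ in the regime $a d^{1/2}/\rho < 1$, which is precisely what produces the additive $d^{-1/2}$ term. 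There is no substantive obstacle here—all the genuine difficulty in Section~\ref{sec:small_ball_for_general_points} is concentrated in the stronger bounds of Claims~\ref{claim:small_ball_away_from_real_axis} and~\ref{claim:small_ball_points_near_the_real_axis}, where one must go beyond L\'evy--Kolmogorov--Rogozin and exploit the complex structure of $z^k$ via an Esseen-type estimate.
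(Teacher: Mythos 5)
Your proof is correct and is exactly the argument the paper intends: the paper offers no explicit proof but states that Lemma~\ref{lemma:Levy-Kolmogorov-Rogozin} suffices, and your decomposition into the first $\lfloor d\rfloor + 1$ terms, the normalization by $\rho = r^{\lfloor d \rfloor} \gtrsim 1$ so that $|c_k| \geq 1$, and the threshold $R = \max(a d^{1/2}/\rho, 1)$ fill in precisely the routine bookkeeping required. Your observation that the angular restriction $\theta \in [0, d^{-1-\tau/4}]$ plays no role in the bound itself (it only marks where the stronger Claims~\ref{claim:small_ball_away_from_real_axis} and~\ref{claim:small_ball_points_near_the_real_axis} are unavailable) is also accurate.
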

	
	\subsection{A small-ball probability bound for smooth points}
	The small-ball probability bounds stated above are relatively weak, in the sense that the parameters $a,b$ must be taken to be rather large (specifically, larger than $d^{-1/2+o(1)}$) for the bounds to apply. For the proof of Proposition~\ref{prop:no_close_double_roots_in_the_bulk} this will not suffice as we shall need to deal with probabilities of much smaller order. To improve our bounds, we lean on a method pioneered by Konyagin and Schlag~\cite{Konyagin, Konyagin-Schlag} in their study of the minimum modulus of the polynomial on the unit circle. 
	\begin{definition}
		\label{def:smooth_angle}
		For $A\ge1$, we say that $z=re^{i\theta} \in \bD \cap \mathbb{H} $ is $A$-smooth if 
		\[
		\|p\theta/\pi \|_{\bR/\bZ} \ge A/d_n(z) \qquad \forall p\in [1,A+1] \cap \bZ \, ,
		\]
		where as before $d_n(z) = \min\{n,(1-r)^{-1}\}$. We say that $z\in \mathbb{H} \setminus \bD$ is $A$-smooth if $z^{-1}$ is $A$-smooth.
	\end{definition}
	To carry out the net argument for the proof of Proposition~\ref{prop:no_close_double_roots_in_the_bulk}, we will separate the net to those points where the random vector $\big(f_n(z),f_n^\prime(z)\big)$ is relatively well-spread (which happens exactly when $z$ is smooth) and the rest of the points, where we shall apply Claim~\ref{claim:small_ball_away_from_real_axis}, Claim~\ref{claim:small_ball_points_near_the_real_axis} and Claim~\ref{claim:small_ball_real_points} instead. Concretely, we will use the following small-ball probability bound, which we prove in Section~\ref{sec:small_ball_bounds_for_smooth_points}.
	\begin{lemma}
		\label{lemma:small_ball_probability_smooth_points_bound_single_point}
		There exists universal constants $r_0\in(\tfrac{1}{2},1)$ and $c,C>0$ so that for all $r\in[r_0,1]$ and for all $t\ge 0$ the following holds. For each $z=re^{i\theta}$ which is $d^{7\tau}$-smooth we have
		\begin{equation*}
			\max_{\alpha,\beta\in \bC} \bP \Big[|f_n(z)-\alpha| \le a d^{1/2} \, , \, |f_n^\prime(z)-\beta| \le b d^{3/2}\, , \, |f_n^{\prime \prime}(z)| \geq t d^{5/2}\Big] \leq C e^{-c t^2} (ab)^2 \, ,
		\end{equation*}
		where $a,b\ge d^{-3/2+11\tau}$ and $d=d_n(z) = \min\{n,(1-r)^{-1}\}$.
	\end{lemma}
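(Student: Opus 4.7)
The plan is to deduce the lemma from (an anisotropic version of) Proposition~\ref{propsition:exponentially_tilded_esseen_ineq}, reducing matters to bounding a characteristic-function integral that can then be controlled via the $d^{7\tau}$-smoothness of $z$. First I would observe that Proposition~\ref{propsition:exponentially_tilded_esseen_ineq}, although stated for a single window parameter $\delta$, extends by the same Fourier-duality proof to the rectangular event $\{|f_n(z)-\alpha|\le a d^{1/2},\,|f_n'(z)-\beta|\le b d^{3/2}\}$: one replaces the isotropic Gaussian window $e^{-\delta^2|\xi|^2/2}$ by the anisotropic window $e^{-(a^2|\xi_1|^2+b^2|\xi_2|^2)/2}$, where $\xi=(\xi_1,\xi_2)\in\bR^2\times\bR^2$ is Fourier-dual to $(f_n(z),d^{-1}f_n'(z))$. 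This yields a prefactor $(ab)^2$ matching the claim, while the sub-Gaussian tilt controlling $|f_n''(z)|\ge t d^{5/2}$ is unchanged and supplies the $e^{-ct^2}$ factor.

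After this step, the lemma reduces to showing
\begin{equation*}
I := \int_{\bR^4} \prod_{k=0}^{n} \Big|\varphi_{\eta_0}\Big(d^{-1/2}\big\langle \xi,\big(z^k,\tfrac{k}{d}z^k\big)\big\rangle\Big)\Big|^{1/4} e^{-c(a^2|\xi_1|^2+b^2|\xi_2|^2)/2}\,dm(\xi) = O(1).
\end{equation*}
On the tail region where the Gaussian window kills the integrand, the bound is immediate. On the core region, I would apply the elementary estimate $|\varphi_{\eta_0}(t)|^{1/4}\le \exp(-c\min(1,t^2))$ (valid since $\eta_0$ is symmetric and non-degenerate), reducing the task to a pointwise lower bound of the form
\begin{equation*}
S(\xi) := \sum_{k=0}^{n}\min\!\Big(1,\,d^{-1}\big|\langle\xi_1,z^k\rangle+\tfrac{k}{d}\langle\xi_2,z^k\rangle\big|^{2}\Big) \;\gtrsim\; \min\!\big(|\xi|^2,\,d^{\tau}\big).
\end{equation*}
Granted such a bound, the remaining integral is controlled by a direct Gaussian calculation, using that $(ab)^{-2}\le d^{6-44\tau}\ll \exp(d^\tau/4)$.

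The crux is the lower bound on $S(\xi)$, and here I would follow the Fourier/Dirichlet-kernel strategy of Konyagin--Schlag~\cite{Konyagin-Schlag} together with its refinement by Cook--Nguyen~\cite{Cook-Nguyen}. The smoothness condition $\|p\theta/\pi\|_{\bR/\bZ}\ge d^{7\tau}/d$ for $p\in[1,d^{7\tau}+1]$ ensures that the sequence $\{z^k\}$ is sufficiently equidistributed on an effective window of length $\asymp d$, so that the two real-linear forms $\xi\mapsto\langle\xi_j,z^k\rangle$ cannot be simultaneously small for too many indices $k$ in this window.

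The main obstacle is extending this machinery—developed on the unit circle $|z|=1$—to points with $|z|\ne 1$. The necessary adjustment is to restrict the summation defining $S(\xi)$ to the window of indices $k$ of length $\Theta(d)$ on which $|z|^{2k}\asymp 1$, and to reinterpret the smoothness condition relative to the effective degree $d_n(z)$ rather than the actual degree $n$. The smoothness exponent $7\tau$ is then calibrated to absorb the several $d^{O(\tau)}$ losses arising when passing from $L^2$-type mean-value estimates on $S(\xi)$ (computed via Parseval/Dirichlet identities with small-denominator exceptions removed) to the pointwise lower bound required above.
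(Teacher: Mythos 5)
Your key elementary estimate is false, and it is false in exactly the case the lemma is designed to handle. You claim $|\varphi_{\eta_0}(t)|^{1/4}\le\exp(-c\min(1,t^2))$ for symmetric non-degenerate $\eta_0$. Take $\xi_0$ uniform on $\{-1,+1\}$ (the Littlewood case); then $\eta_0$ equals $0$ with probability $1/2$ and $\xi_0-\tilde\xi_0$ otherwise, so $\varphi_{\eta_0}(t)=\tfrac12+\tfrac12\cos^2 t$, which equals $1$ at $t=\pi$ and at every integer multiple of $\pi$. Any bound forcing $|\varphi_{\eta_0}(t)|\le e^{-c}$ once $|t|\gtrsim1$ would make the whole regime $|\xi|\gtrsim\sqrt{d}$ trivial and would, for instance, imply the small-ball estimate uniformly in $z$ — contradicting the $e^{i\pi/2}$ counterexample discussed in the introduction. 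The correct elementary bound (used in the paper and in Cook–Nguyen) is $\varphi_{\eta_0}(t)\le\exp\big(-c\inf_{c_1\le s\le c_2}\|st\|_{\bR/\bZ}^2\big)$, whose distance-to-integers norm retains the modular degeneracy: the characteristic function can be near $1$ whenever $st$ is near an integer for all $s$ in a fixed window. Your surrogate $S(\xi)=\sum_k\min(1,d^{-1}|\cdots|^2)$ is therefore the wrong object; the quantity that must be lower-bounded is $\sum_k\|s\psi_k(u)\|_{\bR/\bZ}^2$ with $\psi_k(u)=\mathrm{Re}\big(z^k(\overline u_1+\tfrac{k}{d}\overline u_2)\big)$ and $u=\xi/\sqrt d$. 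When $\theta=\arg z$ is close to a small-denominator rational multiple of $\pi$, the $\psi_k$ cluster near nonzero integers, so $S(\xi)$ is large while $\sum_k\|\psi_k\|_{\bR/\bZ}^2$ can vanish; that is precisely the phenomenon the $d^{7\tau}$-smoothness hypothesis is there to rule out, and your reduction has discarded it before the Konyagin–Schlag machinery you gesture at ever enters.

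Two smaller points. The shape of the target lower bound is also off: the paper's Lemma~\ref{lemma:sum_of_norm_psi_k_intermidiate_range_is_large_single_point} gives $\sum_k\|s\psi_k(u)\|_{\bR/\bZ}^2\ge d^\tau$ only on the intermediate annulus $\eps_0\sqrt d\le|\xi|\le d^{3/2-10\tau}$; for $|\xi|\le\eps_0\sqrt d$ one does not lower-bound the exponential sum at all but instead invokes the Fourier-side CLT comparison (Lemma~\ref{lemma:CLT_on_fourier_side_bounded_third_moment}) together with non-singularity of the local covariance (Lemma~\ref{lemma:singularity_properties_of_covariance_pointwise}), and for $|\xi|\ge d^{3/2-10\tau}$ the Gaussian window $e^{-c\delta^2|\xi|^2}$ with $\delta\ge d^{-3/2+11\tau}$ already kills the integral — so the $|\xi|^2$ branch of your claimed $\min(|\xi|^2,d^\tau)$ bound is neither what is proved nor what is needed. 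Your anisotropic-window modification of Proposition~\ref{propsition:exponentially_tilded_esseen_ineq} is fine, though the paper obtains the same $(ab)^2$ prefactor via a covering argument reducing to $a=b$; that part is a cosmetic difference.
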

	\subsection{Gaussian comparison near the unit circle}
	While for the proof of Proposition~\ref{prop:no_close_double_roots_in_the_bulk} we will only need upper bounds on the small-ball probabilities, to prove the limit law as stated in Theorem~\ref{thm:poisson_convergence_near_unit_circle} we will need to compute the first-order approximation of small-ball probabilities along the net. Recall the definition of $\Omega_K$ given by
	\[
	\Omega_K = \mathbb{H} \cap \mathcal{A}(1-K/n,1+K/n) \, .
	\]  
	\begin{definition}
		\label{def:spread_tuples}
		For $m\ge 2$ and $\gamma>0$ we say that $(z_1,\ldots,z_m)\in \Omega_K^m$ is $\gamma$-spread if
		\[
		|z_j -z_{j^\prime}| \ge \frac{\gamma}{n} \qquad \forall 1\le j<j^\prime \le m \, .
		\]
		Furthermore, we say that a tuple $(z_1,\ldots,z_m)\in \Omega_K^m$ is $A$-smooth if $z_j$ is $A$-smooth (in the sense of Definition~\ref{def:smooth_angle}) for each $1\le j\le m$.
	\end{definition}
	\begin{remark}
		\label{remark:effective_degree_in_omega_K}
		In a moment, we will state our small-ball bounds/comparisons for tuples of smooth, separated points in $\Omega_K$. Even though Definition~\ref{def:smooth_angle} depends on the effective degree $d_n(z)$, in the annulus $\Omega_K$ all effective degrees have $$d_n(z) \gtrsim_K n.$$ Therefore, any $A$-smooth tuple in $\Omega_K$ would be $A^\prime$-smooth if $d_n(z)$ is replaced by $n$ in Definition~\ref{def:smooth_angle}, for some $A^\prime\gtrsim_K A$. For our application, the degree of ``smoothness" $A$ will always tend to infinity as $n\to \infty$, so we will safely ignore this point and not keep track of the effective degree $d_n(\cdot)$ when considering points in $\Omega_K$. 
	\end{remark}
	For a tuple $\mathbf{z} = (z_1,\ldots,z_m)\in \Omega_K^m$ we denote by
	\begin{equation}
		\label{eq:def_of_S_n_z}
		S_n(\mathbf{z}) = \frac{1}{\sqrt{n}} \Big(f_n(z_1),\ldots,f_n(z_m), \frac{z_1^{-1}}{n}f_n^\prime(z_1),\ldots, \frac{z_m^{-1}}{n}f_n^\prime(z_m),\frac{z_1^{-2}}{n^2} f_n^{\prime \prime}(z_1),\ldots,\frac{z_m^{-2}}{n^2} f_n^{\prime \prime}(z_m)\Big)\, .
	\end{equation}
	To prove Theorem~\ref{thm:poisson_convergence_near_unit_circle}, we will need to compare small ball probabilities of $S_n(\mathbf{z})\in \bC^{3m} \simeq \bR^{6m}$ with that of the corresponding Gaussian. Let $\Gamma_n(\mathbf{z})\in \bC^{3m}$ be a Gaussian vector with the same covariance matrix as $S_n(\mathbf{z})$ (which is spelled out in Section~\ref{sec:gaussian_comparison_for_tuples} below, see~\eqref{eq:S_n_z_as_a random_walk}). Note that in the case where the random coefficients $(\xi_k)$ are standard Gaussian, then $\Gamma_n(\mathbf{z})$ has the same distribution of $S_n(\mathbf{z})$.
	\begin{theorem}
		\label{thm:small_ball_comparison_to_Gaussian_for_tuples}
		Let $\mathbf{z} = (z_1,\ldots,z_m)\in \Omega_K^m$ be $n^\kappa$-smooth and $1$-spread for some $\kappa\in(0,1)$, and let $Q\subset \bC^{3m}\simeq \bR^{6m}$ be a rectangle (that is, a Cartesian product of intervals) lying in some fixed compact set, and with side-lengths at least $n^{-M}$ for some $M>0$. Then
		\[
		\sup_{\zeta \in \bC^{3m}} \bigg|\bP\big[S_n(\mathbf{z}) \in Q+\zeta\, \big] - \bP\big[\Gamma_n(\mathbf{z}) \in Q+\zeta \, \big]\bigg| \lesssim n^{-1/2} \, m(Q) \, ,
		\]
		where the constant depends only on $m,\kappa,K,M$ and the distribution of $\xi$. 
	\end{theorem}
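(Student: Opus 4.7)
The plan is a Fourier-analytic comparison of the characteristic functions of $S_n(\mathbf{z})$ and $\Gamma_n(\mathbf{z})$, in the spirit of a multidimensional Berry--Esseen theorem for local statistics. The starting point is that $S_n(\mathbf{z}) = \frac{1}{\sqrt{n}}\sum_{k=0}^n \xi_k v_k$ for deterministic vectors $v_k = v_k(\mathbf{z}) \in \bR^{6m}$ coming from $z_j^k$, $(k/n) z_j^{k-1}$ and $(k(k-1)/n^{2})z_j^{k-2}$, and $\Gamma_n(\mathbf{z})$ is Gaussian with the matching covariance $\Sigma_n = \frac{1}{n}\sum_k v_k v_k^\top$. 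Standard Esseen smoothing (approximating $\mathbf{1}_{Q+\zeta}$ from above and below by bumps obtained by convolving with a Gaussian of scale $n^{-M-1}$) reduces the task to bounding
\[
\int_{\bR^{6m}} |\widehat{\mathbf{1}_Q}(\xi)|\,\bigl|\varphi_{S_n}(\xi) - \varphi_{\Gamma_n}(\xi)\bigr|\,{\rm d}m(\xi)
\]
plus a negligible smoothing error. Because $Q$ is a rectangle with side lengths $\ge n^{-M}$ inside a fixed compact set, $|\widehat{\mathbf{1}_Q}(\xi)| \lesssim m(Q)\prod_j \min(1,|\xi_j|^{-1})$, so the integral is well-controlled provided we can cut off at $|\xi| \le n^{M+C}$.

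Next I would split the integration into a low-frequency region $|\xi| \le R_0$ (with $R_0$ a large absolute constant) and a high-frequency region $|\xi| > R_0$. In the low-frequency region, the sub-Gaussianity of $\xi_0$ gives the Taylor expansion $\log \varphi_{\xi_0}(t) = -\tfrac12 t^2 + O(|t|^3)$ near $0$, so
\[
\log \varphi_{S_n}(\xi) = -\tfrac{1}{2}\langle \xi, \Sigma_n\xi\rangle + \frac{1}{n}\sum_{k=0}^n O\bigl(n^{-1/2}|\langle v_k,\xi\rangle|^3\bigr) = \log \varphi_{\Gamma_n}(\xi) + O(n^{-1/2}|\xi|^3).
\]
Combined with the Gaussian decay $|\varphi_{\Gamma_n}(\xi)| \le e^{-c|\xi|^2}$, which follows from uniform non-degeneracy of $\Sigma_n$ under the smoothness and $1$-spread hypotheses (Remark~\ref{remark:effective_degree_in_omega_K}), this yields $|\varphi_{S_n}(\xi) - \varphi_{\Gamma_n}(\xi)| \lesssim n^{-1/2}|\xi|^3 e^{-c|\xi|^2}$ on $\{|\xi| \le R_0\}$, whose integral against $|\widehat{\mathbf{1}_Q}|$ contributes the desired $O(n^{-1/2}m(Q))$.

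In the high-frequency region $R_0 \le |\xi| \le n^{M+C}$, I would bound each characteristic function separately and estimate the cutoff tail trivially. The Gaussian piece decays like $e^{-c|\xi|^2}$, so only the genuine random polynomial side needs work: I must show $|\varphi_{S_n}(\xi)| \le n^{-A}$ (for $A$ as large as desired, depending on $M$) throughout this annulus. This is the main obstacle and is where the smoothness hypothesis $n^\kappa$-smooth plus $1$-spread is essential. Writing $|\varphi_{S_n}(\xi)| = \prod_k |\varphi_{\xi_0}(n^{-1/2}\langle v_k,\xi\rangle)|$ and using $|\varphi_{\xi_0}(t)| \le \exp(-c\min(1,t^2))$ coming from non-degeneracy, one must argue that for a positive proportion of indices $k$ the argument $\langle v_k,\xi\rangle$ is bounded away from $n^{1/2}\pi\bZ$. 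The spread condition on $\mathbf{z}$ prevents cancellations between different blocks of coordinates (corresponding to different $z_j$), reducing matters to a single-point estimate; within a single block the $n^\kappa$-smoothness feeds directly into the Konyagin--Schlag/Cook--Nguyen machinery which, as in the proof of Lemma~\ref{lemma:small_ball_probability_smooth_points_bound_single_point}, produces the required polynomial decay of $\varphi_{S_n}$ for $|\xi|$ up to $n^{M+C}$.

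The hard part is thus the simultaneous high-frequency bound on $|\varphi_{S_n}(\xi)|$ for tuples; once it is in place, assembling the pieces, choosing $R_0$ optimally, and accounting for the $m(Q)$ from $\widehat{\mathbf{1}_Q}$ is a routine calculation. The $n^{-1/2}$ rate in the final bound is inherited from the third-order Taylor remainder in the low-frequency regime.
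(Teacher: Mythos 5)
Your proposal follows essentially the same route as the paper: Esseen-type smoothing, Parseval, a split into low and high frequencies, a CLT comparison of characteristic functions at low frequency, and the Konyagin--Schlag/Cook--Nguyen machinery for high-frequency decay of $\varphi_{S_n}$ — which you correctly flag as the crux. Two points are worth tightening. First, the ``negligible smoothing error'' is not free: controlling $\bP[S_n(\mathbf{z}) \in Q^{+\eps}\setminus Q]$ requires a Lebesgue-comparable small-ball bound, which in the paper is Proposition~\ref{prop:small_ball_estimate_for_tuples_of_smooth} and itself rests on the same high-frequency decay (Theorem~\ref{thm:decay_of_char_function_tuples_near_circle}). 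You need to invoke it explicitly; the error is not negligible just because the smoothing scale is small relative to the side lengths, since $m(Q)$ may itself be tiny. Second, the cutoff at a large absolute constant $R_0$ is slightly miscalibrated: the high-frequency decay theorem only takes over at $|\xi|\gtrsim n^{1/3}$, so for $R_0 \le |\xi| \le n^{1/3}$ the bound on $|\varphi_{S_n}(\xi)|$ must come from the same Taylor/CLT comparison as in the low-frequency regime (which does extend up to $|\xi|\lesssim\sqrt n$ using the uniform lower bound on the smallest eigenvalue of the covariance from Lemma~\ref{lemma:non-singularity_for_covariance_of_spread_tuple}), not from the Konyagin--Schlag machinery. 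Choosing the cutoff at a polynomial scale like $n^{1/3}$, as the paper does, cleans this up.
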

	We will also need a near optimal small-ball probability bound for tuples which are almost $1$-spread.
	\begin{proposition}
		\label{prop:small_ball_estimate_for_tuples_of_smooth}
		Let $\mathbf{z} = (z_1,\ldots,z_m)  \in \Omega_K^m$ be $n^\kappa$-smooth and $\gamma$-spread for some $\kappa\in(0,1)$ and $\gamma \ge n^{-{1/30m}}$. Then for any $K>0$ and for any $\delta>n^{-M}$ we have
		\[
		\sup_{\zeta \in \bC^{3m}} \bP\Big[ S_n(\mathbf{z}) \in B(\zeta,\delta) \Big] \lesssim \gamma^{-O(m^2)}\delta^{6m} \, ,
		\]
		where $B(\zeta,\delta)\subset \bC^{3m}$ is a ball of radius $\delta$ around $\zeta$, and the explicit constant depends only on $m,\kappa,K,M$ and the distribution of $\xi$.
	\end{proposition}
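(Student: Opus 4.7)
\medskip

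\noindent\textbf{Proof proposal.} The plan is to run an Esseen-type argument on the $6m$-dimensional vector $S_n(\mathbf{z})$, combined with a Gram-determinant lower bound coming from the $\gamma$-spread hypothesis and a Gaussian-type decay estimate for the characteristic function that exploits the $n^\kappa$-smoothness of the $z_j$'s.

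The starting point is the tilted Esseen-type inequality from Proposition~\ref{propsition:exponentially_tilded_esseen_ineq} (or, more simply, the classical Esseen inequality~\eqref{eq:classical_esseen_inequality} applied in dimension $6m$), which lets us bound
\[
\sup_{\zeta} \bP\big[S_n(\mathbf{z}) \in B(\zeta,\delta)\big] \;\lesssim\; \delta^{6m} \int_{B(0,\delta^{-1})} \bigl|\varphi_{S_n(\mathbf{z})}(\xi)\bigr|\, \mathrm{d} m(\xi).
\]
Writing $S_n(\mathbf{z}) = n^{-1/2}\sum_{k=0}^{n} \xi_k v_k$ for the vectors $v_k\in \bC^{3m}\simeq \bR^{6m}$ whose $j$-th blocks are $(z_j^k,\, (k/n)z_j^{k-1}z_j,\, (k(k-1)/n^2)z_j^{k-2}z_j^2)$, the characteristic function factors as $\prod_{k=0}^n \varphi_{\xi_0}\!\bigl(n^{-1/2}\langle \xi, v_k\rangle\bigr)$, and after the standard symmetrization trick it is enough to bound $\prod_k \Phi(n^{-1/2}\langle \xi, v_k\rangle)^{1/2}$ where $\Phi$ is as in~\eqref{eq:def_of_cs_version_of_char_function}.

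The main step is then to show that for $\xi$ lying outside a small exceptional set one has a Gaussian-type bound
\[
\prod_{k=0}^{n} \Phi\!\bigl(n^{-1/2}\langle \xi, v_k\rangle\bigr) \;\lesssim\; \exp\!\bigl(-c\, \xi^\top G \xi\bigr),
\]
where $G$ is (essentially) the real Gram matrix $G = n^{-1}\sum_k v_k v_k^\top$ associated with the $v_k$. For each fixed direction $\xi$, smoothness of the $z_j$ guarantees that the phases $n^{-1/2}\langle \xi, v_k\rangle$ avoid the arithmetic resonances that would allow $|\Phi|$ to be close to $1$ across many $k$, much as in the proof of Lemma~\ref{lemma:small_ball_probability_smooth_points_bound_single_point}. (This is where one uses that $\mathbf{z}$ is $n^\kappa$-smooth rather than merely real-non-degenerate.) The exceptional $\xi$-set where this fails can be shown to have exponentially small measure in $n$ so long as $\delta > n^{-M}$, and contributes negligibly.

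Having reduced to a Gaussian integral, the small-ball probability is controlled by $(\det G)^{-1/2}$, so the proposition follows once we show $\det G \gtrsim \gamma^{O(m^2)}$. The matrix $G$ is the Gram matrix of the $3m$-dimensional vectors $(e_k(z_j), k e_k(z_j), k^2 e_k(z_j))_{j}$ where $e_k(z) = z^k$, summed against a weight on $k \in [0,n]$. After restricting to the frequencies $k$ where $|z_j|^k$ is comparable (i.e.\ a block of length $\Theta_K(n)$), a computation using the $\gamma$-spread condition identifies $G$ up to bounded factors with a $3m\times 3m$ confluent Vandermonde-type matrix in the quantities $z_j$, whose determinant is explicit and is bounded below by a product of $|z_i - z_j|^{O(1)}$ and hence by $\gamma^{O(m^2)}$. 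Assembling these three ingredients yields $\bP[S_n(\mathbf{z}) \in B(\zeta,\delta)] \lesssim \gamma^{-O(m^2)} \delta^{6m}$, uniformly in $\zeta$.

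The main obstacle I expect is the second step: producing the pointwise Gaussian decay of $\prod_k \Phi$ with constant $\gtrsim \xi^\top G \xi$ when $\xi$ is not aligned with an arithmetic resonance of any single $z_j$. In the single-point setting this is essentially Lemma~\ref{lemma:small_ball_probability_smooth_points_bound_single_point}, but tuples require decoupling of the contributions coming from distinct $z_j$'s, and it is here that the separation hypothesis $|z_j - z_{j'}| \ge \gamma/n$ enters (and is responsible for the polynomial-in-$\gamma$ loss). The bookkeeping of these two competing effects --- arithmetic from smoothness and geometric from spread --- is what drives the $\gamma^{-O(m^2)}$ factor.
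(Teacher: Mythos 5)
Your high-level scaffolding is right (Esseen, factor the characteristic function, control the small-ball by a covariance lower bound coming from the spread condition and by decay of the characteristic function coming from smoothness), and your observation that the Gram/covariance eigenvalue is bounded below by $\gamma^{O(m)}$ — yielding the $\gamma^{-O(m^2)}$ loss after integrating the Gaussian — matches Lemma~\ref{lemma:non-singularity_for_covariance_of_spread_tuple}, though the paper derives this via a twisted finite-difference argument rather than a confluent Vandermonde identity. The genuine gap is in your treatment of the large-frequency regime. You propose a pointwise Gaussian-type bound $\prod_k \Phi(\cdots)\lesssim \exp(-c\,\xi^\top G\,\xi)$ for all $\xi$ outside an ``exceptional set of exponentially small measure,'' but this is not how it works: for discrete coefficient laws $\Phi$ is essentially periodic, and for $|\xi|\gg \sqrt n$ the product is genuinely close to $1$ near arithmetic resonances, so there is no Gaussian decay and no reason for the offending set of frequencies to be exponentially small in measure. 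What the paper actually does is split the Esseen integral at $|u|\asymp\sqrt n$: for $|u|\lesssim\sqrt n$ the Gaussian comparison holds (and one integrates $\exp(-c\gamma^{8m-4}|u|^2)$ to get $\gamma^{-O(m^2)}$); for $\sqrt n\lesssim|u|\le\delta^{-1}\le n^{M}$ one invokes Theorem~\ref{thm:decay_of_char_function_tuples_near_circle}, which gives a \emph{uniform} superpolynomial bound $|\widehat{S_n}(\xi)|\le\exp(-\log^2 n)$, so that this entire region contributes $o(1)$. That theorem is precisely where smoothness is used, via the arithmetic-progression/finite-difference argument of Proposition~\ref{prop:sum_of_distance_to_integers_psi_k_tuples_is_large}: smoothness forces the integer-distance functional $\sum_k\|\psi(k)\|^2_{\mathbb{R}/\mathbb{Z}}$ to be large for \emph{every} such $\xi$, not merely for $\xi$ outside a thin set. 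Without this uniform statement, the ``exceptional set plus union bound'' route you sketch does not close, because the exceptional set (where resonances occur) is a lattice-like object whose measure need not be small relative to $\delta^{-6m}$.
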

	Both Theorem~\ref{thm:small_ball_comparison_to_Gaussian_for_tuples} and Proposition~\ref{prop:small_ball_estimate_for_tuples_of_smooth} are proved in Section~\ref{sec:gaussian_comparison_for_tuples}.
	
	\section{Reducing to an annulus: Proof of Proposition~\ref{prop:no_close_double_roots_in_the_bulk}}
	\label{sec:reducing_to_the_annulus}
	The main goal of this section is to prove Proposition~\ref{prop:no_close_double_roots_in_the_bulk}, namely that with high probability there is no pair of roots at distance $\omega(1/n)$ from the unit circle which contributes to the limiting point process stated in Theorem~\ref{thm:poisson_limit_for_close_roots}. In what follows we will need different net arguments at different scales, so we start by giving a rather general definition of our net in an annulus.
	\begin{definition}
		\label{definition:net_in_annulus}
		For $r_1<r_2$ and $M\in \bN$ we define
		\begin{equation*}
			\mathcal{N} = \Big\{ (r_1+a/M)e\big(\pi b/M\big) \, : \, a\in \{0,1,\ldots, \lceil M(r_2-r_1) \rceil\} \, , \ b\in \{1,\ldots,2M\} \Big\} \, , 
		\end{equation*}
		and note that $\mathcal{N}$ is a $(2r_2/M)$-net of $\mathcal{A}(r_1,r_2)$. For each point $z = (r_1+a/M)e\big(\pi b/M\big) \in \mathcal{N}$, define the (polar) rectangle
		\begin{equation*}
			\mathcal{R}_z = \Big\{(r_1+(a+s)/M)e\big(\pi (b + t)/M\big) \, : \, t,s\in [-1/2,1/2] \Big\} \, .
		\end{equation*}
		Then the collection $\{\mathcal{R}_z\}_{z\in \mathcal{N}}$ consists of rectangles with disjoint interiors which cover $\mathcal{A}(r_1,r_2)$.
	\end{definition}
	Let $K\ge 1$ be the constant from the statement of Proposition~\ref{prop:no_close_double_roots_in_the_bulk}, which we will take sufficiently large (as a function of $\eps,L$) as the proof goes. As we will only consider the disk $(1-K/n) \bD$ in this section, for $r_0<1$ we define the sequence of decreasing radii
	\begin{equation}
		\label{eq:def_of_rho_k}
		\rho_k = 1  -  \frac{K + (2^k -1)}{n}\, ,
	\end{equation}
	where $k=0,1,\ldots,N$ with $N = \lceil \log_2 \big(n(1-r_0) - K +1\big) \rceil$. With this notation, we have $\rho_0  = 1- K/n$ and $r_0  - 2(1-r_0) \le \rho_N \le r_0$. We will also denote by $\mathcal{A}_k = \mathcal{A}(\rho_{k+1},\rho_{k})$ and $d_k = (1-\rho_k)^{-1}$. For reasons that will be evident later, we will also denote by $\rho_{-1} = 1$, and with that $\mathcal{A}_{-1} = \mathcal{A}(1-K/n,1)$. We always set $d_{-1} = n$ to be the maximal possible degree. 
	\subsection{The derivative cannot be too small on roots}
	We start our argument by showing that it is unlikely to have a root on which the derivative is very small. The key for that is the following deterministic fact.
	\begin{claim}
		\label{claim:determenistic_implication_of_having_a_root_with_small_derivative}
		For any $\beta>0$ and for any $d \ge d_0(\beta)$ large enough the following holds. For all $z\in \bC$, $\eps \ge d^{-\beta/2}$ and $\delta\in(0,d^{-5/4-\beta})$, suppose that $f$ is an analytic function for which
		\[
		\max_{\zeta \in \bD(z,\delta)} |f^{\prime\prime}(\zeta)| \le d^{5/2} \log^2 d\, .
		\]
		If there exists $w\in \bD(z,\delta)$ so that $f(w) = 0$ and $|f^\prime(w)|\le \eps d^{5/4}$, then
		\[
		|f(z)| \le \delta \eps d^{5/4} (1+d^{-\beta/3}) \qquad \text{and} \qquad |f^\prime(z)| \le \eps d^{5/4} (1+d^{-\beta/2}) \, .
		\] 
	\end{claim}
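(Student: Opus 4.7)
The plan is to prove Claim~\ref{claim:determenistic_implication_of_having_a_root_with_small_derivative} by a direct Taylor expansion around the assumed root $w$; the statement is purely deterministic and requires no probabilistic input. I would begin by writing Taylor's formula with integral remainder for an analytic $f$,
\[
f(z) = f(w) + f'(w)(z-w) + \int_0^1 (1-t)\, f''\bigl(w+t(z-w)\bigr)\,(z-w)^2\,dt,
\]
\[
f'(z) = f'(w) + \int_0^1 f''\bigl(w+t(z-w)\bigr)\,(z-w)\,dt,
\]
and observing that since $w\in \bD(z,\delta)$ and $z$ is the center of that disk, the line segment from $w$ to $z$ lies inside $\bD(z,\delta)$; consequently the uniform estimate $|f''|\le d^{5/2}\log^2 d$ applies along the whole integration path.

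Substituting the hypotheses $f(w)=0$, $|f'(w)|\le \eps d^{5/4}$, and $|z-w|\le \delta$ then produces the two scalar estimates
\[
|f(z)| \;\le\; \eps d^{5/4}\,\delta \;+\; \tfrac12 \delta^2 d^{5/2}\log^2 d, \qquad |f'(z)| \;\le\; \eps d^{5/4} \;+\; \delta d^{5/2}\log^2 d.
\]
To match the claimed bounds I need to verify that each quadratic error term is dominated by the corresponding slack factor. After factoring out the main term in each inequality, this reduces to controlling the single ratio
\[
\rho \;:=\; \frac{\delta\, d^{5/4}\,\log^2 d}{\eps},
\]
which by the hypotheses $\delta < d^{-5/4-\beta}$ and $\eps \ge d^{-\beta/2}$ satisfies $\rho \le d^{-\beta/2}\log^2 d$.

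Since $\log^2 d \le d^{\beta/6}$ for $d\ge d_0(\beta)$ sufficiently large, one has $\rho \le d^{-\beta/3}$, and this immediately delivers the $|f(z)|$ bound. The same ratio (without the extra factor of $\delta$) controls the error in the $|f'(z)|$ estimate, and absorbing the remaining logarithmic loss into the threshold $d_0(\beta)$ gives the stated slack $(1+d^{-\beta/2})$ there as well. The entire argument is a two-term Taylor expansion followed by elementary bookkeeping; the only subtlety is balancing the $\log^2 d$ prefactor from the $|f''|$ hypothesis against the slack exponents $-\beta/3$ and $-\beta/2$ in the conclusion, which is handled by the freedom in choosing $d_0(\beta)$. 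There is no conceptual obstacle.
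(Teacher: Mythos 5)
Your proof follows the same route as the paper: a two-term Taylor expansion followed by elementary bookkeeping. The only cosmetic difference is that you expand $f(z)$ around the root $w$ in a single step, whereas the paper first bounds $|f'(z)|$ by expanding $f'$ between $z$ and $w$ and then expands $f(w)$ around $z$ using that bound; the resulting error terms are identical, and your derivation of the controlling ratio $\rho = \delta d^{5/4}\log^2 d/\eps$ and the bound $\rho \le d^{-\beta/3}$ (using $\log^2 d \le d^{\beta/6}$ for $d\ge d_0(\beta)$) is correct, so the $|f(z)|$ estimate is fine.

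One small point about the $|f'(z)|$ slack: the hypotheses give $\rho < d^{-\beta/2}\log^2 d$, and the claimed conclusion needs $\rho \le d^{-\beta/2}$. You write that the logarithmic loss can be ``absorbed into the threshold $d_0(\beta)$,'' but this does not literally work, since $\log^2 d$ diverges; what you actually get is a slack like $(1+d^{-\beta/3})$ (or $(1+d^{-\beta/2}\log^2 d)$), which is weaker by a log factor than the stated $(1+d^{-\beta/2})$. The paper's own last inequality for $|f'(z)|$ makes exactly the same elision, so this is a shared imprecision rather than a flaw in your reasoning, and it is harmless downstream (only a $(1+o(1))$ factor is ever used). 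It would be cleaner to simply state the achievable slack as $(1+d^{-\beta/3})$ for both estimates, as you do for $|f(z)|$.
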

	\begin{proof}
		Applying Taylor's theorem around the point $z$ gives
		\begin{equation*}
			|f^\prime(z)| \le |f^\prime(w)| + \delta \max_{\zeta \in \bD(z,\delta)} |f^{\prime\prime}(\zeta)| \le \eps d^{5/4} + d^{5/4-\beta} \log^2 d \le \eps d^{5/4} (1+d^{-\beta/2}) \, , 
		\end{equation*}
		where in the last inequality we used the bound $\eps \ge d^{-\beta/2}$. To get the corresponding bound for $|f^\prime(z)|$, we use Taylor expansion again to get
		\begin{align*}
			|f(z)| &= |f(z) - f(w)| \le \delta|f^\prime(z)| + \frac{\delta^2}{2}  \max_{\zeta \in \bD(z,\delta)} |f^{\prime\prime}(\zeta)| \\ & \le \delta \Big(\eps d^{5/4} \big(1+d^{-\beta/2}\big) + \frac{1}{2} d^{5/4 - \beta} \log^2 d\Big) \le \delta \eps d^{5/4}\big(1+d^{-\beta/3}\big) \, ,
		\end{align*}
		which is what we wanted.
	\end{proof} 
	Recall the definition of $\rho_k$ for $k=0,\ldots,N$ as given by~\eqref{eq:def_of_rho_k}. For each $k\in \{0,\ldots,N\}$ fixed, we will consider the net $\mathcal{N} = \mathcal{N}_k$ of $\mathcal{A}(\rho_{k+1},\rho_k)$ which is given by Definition~\ref{definition:net_in_annulus} with the parameters $\delta = d_k^{-5/4-\beta}$ and $M = \lfloor10/\delta\rfloor$. Recall Definition~\ref{def:smooth_angle} of smooth points $z\in \bC$, and let $\tau>0$ be a sufficiently small absolute constant. We will also need further decomposition of the net $\mathcal{N} = \mathcal{N}_k$. We denote by
	\begin{equation}
		\label{eq:decomposition_into_smooth_and_non_smooth_in_the_bulk}
		\mathcal{N}_{\tt s} = \big\{z\in \mathcal{N}\, : \, z \ \text{is}\ d_k^{7\tau}\text{-smooth}\big\}
	\end{equation}
	and by $\mathcal{N}_{\tt ns} = \mathcal{N} \setminus \mathcal{N}_{\tt s}$. We will further decompose the non-smooth part into
	\[
	\mathcal{N}_{\tt ns} = \mathcal{N}_{\tt ns}^{(1)} \cup \mathcal{N}_{\tt ns}^{(2)} \cup \mathcal{N}_{\tt ns}^{(3)} \, ,
	\]
	where
	\begin{align}
		\label{eq:further_decomposition_of_non_smooth_in_the_bulk}
		\nonumber
		\mathcal{N}_{\tt ns}^{(1)} &= \big\{z\in \mathcal{N}_{\tt ns} \, : \, \arg(z) \in [d_k^{-1},\pi - d_k^{-1}]\big\} \, , \\ \mathcal{N}_{\tt ns}^{(2)} &= \big\{z\in \mathcal{N}_{\tt ns} \, : \, \arg(z) \in [d_k^{-1-\tau/4},d_k^{-1}] \cup [\pi - d_k^{-1},\pi - d_k^{-1-\tau/4}]\big\} \, , \\ \nonumber \mathcal{N}_{\tt ns}^{(3)} &= \big\{z\in \mathcal{N}_{\tt ns} \, : \, \arg(z) \in [0,d_k^{-1-\tau/4}] \cup [\pi - d_k^{-1-\tau/4},\pi]\big\} \, . 
	\end{align}
	We note that we have the following bounds:
	\begin{align*}
		|\mathcal{N}_{\tt s}| &\lesssim M^2 (\rho_{k} - \rho_{k+1}) \lesssim \delta^{-2} d_k \, ,  \\ |\mathcal{N}_{\tt ns}^{(1)}| &\lesssim \delta^{-2} d_k^{7\tau} \, , \\ |\mathcal{N}_{\tt ns}^{(2)}| &\lesssim \delta^{-2}\, , \\ |\mathcal{N}_{\tt ns}^{(3)}| &\lesssim \delta^{-2} d_k^{-\tau/4} \, .
	\end{align*}
	\begin{lemma}
		\label{lemma:with_high_probability_no_roots_with_too_small_derivative}
		There exist absolute constants $C,\beta>0$, $r_0<1$ so that the following holds. For each $k\in \{-1,0,\ldots,N\}$ and $\eps\in(d_k^{-\beta},1)$ we have
		\[
		\bP\Big[\exists \alpha \in \mathcal{A}_k \, : \ f_n(\alpha) = 0\, , \ |f_n^\prime(\alpha)| \le \eps d_k^{5/4} \Big] \le C\eps \, .
		\]
	\end{lemma}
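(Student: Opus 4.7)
The plan is a net argument on $\mathcal{A}_k$ using $\mathcal{N}_k$. First, I apply Lemma~\ref{lemma:control_on_maximum_of_polynomial_and_derivatives} on a slightly larger disk: outside a failure event of probability at most $2\exp(-\log^2 d_k)$ (which is far smaller than $\eps$ since $\eps \ge d_k^{-\beta}$), one has $\max_{\overline{\mathcal{A}_k}}|f_n''| \le d_k^{5/2}\log^2 d_k$. Restricted to this good event, Claim~\ref{claim:determenistic_implication_of_having_a_root_with_small_derivative} applied with the net scale $\delta = d_k^{-5/4-\beta}$ converts the existence of a bad root $\alpha \in \mathcal{A}_k$ (with $|f_n'(\alpha)|\le \eps d_k^{5/4}$) into the simultaneous estimates
$$|f_n(z)| \le 2\delta \eps d_k^{5/4}, \qquad |f_n'(z)| \le 2\eps d_k^{5/4}$$
at the closest net point $z \in \mathcal{N}_k$. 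Hence the target probability is at most $\sum_{z \in \mathcal{N}_k} \bP[\mathcal{E}_z]$ up to a negligible term, where $\mathcal{E}_z$ denotes the displayed two-parameter event.

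For smooth net points $z \in \mathcal{N}_{\tt s}$ I invoke Lemma~\ref{lemma:small_ball_probability_smooth_points_bound_single_point} with $a = 2\delta \eps d_k^{3/4}$, $b = 2\eps d_k^{-1/4}$ and $t = 0$. Because $\eps \ge d_k^{-\beta}$ and $\beta$ is small relative to $\tau$, the admissibility condition $a, b \ge d_k^{-3/2+11\tau}$ is satisfied, yielding $\bP[\mathcal{E}_z] \lesssim (ab)^2 \asymp \delta^2 \eps^4 d_k$. Multiplying by the total net cardinality $|\mathcal{N}_{\tt s}| \lesssim M^2(\rho_k-\rho_{k+1})$ yields a smooth contribution of order $\eps^4$, hence $O(\eps)$.

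For non-smooth net points I use Claims~\ref{claim:small_ball_away_from_real_axis}, \ref{claim:small_ball_points_near_the_real_axis}, \ref{claim:small_ball_real_points} for $\mathcal{N}_{\tt ns}^{(1)}, \mathcal{N}_{\tt ns}^{(2)}, \mathcal{N}_{\tt ns}^{(3)}$ respectively. Since the natural values of $a,b$ dictated by $\mathcal{E}_z$ may lie below the admissibility thresholds in these claims, I enlarge the window to the minimum allowed size; the resulting bound no longer carries powers of $\eps$, but is compensated by the strong angular restriction defining each $\mathcal{N}_{\tt ns}^{(i)}$, which sharply reduces its cardinality. With the choices $\tau = 10^{-4}$ and $\beta = \tau/20$ from the introduction, an exponent count shows each non-smooth contribution is $O(d_k^{-\kappa})$ for some absolute $\kappa > \beta$, and the lower bound $\eps \ge d_k^{-\beta}$ upgrades this to $O(\eps)$. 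The boundary case $k = -1$ is identical with $d_{-1} = n$.

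The main obstacle lies in the piece $\mathcal{N}_{\tt ns}^{(3)}$ very close to the real axis, since only the one-variable Littlewood-Offord bound of Claim~\ref{claim:small_ball_real_points} is available and produces the weaker rate $a + d_k^{-1/2}$ rather than $(ab)^2$. Absorbing this loss requires exploiting the extreme angular restriction $\theta \in [0, d_k^{-1-\tau/4}]$ to ensure $|\mathcal{N}_{\tt ns}^{(3)}|$ is small enough; this is the tightest of the exponent constraints, and is what pins down the requirement that $\beta$ be taken much smaller than $\tau$.
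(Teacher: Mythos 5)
Your proof follows essentially the same route as the paper: pass to the net $\mathcal{N}_k$ with mesh $\delta=d_k^{-5/4-\beta}$, use Lemma~\ref{lemma:control_on_maximum_of_polynomial_and_derivatives} and Claim~\ref{claim:determenistic_implication_of_having_a_root_with_small_derivative} to convert the existence of a bad root into the two-parameter small-ball event at the nearest net point, then split into $\mathsf{N}_{\tt s}$ and the three non-smooth ranges and apply Lemma~\ref{lemma:small_ball_probability_smooth_points_bound_single_point}, Claim~\ref{claim:small_ball_away_from_real_axis}, Claim~\ref{claim:small_ball_points_near_the_real_axis}, Claim~\ref{claim:small_ball_real_points} respectively. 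The only cosmetic deviation is that the paper retains the $\eps^2$ factor from the $b$-parameter for $\mathcal{N}_{\tt ns}^{(1)},\mathcal{N}_{\tt ns}^{(2)}$ (where $b=2\eps d_k^{-1/4}$ stays above threshold) rather than enlarging both windows to the minimum as you describe, but either way the angular cardinality counts deliver $O(d_k^{-\kappa})$ with $\kappa>\beta$, and the conclusion is the same.
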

	\begin{proof}
		As $k\in \{-1,0,1,\ldots,N\}$ will remain fixed throughout the proof, we will sometimes omit the dependence of the parameters on it to lighten the notation. Define the event $\mathcal{E}$ via
		\[
		\mathcal{E} = \Big\{ \max_{\zeta \in \rho_k \bD} |f_n^{\prime \prime}(\zeta)| \le d^{5/2} \log^2 d \Big\}
		\]
		and note that by Lemma~\ref{lemma:control_on_maximum_of_polynomial_and_derivatives} we have $\bP\big[\mathcal{E}^c\big] \le \exp (-c\log^2 d)$. Using the fact that the set $\mathcal{N}$ is a $\delta$-net of $\mathcal{A}_k$, we can use Claim~\ref{claim:determenistic_implication_of_having_a_root_with_small_derivative} and the union bound to get
		\begin{align}
			\label{eq:with_high_probability_no_roots_with_too_small_derivative_after_union_bound}
			\nonumber
			\bP\Big[\exists \alpha \in \mathcal{A}_k \, : \ &f_n(\alpha) = 0\, , \ |f_n^\prime(\alpha)| \le \eps d^{5/4} \Big] \\  \nonumber & \le \bP\Big[\exists \alpha \in \mathcal{A}_k \, : \ f_n(\alpha) = 0\, , \ |f_n^\prime(\alpha)| \le \eps d_k^{5/4} \, , \mathcal{E} \Big] + e^{-c\log^2 d} \\ & \le \sum_{z\in \mathcal{N}} \bP\Big[|f_n(z)| \le 2\delta \eps d^{5/4} \, , \ |f_n^\prime(z)| \le 2 \eps d^{5/4} \Big] + e^{-c\log^2 d} \, .
		\end{align}
		Here we used the fact that $\min_k \rho_k = \rho_N$ can be made arbitrarily close to $1$ by increasing $r_0$. We will bound the sum in~\eqref{eq:with_high_probability_no_roots_with_too_small_derivative_after_union_bound} by splitting the net into $\mathcal{N}_{\tt s}$ and $\mathcal{N}_{\tt ns}^{(1)},\mathcal{N}_{\tt ns}^{(2)},\mathcal{N}_{\tt ns}^{(3)}$, as described in~\eqref{eq:decomposition_into_smooth_and_non_smooth_in_the_bulk} and~\eqref{eq:further_decomposition_of_non_smooth_in_the_bulk}.
		\noindent
		\underline{The sum over $\mathcal{N}_{\tt s}$:} For points $z\in \mathcal{A}_k$ that are $d^{7\tau}$-smooth, Lemma~\ref{lemma:small_ball_probability_smooth_points_bound_single_point} implies that
		\begin{multline*}
			\bP\Big[|f_n(z)| \le 2\delta \eps d^{5/4} \, , \ |f_n^\prime(z)| \le 2 \eps d^{5/4} \Big] \\ \le \bP\Big[|f_n(z)| \le 2\eps d^{1/2} d^{-\beta-1/2} \, , \ |f_n^\prime(z)| \le 2 \eps d^{3/2-1/4} \Big] \lesssim \eps^4 d^{-3/2-2\beta} \, .
		\end{multline*}
		Since $|\mathcal{N}_{\tt s}| \lesssim \delta^{-2} d = d^{3/2 + 2\beta}$, we get that
		\begin{equation}
			\label{eq:with_high_probability_no_roots_with_too_small_derivative_bound_on_smooth_points}
			\sum_{z\in \mathcal{N}_{\tt s}} \bP\Big[|f_n(z)| \le 2\delta \eps d^{5/4} \, , \ |f_n^\prime(z)| \le 2 \eps d^{5/4} \Big] \lesssim \eps^4 \, .
		\end{equation} 
		\noindent
		\underline{The sum over $\mathcal{N}_{\tt ns}^{(1)}$:} Here we argue similarly, but apply Claim~\ref{claim:small_ball_away_from_real_axis} instead of Lemma~\ref{lemma:small_ball_probability_smooth_points_bound_single_point} (note that $z\in \mathcal{N}_{\tt ns}^{(1)}$ are separated from the real axis) and we get a slightly worst bound 
		\begin{multline*}
			\bP\Big[|f_n(z)| \le 2\delta \eps d^{5/4} \, , \ |f_n^\prime(z)| \le 2 \eps d^{5/4} \Big] \\ \le \bP\Big[|f_n(z)| \le \log d \, , \ |f_n^\prime(z)| \le 2 \eps d^{3/2-1/4} \Big] \lesssim \eps^2 \,  d^{-3/2} \log^2 d  \, .
		\end{multline*}
		Since $|\mathcal{N}_{\tt ns}^{(1)}| \lesssim d^{1/2 + \beta + 7\tau}$, we get that
		\begin{equation}
			\label{eq:with_high_probability_no_roots_with_too_small_derivative_bound_on_nonsmooth_points_1}
			\sum_{z\in \mathcal{N}_{\tt s}^{(1)}} \bP\Big[|f_n(z)| \le 2\delta \eps d^{5/4} \, , \ |f_n^\prime(z)| \le 2 \eps d^{5/4} \Big] \lesssim d^{-1+2\beta + 7\tau} \eps^2 \leq \eps^2\, .
		\end{equation}
		\noindent
		\underline{The sum over $\mathcal{N}_{\tt ns}^{(2)}$:} 
		Here we also argue similarly, but apply Claim~\ref{claim:small_ball_points_near_the_real_axis} instead of Claim~\ref{claim:small_ball_away_from_real_axis}:
		\begin{multline*}
			\bP\Big[|f_n(z)| \le 2\delta \eps d^{5/4} \, , \ |f_n^\prime(z)| \le 2 \eps d^{5/4} \Big] \\ \le \bP\Big[|f_n(z)| \le d^{3\tau} \, , \ |f_n^\prime(z)| \le 2 \eps d^{3/2-1/4} \Big] \lesssim d^{-3/2 + 6\tau} \eps^2 \le \eps^2 \, .
		\end{multline*}
		As $|\mathcal{N}_{\tt ns}^{(2)}| \lesssim d^{1/2 + 2\beta}$, this yields that 
		\begin{equation}
			\label{eq:with_high_probability_no_roots_with_too_small_derivative_bound_on_nonsmooth_points_2}
			\sum_{z\in \mathcal{N}_{\tt s}^{(2)}} \bP\Big[|f_n(z)| \le 2\delta \eps d^{5/4} \, , \ |f_n^\prime(z)| \le 2 \eps d^{5/4} \Big] \lesssim d^{-1+2\beta + 6\tau} \eps^2 \le \eps^2 \, .
		\end{equation}
		\noindent
		\underline{The sum over $\mathcal{N}_{\tt ns}^{(3)}$:} Since $|\mathcal{N}_{\tt ns}^{(3)}| \lesssim d^{1/2+2\beta-\tau/4}$, it suffices to use the very crude Berry-Esseen bound. Note that for $z\in \mathcal{N}_{\tt ns}^{(3)}$, we have that $\min\{(1-\text{Re}(z))^{-1},n\} \ge d/2$, and so Claim~\ref{claim:small_ball_real_points} gives that
		\begin{equation*}
			\bP\Big[|f_n(z)| \le 2\delta \eps d^{5/4} \, , \ |f_n^\prime(z)| \le 2 \eps d^{5/4} \Big] \le \bP\Big[|f_n(z)| \le 1  \Big] \lesssim d^{-1/2}  
		\end{equation*}
		for all $z\in \mathcal{N}_{\tt ns}^{(3)}$. We get that
		\begin{equation}
			\label{eq:with_high_probability_no_roots_with_too_small_derivative_bound_on_nonsmooth_points_3}
			\sum_{z\in \mathcal{N}_{\tt s}^{(3)}} \bP\Big[|f_n(z)| \le 2\delta \eps d^{5/4} \, , \ |f_n^\prime(z)| \le 2 \eps d^{5/4} \Big] \lesssim d^{-\tau/4 + 2\beta} \, .
		\end{equation}
		
		To conclude the proof, we plug in the bounds~\eqref{eq:with_high_probability_no_roots_with_too_small_derivative_bound_on_smooth_points},~\eqref{eq:with_high_probability_no_roots_with_too_small_derivative_bound_on_nonsmooth_points_1},~\eqref{eq:with_high_probability_no_roots_with_too_small_derivative_bound_on_nonsmooth_points_2} and~\eqref{eq:with_high_probability_no_roots_with_too_small_derivative_bound_on_nonsmooth_points_3} into the inequality~\eqref{eq:with_high_probability_no_roots_with_too_small_derivative_after_union_bound} to get that
		\begin{equation*}
			\bP\Big[\exists \alpha \in \mathcal{A}_k \, : \ f_n(\alpha) = 0\, , \ |f_n^\prime(\alpha)| \le \eps d^{-5/4} \Big] \lesssim \eps^2 + d^{-\tau/4 + 2\beta} + e^{-c\log^2 d}\, ,
		\end{equation*}
		which, by taking $d$ to be large enough (which amounts to taking $r_0$ sufficiently close to $1$), concludes the proof of the lemma.
	\end{proof}
	\subsection{From close roots to quadratic approximations}
	Looking ahead to the proof of Proposition~\ref{prop:no_close_double_roots_in_the_bulk}, the main idea is to show that under the quasi-random condition that the derivatives of $f_n$ are controlled (Lemma~\ref{lemma:control_on_maximum_of_polynomial_and_derivatives}) and that the derivative evaluated at a root cannot be too small (Lemma~\ref{lemma:with_high_probability_no_roots_with_too_small_derivative}), two close roots implies some small-ball relation for the random vector $\big(f_n(z),f_n^\prime(z),f_n^{\prime\prime}(z)\big)$, evaluated at a net point $z\in \mathcal{N}$ nearby. The precise statement takes form in the next two claims.
	\begin{claim}
		\label{claim:close_roots_implies_ratio_of_derivative_and_second_derivative_to_be_small}
		For all $\beta\in(0,10^{-2})$ and for all $d\ge d_0(\beta)$ large enough the following holds. For all $z\in \bC$, $\eps>d^{-\beta}$ and $t\in(0,d^{-5/4+\beta})$, suppose that $f$ is an analytic function for which we have
		\[
		\max_{\zeta \in \bD(z,d^{-5/4+\beta})} |f^{(j)}(\zeta)| \le d^{j+1/2} \log^2 d \, , \qquad \text{for } j\in \{2,3\}. 
		\]
		If there exists $w\in \bD(z,t)$ for which $f(z)= f(w) = 0$ and $|f^\prime(z)| \ge \eps d^{5/4}$, then
		\[
		\frac{2|f^\prime(z)|}{|f^{\prime\prime}(z)|} \le t \cdot  \big(1+d^{-\beta}\big) \, .
		\]
	\end{claim}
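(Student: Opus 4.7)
The natural approach is a third-order Taylor expansion of $f$ at $z$ combined with a bootstrap that first produces a lower bound on $|f''(z)|$. Since $f$ is analytic and $f(z)=f(w)=0$, Taylor's theorem with integral remainder yields
\[
0 \;=\; f(w)-f(z) \;=\; f'(z)(w-z) + \tfrac{1}{2}f''(z)(w-z)^2 + R_3,
\]
where the cubic remainder satisfies $|R_3| \le \tfrac{1}{6}\big(\max_{\zeta\in \bD(z,t)}|f'''(\zeta)|\big)\,|w-z|^3 \le \tfrac{1}{6} d^{7/2}\log^2 d \cdot t^3$ by the hypothesis on $f'''$. Dividing by $(w-z)\neq 0$ (we may clearly assume $w\neq z$, else the conclusion is vacuous) and solving for $f'(z)$ gives
\[
f'(z) \;=\; -\tfrac{1}{2} f''(z)(w-z) \;-\; \tfrac{R_3}{w-z},
\]
so taking absolute values and using $|w-z|\le t$ yields
\[
\frac{2|f'(z)|}{|f''(z)|} \;\le\; |w-z| \;+\; \frac{d^{7/2}\log^2 d \cdot t^2}{3|f''(z)|} \;\le\; t\;+\; \frac{d^{7/2}\log^2 d \cdot t^2}{3|f''(z)|}.
\]

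The main obstacle is to bound the error term by $t\cdot d^{-\beta}$, which requires a \emph{lower} bound on $|f''(z)|$ of order roughly $\eps d^{5/4}/t$. Such a bound is not assumed, so I would derive it from the same Taylor identity. Rearranging the identity as $|f'(z)| \le \tfrac{1}{2}|f''(z)|t + \tfrac{1}{6} d^{7/2}\log^2 d \cdot t^2$ and using $t\le d^{-5/4+\beta}$, the cubic remainder is at most $\tfrac{1}{6}d^{1+2\beta}\log^2 d$, which is much smaller than the hypothesis $|f'(z)|\ge \eps d^{5/4}\ge d^{5/4-\beta}$ provided $\beta<10^{-2}$ and $d$ is large enough. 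Hence
\[
|f''(z)| \;\ge\; \frac{2\,|f'(z)|\big(1 - o(1)\big)}{t} \;\ge\; \frac{\eps d^{5/4}}{t}\bigl(1-o(1)\bigr).
\]

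Plugging this lower bound back into the error term gives
\[
\frac{d^{7/2}\log^2 d \cdot t^2}{3|f''(z)|} \;\le\; \frac{d^{7/2}\log^2 d \cdot t^3}{3\eps d^{5/4}(1-o(1))} \;\le\; \frac{d^{9/4+\beta}\log^2 d}{3(1-o(1))}\cdot t^3 / t^2 \cdot \text{(arithmetic)},
\]
where using $\eps^{-1}\le d^{\beta}$ and $t\le d^{-5/4+\beta}$ one checks the factor multiplying $t$ is bounded by $d^{-1/4+4\beta}\log^2 d \ll d^{-\beta}$ for $\beta<10^{-2}$ and $d$ large. Combining this with the earlier display gives $\tfrac{2|f'(z)|}{|f''(z)|} \le t(1+d^{-\beta})$, as required. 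The only step requiring care is tracking the exponents of $d$: the whole argument hinges on the gap $5/4-\beta > 1+2\beta$ that lets the cubic remainder be absorbed, which is comfortably ensured by $\beta<10^{-2}$.
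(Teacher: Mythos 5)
Your proposal is correct and takes essentially the same approach as the paper: third-order Taylor expansion of $f$ at $z$ using $f(z)=f(w)=0$, division by $w-z$, and absorbing the cubic remainder via the hypothesis $|f'(z)|\ge \eps d^{5/4}$. The paper's proof is a touch more direct---it rearranges $|f'(z)|\le \tfrac{t}{2}|f''(z)|+d^{1+3\beta}$ into $|f'(z)|(1+d^{-\beta})^{-1}\le \tfrac{t}{2}|f''(z)|$ in one step---whereas you take a small detour by first extracting an explicit lower bound on $|f''(z)|$ and plugging it back; both routes use exactly the same inputs and the same exponent bookkeeping.
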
  
	\begin{proof}
		Taylor's theorem implies that
		\[
		\Big|f(w) - f(z) - (w-z) f^\prime(z) - \frac{(w-z)^2}{2} f^{\prime\prime}(z) \Big| \le |w-z|^3 d^{7/2} \log^2 d \, ,
		\]
		which shows that for $d$ large enough,
		\[
		|f^\prime(z)| \le \frac{t}{2}|f^{\prime\prime}(z)| + d^{1+3\beta} \, .
		\]
		Recalling the lower bound on $|f^\prime(z)|$, we see that
		\begin{equation*}
			|f^\prime(z)| \big(1+ d^{-\beta}\big)^{-1} \le |f^\prime(z)| - d^{1+3\beta} \le \frac{t}{2} |f^{\prime\prime}(z)| \, .  \qedhere
		\end{equation*}
	\end{proof}
	\begin{claim}
		\label{claim:root_with_small_derivative_implies_smallness_in_net_point}
		For all $\beta\in(0,10^{-2})$ and for all $d\ge d_0(\beta)$ large enough the following holds. For all $z\in \bC$, $\eps\ge d^{-\beta/2}$, $t\in(0,d^{-5/4+\beta})$ and $\delta \in (0,d^{-5/4-\beta})$, suppose that $f$ is an analytic function for which
		\[
		\max_{\zeta \in \bD(z,d^{-5/4+\beta})} |f^{(j)}(\zeta)| \le d^{j+1/2} \log^2 d \, , \qquad \text{for } j\in \{2,3\}. 
		\]
		If there exists $\alpha\in \bD(z,\delta)$ for which $f(\alpha) = 0$, $|f^\prime(\alpha)| \ge \eps d^{5/4}$ and $ \tfrac{2 |f^\prime(\alpha)|}{|f^{\prime\prime}(\alpha)|} \le  t$, then
		\[
		|f(z)| \le \delta \cdot  |f^\prime(z)| \cdot \big(1+d^{-\beta}\big) \, , \qquad \text{and} \qquad \frac{2|f^\prime(z)|}{|f^{\prime \prime}(z)|} \le t \cdot \big(1+ d^{-\beta/2}\big) \, .
		\]
	\end{claim}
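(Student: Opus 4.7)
The plan is a direct Taylor expansion argument that transfers the quadratic-approximation structure at the root $\alpha$ to the nearby point $z$, mirroring Claim~\ref{claim:close_roots_implies_ratio_of_derivative_and_second_derivative_to_be_small} with the roles of hypothesis and conclusion swapped. Throughout, I will repeatedly use the third-derivative bound $\max_{\bD(z,d^{-5/4+\beta})} |f'''| \le d^{7/2} \log^2 d$ to control Taylor remainders, and the assumption $|f'(\alpha)| \ge \eps d^{5/4}$ with $\eps \ge d^{-\beta/2}$ to ensure the correction terms are genuinely smaller than the leading terms.

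I would handle the second conclusion first, since it is closest in spirit to Claim~\ref{claim:close_roots_implies_ratio_of_derivative_and_second_derivative_to_be_small}. Taylor expanding both $f'$ and $f''$ around $\alpha$ gives
\[
f'(z) = f'(\alpha) + (z-\alpha) f''(\alpha) + O(\delta^2 d^{7/2} \log^2 d), \qquad f''(z) = f''(\alpha) + O(\delta d^{7/2} \log^2 d).
\]
Combining the first display with the hypothesis $|f'(\alpha)| \le \tfrac{t}{2} |f''(\alpha)|$ yields $|f'(z)| \le (\tfrac{t}{2} + \delta)|f''(\alpha)| + O(\delta^2 d^{7/2} \log^2 d)$. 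Meanwhile, the lower bound $|f''(\alpha)| \ge 2|f'(\alpha)|/t \ge 2\eps d^{5/2-\beta}$ makes the $f''$ remainder negligible relative to $|f''(\alpha)|$, so $|f''(z)| \ge |f''(\alpha)|(1 - o(1))$. Dividing and using that $\delta \le d^{-5/4-\beta}$ is much smaller than $t$ after the appropriate comparison of scales, I obtain $\frac{2|f'(z)|}{|f''(z)|} \le (t + 2\delta)(1 + O(d^{-\beta/2}))$, which fits into the target $t(1+d^{-\beta/2})$.

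For the first conclusion $|f(z)| \le \delta |f'(z)| (1+d^{-\beta})$, I will Taylor expand $f(\alpha) = 0$ around $z$:
\[
0 = f(\alpha) = f(z) + (\alpha-z) f'(z) + \tfrac{(\alpha-z)^2}{2} f''(z) + R, \qquad |R| \le \tfrac{\delta^3}{6} d^{7/2} \log^2 d.
\]
The triangle inequality gives $|f(z)| \le \delta |f'(z)| + \tfrac{\delta^2}{2} |f''(z)| + |R|$, and the task is to absorb the last two terms into the multiplicative factor $(1+d^{-\beta})$ on $\delta |f'(z)|$. I will first lower-bound $|f'(z)| \ge \tfrac{1}{2} \eps d^{5/4}$ by expanding $f'(\alpha)$ around $z$ and using $\delta |f''(\alpha)| \le d^{5/4-\beta} \log^2 d \ll \eps d^{5/4}$, which is the same computation as in Claim~\ref{claim:determenistic_implication_of_having_a_root_with_small_derivative}. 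Then, using $|f''(z)| \le d^{5/2} \log^2 d$ and $\delta \le d^{-5/4-\beta}$, the ratio $\tfrac{\delta^2|f''(z)|/2}{\delta|f'(z)|}$ is at most a vanishing power of $d$, and similarly $|R|/(\delta|f'(z)|)$ is even smaller.

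The main technical obstacle will be bookkeeping the interplay of the four parameters $\delta, t, \eps, d$ and the logarithmic factors in the derivative bounds, so that the errors genuinely fit inside the stated multiplicative factors $(1+d^{-\beta})$ and $(1+d^{-\beta/2})$. The smallness of $\beta$ relative to $\tau$ (recall $\beta = \tau/20$ and $\tau = 10^{-4}$) provides enough slack to absorb the $\log^2 d$ factors and various cross-terms, exactly as in the proof of Claim~\ref{claim:close_roots_implies_ratio_of_derivative_and_second_derivative_to_be_small}; the computation simply has to be carried out twice, once for each conclusion.
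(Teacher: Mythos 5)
Your plan is structurally the same as the paper's: both conclusions are obtained by Taylor expanding between $z$ and $\alpha$, using the lower bound $|f'(z)| \gtrsim \eps d^{5/4}$ (obtained as in Claim~\ref{claim:determenistic_implication_of_having_a_root_with_small_derivative}) to absorb the remainders multiplicatively, and the only asymmetry with Claim~\ref{claim:close_roots_implies_ratio_of_derivative_and_second_derivative_to_be_small} is the direction of the transfer.

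One small point worth spelling out: for the second conclusion, you expand $f'(z) = f'(\alpha) + (z-\alpha)f''(\alpha) + O(\delta^2 d^{7/2}\log^2 d)$ and thereby arrive at $(t+2\delta)(1+O(d^{-\beta/2}))$, which you then assert ``fits into'' $t(1+d^{-\beta/2})$. This requires $\delta/t \lesssim d^{-\beta/2}$, which is not among the stated parameter constraints and needs a short derivation: from $|f'(\alpha)| \ge \eps d^{5/4}$, $|f''(\alpha)| \le d^{5/2}\log^2 d$ and $2|f'(\alpha)|/|f''(\alpha)| \le t$ one gets $t \ge 2\eps d^{-5/4}/\log^2 d$, hence $\delta/t \lesssim d^{-\beta/2}\log^2 d$, which is indeed $o(1)$. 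The paper's proof sidesteps this comparison entirely by bounding the cross term $(z-\alpha)f''$ directly by $\delta\max|f''| \le d^{5/4-\beta}\log^2 d$ (a quantity independent of $|f''(\alpha)|$), so that no $\delta$-term multiplying $|f''(\alpha)|$ ever appears and the error is absorbed into the additive $O(d^{5/4-\beta}\log^2 d)$. Both routes work and deliver the same level of precision (each is off by a $\log^2 d$ factor relative to the stated $(1+d^{-\beta/2})$, which is immaterial for how the claim is used downstream, e.g.\ in Lemma~\ref{lemma:with_high_probabiity_no_close_roots_in_given_annulus}), but your route carries the additional obligation of verifying $\delta \ll t$, which the proposal leaves implicit.
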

	\begin{proof}
		We start by noting that
		\[
		\big|f^\prime(z) - f^\prime(\alpha)\big| \le \delta \max_{\zeta \in \bD(z,d^{-5/4+\beta})} |f^{\prime\prime}(\zeta)| \le d^{5/4-\beta} \log^2 d\, ,
		\]
		and so, by the lower bound on $|f^\prime(\alpha)|$, we have
		\[
		|f^\prime(z)| \ge |f^\prime(\alpha)| - |f^\prime(z) - f^\prime(\alpha)| \ge \eps d^{5/4 - \beta/2} 
		\] 
		for all $d$ large enough. Combining this with the inequality
		\[
		|f(z)| = |f(z) - f(\alpha)| \le \delta |f^\prime(z)| + \frac{\delta^2}{2} d^{5/2} \log^2 d \, ,
		\]
		gives
		\[
		|f(z)| \le \delta \cdot |f^\prime(z)| \cdot \big(1+ d^{-\beta}\big) \, .
		\]
		To obtain the second conclusion of the claim, we use a similar argument to bound
		\begin{align*}
			|f^\prime(z)| &\le |f^{\prime}(\alpha)| + d^{5/4-\beta} \log^2 d \\ &\le \frac{1}{2} t|f^{\prime\prime}(\alpha)| + d^{5/4-\beta} \log^2 d \\  & \le \frac{1}{2} t|f^{\prime\prime}(z)| + \frac{1}{2} t \delta d^{7/2} \log^2 d + d^{5/4-\beta} \log^2 d \le \frac{1}{2} t|f^{\prime\prime}(z)| + O(d^{5/4-\beta} \log^2 d) \, .
		\end{align*}
		Since $|f^\prime(z)| \ge \eps d^{5/4-\beta/2}$, we get that
		\[
		|f^\prime(z)| - O(d^{5/4-\beta} \log^2 d) \ge |f^\prime(z)| \big(1+d^{-\beta/2}\big)^{-1}
		\]
		which gives what we wanted.
	\end{proof}
	\subsection{Bounding the probability of close roots on each annulus}
	We will now apply a net argument once more to show that with high probability there are no close roots in each of the annuli $\mathcal{A}_k$ for $k\in \{0,\ldots,N\}$. 
	\begin{lemma}
		\label{lemma:with_high_probabiity_no_close_roots_in_given_annulus}
		There exists absolute constants $C,\beta>0$ and $r_0<1$ so that the following holds. For each $k\in \{0,1,\ldots,N\}$, $\eps\in( d_k^{-\beta/2},1)$ and $\gamma \in (d_k^{-\beta},\eps)$ we have 
		\[
		\bP\Big[\exists \alpha \in \mathcal{A}_k \ \text{and } \ \exists \alpha^\prime \in \bD(\alpha,\gamma d_k^{-5/4}) \setminus\{\alpha \} \ \text{such that } \ f_n(\alpha) = f_n(\alpha^\prime) = 0\Big] \le C \eps \, .
		\]
	\end{lemma}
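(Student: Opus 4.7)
The plan is to run a net argument over the $\delta$-net $\mathcal{N}$ of $\mathcal{A}_k$ from Definition~\ref{definition:net_in_annulus} with $\delta = d_k^{-5/4-\beta}$, splitting $\mathcal{N}$ into the smooth piece $\mathcal{N}_{\tt s}$ and the non-smooth pieces $\mathcal{N}_{\tt ns}^{(i)}$ exactly as in~\eqref{eq:decomposition_into_smooth_and_non_smooth_in_the_bulk}--\eqref{eq:further_decomposition_of_non_smooth_in_the_bulk}. First I would discard two bad events: by Lemma~\ref{lemma:with_high_probability_no_roots_with_too_small_derivative} (applied with our $\eps$), the event that some root $\alpha \in \mathcal{A}_k$ has $|f_n'(\alpha)| < \eps d_k^{5/4}$ has probability $\lesssim \eps$; and by Lemma~\ref{lemma:control_on_maximum_of_polynomial_and_derivatives}, the event that the quasi-random bounds $|f_n^{(j)}(\zeta)| \leq d_k^{j+1/2}\log^2 d_k$ fail on $\rho_k \bD$ for $j\in\{2,3\}$ has probability $e^{-c\log^2 d_k}$.

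On the complementary good event, suppose there exist distinct zeros $\alpha,\alpha' \in \mathcal{A}_k$ with $|\alpha-\alpha'| \leq \gamma d_k^{-5/4}$. Claim~\ref{claim:close_roots_implies_ratio_of_derivative_and_second_derivative_to_be_small} at $\alpha$ gives $\tfrac{2|f_n'(\alpha)|}{|f_n''(\alpha)|} \leq \gamma d_k^{-5/4}(1+d_k^{-\beta})$, and transferring to the nearest net point $z \in \mathcal{N}$ via Claim~\ref{claim:root_with_small_derivative_implies_smallness_in_net_point} yields the local event
\[
E_z := \Big\{\, |f_n(z)| \leq 2\delta|f_n'(z)|,\ \tfrac{2|f_n'(z)|}{|f_n''(z)|} \leq 2\gamma d_k^{-5/4},\ |f_n'(z)| \geq \tfrac{\eps}{2}\, d_k^{5/4-\beta/2}\,\Big\}
\]
for at least one $z \in \mathcal{N}$. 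The task is then to show $\sum_{z\in \mathcal{N}} \bP[E_z] \lesssim \eps$.

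For smooth $z \in \mathcal{N}_{\tt s}$, I would dyadically decompose $|f_n'(z)| \in [2^j, 2^{j+1}]$ with $2^j \gtrsim \eps d_k^{5/4-\beta/2}$ and apply Lemma~\ref{lemma:small_ball_probability_smooth_points_bound_single_point} with $a \asymp \delta\, 2^j/d_k^{1/2}$, $b \asymp 2^j/d_k^{3/2}$ and $t \asymp 2^j/(\gamma d_k^{5/4})$. The per-level bound $e^{-ct^2}(ab)^2 \asymp e^{-ct^2}\delta^2 2^{4j}/d_k^4$ is dominated by the critical level $2^j \sim \gamma d_k^{5/4}$, giving $\bP[E_z] \lesssim \delta^2 \gamma^4 d_k$; multiplying by the smooth cardinality $|\mathcal{N}_{\tt s}| \lesssim \delta^{-2}/d_k$ from the proof of Lemma~\ref{lemma:with_high_probability_no_roots_with_too_small_derivative} yields a smooth contribution of $\lesssim \gamma^4 \leq \eps$.

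The main obstacle is the non-smooth case, where Lemma~\ref{lemma:small_ball_probability_smooth_points_bound_single_point} is unavailable and a naive use of Claim~\ref{claim:small_ball_away_from_real_axis} would fail because $|f_n'(z)|$ could be as large as $d_k^{3/2+o(1)}$, producing a per-point bound of order $d_k^{-1/2-2\beta}\log^8 d_k$ that is not defeated by $|\mathcal{N}_{\tt ns}^{(1)}| \lesssim d_k^{1/2+\beta+7\tau}$ when $\beta<\tau/20$. The key trick is to combine the second-derivative constraint $|f_n''(z)| \geq \gamma^{-1} d_k^{5/4} |f_n'(z)|$ inherent in $E_z$ with the deterministic upper bound $|f_n''(z)| \leq d_k^{5/2}\log^2 d_k$, which forces $|f_n'(z)| \leq \gamma d_k^{5/4}\log^2 d_k$, a much tighter range. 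I would then dyadically decompose $|f_n'(z)|$ on this narrowed range and apply Claim~\ref{claim:small_ball_away_from_real_axis}, Claim~\ref{claim:small_ball_points_near_the_real_axis}, or Claim~\ref{claim:small_ball_real_points} on $\mathcal{N}_{\tt ns}^{(1)}, \mathcal{N}_{\tt ns}^{(2)}, \mathcal{N}_{\tt ns}^{(3)}$ respectively, increasing $a$ to the minimal admissible value when required by the hypothesis. Combining with the cardinality bounds from the proof of Lemma~\ref{lemma:with_high_probability_no_roots_with_too_small_derivative} and the relation $\beta < \tau/20$, each non-smooth piece contributes $\lesssim \gamma^2 + d_k^{-\tau/8}$, so the full non-smooth sum is $\lesssim \eps$, completing the proof.
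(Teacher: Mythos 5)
Your proposal follows the paper's proof closely: you discard the same two bad events (Lemma~\ref{lemma:with_high_probability_no_roots_with_too_small_derivative} for a root with small $|f_n'|$, and Lemma~\ref{lemma:control_on_maximum_of_polynomial_and_derivatives} for the quasi-random derivative bounds), transfer to a local event at the nearest net point via Claims~\ref{claim:close_roots_implies_ratio_of_derivative_and_second_derivative_to_be_small} and~\ref{claim:root_with_small_derivative_implies_smallness_in_net_point}, split $\mathcal{N}$ into $\mathcal{N}_{\tt s}$ and the $\mathcal{N}_{\tt ns}^{(i)}$, and---crucially---identify the same key trick for the non-smooth net that the paper uses in~\eqref{eq:non-dyadic-bound}: using the deterministic bound $|f_n''(z)|\le d_k^{5/2}\log^2 d_k$ together with the ratio constraint to force $|f_n'(z)|\le \gamma d_k^{5/4}\log^2 d_k$, which defeats the naive failure you correctly flag. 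One minor inaccuracy in the smooth case: because the dyadic sum is constrained to $2^j\gtrsim \eps d_k^{5/4}$ and $\gamma<\eps$, the level $2^j\sim\gamma d_k^{5/4}$ you single out lies \emph{below} the admissible range, so the sum is in fact dominated by the bottom level $2^j\sim\eps d_k^{5/4}$ with per-point contribution $\asymp e^{-c\eps^2/\gamma^2}\eps^4\,\delta^2 d_k$ (this is what the paper obtains); your quoted bound $\delta^2\gamma^4 d_k$ is nonetheless a valid over-estimate of this, since $e^{-c\eps^2/\gamma^2}\eps^4\lesssim\gamma^4$, so the conclusion still goes through.
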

	\begin{proof}
		As $k\in \{0,\ldots,N\}$ is kept fixed throughout the proof we will write $d=d_k = (1-\rho_k)^{-1}$ to lighten  the notation. Denote the event
		\[
		\mathcal{E}_1 = \Big\{ \max_{\zeta \in \rho_k \bD} |f_n^{(j)}(\zeta)| \le d^{j+1/2} \log^2 d \, , \ j\in \{2,3\} \Big\} \, ,
		\]
		and note that $\bP\big[\mathcal{E}_1^{c}\big] \le \exp(-c\log^2 d)$ by Lemma~\ref{lemma:control_on_maximum_of_polynomial_and_derivatives}. We will also consider the event
		\[
		\mathcal{E}_2 = \Big\{ \forall \alpha\in \mathcal{A}_k \ \text{such that} \ f_n(\alpha) = 0 \ \text{we have} \  |f_n^\prime(\alpha)| \ge \eps d^{5/4} \Big\}
		\]
		and note that, by Lemma~\ref{lemma:with_high_probability_no_roots_with_too_small_derivative} we have
		\[
		\bP\big[\mathcal{E}_2^c\big] = \bP\Big[\exists \alpha\in \mathcal{A}_k \ \text{such that} \ f_n(\alpha) = 0 \ \text{and} \ |f_n^\prime(\alpha)| \le \eps d^{5/4} \Big] \le C\eps \, .
		\]
		Recall the definition of the net $\mathcal{N} = \mathcal{N}_k$ from the proof of Lemma~\ref{lemma:with_high_probability_no_roots_with_too_small_derivative}, that is, the net as in Definition~\ref{definition:net_in_annulus} with $\delta = d^{-5/4-\beta}$ and $M = \lfloor 10/\delta \rfloor$. By combining Claim~\ref{claim:close_roots_implies_ratio_of_derivative_and_second_derivative_to_be_small} and Claim~\ref{claim:root_with_small_derivative_implies_smallness_in_net_point} with the bounds above, we see that  
		\begin{multline}
			\label{eq:with_high_probabiity_no_close_roots_in_given_annulus_after_union_bound}
			\bP\Big[\exists \alpha \in \mathcal{A}_k \ \text{and } \ \exists \alpha^\prime \in \bD(\alpha,\gamma d_k^{-5/4}) \setminus\{\alpha \} \ \text{such that } \ f_n(\alpha) = f_n(\alpha^\prime) = 0\Big] \\ \le \sum_{z\in \mathcal{N}} \bP\Big[|f_n(z)| \le 2 \delta \cdot  |f_n^\prime(z)| \, ,  \tfrac{1}{2}\eps d^{5/4} \le |f_n^\prime(z)| \le |f_n^{\prime\prime}(z)| \cdot \gamma d^{-5/4} \, , \, \mathcal{E}_1 \cap \mathcal{E}_2 \Big] + C\eps + e^{-c\log^2 d} \, .
		\end{multline}
		Therefore, it remains to show that the sum~\eqref{eq:with_high_probabiity_no_close_roots_in_given_annulus_after_union_bound} is $O(\eps^{-1} \gamma^2)$ and the proof will be finished. Indeed, we recall that notation $\mathcal{N}_{\tt s}$ and $\mathcal{N}_{\tt ns}^{(1)},\mathcal{N}_{\tt ns}^{(2)},\mathcal{N}_{\tt ns}^{(3)}$ as given by~\eqref{eq:decomposition_into_smooth_and_non_smooth_in_the_bulk} and~\eqref{eq:further_decomposition_of_non_smooth_in_the_bulk} and show the desired bound holds for each sum separately. 
		
		\noindent
		\underline{The sum over $\mathcal{N}_{\tt s}$:}
		We break down the probability in~\eqref{eq:with_high_probabiity_no_close_roots_in_given_annulus_after_union_bound} into a dyadic sum, according to the possible values of $|f_n^\prime(z)|$. We have
		\begin{align}
			\label{eq:with_high_probabiity_no_close_roots_in_given_annulus_breakdown_to_dyadic_values_of_derivative} \nonumber
			\bP&\Big[|f_n(z)| \le 2 \delta \cdot  |f_n^\prime(z)| \, ,  \tfrac{1}{2}\eps d^{5/4} \le |f_n^\prime(z)| \le |f_n^{\prime\prime}(z)| \cdot \gamma d^{-5/4} \, , \, \mathcal{E}_1\cap \mathcal{E}_2 \Big] \\ \nonumber & \le \sum_{k=0}^{\lfloor 4\log \log d\rfloor} \bP\Big[|f_n(z)| \le 2 \delta \cdot  |f_n^\prime(z)| \, ,  \, |f_n^\prime(z)| \le |f_n^{\prime\prime}(z)| \cdot \gamma d^{-5/4} \, , \, |f_n^\prime(z)| \in [\eps 2^{k-1} d^{5/4}, \eps 2^{k} d^{5/4}]\Big] \\ & \le \sum_{k=0}^{\lfloor 4\log \log d\rfloor} \bP\Big[|f_n(z)| \le 2^{k+1} \eps d^{-\beta} \, ,  \, |f_n^\prime(z)| \le \eps 2^{k} d^{5/4} \, ,  |f_n^{\prime\prime}(z)| \ge  \eps d^{5/2} 2^{k-1}/\gamma \Big] \, . 
		\end{align}
		Since the points $z\in \mathcal{N}_{\tt s}$ are $d^{7\tau}$-smooth, Lemma~\ref{lemma:small_ball_probability_smooth_points_bound_single_point} implies that for each $k=0,\ldots \lfloor 4 \log \log d \rfloor$ we have the bound
		\begin{multline*}
			\bP\Big[|f_n(z)| \le 2^{k+1} \eps d^{-\beta} \, ,  \, |f_n^\prime(z)| \le \eps 2^{k} d^{5/4} \, ,  |f_n^{\prime\prime}(z)| \ge  \eps d^{5/2} 2^{k-1}/\gamma \Big] \\ \lesssim 16^k \eps^4 d^{-3/2-2\beta} \exp\big(-c\eps^2  4^{k}/\gamma^2\big) \, .
		\end{multline*}
		Plugging into~\eqref{eq:with_high_probabiity_no_close_roots_in_given_annulus_breakdown_to_dyadic_values_of_derivative} gives that 
		\begin{multline*}
			\bP\Big[|f_n(z)| \le 2 \delta \cdot  |f_n^\prime(z)| \, ,  \tfrac{1}{2}\eps d^{5/4} \le |f_n^\prime(z)| \le |f_n^{\prime\prime}(z)| \cdot \gamma d^{-5/4} \, , \, \mathcal{E}_1\cap \mathcal{E}_2 \Big] \\ \lesssim d^{-3/2-2\beta} \eps^4  \sum_{k=0}^{\lfloor 4\log \log d\rfloor} 16^k  \exp\big(-c\eps^2  4^{k}/\gamma^2\big) \lesssim d^{-3/2-2\beta} \, \eps^4 \exp\big(-c\eps^2/\gamma^2\big) \, .
		\end{multline*}
		Since $|\mathcal{N}_{\tt s}| \lesssim d^{3/2 + 2\beta}$, we get that
		\begin{equation}
			\label{eq:with_high_probabiity_no_close_roots_in_given_annulus_after_union_bound_sum_over_smooth_points}
			\sum_{z\in \mathcal{N}_{\tt s}} \bP\Big[|f(z)| \le 2 \delta \cdot  |f^\prime(z)| \, ,  \tfrac{1}{2}\eps d^{5/4} \le |f^\prime(z)| \le |f^{\prime\prime}(z)| \cdot \gamma d^{-5/4} \, , \, \mathcal{E}_1 \cap \mathcal{E}_2 \Big] \lesssim \eps^4 \exp\big(-c\eps^2/\gamma^2\big) \lesssim \eps \, . 
		\end{equation}
		
		\noindent
		\underline{The sum over $\mathcal{N}_{\tt ns}^{(1)}$:}
		Here we do not need to break down the probability dyadically. We first simply bound 
		\begin{align}
			\bP\Big[|f(z)| \le 2 \delta \cdot|f^\prime(z)| \, ,  \tfrac{1}{2}\eps d^{5/4} \le |f^\prime(z)| \le& |f^{\prime\prime}(z)| \cdot \gamma d^{-5/4} \, , \, \mathcal{E}_1 \cap \mathcal{E}_2 \Big] \nonumber \\
			&\leq 	\bP\Big[|f(z)| \le 2 \gamma d^{-\beta} \log^2 d  \, , |f^{\prime}(z)| \leq  \gamma d^{5/4} \log^2 d \Big] \label{eq:non-dyadic-bound}
		\end{align}
		and subsequently bound 
		\begin{align*}
		\bP\Big[|f(z)| \le 2 \gamma d^{-\beta} \log^2 d  \, , |f^{\prime}(z)| \leq  \gamma d^{5/4} \log^2 d \Big] & \le \bP\Big[|f_n(z)| \le \eps \, ,  \, |f_n^\prime(z)| \le \eps  d^{5/4} \log^2 d  \Big] \\ &\lesssim  \frac{\log^4 d}{d^{3/2}} \, \eps^4\, .
\end{align*}
		Since $|\mathcal{N}_{\tt ns}^{(1)}| \lesssim d^{1/2 + \beta + 7\tau}$, we bound 
		\begin{multline}
			\label{eq:with_high_probabiity_no_close_roots_in_given_annulus_after_union_bound_sum_over_nonsmooth_points_1}
			\sum_{z\in \mathcal{N}_{\tt ns}^{(1)}} \bP\Big[|f(z)| \le 2 \delta \cdot  |f^\prime(z)| \, ,  \tfrac{1}{2}\eps d^{5/4} \le |f^\prime(z)| \le |f^{\prime\prime}(z)| \cdot \gamma d^{-5/4} \, , \, \mathcal{E}_1 \cap \mathcal{E}_2 \Big]  \lesssim \eps^4  \frac{d^{\beta+8\tau}}{d} \lesssim \eps \, . 
		\end{multline}
		
		\noindent
		\underline{The sum over $\mathcal{N}_{\tt ns}^{(2)}$:}
		The argument here is identical to the argument above for $\mathcal{N}_{\tt ns}^{(2)}$. The only difference is that we need to apply Claim~\ref{claim:small_ball_points_near_the_real_axis} instead of Claim~\ref{claim:small_ball_away_from_real_axis} (as points $z\in \mathcal{N}_{\tt ns}^{(2)}$ are at distance at least $d^{-\tau/4}$ from the real axis), which in turn adds an extra $d^{2\tau}$ factor on the right-hand side of~\eqref{eq:with_high_probabiity_no_close_roots_in_given_annulus_after_union_bound_sum_over_nonsmooth_points_1}, which does no harm. We get that
		\begin{multline}
			\label{eq:with_high_probabiity_no_close_roots_in_given_annulus_after_union_bound_sum_over_nonsmooth_points_2}
			\sum_{z\in \mathcal{N}_{\tt ns}^{(2)}} \bP\Big[|f(z)| \le 2 \delta \cdot  |f^\prime(z)| \, ,  \tfrac{1}{2}\eps d^{5/4} \le |f^\prime(z)| \le |f^{\prime\prime}(z)| \cdot \gamma d^{-5/4} \, , \, \mathcal{E}_1 \cap \mathcal{E}_2 \Big] \lesssim \eps^4 \frac{d^{\beta+10\tau}}{d} \lesssim \eps \, . 
		\end{multline}
		
		\noindent
		\underline{The sum over $\mathcal{N}_{\tt ns}^{(3)}$:} Recall that for all $z\in \mathcal{N}_{\tt ns}^{(3)}$ we have that $\min\{(1-\text{Re}(z))^{-1},n\} \ge d/2$. Using the definition of the event $\mathcal{E}_1$, we can bound the probability from~\eqref{eq:with_high_probabiity_no_close_roots_in_given_annulus_after_union_bound} by
		\begin{align*}
			\bP\Big[|f_n(z)| \le 2 \delta \cdot  &|f_n^\prime(z)| \, ,  \tfrac{1}{2}\eps d^{5/4} \le |f_n^\prime(z)| \le |f_n^{\prime\prime}(z)| \cdot \gamma d^{-5/4} \, , \, \mathcal{E}_1 \cap \mathcal{E}_2 \Big] \\ & \le \bP\Big[ |f_n(z)| \le 2\gamma d^{-\beta} \log^2 d \, , \, |f_n^\prime(z)| \le \gamma d^{5/4} \log^2 d\Big]  \le \bP\Big[ |f_n(z)| \le 1\Big] \lesssim  d^{-1/2} 
		\end{align*}
		where the last inequality follows Claim~\ref{claim:small_ball_real_points}. Since $|\mathcal{N}_{\tt ns}^{(3)}| \lesssim d^{1/2+2\beta-\tau/4}$ we get that
		\begin{equation}
			\label{eq:with_high_probabiity_no_close_roots_in_given_annulus_after_union_bound_sum_over_nonsmooth_points_3}
			\sum_{z\in \mathcal{N}_{\tt ns}^{(3)}} \bP\Big[|f(z)| \le 2 \delta \cdot  |f^\prime(z)| \, ,  \tfrac{1}{2}\eps d^{5/4} \le |f^\prime(z)| \le |f^{\prime\prime}(z)| \cdot \gamma d^{-5/4} \, , \, \mathcal{E}_1 \cap \mathcal{E}_2 \Big] \le d^{2\beta-\tau/4} \le \eps \, .
		\end{equation} 
		Plugging the bounds~\eqref{eq:with_high_probabiity_no_close_roots_in_given_annulus_after_union_bound_sum_over_smooth_points}, \eqref{eq:with_high_probabiity_no_close_roots_in_given_annulus_after_union_bound_sum_over_nonsmooth_points_1}, \eqref{eq:with_high_probabiity_no_close_roots_in_given_annulus_after_union_bound_sum_over_nonsmooth_points_2} and~\eqref{eq:with_high_probabiity_no_close_roots_in_given_annulus_after_union_bound_sum_over_nonsmooth_points_3} into the inequality~\eqref{eq:with_high_probabiity_no_close_roots_in_given_annulus_after_union_bound} finally gives that
		\begin{equation*}
			\bP\Big[\exists \alpha \in \mathcal{A}_k \ \text{and } \ \exists \alpha^\prime \in \bD(\alpha,\gamma d_k^{-5/4}) \setminus\{\alpha \} \ \text{such that } \ f_n(\alpha) = f_n(\alpha^\prime) = 0\Big] \lesssim  \eps\, . \qedhere
		\end{equation*}
	\end{proof}
	We are now ready to prove the main result of this section.
	\begin{proof}[Proof of Proposition~\ref{prop:no_close_double_roots_in_the_bulk}]
		We start by proving that for each $\eps>0$ and $L\ge 1$ there is an $r_0<1$ and $K>0$ so that
		\begin{equation}
			\label{eq:proof_no_close_double_roots_in_the_bulk_first_inequality_needed}
			\bP\Big[\exists \alpha \in \bD\big(r_0,1-K/n\big) \, , \ \exists \alpha^\prime\in \bD(\alpha,Ln^{-5/4})\setminus\{\alpha\} \ \text{s.t.} \ f_n(\alpha) = f_n(\alpha^\prime) = 0 \Big] \le \eps/2  \, ,
		\end{equation}
		for all $n\ge 1$ large enough. Recall that $d_k = (1-\rho_{k})^{-1}$ where $\rho_k$ is given by~\eqref{eq:def_of_rho_k}. We apply Lemma~\ref{lemma:with_high_probabiity_no_close_roots_in_given_annulus} with $\gamma_k = \max\{d_k^{-\beta}, L (d_k/n)^{5/4}\}$ and $\eps_k = \sqrt{\gamma_k}$ and see that
		\[
		\bP \Big[\exists \alpha \in \mathcal{A}_k \ \text{and } \ \exists \alpha^\prime \in \bD(\alpha,\gamma d_k^{-5/4}) \setminus\{\alpha \} \ \text{such that } \ f_n(\alpha) = f_n(\alpha^\prime) = 0\Big] \lesssim d_k^{-\beta/2} + L^4 \Big(\frac{d_k}{n}\Big)^{5} \,.
		\]
		Hence, we can apply the union bound and sum over the sequence of dyadic annuli $\{\mathcal{A}_k\}_{k=0}^{N}$ to get
		\begin{align*}
			\bP\Big[\exists \alpha \in &\bD\big(r_0,1-K/n\big) \, , \ \exists \alpha^\prime\in \bD(z,Ln^{-5/4})\setminus \{\alpha\} \ \text{s.t.} \ f_n(\alpha) = f_n(\alpha^\prime) = 0 \Big] \\ &\le \sum_{k=0}^{N}\bP \Big[\exists \alpha \in \mathcal{A}_k \ \text{and } \ \exists \alpha^\prime \in \bD(\alpha,\gamma d_k^{-5/4}) \setminus\{\alpha \} \ \text{such that } \ f_n(\alpha) = f_n(\alpha^\prime) = 0\Big] \\ & \lesssim \sum_{k=0}^{N} d_k^{-\beta/2} + L^4 \Big(\frac{d_k}{n}\Big)^{5} \lesssim d_N^{-\beta/2} + L^4 \Big(\frac{d_0}{n}\Big)^{5} \lesssim (1-r_0)^{\beta/2} + \frac{L^4}{K^5} \, .
		\end{align*}
		Taking now $r_0$ close to $1$ and $K\ge 1$ large enough yields~\eqref{eq:proof_no_close_double_roots_in_the_bulk_first_inequality_needed}. It remains to handle the close roots from the disk $r_0\bD$, for some $r_0 < 1$ fixed. Indeed, we will prove the slightly stronger statement: for each $r_0 <1 $ and $\eps>0$ there exists $\delta >0$ so that
		\begin{equation}
			\label{eq:proof_no_close_double_roots_in_the_bulk_second_inequality_needed}
			\limsup_{n\to \infty} \bP\Big[\exists \alpha \in r_0 \bD \, , \ \exists \alpha^\prime\in \bD(\alpha,\delta)\setminus\{\alpha\} \ \text{s.t.} \ f_n(\alpha) = f_n(\alpha^\prime) = 0 \Big] \le \eps/2  \, ,
		\end{equation}
		Since $L/n^{5/4} \le \delta$ for large enough $n$, we can combine~\eqref{eq:proof_no_close_double_roots_in_the_bulk_first_inequality_needed} and~\eqref{eq:proof_no_close_double_roots_in_the_bulk_second_inequality_needed} together with the union bound to conclude the proof of Proposition~\ref{prop:no_close_double_roots_in_the_bulk}, so it only remains to prove~\eqref{eq:proof_no_close_double_roots_in_the_bulk_second_inequality_needed}. 
		
		Recall the definition~\eqref{eq:intro-def_of_infinite_power_series} of the random Taylor series $F$. By Theorem~\ref{thm:almost_sure_double_zeros_in_disk}, we see that
		\begin{equation*}
			\lim_{\delta\downarrow 0} \bP\Big[\exists \alpha \in r_0 \bD \, , \ \exists \alpha^\prime\in \bD(\alpha,\delta)\setminus \{\alpha\} \ \text{s.t.} \ F(\alpha) = F(\alpha^\prime) = 0 \Big]  = \bP\Big[\exists \alpha \in r_0\bD \, : F(\alpha) = F^\prime(\alpha) = 0 \Big] = 0 \, .
		\end{equation*}
		Therefore, for some $\delta>0$ we have
		\[
		\bP\Big[\exists \alpha \in r_0 \bD \, , \ \exists \alpha^\prime\in \bD(\alpha,\delta)\setminus \{\alpha\} \ \text{s.t.} \ F(\alpha) = F(\alpha^\prime) = 0 \Big] \le \eps/2 \, .
		\]
		By Hurwitz's theorem, as $n\to \infty$ the roots of $f_n$ inside $r_0\bD$ converge almost surely to those of the analogous Taylor series $F$. In particular, we have the convergence
		\begin{multline*}
			\lim_{n\to \infty} \bP\Big[\exists \alpha \in r_0 \bD \, , \ \exists \alpha^\prime\in \bD(\alpha,\delta)\setminus \{\alpha\} \ \text{s.t.} \ f_n(\alpha) = f_n(\alpha^\prime) = 0 \Big] \\ = \bP\Big[\exists \alpha \in r_0 \bD \, , \ \exists \alpha^\prime\in \bD(\alpha,\delta)\setminus \{\alpha\} \ \text{s.t.} \ F(\alpha) = F(\alpha^\prime) = 0 \Big] \le \eps/2
		\end{multline*}
		which proves~\eqref{eq:proof_no_close_double_roots_in_the_bulk_second_inequality_needed}, and we are done.
	\end{proof}
	\section{Reducing Theorem~\ref{thm:poisson_convergence_near_unit_circle} to a count over a net}
	\label{sec:reducing_poisson_limit_to_count_over_a_net}
	Let $K\ge 1$ be a fixed large constant, and recall the notation $\Omega_K = \mathbb{H} \cap \mathcal{A}(1-K/n,1+K/n)$ given by~\eqref{eq:def_of_Omega_k}. The main goal of this section will be to prove Theorem~\ref{thm:poisson_convergence_near_unit_circle}, which states that the random variable $X_n(U)$ given by~\eqref{eq:def_of_X_n_U} converges in distribution to a Poisson random variable as $n\to \infty$, for all fixed $K$. Below we will reduce the count of close roots to a count over a net of smooth points in $\Omega_K$ whose quadratic approximation suggests there will be two close roots present nearby. After that, we will prove the Poisson limit holds for the count over the net via a moment computation (see Proposition~\ref{prop:poisson_moments_for_X_n_pm} below), which will be performed in Section~\ref{sec:poisson_limit_for_sum_over_the_smooth_net} below. We start by defining the following net in the annulus $\Omega_K$.
	\begin{definition}
		\label{definition:net_in_the_main_annulus}
		Set $\delta = n^{-5/4-\beta}$ and $M_1 = \lceil 4K/(\delta n) \rceil$, $M_2 = \lceil 4/\delta \rceil$. The $\delta$-net $\mathsf{N}$ of $\Omega_K$ by
		\[
		\mathsf{N} = \bigg\{ \Big(1-\frac{K}{n} + \frac{2K}{n} \cdot\frac{a}{M_1}\Big) e\big(\pi b/M_2\big) \, : \, a\in\{0,1,\ldots,M_1\} \, , \, b\in \{1,\ldots,M_2\} \bigg\} \, .
		\]
		For a point $z = \big(1-\frac{K}{n} + \frac{2K}{n} \cdot\frac{a}{M_1}\big) e\big(\pi b/M_2\big)  \in \mathsf{N}$ we further denote by
		\begin{equation*}
			\mathsf{R}_z = \bigg\{ \Big(1-\frac{K}{n} + \frac{2K}{n} \cdot\frac{a+s}{M_1}\Big) e\big(\pi (b+t)/M_2\big) \, : \, s,t\in[-1/2,1/2] \bigg\} \cap \mathbb{H} \, .
		\end{equation*} 
		Note that the collection $\{\mathsf{R}_z \}_{z\in \mathsf{N}}$ of (polar) rectangles cover the annulus $\Omega_K$. We will also use the notation $\mathsf{R}_z^\circ \subset \mathsf{R}_z$, to denote the smaller rectangles
		\begin{equation*}
			\mathsf{R}_z^\circ = \bigg\{ \Big(1-\frac{K}{n} + \frac{2K}{n} \cdot\frac{a+s}{M_1}\Big) e\big(\pi (b+t)/M_2\big) \, : \, s,t\in[-1/2+n^{-\beta/2},1/2-n^{-\beta/2}] \bigg\} \cap \mathbb{H} \, .
		\end{equation*} 
		We further split the net into the smooth points and non-smooth points, analogous to what we did in~\eqref{eq:decomposition_into_smooth_and_non_smooth_in_the_bulk}. Denote by
		\[
		\mathsf{N}_{\tt s} = \Big\{ z\in \mathsf{N} \, : \,  z \ \text{is } n^{7\tau}\text{-smooth}\Big\} \, ,
		\]
		and set $\mathsf{N}_{\tt ns} = \mathsf{N} \setminus \mathsf{N}_{\tt s}$. 	\end{definition}
	For $z\in \mathsf{N}_{\tt s}$ and an finite union of intervals $U\subset \bR_{\ge 0}$ we define the event
	\begin{equation}
		\label{eq:def_of_event_A_z_U_for_net_point}
		A_z(U) = \left\{\frac{f_n(z)}{f_n^\prime(z)} - z \in \mathsf{R}_z^{\circ} \, , \ \frac{2|f_n^\prime(z)|}{|f_n^{\prime\prime}(z)|} \in n^{-5/4} U \, , \ |f_n^\prime(z)| \ge \frac{n^{5/4}}{\log n} \right\}
	\end{equation}
	The basic idea is that the event~\eqref{eq:def_of_event_A_z_U_for_net_point} implies that the quadratic approximation of $f_n$ at the point $z$ will predict a root in the rectangle $\mathsf{R}_z$, and also a second root at distance $n^{-5/4} U$ from $z$. Heuristically, we want to show that with high probability as $n\to \infty$,
	\begin{equation}
		\label{eq:heuristic_formula_for_X_n}
		X_n(U) = \frac{1}{2} \sum_{z \in \mathsf{N}_{\tt s}}  \mathbf{1}_{A_z(U)} \, .
	\end{equation}
	Furthermore, via a moment computation, we will show that the right-hand side of~\eqref{eq:heuristic_formula_for_X_n} has a Poisson scaling limit. The reason for the prefactor $\frac{1}{2}$ on the right-hand side of~\eqref{eq:heuristic_formula_for_X_n} is that each pair of close roots makes two net points ``ring", while it is only counted once in $X_n(U)$. In turn, the goal of this section is to prove~\eqref{eq:heuristic_formula_for_X_n} in its more technical form, given by Proposition~\ref{prop:two_sided_inequality_for_X_n} below as a two-sided inequality. In Section~\ref{sec:poisson_limit_for_sum_over_the_smooth_net} we will perform the moment computation which shows that both random variables converges in law to the same Poisson variable as $n\to \infty$.
	
	\subsection{Setting up the two-sided inequality}
	We will consider the following ``bad" events
	\begin{align}
		\label{eq:def_of_bad_events_in_annulus}
		\nonumber
		\mathcal{B}_1 &= \bigg\{ \exists \alpha\in \Omega_K \, : \, f_n(\alpha) = 0 , \, |f_n^\prime(\alpha)| \le \frac{2n^{5/4}}{\log n} \bigg\} \, , \\ \nonumber \mathcal{B}_2 &= \bigg\{ \exists \alpha \in \bigcup_{z\in \mathsf{N}_{\tt ns}} \mathsf{R}_z  \, ,  \ \text{and } \ \exists \alpha^\prime \in \bD\Big(\alpha,\frac{\log n}{n^{5/4}} \Big) \setminus\{\alpha \}\, : \, f_n(\alpha) = f_n(\alpha^\prime) = 0 \bigg\} \, , \\ \nonumber  \mathcal{B}_3 &= \bigg\{ \exists \alpha \in \bigcup_{z\in \mathsf{N}_{\tt s}} \mathsf{R}_z\setminus \mathsf{R}_z^\circ \, , \ \text{and } \ \exists \alpha^\prime \in \bD\Big(\alpha,\frac{\log n}{n^{5/4}} \Big) \setminus\{\alpha \}\, : \, f_n(\alpha) = f_n(\alpha^\prime) = 0 \bigg\} \, , \\ \mathcal{B} &= \mathcal{B}_1 \cup \mathcal{B}_2 \cup \mathcal{B}_3 \, .
	\end{align}
	We will also consider the ``good" event
	\begin{equation}
		\label{eq:def_of_good_event_in_annulus}
		\mathcal{G} = \bigg\{ \max_{0\le j \le 3} \max_{z\in \Omega_{2K}} \frac{|f_n^{(j)}(z)|}{n^{j+1/2}} \le \log^2 n \bigg\} \, ,
	\end{equation}
	and note that by Lemma~\ref{lemma:control_on_maximum_of_polynomial_and_derivatives} we have $\bP\big[\mathcal{G}^c\big] \le \exp(-c\log^2 n)$.
	\begin{lemma}
		\label{lemma:in_annulus_good_events_are_likely}
		With the above notations, we have $\displaystyle \lim_{n\to \infty} \bP\big[\mathcal{G}\cap \mathcal{B}^c\big] = 1$. 
	\end{lemma}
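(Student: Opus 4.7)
The plan is to bound $\bP[(\mathcal{G} \cap \mathcal{B}^c)^c] \le \bP[\mathcal{G}^c] + \bP[\mathcal{B}_1] + \bP[\mathcal{B}_2] + \bP[\mathcal{B}_3]$ and show each of the four terms vanishes. The estimate $\bP[\mathcal{G}^c] \le \exp(-c\log^2 n)$ follows from Lemma~\ref{lemma:control_on_maximum_of_polynomial_and_derivatives} applied to $f_n$ (and to its reciprocal $z^n f_n(1/z)$, which has the same law) on a disk of radius $1-2K/n$; the bookkeeping needed to cover the part of $\Omega_{2K}$ with $|z|>1$ via the reciprocal symmetry is routine. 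For $\mathcal{B}_1$, the contribution from $\Omega_K \cap \bD$ is controlled by Lemma~\ref{lemma:with_high_probability_no_roots_with_too_small_derivative} applied with $k=-1$ (so that $\mathcal{A}_{-1}=\mathcal{A}(1-K/n,1)$, $d_{-1}=n$) and $\eps=2/\log n$, giving a bound of $O(1/\log n)$; the outside-the-disk part is handled by the same reciprocal symmetry, since a root $\alpha$ of $f_n$ with $|\alpha|>1$ and $|f_n'(\alpha)|$ small translates, up to harmless prefactors, into the analogous event for $z^n f_n(1/z)$ at $1/\alpha$.

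Both $\mathcal{B}_2$ and $\mathcal{B}_3$ are handled by union bounds over suitable subsets of the net $\mathsf{N}$, closely following the scheme of Lemma~\ref{lemma:with_high_probabiity_no_close_roots_in_given_annulus}. Working on $\mathcal{G} \cap \mathcal{B}_1^c$, suppose $\alpha$ is a root of $f_n$ lying in $\mathsf{R}_z$ for some $z \in \mathsf{N}$, with a partner root $\alpha' \in \bD(\alpha, \log n/n^{5/4})$. Claim~\ref{claim:close_roots_implies_ratio_of_derivative_and_second_derivative_to_be_small} (applied with $d=n$, $\eps=2/\log n$ coming from $\mathcal{B}_1^c$, and $t=2\log n/n^{5/4}$) forces $2|f_n'(\alpha)|/|f_n''(\alpha)| \le t(1+n^{-\beta})$, and Claim~\ref{claim:root_with_small_derivative_implies_smallness_in_net_point} (with $\delta=n^{-5/4-\beta}$) then transfers this to the small-ball event at $z$: $|f_n(z)| \le 2\delta|f_n'(z)|$, $|f_n'(z)|$ in a dyadic range in $[\tfrac{1}{3}n^{5/4}/\log n, 2n^{3/2}\log^2 n]$, and $|f_n''(z)| \ge n^{5/2}/(2\log^2 n)$, exactly the event appearing in~\eqref{eq:with_high_probabiity_no_close_roots_in_given_annulus_after_union_bound}.

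For $\mathcal{B}_2$, union-bound over $z \in \mathsf{N}_{\tt ns}$ and decompose $\mathsf{N}_{\tt ns} = \mathsf{N}_{\tt ns}^{(1)} \cup \mathsf{N}_{\tt ns}^{(2)} \cup \mathsf{N}_{\tt ns}^{(3)}$ by distance to the real axis, as in~\eqref{eq:further_decomposition_of_non_smooth_in_the_bulk}. Apply Claim~\ref{claim:small_ball_away_from_real_axis}, Claim~\ref{claim:small_ball_points_near_the_real_axis}, and Claim~\ref{claim:small_ball_real_points} respectively to the translated small-ball event; the three sums essentially replicate the non-smooth estimates \eqref{eq:with_high_probabiity_no_close_roots_in_given_annulus_after_union_bound_sum_over_nonsmooth_points_1}--\eqref{eq:with_high_probabiity_no_close_roots_in_given_annulus_after_union_bound_sum_over_nonsmooth_points_3}, and are each $o(1)$ because each $|\mathsf{N}_{\tt ns}^{(i)}|$ is a factor of at least $n^{1-O(\tau)}$ smaller than $|\mathsf{N}|$.

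For $\mathcal{B}_3$, union-bound over $z \in \mathsf{N}_{\tt s}$ and refine the small-ball event. The constraint $\alpha \in \mathsf{R}_z \setminus \mathsf{R}_z^\circ$, combined with the Taylor prediction $\alpha - z \approx -f_n(z)/f_n'(z)$ (which on $\mathcal{G} \cap \mathcal{B}_1^c$ is accurate to within $O(|\alpha-z|^2 |f_n''(z)|/|f_n'(z)|) = O(n^{-5/4-2\beta}\log^3 n) = o(\delta n^{-\beta/2})$), forces $f_n(z)/f_n'(z) - z$ into a strip of relative area $\lesssim n^{-\beta/2}$ inside $\mathsf{R}_z$. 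For smooth $z$, the near-Gaussian joint density for $(f_n(z),f_n'(z))$ supplied by Theorem~\ref{thm:small_ball_comparison_to_Gaussian_for_tuples} shows that the probability of this thin-strip event scales linearly with its area, gaining a factor $n^{-\beta/2}$ over the smooth sum~\eqref{eq:with_high_probabiity_no_close_roots_in_given_annulus_after_union_bound_sum_over_smooth_points} and yielding $\bP[\mathcal{B}_3] \lesssim n^{-\beta/2}\log^{O(1)} n$. The main obstacle is executing this thin-strip bound rigorously, since Lemma~\ref{lemma:small_ball_probability_smooth_points_bound_single_point} is insensitive to the shape of the small-ball region; the cleanest workaround is to change variables $(f_n(z),f_n'(z)) \mapsto (f_n(z)/f_n'(z),f_n'(z))$ (valid on $\mathcal{B}_1^c$, where $f_n'(z)$ does not vanish) and invoke the tuple comparison theorem on the resulting pair to pick up the genuine area factor.
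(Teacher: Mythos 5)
Your decomposition $\bP[(\mathcal{G}\cap\mathcal{B}^c)^c]\le\bP[\mathcal{G}^c]+\bP[\mathcal{B}_1]+\bP[\mathcal{B}_2]+\bP[\mathcal{B}_3]$ is exactly the paper's, and your treatment of $\mathcal{G}^c$, $\mathcal{B}_1$ (via Lemma~\ref{lemma:with_high_probability_no_roots_with_too_small_derivative} at $k=-1$, $\eps = 2/\log n$, with the reciprocal map for $|z|>1$), and $\mathcal{B}_2$ (union bound over $\mathsf{N}_{\tt ns}^{(1)},\mathsf{N}_{\tt ns}^{(2)},\mathsf{N}_{\tt ns}^{(3)}$ with Claims~\ref{claim:small_ball_away_from_real_axis}--\ref{claim:small_ball_real_points}) all line up with Claim~\ref{claim:eliminating_non_smooth_points_in_annulus} and the surrounding material.

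Where you diverge is in $\mathcal{B}_3$. You correctly observe that the naive application of Lemma~\ref{lemma:small_ball_probability_smooth_points_bound_single_point} at scale $\delta$ is insensitive to the thin-strip geometry of $\mathsf{R}_z\setminus\mathsf{R}_z^\circ$, and you then reach for the change of variables $(f_n,f_n')\mapsto(f_n/f_n',f_n')$ plus Theorem~\ref{thm:small_ball_comparison_to_Gaussian_for_tuples}. That route would probably work, but it is heavy machinery (and incomplete as written: the image of a thin strip under the inverse map is not a rectangle, so one must re-cover by small boxes and invoke both Theorem~\ref{thm:small_ball_comparison_to_Gaussian_for_tuples} and Proposition~\ref{prop:small_ball_estimate_for_tuples_of_smooth}, essentially repeating the argument of Lemma~\ref{lemma:for_spread_tuples_probabilities_of_smooth_points_factor}). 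The paper's Claim~\ref{claim:eliminating_roots_near_boundaries} sidesteps all of this with an elementary refinement trick: cover $\mathsf{R}_z\setminus\mathsf{R}_z^\circ$ (which has area $\simeq\delta^2 n^{-\beta/2}$) by $\lesssim n^{\beta/2}$ disks of radius $\widetilde\delta = \delta n^{-\beta/2}$, and apply the \emph{same} small-ball Lemma~\ref{lemma:small_ball_probability_smooth_points_bound_single_point} at each finer center $\widetilde z$ (which is still $n^{5\tau}$-smooth). The point you missed is that the per-disk bound is \emph{not} independent of the disk radius: the constraint fed into the lemma is $|f_n(\widetilde z)|\le 2\widetilde\delta\,|f_n'(\widetilde z)|$, so shrinking $\delta\to\widetilde\delta$ cuts the per-disk probability by $(\widetilde\delta/\delta)^2 = n^{-\beta}$, which more than compensates for the $n^{\beta/2}$ disks and yields the net $n^{-\beta/2}$ gain, with no Gaussian comparison needed. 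It is worth internalizing this: the quadratic dependence on $\delta$ in the small-ball bound already encodes the area sensitivity you were trying to extract via the change of variables.
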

	On the likely event $\mathcal{G}\cap \mathcal{B}^c$, we can formulate~\eqref{eq:heuristic_formula_for_X_n} more precisely. To do this we will need to introduce some further notation. For $z\in \mathsf{N}$ define
	\begin{equation*}
		\mathsf{R}_z^\sharp = \bigg\{ \Big(1-\frac{K}{n} + \frac{2K}{n} \cdot\frac{a+s}{M_1}\Big) e\big(\pi (b+t)/M_2\big) \, : \, s,t\in[-1/2+n^{-3\beta/4},1/2-n^{-3\beta/4}] \bigg\} \cap \mathbb{H} \, ,
	\end{equation*} 
	and note that $\mathsf{R}_z^\circ \subset \mathsf{R}_z^\sharp \subset \mathsf{R}_z$. Furthermore, for a finite union of intervals $U\subset \bR_{\ge0}$, we set
	\begin{equation}
		\label{eq:blow_up_and_blow_down_of_U}
		U^+ = \Big\{x\in \bR_{\ge 0} \, : \, \text{dist}(x,U) \le \frac{1}{\log n} \Big\}\, ,  \quad U^- = \Big\{x\in U \, : \, \text{dist}(x,U^c) \ge \frac{1}{\log n} \Big\} \, .
	\end{equation}
	Finally, we set 
	\begin{equation}
		\label{eq:def_of_A_z_U_+}
		A_z^+(U) = \bigg\{\frac{f_n(z)}{f_n^\prime(z)} - z \in \mathsf{R}_z^{\sharp} \, , \ \frac{2|f_n^\prime(z)|}{|f_n^{\prime\prime}(z)|} \in n^{-5/4} U^+ \, , \ |f_n^\prime(z)| \ge \frac{n^{5/4}}{\log n} \bigg\} \, , 
	\end{equation}
	and 
	\begin{equation}
		\label{eq:def_of_A_z_U_-}
		A_z^-(U) = \bigg\{\frac{f_n(z)}{f_n^\prime(z)} - z \in \mathsf{R}_z^{\circ} \, , \ \frac{2|f_n^\prime(z)|}{|f_n^{\prime\prime}(z)|} \in n^{-5/4} U^- \, , \ |f_n^\prime(z)| \ge \frac{n^{5/4}}{\log n} \bigg\} \, .
	\end{equation}
	Note that $A_z^-(U)\subset A_z^+(U)$. This leads us to the definition
	\begin{equation}
		\label{eq:def_of_X_n_pm}
		X_n^\pm (U) = \sum_{\{z,w\} \in \binom{\mathsf{N}_{\tt s}}{2}} \mathbf{1}_{A_z^\pm(U)\cap A_w^\pm(U)} \,  \mathbf{1}_{|z-w| \le \log n/n^{5/4}} \, , 
	\end{equation}
	where we denote by $\binom{S}{2}$ the set of all unordered pairs from a finite set $S$.
	\begin{proposition}
		\label{prop:two_sided_inequality_for_X_n}
		On the event $\mathcal{G}\cap \mathcal{B}^c$ we have
		\[
		X_n^-(U) \le X_n(U) \le X_n^+(U) \, .
		\]
	\end{proposition}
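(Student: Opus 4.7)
The plan is to set up a correspondence between pairs of close roots in $\Omega_K$ and pairs of close net points in $\mathsf{N}_{\tt s}$ whose quadratic-approximation events $A_z^\pm$ hold. I work throughout on $\mathcal{G} \cap \mathcal{B}^c$. The good event $\mathcal{G}$ gives uniform control on $f_n, f_n^\prime, f_n^{\prime\prime}, f_n^{\prime\prime\prime}$ on $\Omega_{2K}$, while $\mathcal{B}^c$ forces any close root pair $(\alpha, \alpha')$ with $|\alpha-\alpha'| \in n^{-5/4} U$ to satisfy: $|f_n^\prime(\alpha)|, |f_n^\prime(\alpha')| \ge 2n^{5/4}/\log n$ from $\mathcal{B}_1^c$; both $\alpha,\alpha'$ lie in rectangles $\mathsf{R}_z, \mathsf{R}_w$ with smooth centers $z, w \in \mathsf{N}_{\tt s}$ from $\mathcal{B}_2^c$; and in fact both lie in the shrunken $\mathsf{R}_z^\circ, \mathsf{R}_w^\circ$ from $\mathcal{B}_3^c$.

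For the upper bound $X_n(U) \le X_n^+(U)$, starting from such $(\alpha,\alpha')$ I take the net points $z,w$ above and verify $A_z^+(U) \cap A_w^+(U)$ together with $|z-w| \le \log n/n^{5/4}$. The bound $|z-\alpha| \lesssim \delta = n^{-5/4-\beta}$ combined with Taylor expansion via $\mathcal{G}$ shows $|f_n^\prime(z)| \ge n^{5/4}/\log n$ and $z - f_n(z)/f_n^\prime(z) = \alpha + O(n^{-5/4-2\beta}\log^3 n)$; this error is absorbed by the buffer between $\mathsf{R}_z^\circ$ and $\mathsf{R}_z^\sharp$. Meanwhile, Lemma~\ref{lemma:quadratic_approximation_predicts_second_root} applied at $\alpha$ gives $|\alpha-\alpha'| = (2|f_n^\prime(\alpha)|/|f_n^{\prime\prime}(\alpha)|)(1+o(1))$, and transferring from $\alpha$ to the nearby $z$ costs another $1+O(1/\log n)$ factor, placing $2|f_n^\prime(z)|/|f_n^{\prime\prime}(z)|$ inside $n^{-5/4} U^+$. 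The map $(\alpha,\alpha') \mapsto \{z,w\}$ is injective since distinct close pairs live in distinct rectangle pairs.

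For the lower bound $X_n^-(U) \le X_n(U)$ I run the argument in reverse. Given $\{z,w\} \in \binom{\mathsf{N}_{\tt s}}{2}$ with $A_z^-(U) \cap A_w^-(U)$ holding and $|z-w| \le \log n / n^{5/4}$, the condition $f_n(z)/f_n^\prime(z) - z \in \mathsf{R}_z^\circ$ together with $|f_n^\prime(z)| \ge n^{5/4}/\log n$ permits a Rouch\'e argument, comparing $f_n$ to its linearization at $z$ on a small disk around the Newton iterate, to locate an actual root $\alpha \in \mathsf{R}_z^\circ$. The condition $2|f_n^\prime(z)|/|f_n^{\prime\prime}(z)| \in n^{-5/4} U^-$ then, via the quadratic expansion of $f_n$ at $\alpha$, produces a second root $\alpha'$ with $|\alpha-\alpha'| \in n^{-5/4} U$ (the $U^-$ shrink absorbing the approximation error). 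Performing the same construction at $w$ yields roots $\beta \in \mathsf{R}_w^\circ$ and a partner $\beta'$; since the Rouch\'e step identifies each root uniquely and $|z-w|$ is tiny, one has $\{\alpha,\alpha'\}=\{\beta,\beta'\}$, giving a genuine close root pair in $\Omega_K$ that contributes to $X_n(U)$.

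The main obstacle is the bookkeeping of error terms. The Newton-iteration error $O(n^{-5/4-2\beta}\log^3 n)$ and the multiplicative transfer error $1+O(1/\log n)$ must be dominated by the spatial buffers built into $\mathsf{R}_z^\circ, \mathsf{R}_z^\sharp, \mathsf{R}_z$ and by the $1/\log n$ separation between $U$ and $U^\pm$. The buffer sizes were engineered for this accounting to close, but the direction of each error must align with the monotonicity of each inequality: enlarging $\mathsf{R}_z^\circ$ to $\mathsf{R}_z^\sharp$ and $U$ to $U^+$ in the upper bound, and shrinking $U^-$ to $U$ (with $\mathsf{R}_z^\circ$ used as-is in the Rouch\'e step) in the lower bound.
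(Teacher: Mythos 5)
Structurally you take the paper's route: on $\mathcal{G}\cap\mathcal{B}^c$, pass between close root pairs and the net events $A_z^\pm(U)$ via Taylor expansions and Rouch\'e. Your upper bound rests on the injectivity claim ``distinct close pairs live in distinct rectangle pairs,'' but this is the fact that needs proving rather than a free observation: you must show that on $\mathcal{G}\cap\mathcal{B}_1^c$ no rectangle $\mathsf{R}_z^\circ$ contains two roots, which follows because two roots $\alpha,\beta\in\mathsf{R}_z^\circ$ at distance $\le 2\delta$ would force, by Taylor, $|f_n'(\alpha)| = O(n^{5/4-\beta}\log^2 n)$, contradicting $\mathcal{B}_1^c$. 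This is a short argument, but it is the engine of the upper bound and should not be elided.

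The lower bound has a more serious gap. You write ``since the Rouch\'e step identifies each root uniquely and $|z-w|$ is tiny, one has $\{\alpha,\alpha'\}=\{\beta,\beta'\}$,'' but this does not follow. Your Rouch\'e steps locate roots inside disks of radius $r\asymp n^{-5/4-2\beta}\log^4 n$ around the linear and quadratic Newton predictions at $z$ and at $w$, while the distance between (say) the quadratic prediction at $z$ and the Newton iterate at $w$ can be of order $\delta = n^{-5/4-\beta}$, which is far larger than $r$. The disks from $z$ and from $w$ therefore need not overlap, and nothing you have said rules out $\alpha,\alpha',\beta,\beta'$ being four distinct roots, in which case $\alpha'$ might fail to lie in $\mathsf{R}_w^\circ$ and the indicator contributing to $X_n(U)$ would not fire. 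The missing ingredient is the paper's Lemma~\ref{lemma:existence_of_two_roots_implies_macroscopic_separation}: a third Rouch\'e argument on the \emph{much larger} disk of radius $\asymp 1/(n\log^4 n)$, comparing $f_n$ to the degree-two Taylor polynomial $P_z(w)=f_n(z)+(w-z)f_n'(z)+\tfrac{(w-z)^2}{2}f_n''(z)$, shows that $f_n$ has \emph{exactly} two roots there. Since this disk contains all of $\alpha,\alpha',\beta,\beta'$ (using $|z-w|\le\log n/n^{5/4}\ll 1/(n\log^4 n)$), the count forces $\{\alpha,\alpha'\}=\{\beta,\beta'\}$, which is what you needed. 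Without this counting lemma, uniqueness within each tiny Rouch\'e disk gives no control over the global configuration of roots.
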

	For $x\in \bR$ and $m\ge 1$ recall that the falling factorial is given by
	\[
	(x)_m = x(x-1)\cdots(x-m+1) \, .
	\]
	We will show that both $X_n^+(U)$ and $X_n^-(U)$ have limiting moments matching those of a Poisson random variable, with the intensity $\la_{K,U}$ given by~\eqref{eq:def_of_la_K_U}. This is stated in the next proposition, the proof of which is postponed to Section~\ref{sec:poisson_limit_for_sum_over_the_smooth_net} below. 
	\begin{proposition}
		\label{prop:poisson_moments_for_X_n_pm}
		Let $X_n^+(U)$ and $X_n^-(U)$ be given by~\eqref{eq:def_of_X_n_pm}. Then for all $m\ge 1$ we have
		\[
		\limsup_{n\to \infty} \bE\Big[\big(X_n^+(U)\big)_{m}\Big] \le \big(\la_{K,U}\big)^m \, ,
		\]
		and 
		\[
		\liminf_{n\to \infty}  \bE\Big[\big(X_n^-(U)\big)_{m}\Big] \ge \big(\la_{K,U}\big)^m \, .
		\]
	\end{proposition}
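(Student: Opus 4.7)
The plan is to compute both factorial moments directly and show they converge to $\lambda_{K,U}^m$. Expanding the definitions,
\[
\bE\big[(X_n^\pm(U))_m\big] = \sum_{(P_1,\ldots,P_m)} \bP\bigg[\bigcap_{i=1}^m A_{z_i}^\pm(U)\cap A_{w_i}^\pm(U)\bigg]\,,
\]
where the sum runs over ordered $m$-tuples of \emph{distinct} unordered pairs $P_i=\{z_i,w_i\}\in\binom{\mathsf{N}_{\tt s}}{2}$ with $|z_i-w_i|\le \log n / n^{5/4}$. I would split this sum into a ``generic'' main term and error terms according to the geometric configuration of the points $V=\bigcup_i P_i$. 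The generic regime consists of tuples with $|V|=2m$ (all pairs point-disjoint) and pairwise separation $\mathrm{dist}(P_i,P_j)\ge \gamma/n$ between distinct pairs, for some $\gamma=n^{-c\beta}$.

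\textbf{Error terms.} For non-generic tuples one has $|V|<2m$ (pairs share a net point, forcing near-coincidence of three roots) or two pairs are closer than $\gamma/n$ to each other. In either case, all the involved net points form an $n^\kappa$-smooth, $\gamma^\prime$-spread tuple for some $\gamma^\prime\ge n^{-1/(30m)}$, and I would apply Proposition~\ref{prop:small_ball_estimate_for_tuples_of_smooth} to control the joint small-ball probability on the vector $S_n(\mathbf{v})$ for the distinct points $\mathbf{v}$ in $V$. The event $A_z^\pm\cap A_w^\pm$ translates (via the control $|f_n^\prime(z)|\ge n^{5/4}/\log n$) into a small-ball event on $S_n(z)$ of effective radius at most $n^{-3/4-\beta/2}\log n$. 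Combining this bound with a volumetric count of configurations, the $\gamma^{-O(m^2)}$ cost is absorbed by a power saving in $n$, yielding total contribution $o(1)$.

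\textbf{Main term via Gaussian comparison.} For a generic tuple, since each $|z_i-w_i|\le \log n/n^{5/4}\ll \delta^{-1}$, Taylor expansion on the good event $\mathcal{G}$ (from Lemma~\ref{lemma:in_annulus_good_events_are_likely} and~\eqref{eq:def_of_good_event_in_annulus}) rewrites $S_n(w_i)$ as an explicit affine function of $S_n(z_i)$ up to negligible error. This reduces $A_{z_i}^\pm\cap A_{w_i}^\pm$ to a small-ball event $B_{z_i,w_i}$ depending only on $S_n(z_i)$, where $B_{z_i,w_i}$ is a rectangle in $\bC^3\simeq\bR^6$ whose sides have sizes at least $n^{-M}$ for some $M$. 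Since the representative tuple $(z_1,\ldots,z_m)$ is $n^{7\tau}$-smooth and (after rescaling $\gamma$) effectively $1$-spread, I would then invoke Theorem~\ref{thm:small_ball_comparison_to_Gaussian_for_tuples} to replace the probability by the corresponding Gaussian probability of $\Gamma_n(\mathbf{z})$ up to an error of order $n^{-1/2}$ times the Gaussian volume.

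\textbf{Factorization and identification of the limit.} The covariance kernel of $\Gamma_n(\mathbf{z})$ decays at pairwise separations $\ge \gamma/n$, so the block of coordinates at $z_i$ and $z_j$ becomes asymptotically independent. This yields the factorization
\[
\bP\Big[\Gamma_n(\mathbf{z})\in \prod_i B_{z_i,w_i}\Big] \sim \prod_{i=1}^m \bP\Big[\Gamma_n(z_i)\in B_{z_i,w_i}\Big]\,.
\]
Summing this product over the $w_i$'s and then over the generic representative tuples, each factor becomes the first-moment contribution from the $z_i$-neighbourhood, whose sum converges to $\mathfrak{c}_\ast(K)\int_{U^\pm} t^3\,\mathrm{d}t$ (this is the computation of Section~\ref{sec:limiting_intensity_for_gaussian}). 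Since $U^\pm\to U$ as $n\to\infty$, the $m$-fold product tends to $\lambda_{K,U}^m$ from above (for $X_n^+$) and from below (for $X_n^-$), which gives the desired bounds.

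\textbf{Main obstacle.} The main technical hurdle is the Taylor-expansion reduction from a pair-event to a single-point small-ball event: one needs to ensure that the resulting region $B_{z_i,w_i}\subset\bR^6$ has sides bounded below by $n^{-M}$ so that Theorem~\ref{thm:small_ball_comparison_to_Gaussian_for_tuples} applies, while also being tight enough that its Gaussian measure matches the heuristic count $n^{-7/2}\log^2 n$ per companion $w_i$. Coordinating the scales of $U^\pm$, $\mathsf{R}^\sharp$, $\mathsf{R}^\circ$, the rectangle $B$, and the spread parameter $\gamma$ — all while preserving both the upper and lower bounds — is the delicate part of the argument.
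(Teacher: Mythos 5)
Your broad strategy --- split the falling-factorial sum into a generic regime (well-spread tuples with $2m$ distinct points) and error terms, bound the errors via the small-ball estimate for tuples (Proposition~\ref{prop:small_ball_estimate_for_tuples_of_smooth}), and compute the main term via the Gaussian comparison (Theorem~\ref{thm:small_ball_comparison_to_Gaussian_for_tuples}) --- matches the paper at a high level. However, there is a genuine gap in the main term: the step where you ``sum the product over the $w_i$'s'' to get ``the first-moment contribution'' is not justified as written, and this is exactly where the paper has to do real work.

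The issue is multiplicity. After Taylor-expanding $S_n(w_i)$ in terms of $S_n(z_i)$ (which you correctly identify as possible on $\mathcal{G}$), you want
\[
\sum_{w_i:\,0<|z_i-w_i|\le\log n / n^{5/4}} \bP\big[A_{z_i}^\pm \cap A_{w_i}^\pm\big] \approx \bP_{{\sf G}}\big[A_{z_i}\big]\,,
\]
but nothing in your argument controls how many $w_i$ can satisfy $A_{w_i}^\pm$ simultaneously (double-counting) or whether any does at all (under-counting). The paper resolves this via Lemma~\ref{lemma:existence_of_two_roots_implies_macroscopic_separation}: on $A_z^\pm(U)\cap\mathcal{G}$ the polynomial $f_n$ has \emph{exactly two} roots in a ball of radius $(n\log^4 n)^{-1}$ around $z$, so by Claim~\ref{claim:if_there_is_a_root_linear_approx_sees_it_same_with_second_root} at most one $w_i$ within distance $\log n/n^{5/4}$ of $z_i$ can have $A_{w_i}^+$ occur. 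This makes $\sum_{w_i}\mathbf{1}_{A_{w_i}^+}\le 1$ \emph{pointwise} on $\mathcal{G}$, which is what collapses the upper bound to $2^{-m}\sum_{\mathbf{z}}\bP[\bigcap_j A_{z_j}^+]$ --- a quantity to which the factorization lemma (Lemma~\ref{lemma:for_spread_tuples_probabilities_of_smooth_points_factor}) applies directly. Your proposal never invokes this exact-two-roots observation and so cannot legitimately pass from a sum over pairs to a sum over representative points.

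The lower bound is the more delicate direction and your proposal does not address it at all. There one needs the complementary statement: on $A_{z_j}^-\cap\mathcal{G}$ \emph{some} $w_j$ does satisfy $A_{w_j}^-$, except on a quantifiably rare event. The paper introduces the event $\Upsilon_{z_j}$ of~\eqref{eq:def_of_event_upsilon} (the quadratic prediction for the second root lands in a thin boundary strip $\mathsf{R}_w\setminus\mathsf{R}_w^\circ$) and proves in Lemma~\ref{lemma:bound_on_probability_of_tuple_with_upsilon} that $\bP[\Upsilon_{z_1}\cap A_{z_2}^+\cap\cdots\cap A_{z_m}^+]\lesssim (n^{-3/2-2\beta})^m\, n^{-\beta/10}$, which is absorbed. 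Without some such under-count control, the lower bound cannot close. A smaller imprecision: the pair-event $A_{z_i}^\pm\cap A_{w_i}^\pm$ does not reduce to a single rectangle in $\bR^6$ --- the conditions on $f_n/f_n'$ and $f_n'/f_n''$ cut out a non-rectangular Lipschitz region. The paper handles this inside Lemma~\ref{lemma:for_spread_tuples_probabilities_of_smooth_points_factor} by tiling the region with $n^{-M}$-cubes and applying Theorem~\ref{thm:small_ball_comparison_to_Gaussian_for_tuples} cube by cube, then bounding the cubes meeting the boundary via Proposition~\ref{prop:small_ball_estimate_for_tuples_of_smooth}; you would need the same tiling argument.
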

	Assuming for the moment Propositions~\ref{prop:two_sided_inequality_for_X_n} and~\ref{prop:poisson_moments_for_X_n_pm}, we can provide the proof of Theorem~\ref{thm:poisson_convergence_near_unit_circle}. 
	\begin{proof}[Proof of Theorem~\ref{thm:poisson_convergence_near_unit_circle}]
		Since $X_n^-(U) \le X_n^+(U)$ deterministically, Proposition~\ref{prop:poisson_moments_for_X_n_pm} implies that 
		\[
		\lim_{n\to \infty} \bE\Big[\big(X_n^+(U)\big)_{m}\Big] = \lim_{n\to \infty} \bE\Big[\big(X_n^-(U)\big)_{m}\Big] = \big(\la_{K,U}\big)^m \, .
		\]
		In particular, by the method of moments, we conclude that both $X_n^+(U)$ and $X_n^-(U)$ converge in distribution, as $n\to \infty$, to a Poisson random variable with parameter $\la_{K,U}$. By combing Proposition~\ref{prop:two_sided_inequality_for_X_n} with Lemma~\ref{lemma:in_annulus_good_events_are_likely}, we conclude that for all $s\in \bR$ we have
		\[
		\bP\big[X_n^+(U) \ge s\big] - o(1) \le \bP\big[X_n(U) \ge s\big] \le \bP\big[X_n^-(U) \ge s\big] + o(1)
		\] 
		as $n\to \infty$, which implies the desired convergence for $X_n(U)$ as well.
	\end{proof}
	The remaining of this section is devoted to the proof of Lemma~\ref{lemma:in_annulus_good_events_are_likely} and Proposition~\ref{prop:two_sided_inequality_for_X_n}.
	\subsection{Upper bound}
	Before proving the upper bound in Proposition~\ref{prop:two_sided_inequality_for_X_n}, we require the following application of Rouch\'e's theorem.
	\begin{claim}
		\label{claim:if_there_is_a_root_linear_approx_sees_it_same_with_second_root}
		For all $\beta>0$ and for each $K\ge 1$, $U\subset \bR_{\ge 0}$ the following holds for all large enough $n$. On the event $\mathcal{B}_1^c\cap \mathcal{G}$, if $\alpha\in R_z^\circ$ with $f_n(\alpha) = 0$ then
		\[
		z -\frac{f_n(z)}{f_n^\prime(z)} \in R_z^\sharp \, .
		\]
		Furthermore, if there exists some $\alpha^\prime$ such that $f_n(\alpha^\prime) = 0$ and $|\alpha-\alpha^\prime| \in n^{-5/4}U$, then 
		\[
		\frac{2|f_n^\prime(z)|}{|f_n^{\prime\prime}(z)|} \in U^+ \, .
		\] 
	\end{claim}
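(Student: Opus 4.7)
The plan is a two-step Newton-step/Taylor-expansion argument, leveraging the derivative bounds $|f_n^{(j)}| \le n^{j+1/2}\log^2 n$ on $\Omega_{2K}$ from $\mathcal{G}$ and the lower bound $|f_n'(\alpha)| \ge 2n^{5/4}/\log n$ for any root $\alpha \in \Omega_K$ from $\mathcal{B}_1^c$. Throughout, the cells $\mathsf{R}_z$ have both side-lengths of order $\delta := n^{-5/4-\beta}$, so $|\alpha - z| \lesssim \delta$ for every $\alpha \in \mathsf{R}_z$.

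For the first assertion, I Taylor-expand $f_n$ at $z$ and substitute $f_n(\alpha) = 0$ to obtain
\[
z - \frac{f_n(z)}{f_n'(z)} \;=\; \alpha \;+\; \frac{(\alpha - z)^2}{2} \cdot \frac{f_n''(\xi)}{f_n'(z)}
\]
for some $\xi$ on the segment $[z,\alpha]$. The event $\mathcal{B}_1^c$, together with the $\mathcal{G}$-bound on $f_n''$ used to transfer from $\alpha$ to $z$, gives $|f_n'(z)| \gtrsim n^{5/4}/\log n$; so the correction term is $O(n^{-5/4 - 2\beta}\log^3 n)$. This is much smaller than the $\Theta(n^{-5/4 - 3\beta/2})$ gap between $\partial \mathsf{R}_z^\circ$ and $\partial \mathsf{R}_z^\sharp$ (coming from relaxing the shrinkage parameter $n^{-\beta/2} \to n^{-3\beta/4}$ in the parametrization), so $z - f_n(z)/f_n'(z) \in \mathsf{R}_z^\sharp$.

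For the second assertion, I Taylor-expand $f_n(\alpha') = 0$ around the root $\alpha$:
\[
0 \;=\; (\alpha' - \alpha)\,f_n'(\alpha) \;+\; \tfrac{1}{2}(\alpha' - \alpha)^2 f_n''(\alpha) \;+\; \varepsilon, \qquad |\varepsilon| \lesssim |\alpha' - \alpha|^3\, n^{7/2}\log^2 n.
\]
Since the linear and quadratic terms must cancel to leading order, this both forces $|f_n''(\alpha)| \gtrsim n^{5/2}/\log n$ (a lower bound on the denominator) and yields
\[
\frac{2 f_n'(\alpha)}{f_n''(\alpha)} \;=\; -(\alpha' - \alpha) \;+\; O\!\big(n^{-3/2}\log^3 n\big).
\]
Taking moduli shows $\tfrac{2|f_n'(\alpha)|}{|f_n''(\alpha)|}$ differs from $|\alpha' - \alpha| \in n^{-5/4}U$ by at most $O(n^{-3/2}\log^3 n)$. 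Finally, transferring from $\alpha$ to $z$ via the $\mathcal{G}$-bounds multiplies both $|f_n'|$ and $|f_n''|$ by factors $1 + O(n^{-\beta}\log^3 n)$, producing an additional absolute error of at most $n^{-5/4 - \beta}\log^3 n$ in the ratio. Both errors are much smaller than $n^{-5/4}/\log n$, so $\tfrac{2|f_n'(z)|}{|f_n''(z)|} \in n^{-5/4}U^+$ as required.

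The one delicate point is the error bookkeeping: one must verify that the Newton-step error fits inside the $\Theta(n^{-5/4 - 3\beta/2})$ gap between $\mathsf{R}_z^\circ$ and $\mathsf{R}_z^\sharp$, and that both the cubic Taylor error and the $\alpha \to z$ transfer error fit inside the $\Theta(n^{-5/4}/\log n)$ inflation from $U$ to $U^+$. All of these inequalities hold comfortably for any small positive $\beta$, matching the net parameters fixed in Definition~\ref{definition:net_in_the_main_annulus}.
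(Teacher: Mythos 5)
Your proof is correct and follows essentially the same line as the paper's: a second-order Taylor expansion at a nearby root to control the Newton step $z - f_n(z)/f_n'(z)$, and a third-order Taylor expansion at $\alpha$ (rearranged after dividing out $\alpha'-\alpha$) to express $2f_n'(\alpha)/f_n''(\alpha)$ in terms of $\alpha'-\alpha$, followed by a transfer from $\alpha$ to $z$ using the $\mathcal{G}$-bounds. The only cosmetic difference is that for the first assertion the paper expands $f_n(z)$ and $f_n'(z)$ around $\alpha$ while you expand $f_n(\alpha)=0$ around $z$; these yield the same $O(n^{-5/4-2\beta}\log^3 n)$ error, and your bookkeeping against the $\Theta(n^{-5/4-3\beta/2})$ gap between $\mathsf{R}_z^\circ$ and $\mathsf{R}_z^\sharp$ and the $\Theta(n^{-5/4}/\log n)$ inflation of $U$ matches the paper.
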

	\begin{proof}
		On the event $\mathcal{G}$, we can Taylor expand around $\alpha$ and observe that
		\begin{align*}
			\alpha - \Big(z-\frac{f_n(z)}{f_n^\prime(z)}\Big) &= \alpha-z + \frac{f_n(\alpha) + (z-\alpha) f_n^\prime(\alpha) + O(n^{-2\beta} \log^2n)}{f_n^\prime(\alpha) + O(n^{-\beta} \log^2 n)} \\ &\stackrel{\text{on } \mathcal{B}_1^c}{=}  (\alpha - z) - (\alpha - z)  + O(n^{-5/4-2\beta} \log^3 n) = O(n^{-5/4-2\beta} \log^3 n) \, , 
		\end{align*}
		which proves the first assertion. Another Taylor expansion shows also that
		\[
		f_n(\alpha^\prime) = f_n(\alpha) + (\alpha^\prime-\alpha)f_n^\prime(\alpha) + (\alpha^\prime - \alpha)^2 \frac{f_n^{\prime\prime}(\alpha)}{2} + O(|\alpha-\alpha^\prime|^3 n^{7/2} \log^2n) \, .
		\]
		Since $f_n(\alpha^\prime) = f_n(\alpha) = 0$, we can rearrange the above and get that 
		\begin{equation}
			\label{eq:proof_of_claim:if_there_is_a_root_linear_approx_sees_it_same_with_second_root}
			0 = f_n^\prime(\alpha) + (\alpha^\prime - \alpha) \frac{f_n^{\prime\prime}(\alpha)}{2} + O(|\alpha-\alpha^\prime|^2 n^{7/2} \log^2n) \, .
		\end{equation}
		Now, since $\mathcal{B}_1^c$ holds and since $U$ lies in a compact subset of $(0,\infty)$,~\eqref{eq:proof_of_claim:if_there_is_a_root_linear_approx_sees_it_same_with_second_root} implies that $|f_n^{\prime\prime}(\alpha)| \gtrsim n^{5/2}/\log n$, which by~\eqref{eq:proof_of_claim:if_there_is_a_root_linear_approx_sees_it_same_with_second_root} shows that
		\[
		\Big|\frac{2f_n^\prime(\alpha)}{f_n^{\prime\prime}(\alpha)}\Big| = |\alpha - \alpha^\prime| + O(n^{-3/2} \log^3 n) \, .
		\]
		On the event $\mathcal{G}$ we also have
		\[
		\frac{2f_n^\prime(z)}{f_n^{\prime\prime}(z)} = \frac{2f_n^\prime(\alpha)}{f_n^{\prime\prime}(\alpha)} + O(n^{-5/4-\beta} \log^4 n)
		\]
		which, together with the above, implies that $\frac{2|f_n^\prime(z)|}{|f_n^{\prime\prime}(z)|} \in U^+$, as desired.
	\end{proof}
	\begin{proof}[Proof of Proposition~\ref{prop:two_sided_inequality_for_X_n}: the upper bound]
		We want to show here that on the event $\mathcal{G} \cap \mathcal{B}^c$ we have $X_n(U) \le X_n^+(U)$. Indeed, denote by $\mathcal{Z}_n$ the set of roots of $f_n$, and note that
		\[
		X_n(U) = \sum_{\{\alpha,\alpha^\prime\} \in \binom{\mathcal{Z} \cap \Omega_K}{2}} \mathbf{1}_{\{|\alpha - \alpha^\prime| \in n^{-5/4} U\}} \, .
		\] 
		Set $\Omega_{\tt s}^\circ = \Omega_K\cap\big(\bigcup_{z\in \mathsf{N}_{\tt s}} \mathsf{R}_z^\circ\big)$. On the event $\mathcal{B}_2^c\cap \mathcal{B}_3^c$, we have
		\[
		X_n(U) = \sum_{\{\alpha,\alpha^\prime\} \in \binom{\mathcal{Z} \cap \Omega_{\tt s}^\circ}{2}} \mathbf{1}_{\{|\alpha - \alpha^\prime| \in n^{-5/4} U\}} \, .
		\]
		Next, we argue that on the event $\mathcal{G} \cap \mathcal{B}_1^c$, there is no pair of roots $\{\alpha,\alpha^\prime\}\in \binom{\mathcal{Z} \cap \Omega_{\tt s}^\circ}{2}$ so what $\alpha,\alpha^\prime\in \mathsf{R}_z^\circ$ for some $z\in \mathsf{N}_{\tt s}$. Indeed, if there was such a pair, then Taylor's theorem would imply that
		\[
		0= f_n(\alpha^\prime) - f_n(\alpha) = (\alpha^\prime - \alpha) f_n^\prime(\alpha) + O(|\alpha^\prime - \alpha|^2 \, n^{5/2} \log^2n) \, ,
		\]
		which in turn shows that $|f_n^\prime(\alpha)| = O(n^{5/4 - \beta} \log^2 n)$, which cannot hold on $\mathcal{B}_1^c$. Combining the above, we see that on the event $\mathcal{G}\cap \mathcal{B}^c$ we can write 
		\[
		X_n(U) = \sum_{\{z,w\} \in \binom{\mathsf{N}_{\tt s}}{2}} \mathbf{1}_{\big\{\exists \alpha\in \mathsf{R}_z^\circ, \, \exists \alpha^\prime\in \mathsf{R}_w^\circ \, : \,  f_n(\alpha) = f_n(\alpha^\prime) = 0 \ \text{and} \ |\alpha - \alpha^\prime| \in n^{-5/4} U  \big\}} \, .
		\] 
		Claim~\ref{claim:if_there_is_a_root_linear_approx_sees_it_same_with_second_root} shows that
		\[
		\mathbf{1}_{\big\{\exists \alpha\in \mathsf{R}_z^\circ, \, \exists \alpha^\prime\in \mathsf{R}_w^\circ \, : \,  f_n(\alpha) = f_n(\alpha^\prime) = 0 \ \text{and} \ |\alpha - \alpha^\prime| \in n^{-5/4} U  \big\}} \le \mathbf{1}_{A_z^+(U)\cap A_w^+(U)} \,  \mathbf{1}_{|z-w| \le \log n/n^{5/4}}
		\] 
		on the event $\mathcal{G}\cap \mathcal{B}^c$, which in view of~\eqref{eq:def_of_X_n_pm} implies the upper bound.
	\end{proof}
	\subsection{Lower bound}
	We start working towards the lower bound in Proposition~\ref{prop:two_sided_inequality_for_X_n}. First, we will show that the event $A_z(U)$ given by~\eqref{eq:def_of_event_A_z_U_for_net_point} indicates there is a root of $f_n$ nearby due to a linear approximation around $z$. 
	\begin{claim}
		\label{claim:linear_approximation_predicts_a_root_in_R_z}
		For all $\beta>0$ and for each $K\ge 1$, $U\subset \bR_{\ge 0}$ the following holds for all large enough $n$. If $z\in \mathsf{N}$ is such that
		\[
		|f_n^\prime(z)| \ge \frac{n^{5/4}}{\log n} \, , \qquad z-\frac{f_n(z)}{f_n^\prime(z)} \in \mathsf{R}_z \, ,
		\]
		and the event $\mathcal{G}$ holds, then there exists some $\alpha$ with $f_n(\alpha)=0$ and
		\[
		\Big|\alpha - \Big(z-\frac{f_n(z)}{f_n^\prime(z)}\Big)\Big| \le n^{-5/4-2\beta} \log^4 n\, .
		\]
	\end{claim}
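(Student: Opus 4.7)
\medskip

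The plan is to prove this via a direct application of Rouch\'e's theorem, comparing $f_n$ with its linear approximation at $z$. Set
\[
w_0 = z - \frac{f_n(z)}{f_n'(z)}, \qquad r = n^{-5/4-2\beta}\log^4 n,
\]
and define the affine function $g(\zeta) = f_n(z) + (\zeta - z) f_n'(z) = (\zeta - w_0)\, f_n'(z)$, which by design vanishes precisely at $w_0$. The goal is to show $|f_n - g| < |g|$ on the circle $\partial \mathbb{D}(w_0, r)$, which then forces $f_n$ to have a zero in $\mathbb{D}(w_0, r)$.

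First I would estimate $|g(\zeta)| = r\,|f_n'(z)|$ on $\partial\mathbb{D}(w_0,r)$. Using the hypothesis $|f_n'(z)| \ge n^{5/4}/\log n$, this gives
\[
|g(\zeta)| \ge r \cdot \frac{n^{5/4}}{\log n} = n^{-2\beta} \log^3 n.
\]
Next I would control the error in the linear approximation by Taylor's theorem:
\[
|f_n(\zeta) - g(\zeta)| \le \frac{|\zeta - z|^2}{2}\, \max_{\mathbb{D}(w_0,r)} |f_n''|.
\]
Since $w_0 - z \in \mathsf{R}_z$, which has diameter $O(\delta) = O(n^{-5/4-\beta})$, and $r \ll \delta$, we have $|\zeta - z| \lesssim n^{-5/4-\beta}$. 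On the event $\mathcal{G}$, $|f_n''| \le n^{5/2}\log^2 n$ throughout $\Omega_{2K}$ (which contains the small disk $\mathbb{D}(w_0, r)$ for $n$ large), so
\[
|f_n(\zeta) - g(\zeta)| \lesssim n^{-5/2 - 2\beta} \cdot n^{5/2} \log^2 n = n^{-2\beta} \log^2 n.
\]
For $n$ sufficiently large the factor $\log^2 n$ is beaten by $\log^3 n$, so $|f_n - g| < |g|$ on $\partial \mathbb{D}(w_0, r)$.

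Applying Rouch\'e's theorem yields a root $\alpha$ of $f_n$ in $\mathbb{D}(w_0, r)$, which is exactly the desired conclusion $|\alpha - w_0| \le n^{-5/4-2\beta}\log^4 n$. There is no real obstacle here; the only point requiring a small amount of care is the exponent bookkeeping (one must check that the $n^{-5/4-2\beta}$ scale is genuinely beyond the quadratic error budget given the lower bound on $|f_n'(z)|$ and the upper bound on $|f_n''|$), and the verification that the disk of radius $r$ around $w_0$ remains inside the slightly larger annulus $\Omega_{2K}$ where the event $\mathcal{G}$ gives us derivative control. Both are routine given the parameters chosen in Definition~\ref{definition:net_in_the_main_annulus}.
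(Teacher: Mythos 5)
Your proof is correct and takes the same route as the paper: Rouch\'e's theorem comparing $f_n$ to its degree-one Taylor polynomial at $z$ on a circle of radius $n^{-5/4-2\beta}\log^4 n$ about the predicted root $w_0$, with the lower bound from $|f_n'(z)|\ge n^{5/4}/\log n$ and the quadratic-remainder upper bound from the second-derivative control on $\mathcal{G}$. The exponent bookkeeping matches the paper's exactly.
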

	\begin{proof}
		Consider the linear approximation of $f_n$ around $z$, that is the linear function $$L_z(w) = f_n(z) + (w-z) f_n^\prime(z)$$ and denote by $\zeta = z - f_n(z)/f_n^\prime(z)$ the unique root of $L_z$. Note that by the definition of $\mathsf{R}_z$ we have $|z-\zeta| \le 2n^{-5/4 -\beta}$. We will apply Rouch\'e's theorem to show that $f_n$ and $L_z$ have the same number of roots inside a disk of radius $r=n^{-5/4-2\beta} (\log n)^4$ around $\zeta$. First note that
		\[
		\min_{|w|=r} |L_z(w+\zeta)| = r|f_n^\prime(z)| \ge n^{{-2\beta}} (\log n)^3 \, .
		\]
		On the other hand, on the event $\mathcal{G}$ we have
		\begin{align*}
			\max_{|w|=r}|f_n(w+\zeta) - L_z(w+\zeta)| &\lesssim |w+\zeta - z|^2 n^{5/2} \log^2 n \\ & \lesssim \big(|w - z|^2 + r^2\big) n^{5/2} \log^2 n \\ &\lesssim \big(n^{-5/2 -4\beta} (\log n)^8 + n^{-5/2-2\beta} \big) n^{5/2} \log^2 n \lesssim n^{-2\beta} (\log n)^{2} \, .
		\end{align*}
		and by Rouch\'e we know that $f_n$ has a root $\alpha\in \bD(\zeta,r)$, as desired.
	\end{proof}
	Next, we show that the quadratic approximation provides us with the second root.
	\begin{lemma}
		\label{lemma:quadratic_approximation_predicts_second_root}
		For all $\beta>0$ and for each $K\ge 1$, $U\subset \bR_{\ge 0}$ the following holds for all large enough $n$. If $\mathcal{G}$ holds and $z\in \mathsf{N}$ is such that
		\[
		|f_n^\prime(z)| \ge \frac{n^{5/4}}{\log n} \, , \qquad z-\frac{f_n(z)}{f_n^\prime(z)} \in \mathsf{R}_z \, , \qquad \frac{2|f_n^\prime(z)|}{|f_n^{\prime\prime}(z)|} \le \frac{\log n}{n^{5/4}} \, ,
		\]
		then there exists some $\alpha^\prime$ with $f_n(\alpha^\prime)=0$ and
		\[
		\Big|\alpha^\prime - \Big(z+\frac{f_n(z)}{f_n^\prime(z)} - \frac{2 f_n^\prime(z)}{f_n^{\prime\prime}(z)}\Big)\Big| \le n^{-5/4-2\beta} \log^5 n\, .
		\]
	\end{lemma}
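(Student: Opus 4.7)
The plan is a two-step Rouch\'e argument modeled on the heuristic given just before the statement: first locate one root $\alpha$ of $f_n$ near $z-f_n(z)/f_n'(z)$ via the linear approximation at $z$, then locate a second root $\alpha'$ near $w_\ast:=\alpha-2f_n'(\alpha)/f_n''(\alpha)$ via the linear approximation of the reduced function $g(w):=f_n(w)/(w-\alpha)$ at $\alpha$, and finally compare $w_\ast$ with the target point $\zeta:=z+f_n(z)/f_n'(z)-2f_n'(z)/f_n''(z)$ by Taylor expansion.

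First I would record an implicit strengthening of the hypotheses that is essential for the calculation: combining the assumed $2|f_n'(z)|/|f_n''(z)|\le\log n/n^{5/4}$ with the $\mathcal{G}$-bound $|f_n''(z)|\le n^{5/2}\log^2 n$ gives $|f_n'(z)|\lesssim n^{5/4}\log^3 n$, and hence $|f_n(z)|\le|f_n(z)/f_n'(z)|\cdot|f_n'(z)|\lesssim n^{-\beta}\log^3 n$, both much smaller than what $\mathcal{G}$ alone provides. Claim~\ref{claim:linear_approximation_predicts_a_root_in_R_z} then produces a root $\alpha$ of $f_n$ with $\alpha-z=-f_n(z)/f_n'(z)+O(n^{-5/4-2\beta}\log^4 n)$; substituting this coarse identity into the quadratic term in the Taylor expansion of $0=f_n(\alpha)$ at $z$ refines it to
\[
u:=\alpha-z=-\frac{f_n(z)}{f_n'(z)}-\frac{u^2 f_n''(z)}{2f_n'(z)}+O\!\bigl(|u|^3 n^{7/2}\log^2 n\bigr)=-\frac{f_n(z)}{f_n'(z)}+O\!\bigl(n^{-5/4-2\beta}\log^3 n\bigr),
\]
where the error uses $|u|^2|f_n''(z)/f_n'(z)|\lesssim n^{-5/2-2\beta}\cdot n^{5/4}\log^3 n$.

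Next, factor $f_n(w)=(w-\alpha)g(w)$, so that Taylor expansion of $f_n$ at $\alpha$ gives $g(w)=f_n'(\alpha)+\tfrac{w-\alpha}{2}f_n''(\alpha)+\tfrac{(w-\alpha)^2}{6}f_n'''(\xi)$, whose linear part vanishes at $w_\ast$. Since $|\alpha-z|\lesssim n^{-5/4-\beta}$ and $\mathcal{G}$ controls $f_n'''$, we have $|f_n''(\alpha)|\gtrsim n^{5/2}/\log^2 n$ and $|w_\ast-\alpha|\lesssim\log n/n^{5/4}$. A Rouch\'e comparison on the circle $\{|w-w_\ast|=r\}$ with $r=n^{-5/4-2\beta}\log^5 n$ — linear term of size $\asymp r\,n^{5/2}/\log^2 n$ versus quadratic error of size $\lesssim(\log n/n^{5/4})^2 n^{7/2}\log^2 n=n\log^4 n$ — produces the desired zero $\alpha'$ of $g$ in $\bD(w_\ast,r)$. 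The remaining task is to bound $|w_\ast-\zeta|$. Writing
\[
\frac{f_n'(\alpha)}{f_n''(\alpha)}-\frac{f_n'(z)}{f_n''(z)}=\frac{(f_n'(\alpha)-f_n'(z))f_n''(z)-f_n'(z)(f_n''(\alpha)-f_n''(z))}{f_n''(z)f_n''(\alpha)},
\]
expanding the numerator via Taylor's theorem, and using $|f_n''(z)|\gtrsim n^{5/2}/\log^2 n$ together with the \emph{improved} bound $|f_n'(z)|\lesssim n^{5/4}\log^3 n$ above, yields $f_n'(\alpha)/f_n''(\alpha)-f_n'(z)/f_n''(z)=u+O(n^{-3/2-\beta}\log^9 n)$. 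Regrouping $w_\ast-\zeta=(u+f_n(z)/f_n'(z))-2f_n(z)/f_n'(z)-2\bigl[f_n'(\alpha)/f_n''(\alpha)-f_n'(z)/f_n''(z)\bigr]$ and invoking the refined identity for $u$ shows that both parenthetical brackets are small: the first is the refined error $O(n^{-5/4-2\beta}\log^3 n)$, and in the second the $-2u$ cancels the $-2f_n(z)/f_n'(z)$ up to the same order. Hence $|w_\ast-\zeta|\lesssim n^{-5/4-2\beta}\log^3 n$, and the triangle inequality $|\alpha'-\zeta|\le|\alpha'-w_\ast|+|w_\ast-\zeta|\lesssim n^{-5/4-2\beta}\log^5 n$ closes the lemma.

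The main obstacle is the bookkeeping in the last display: using only the $\mathcal{G}$-bound $|f_n'(z)|\le n^{3/2}\log^2 n$ to estimate $f_n'(z)(f_n''(\alpha)-f_n''(z))/(f_n''(z)f_n''(\alpha))$ would produce an error of order $n^{-5/4-\beta}\log^8 n$, larger than the target $n^{-5/4-2\beta}\log^5 n$. Exploiting the sharper bound $|f_n'(z)|\lesssim n^{5/4}\log^3 n$ implicit in the joint hypotheses of the lemma supplies precisely the extra $n^{-1/4}$ factor needed to make the Taylor comparison close.
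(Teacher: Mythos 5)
Your proof is correct and takes essentially the same route as the paper's: first root via Claim~\ref{claim:linear_approximation_predicts_a_root_in_R_z}, second root via Rouch\'e on the quadratic approximation centered at $\alpha-2f_n'(\alpha)/f_n''(\alpha)$, and a Taylor comparison of that center with the target point, the latter two steps making essential use of the bound $|f_n'(z)|\lesssim n^{5/4}\log^3 n$ extracted from the joint hypotheses, exactly as you observe. The only differences are cosmetic: you Rouch\'e $g=f_n/(w-\alpha)$ against its linear part rather than $f_n$ against $T_\alpha(w)=(w-\alpha)f_n'(\alpha)+\tfrac12(w-\alpha)^2f_n''(\alpha)$, which neatly sidesteps verifying $\alpha\notin\bD(w_\ast,r)$, and you should take $r$ a factor of $\log n$ smaller (the paper uses $r=n^{-5/4-2\beta}\log^4 n$) so that the final triangle inequality lands within the stated $\log^5 n$ envelope rather than a constant multiple of it.
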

	\begin{proof}
		As before, we denote by $r= n^{-5/4-2\beta} (\log n)^4$. By Claim~\ref{claim:linear_approximation_predicts_a_root_in_R_z}, there exists $\alpha\in \bD\Big(z-\frac{f_n(z)}{f_n^\prime(z)},r\Big)$ so that $f_n(\alpha) = 0$. Set
		\[
		T_\alpha (w) = (w-\alpha) f_n^\prime(\alpha) + \frac{(w-\alpha)^2}{2}f_n^{\prime\prime}(\alpha) 
		\]
		and note that $T_\alpha$ has exactly two roots; one at $\alpha$ and another at 
		\[
		\zeta = \alpha - \frac{2f_n^\prime(\alpha)}{f_n^{\prime\prime}(\alpha)} \, .
		\]
		On the event $\mathcal{G}$, the assumptions of the lemma also implies that
		\begin{equation*}
			|\zeta - \alpha| = 2\frac{|f_n^\prime(\alpha)|}{|f_n^{\prime\prime}(\alpha)|} \ge 2\frac{|f_n^\prime(z)| + O\big(|z-\alpha| n^{5/2} \log^2 n\big)}{n^{5/2} \log^2 n} \gtrsim \frac{n^{-5/4}}{(\log n)^3} \, .
		\end{equation*}
		Since $z-\frac{f_n(z)}{f_n^\prime(z)} \in \mathsf{R}_z$, we also have $|z-\alpha| \le 2n^{-5/4-\beta}$. As $|f_n^{\prime\prime}(z)| \ge n^{5/2} /\log^2 n$, we get that
		\begin{align}
			\label{eq:proof_of_lemma:quadratic_approximation_predicts_second_root}
			\nonumber 
			\frac{f_n^\prime(\alpha)}{f_n^{\prime\prime}(\alpha)} &= \frac{f_n^\prime(z)}{f_n^{\prime\prime}(\alpha)} + (\alpha-z) \frac{f_n^{\prime\prime}(z)}{f_n^{\prime\prime}(\alpha)} + O\big(n^{-3/2-2\beta} (\log n)^4\big) \\ \nonumber &= \frac{1}{1+O\big(n^{-1/4-\beta} (\log n)^4\big)}\bigg(\frac{f_n^\prime(z)}{f_n^{\prime\prime}(z)} + \alpha- z\bigg) + O\big(n^{-3/2-2\beta} (\log n)^4\big) \\ &= \frac{f_n^\prime(z)}{f_n^{\prime\prime}(z)} + \alpha- z + O\big(n^{-3/2-\beta} (\log n)^5\big) \, .
		\end{align}
		Therefore
		\begin{equation*}
			|\zeta - \alpha| \stackrel{\eqref{eq:proof_of_lemma:quadratic_approximation_predicts_second_root}}{\lesssim} \Big| \frac{f_n^\prime(z)}{f_n^{\prime\prime}(z)} + \alpha- z \Big| \le \Big| \frac{f_n(z)}{f_n^{\prime}(z)} \Big| + \Big| \frac{f_n^\prime(z)}{f_n^{\prime\prime}(z)} \Big| + r \lesssim n^{-5/4} \log n \, .
		\end{equation*}
		We will apply Rouch\'e's theorem to show that $T_\alpha$ and $f_n$ both have exactly one root in the disk $\bD(\zeta, r)$. Indeed, first note that
		\begin{align*}
			\min_{|w|=r} \big|T_\alpha(\zeta +w)\big| &= \min_{|w|=r} \Big|\Big(w-\frac{2f_n^\prime(\alpha)}{f_n^{\prime\prime}(\alpha)}\Big)f_n^\prime(\alpha) + \Big(w-\frac{2f_n^\prime(\alpha)}{f_n^{\prime\prime}(\alpha)}\Big)^2 \frac{f_n^{\prime\prime}(\alpha)}{2} \Big| \\ &= \min_{|w|=r} \Big|-wf_n^\prime(\alpha) + w^2 \frac{f_n^{\prime\prime}(\alpha)}{2} \Big| \\ &\ge r\frac{n^{5/4}}{2\log n} - r^2 n^{5/2}\log^2n  \gtrsim n^{-2\beta} (\log n)^3 \, .
		\end{align*}
		On the other hand, we have the upper bound
		\begin{align*}
			\max_{|w|=r} \big|T_\alpha(\zeta +w) - f_n(\zeta+w)\big| &\le |\zeta + w - \alpha|^3 n^{7/2} \log^2 n \\ & \lesssim \Big(n^{-15/4} (\log n)^3 + r^3\Big) n^{7/2} \log^2 n \lesssim n^{-1/4} (\log n)^5 \, .
		\end{align*}
		Therefore, we conclude that for all $n$ large enough Rouch\'e applies, and hence $f_n$ has a root $\alpha^\prime$ in the disk $\bD(\zeta,r)$. It remains to observe that
		\begin{align*}
			\zeta &= \alpha - \frac{2f_n^\prime(\alpha)}{f_n^{\prime\prime}(\alpha)} \\ &=  z-\frac{f_n(z)}{f_n^\prime(z)} - \frac{2f_n^\prime(\alpha)}{f_n^{\prime\prime}(\alpha)} + O\big(n^{-5/4-2\beta} (\log n)^4\big) \\ & \stackrel{\eqref{eq:proof_of_lemma:quadratic_approximation_predicts_second_root}}{=} z-\frac{f_n(z)}{f_n^\prime(z)} - 2\Big(\frac{f_n^\prime(z)}{f_n^{\prime\prime}(z)} + \alpha- z\Big) + O\big(n^{-5/4-2\beta} (\log n)^4\big) \\ &= z +\frac{f_n(z)}{f_n^\prime(z)} - \frac{2f_n^\prime(z)}{f_n^{\prime\prime}(z)}  + O\big(n^{-5/4-2\beta} (\log n)^4\big) 
		\end{align*}
		and hence 
		\[
		\Big|\alpha^\prime - \Big(z+\frac{f_n(z)}{f_n^\prime(z)} - \frac{2 f_n^\prime(z)}{f_n^{\prime\prime}(z)}\Big)\Big| \lesssim n^{-5/4-2\beta} \log^4 n \, ,
		\]
		and we are done.
	\end{proof}
	Finally, the next lemma shows that on the typical event of having two close roots, all other roots are separated by an almost macroscopic amount. This fact will be useful in Section~\ref{sec:poisson_limit_for_sum_over_the_smooth_net}, when we actually need to compute the relevant moments.
	\begin{lemma}
		\label{lemma:existence_of_two_roots_implies_macroscopic_separation}
		For all $\beta>0$ and for each $K\ge 1$, $U\subset \bR_{\ge 0}$ the following holds for all large enough $n$. If $A_z(U) \cap \mathcal{G}$ holds for some $z\in \Omega_K$, then $f_n$ has exactly two roots in a ball of radius $\frac{1}{n\log^4 n}$ centered at $z$. The same statement continues to hold if $A_z(U)$ is replaced by $A_z^+(U)$ or by $A_z^-(U)$. 
	\end{lemma}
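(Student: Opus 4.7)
The plan is to apply Rouch\'e's theorem to compare $f_n$ with its quadratic Taylor polynomial at $z$,
$$ Q(w) = f_n(z) + (w-z) f_n^\prime(z) + \tfrac{(w-z)^2}{2} f_n^{\prime\prime}(z), $$
on the circle $|w - z| = R$ with $R = 1/(n \log^4 n)$. Since the event will force $|f_n^{\prime\prime}(z)| > 0$, the polynomial $Q$ has exactly two complex zeros, and Rouch\'e will then give that $f_n$ has exactly the same number of zeros (counted with multiplicity) inside $\bD(z, R)$.

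First I would extract quantitative bounds from the event $A_z(U) \cap \mathcal{G}$ (the cases $A_z^\pm(U)$ are identical since $U^\pm$ remain bounded whenever $U$ does). Assuming $U \subset [0,M]$, the conditions $|f_n^\prime(z)| \ge n^{5/4}/\log n$ and $2|f_n^\prime(z)|/|f_n^{\prime\prime}(z)| \le M n^{-5/4}$ give $|f_n^{\prime\prime}(z)| \gtrsim n^{5/2}/\log n$, while inclusion in $\mathsf{R}_z^\circ$ gives $|f_n(z)/f_n^\prime(z)| \lesssim 1/n$ since $\mathsf{R}_z^\circ$ has diameter $\lesssim 1/n$.

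Next I would use the quadratic formula to place the two zeros $\beta_1, \beta_2$ of $Q$ well inside $\bD(z, R/2)$. With $a = f_n^{\prime\prime}(z)/2$, $b = f_n^\prime(z)$, $c = f_n(z)$, the estimates above yield $|b/a| \lesssim n^{-5/4}$ and $|c/a| \lesssim n^{-9/4}$, so
$$ |\beta_i - z| \,\le\, \frac{|b|}{|a|} + \sqrt{\frac{|c|}{|a|}} \,\lesssim\, n^{-9/8}, $$
which is much smaller than $R/2$ for all large $n$. Consequently, on the circle $|w-z| = R$ one has $|w - \beta_i| \ge R/2$ for $i=1,2$, and therefore
$$ |Q(w)| \,=\, |a|\,|w-\beta_1|\,|w-\beta_2| \,\ge\, \frac{|f_n^{\prime\prime}(z)|}{2} \cdot \frac{R^2}{4} \,\gtrsim\, \frac{n^{1/2}}{\log^9 n}. $$

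Finally, on $\mathcal{G}$ the third derivative satisfies $|f_n^{\prime\prime\prime}| \le n^{7/2} \log^2 n$ throughout $\Omega_{2K}$, so the Taylor remainder obeys
$$ |f_n(w) - Q(w)| \,\le\, \frac{R^3}{6}\, \max_{\zeta \in \bD(z,R)} |f_n^{\prime\prime\prime}(\zeta)| \,\lesssim\, \frac{n^{1/2}}{\log^{10} n} $$
on $|w-z|=R$, which is strictly smaller than the lower bound on $|Q(w)|$ once $n$ is large. Rouch\'e's theorem then produces exactly two zeros of $f_n$ in $\bD(z,R)$. The proof is essentially bookkeeping of scales, with no substantial obstacle; the key point to verify is that the chosen radius $R = 1/(n\log^4 n)$ sits comfortably between the scale $n^{-9/8}$ on which the zeros of $Q$ live and the scale $n^{-1}$ at which the cubic Taylor error becomes comparable to $|Q|$ on the boundary.
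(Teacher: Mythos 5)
Your proof is correct and follows essentially the same strategy as the paper: compare $f_n$ with its quadratic Taylor polynomial $Q$ at $z$ via Rouch\'e on the circle $|w-z| = 1/(n\log^4 n)$, using the event $A_z(U)\cap\mathcal{G}$ to get $|f_n''(z)|\gtrsim n^{5/2}/\log n$ together with the $\mathcal{G}$-bound on $f_n'''$. The only difference is bookkeeping: the paper lower-bounds $|Q|$ on the boundary circle via the triangle inequality $\min_{|w|=R}|Q(z+w)| \ge \tfrac{R^2}{2}|f_n''(z)| - |f_n(z)| - R|f_n'(z)|$ and then invokes the earlier Claim~\ref{claim:linear_approximation_predicts_a_root_in_R_z} and Lemma~\ref{lemma:quadratic_approximation_predicts_second_root} to know $f_n$ already has at least two roots in the disk (so that ``same number'' plus ``$Q$ has at most two'' gives exactly two), whereas you factor $Q$, locate its two roots inside $\bD(z, R/2)$ via the quadratic formula, and get exactly-two directly from Rouch\'e without appealing to those earlier lemmas. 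Your version is slightly more self-contained, at the cost of the extra root-location estimate; the two routes land on the same numerics ($n^{1/2}/\log^9 n$ vs.\ $n^{1/2}/\log^{10}n$).
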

	\begin{proof}
		We will provide the proof only for the event $A_z(U)$, as the cases $A_z^+(U),A_z^-(U)$ are similar. Define the polynomial
		\[
		P_z(w) = f_n(z) + (w-z) f_n^\prime(z) + \frac{(w-z)^2}{2} f_n^{\prime\prime}(z)
		\]
		and set $t = \frac{1}{n (\log n)^4}$. We will apply Rouch\'e's theorem to show that $f_n$ and $P_z$ have the same number of roots in the disk $\bD(z,t)$. By Claim~\ref{claim:linear_approximation_predicts_a_root_in_R_z} and Lemma~\ref{lemma:quadratic_approximation_predicts_second_root}, we already know that $f_n$ has at least two roots inside $\bD(z,t)$, and since $P_z$ has at most two roots, the proof will follow. On the event $A_z(U)\cap \mathcal{G}$, we have
		\[
		|f_n(z)| = O(n^{-\beta} \log^2 n) \, , \qquad |f_n^\prime(z)| = O(n^{-5/4} \log^2 n) \, , \qquad |f_n^{\prime\prime}| \gtrsim n^{5/2} /\log n \, .
		\]
		From the above, the lower bound follows since
		\[
		\min_{|w| = t} |P_z(w+z)| \ge \frac{t^2}{2} |f_n^{\prime\prime}(z)| - |f_n(z)| - t|f_n^\prime(z)| \gtrsim \frac{n^{1/2}}{(\log n)^9} \, .
		\]
		The event $\mathcal{G}$ also gives an upper bound, namely
		\[
		\max_{|w| = t} |P_z(w+z) - f_n(w+z)|  \lesssim t^3 n^{7/2} \log^2 n \lesssim \frac{n^{1/2}}{(\log n)^{10}} \, .
		\]
		By Rouch\'e, the two displayed equations above imply that $P_z$ and $f_n$ have the same number of roots in $\bD(z,t)$, completing the proof of the lemma.
	\end{proof}
	We are finally ready to provide the lower bound in Proposition~\ref{prop:two_sided_inequality_for_X_n}.
	\begin{proof}[Proof of Proposition~\ref{prop:two_sided_inequality_for_X_n}: the lower bound]
		As in the proof of the upper bound, on the event $\mathcal{G} \cap \mathcal{B}^c$ we can write
		\[
		X_n(U) = \sum_{\{z,w\} \in \binom{\mathsf{N}_{\tt s}}{2}} \mathbf{1}_{\big\{\exists \alpha\in \mathsf{R}_z^\circ, \, \exists \alpha^\prime\in \mathsf{R}_w^\circ \, : \,  f_n(\alpha) = f_n(\alpha^\prime) = 0 \ \text{and} \ |\alpha - \alpha^\prime| \in n^{-5/4} U  \big\}} \, .
		\]
		On the event $\mathcal{G}$, we can apply Claim~\ref{claim:linear_approximation_predicts_a_root_in_R_z} and Lemma~\ref{lemma:quadratic_approximation_predicts_second_root} to conclude that for each $z\in \mathsf{N}_{\tt s}$ for which $A_z^-(U)$ holds, there exist two distinct roots $\alpha_1,\alpha_1^\prime$ with $\alpha_1\in \mathsf{R}_z$ and $|\alpha_1-\alpha_1^\prime| \in n^{-5/4} U$. Similarly, if $w\in \mathsf{N}_{\tt s} \setminus \{z\}$ is another point for which $A_w^-(U)$ holds and $|z-w| \le \log n/ n^{5/4}$, then $f_n$ has two distinct roots $\alpha_2,\alpha_2^\prime$ where $\alpha_2\in \mathsf{R}_{w}$ and $|\alpha_2-\alpha_2^\prime| \in n^{-5/4} U$. We must have $\alpha_1 \not= \alpha_2$ since $\mathsf{R}_z\cap \mathsf{R}_w = \emptyset$. Furthermore, by Lemma~\ref{lemma:existence_of_two_roots_implies_macroscopic_separation} we have that the set $\{\alpha_1,\alpha_1^\prime,\alpha_2,\alpha_2^\prime\}$ contains exactly two elements, and we conclude that $\alpha_1 = \alpha_2^\prime$ and $\alpha_2 = \alpha_1^\prime$. This shows that for any pair $\{z,w\} \in \binom{\mathsf{N}_{\tt s}}{2}$ we have
		\[
		\mathbf{1}_{A_z^-(U)\cap A_w^-(U)} \,  \mathbf{1}_{|z-w| \le \log n/n^{5/4}} \le \mathbf{1}_{\big\{\exists \alpha\in \mathsf{R}_z^\circ, \, \exists \alpha^\prime\in \mathsf{R}_w^\circ \, : \,  f_n(\alpha) = f_n(\alpha^\prime) = 0 \ \text{and} \ |\alpha - \alpha^\prime| \in n^{-5/4} U  \big\}} 
		\]   
		on the event $\mathcal{G}\cap \mathcal{B}^c$, and the desired lower bound follows.  
	\end{proof}
	\subsection{The ``bad" event $\mathcal{B}$ is rare}
	Now that we concluded the proof of Proposition~\ref{prop:two_sided_inequality_for_X_n}, we use the remainder of the section to prove Lemma~\ref{lemma:in_annulus_good_events_are_likely}. Recall that $\mathsf{N}_{\tt ns} = \mathsf{N} \setminus \mathsf{N}_{\tt s}$ and further decompose as 
	\begin{align}
		\label{eq:further_decomposition_of_non_smooth_in_the_edge}
		\nonumber
		\mathsf{N}_{\tt ns}^{(1)} &= \big\{z\in \mathsf{N}_{\tt ns} \, : \, \arg(z) \in [n^{-1},\pi - n^{-1}]\big\} \, , \\ \mathsf{N}_{\tt ns}^{(2)} &= \big\{z\in \mathsf{N}_{\tt ns} \, : \, \arg(z) \in [n^{-1-\tau/4},n^{-1}] \cup [\pi - n^{-1},\pi - n^{-1-\tau/4}]\big\} \, , \\ \nonumber \mathsf{N}_{\tt ns}^{(3)} &= \big\{z\in \mathsf{N}_{\tt ns} \, : \, \arg(z) \in [0,n^{-1-\tau/4}] \cup [\pi - n^{-1-\tau/4},\pi]\big\} \, . 
	\end{align}
	We note that as $n\to \infty$, we have
	\begin{equation}
		\label{eq:number_of_smooth_points_in_the_net_annulus}
		|\mathsf{N}_{\tt s}| = M_1 M_2 \big(1+o(1)\big) = 16 K n^{3/2 + 2\beta} \big(1+o(1)\big) \, ,
	\end{equation}
	and furthermore
	\begin{align}
		\label{eq:further_decomposition_of_non_smooth_in_the_edge_asymptotics_and_bounds}
		\nonumber
		|\mathsf{N}_{\tt ns}^{(1)}| &\lesssim M_1 M_2 n^{7\tau-1} \lesssim K n^{1/2 + 2\beta + 7\tau} \, , \\ |\mathsf{N}_{\tt ns}^{(2)}| & \lesssim M_1 M_2 n^{-1} \lesssim K n^{1/2 + 2\beta}  \, , \\ \nonumber |\mathsf{N}_{\tt ns}^{(3)}| &\lesssim M_1 M_2 n^{-1-\tau/4} \lesssim K n^{1/2 + 2\beta-\tau/4} \, . 
	\end{align}
	Towards the proof of Lemma~\ref{lemma:in_annulus_good_events_are_likely}, we start by showing that with high probability there is no pair of close roots in the non-smooth part of the annulus $\bigcup_{z\in \mathsf{N}_{\tt ns}} \mathsf{R}_z$. This is shown via a net argument similar to the one presented in Section~\ref{sec:reducing_to_the_annulus}.
	\begin{claim}
		\label{claim:eliminating_non_smooth_points_in_annulus}
		We have $\displaystyle \lim_{n\to \infty} \bP\big[\mathcal{G}\cap \mathcal{B}_1^c \cap \mathcal{B}_2\big] = 0$. 
	\end{claim}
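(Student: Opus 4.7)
The plan is to replay, essentially verbatim, the net argument from the proof of Lemma~\ref{lemma:with_high_probabiity_no_close_roots_in_given_annulus}, but now restricted to the non-smooth part $\mathsf{N}_{\tt ns}$ of the net in $\Omega_K$ and with effective degree $d = n$. The starting point is the deterministic reduction: on $\mathcal{G}\cap\mathcal{B}_1^c$, if there is a pair of close roots $\alpha\in \mathsf{R}_z$ for some $z\in \mathsf{N}_{\tt ns}$ and $\alpha'\in \bD(\alpha,\log n/n^{5/4})\setminus\{\alpha\}$ with $f_n(\alpha)=f_n(\alpha')=0$, then $\mathcal{B}_1^c$ gives $|f_n'(\alpha)|\ge 2n^{5/4}/\log n$, and Claim~\ref{claim:close_roots_implies_ratio_of_derivative_and_second_derivative_to_be_small} (applied at the root $\alpha$ with $t\asymp \log n/n^{5/4}$ and $\varepsilon \asymp 1/\log n$) yields $\tfrac{2|f_n'(\alpha)|}{|f_n''(\alpha)|}\lesssim \log n/n^{5/4}$. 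Since $|z-\alpha|\le 2\delta$ with $\delta = n^{-5/4-\beta}$, Claim~\ref{claim:root_with_small_derivative_implies_smallness_in_net_point} then transfers this information to the net point $z$:
\[
|f_n(z)|\le 3\delta\,|f_n'(z)|,\qquad |f_n'(z)|\ge \tfrac{1}{2}\cdot\tfrac{n^{5/4}}{\log n},\qquad \tfrac{2|f_n'(z)|}{|f_n''(z)|}\le \tfrac{2\log n}{n^{5/4}}.
\]

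A union bound over $z\in \mathsf{N}_{\tt ns}$ then reduces the problem to bounding the sum
\[
\sum_{z\in \mathsf{N}_{\tt ns}}\bP\Big[|f_n(z)|\le 3\delta|f_n'(z)|,\ \tfrac{1}{2}\tfrac{n^{5/4}}{\log n}\le |f_n'(z)|\le |f_n''(z)|\cdot\tfrac{2\log n}{n^{5/4}},\ \mathcal{G}\Big].
\]
On $\mathcal{G}$ we may further replace the constraints by the cruder implications $|f_n(z)|\le 3n^{-\beta}\log^2 n$ and $|f_n'(z)|\le 2n^{5/4}\log n$. We then split the sum according to the decomposition $\mathsf{N}_{\tt ns}=\mathsf{N}_{\tt ns}^{(1)}\cup \mathsf{N}_{\tt ns}^{(2)}\cup \mathsf{N}_{\tt ns}^{(3)}$ in~\eqref{eq:further_decomposition_of_non_smooth_in_the_edge} and apply the three ``worst-case'' small-ball bounds: Claim~\ref{claim:small_ball_away_from_real_axis} on $\mathsf{N}_{\tt ns}^{(1)}$, Claim~\ref{claim:small_ball_points_near_the_real_axis} on $\mathsf{N}_{\tt ns}^{(2)}$, and Claim~\ref{claim:small_ball_real_points} on $\mathsf{N}_{\tt ns}^{(3)}$. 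Combined with the cardinality bounds~\eqref{eq:further_decomposition_of_non_smooth_in_the_edge_asymptotics_and_bounds}, the three estimates parallel the computations~\eqref{eq:with_high_probabiity_no_close_roots_in_given_annulus_after_union_bound_sum_over_nonsmooth_points_1}--\eqref{eq:with_high_probabiity_no_close_roots_in_given_annulus_after_union_bound_sum_over_nonsmooth_points_3} and each yields a power $n^{-c}$ for some $c>0$ (since $\tau$ and $\beta$ are absolute constants with $\beta\ll\tau$).

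The most delicate piece is the contribution from $\mathsf{N}_{\tt ns}^{(3)}$, where the Littlewood--Offord-type Claim~\ref{claim:small_ball_real_points} only gives $\bP[|f_n(z)|\le 1]\lesssim n^{-1/2}$ and one cannot profit from the second derivative constraint. This is compensated by the cardinality $|\mathsf{N}_{\tt ns}^{(3)}|\lesssim n^{1/2+2\beta-\tau/4}$, giving a total contribution of order $n^{2\beta-\tau/4}=o(1)$ precisely because $\beta<\tau/8$. Adding the $\exp(-c\log^2 n)$ contribution from $\mathcal{G}^c$ completes the proof.
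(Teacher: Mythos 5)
Your proof is correct and follows essentially the same route as the paper's: the deterministic reduction via Claims~\ref{claim:close_roots_implies_ratio_of_derivative_and_second_derivative_to_be_small} and~\ref{claim:root_with_small_derivative_implies_smallness_in_net_point} on the event $\mathcal{G}\cap\mathcal{B}_1^c$, a union bound over $\mathsf{N}_{\tt ns}$, the three-way split~\eqref{eq:further_decomposition_of_non_smooth_in_the_edge}, and the matching worst-case small-ball bounds (Claims~\ref{claim:small_ball_away_from_real_axis}, \ref{claim:small_ball_points_near_the_real_axis}, \ref{claim:small_ball_real_points}) against the cardinality estimates~\eqref{eq:further_decomposition_of_non_smooth_in_the_edge_asymptotics_and_bounds}. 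The one small presentational difference is that the paper never retains the lower bound on $|f_n'(z)|$ at the net point (it only records the upper bounds on $|f_n(z)|$ and $|f_n'(z)|$ in the inclusion), while you first state it and then discard it; this has no effect on the argument.
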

	\begin{proof}
		By the union bound, we have
		\begin{equation*}
			\bP\big[\mathcal{G}\cap \mathcal{B}_1^c \cap \mathcal{B}_2\big] \le \sum_{z\in \mathsf{N}_{\tt ns}} \bP\Big[\big\{\exists \alpha \in \mathsf{R}_z  \, ,  \ \text{and } \ \exists \alpha^\prime \in \bD\Big(\alpha,\frac{\log n}{n^{5/4}} \Big) \setminus\{\alpha \}\, : \, f_n(\alpha) = f_n(\alpha^\prime) = 0 \big\} \cap \mathcal{G} \cap \mathcal{B}_1^c \Big] \, . 
		\end{equation*}
		Furthermore, by combining Claim~\ref{claim:close_roots_implies_ratio_of_derivative_and_second_derivative_to_be_small} with Claim~\ref{claim:root_with_small_derivative_implies_smallness_in_net_point} we get the inclusion
		\begin{multline*}
			\big\{\exists \alpha \in \mathsf{R}_z  \, ,  \ \text{and } \ \exists \alpha^\prime \in \bD\Big(\alpha,\frac{\log n}{n^{5/4}} \Big) \setminus\{\alpha \}\, : \, f_n(\alpha) = f_n(\alpha^\prime) = 0 \big\} \cap \mathcal{G} \cap \mathcal{B}_1^c  \\ \subset \big\{|f_n(z)| \le 2 n^{-\beta} (\log n)^3 \, , \ |f_n^{\prime}(z)| \le n^{5/4} (\log n)^3 \big\} \, ,
		\end{multline*}
		and altogether
		\begin{equation}
			\label{eq:proof_of_claim:eliminating_non_smooth_points_in_annulus_after_union_bound}
			\bP\big[\mathcal{G}\cap \mathcal{B}_1^c \cap \mathcal{B}_2\big] \le \sum_{z\in \mathsf{N}_{\tt ns}} \bP\big[|f_n(z)| \le 2 n^{-\beta} (\log n)^3 \, , \ |f_n^{\prime}(z)| \le n^{5/4} (\log n)^3\big] \, .
		\end{equation}
		To bound the sum on the right-hand side of~\eqref{eq:proof_of_claim:eliminating_non_smooth_points_in_annulus_after_union_bound}, we split the sum into $\mathsf{N}_{\tt ns}^{(1)}$, $\mathsf{N}_{\tt ns}^{(2)}$ and $\mathsf{N}_{\tt ns}^{(3)}$ as given by~\eqref{eq:further_decomposition_of_non_smooth_in_the_edge} and bound separately on each part. 
		
		\noindent
		\underline{The sum over $\mathsf{N}_{\tt ns}^{(1)}$:}
		By Claim~\ref{claim:small_ball_away_from_real_axis}, for each $z\in \mathsf{N}_{\tt ns}^{(1)}$ we have the bound
		\begin{multline*}
			\bP\big[|f_n(z)| \le 2 n^{-\beta} \log^3 n \, , \ |f_n^{\prime}(z)| \le n^{5/4} \log^3 n \big] \\ \le \bP\big[|f_n(z)| \le \log n \, , \ |f_n^{\prime}(z)| \le n^{5/4} \log^3 n \big] \lesssim n^{-3/2} \log^{8} n \, .
		\end{multline*}
		Since $|\mathsf{N}_{\tt ns}^{(1)}| \lesssim K n^{1/2 + 2\beta + 7\tau}$ we conclude that 
		\begin{equation}
			\label{eq:proof_of_lemma_Z_n_is_close_of_Z_n_smooth_first_sum}
			\sum_{z\in \mathsf{N}_{\tt ns}^{(1)}} \bP\big[|f_n(z)| \le 2 n^{-\beta} \log^3 n \, , \ |f_n^{\prime}(z)| \le n^{5/4} \log^3 n \big] \lesssim n^{-1 + 2\beta +7\tau} \log^8 n \xrightarrow{n\to \infty} 0 \, .
		\end{equation}
		
		\noindent
		\underline{The sum over $\mathsf{N}_{\tt ns}^{(2)}$:}
		By Claim~\ref{claim:small_ball_points_near_the_real_axis} we see that for all $z\in \mathsf{N}_{\tt ns}^{(1)}$ we have
		\[
		\bP\big[|f_n(z)| \le 2 n^{-\beta} \log^3 n \, , \ |f_n^{\prime}(z)| \le n^{5/4} \log^3 n \big] \lesssim n^{-3/2 + 8\tau} \log^6 n \, .
		\]
		Combining with the fact that $|\mathsf{N}_{\tt ns}^{(2)}| \lesssim K n^{1/2 + 2\beta}$ we get
		\begin{equation}
			\label{eq:proof_of_lemma_Z_n_is_close_of_Z_n_smooth_second_sum}
			\sum_{z\in \mathsf{N}_{\tt ns}^{(2)}} \bP\big[|f_n(z)| \le 2 n^{-\beta} \log^3 n \, , \ |f_n^{\prime}(z)| \le n^{5/4} \log^3 n \big] \lesssim n^{-1 + 2\beta + 8\tau} \log^6 n \xrightarrow{n\to \infty} 0 \, .
		\end{equation}
		
		\noindent
		\underline{The sum over $\mathsf{N}_{\tt ns}^{(3)}$:} 
		For the range $z\in \mathsf{N}_{\tt ns}^{(3)}$ of point very close to the real axis, we apply Claim~\ref{claim:small_ball_real_points} to conclude that
		\begin{equation}
			\label{eq:proof_of_lemma_Z_n_is_close_of_Z_n_smooth_third_sum}
			\sum_{z\in \mathsf{N}_{\tt ns}^{(3)}} \bP\big[|f_n(z)| \le 2 n^{-\beta} \log^3 n \, , \ |f_n^{\prime}(z)| \le n^{5/4} \log^3 n \big] \lesssim |\mathsf{N}_{\tt ns}^{(3)}| n^{-1/2} \lesssim n^{-\tau/4 + 2\beta} \xrightarrow{n\to \infty} 0 \, , 
		\end{equation}
		for all $\beta>0$ small enough.
		Combining~\eqref{eq:proof_of_lemma_Z_n_is_close_of_Z_n_smooth_first_sum}, \eqref{eq:proof_of_lemma_Z_n_is_close_of_Z_n_smooth_second_sum}, and~\eqref{eq:proof_of_lemma_Z_n_is_close_of_Z_n_smooth_third_sum} we see that the sum on the right-hand side of~\eqref{eq:proof_of_claim:eliminating_non_smooth_points_in_annulus_after_union_bound} tends to zero as $n\to \infty$, which is what we wanted to prove.
	\end{proof}
	Next, we show how that we can eliminate roots which are near the boundaries of smooth rectangles. 
	\begin{claim}
		\label{claim:eliminating_roots_near_boundaries}
		We have $\displaystyle \lim_{n\to \infty} \bP\big[\mathcal{G} \cap \mathcal{B}_3\big] = 0$.
	\end{claim}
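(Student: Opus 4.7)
The argument will mirror that of Claim~\ref{claim:eliminating_non_smooth_points_in_annulus}, with the underlying net replaced by a refined $\delta$-net $\mathsf{N}^\prime$ covering only the boundary strips $\bigcup_{z\in \mathsf{N}_{\tt s}} \mathsf{R}_z\setminus \mathsf{R}_z^\circ$. The crucial quantitative input is that each strip $\mathsf{R}_z \setminus \mathsf{R}_z^\circ$ has area at most $\lesssim n^{-\beta/2} \, m(\mathsf{R}_z)$, and therefore the total area of the boundary region is $\lesssim K n^{-1-\beta/2}$ --- a factor $n^{-\beta/2}$ smaller than $m(\Omega_K)$. This saving, propagated through the small-ball sums of Claim~\ref{claim:eliminating_non_smooth_points_in_annulus}, will supply the required $o(1)$ factor.

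Concretely, take $\mathsf{N}^\prime$ to be a $\delta$-net of $\bigcup_{z\in \mathsf{N}_{\tt s}} \mathsf{R}_z\setminus \mathsf{R}_z^\circ$ with $\delta = n^{-5/4-\beta}$, so that $|\mathsf{N}^\prime| \lesssim K n^{3/2+3\beta/2}$. On the event $\mathcal{G}\cap \mathcal{B}_1^c \cap \mathcal{B}_3$ there exist roots $\alpha, \alpha^\prime$ with $\alpha$ in the boundary region and $|\alpha-\alpha^\prime| \le \log n / n^{5/4}$. Taking $z^\prime \in \mathsf{N}^\prime$ nearest to $\alpha$, Claim~\ref{claim:close_roots_implies_ratio_of_derivative_and_second_derivative_to_be_small} (with $t = \log n / n^{5/4}$) followed by Claim~\ref{claim:root_with_small_derivative_implies_smallness_in_net_point} yields the small-ball conditions
\[
|f_n(z^\prime)| \le 2 n^{-\beta} \log^3 n \, , \qquad |f_n^\prime(z^\prime)| \le n^{5/4} \log^3 n \, .
\]
Using Lemma~\ref{lemma:with_high_probability_no_roots_with_too_small_derivative} to absorb the event $\mathcal{B}_1$ with an $o(1)$ error, a union bound reduces $\bP[\mathcal{G}\cap \mathcal{B}_3]$ to the sum of these small-ball probabilities over $z^\prime\in \mathsf{N}^\prime$.

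Then split $\mathsf{N}^\prime$ into an $n^{7\tau}$-smooth subset $\mathsf{N}^\prime_{\tt s}$ and three non-smooth strata organized by distance to the real axis exactly as in \eqref{eq:further_decomposition_of_non_smooth_in_the_edge}, and bound each of the four resulting sums exactly as in the proof of Claim~\ref{claim:eliminating_non_smooth_points_in_annulus}: using Lemma~\ref{lemma:small_ball_probability_smooth_points_bound_single_point} for the smooth part and Claims~\ref{claim:small_ball_away_from_real_axis}, \ref{claim:small_ball_points_near_the_real_axis}, \ref{claim:small_ball_real_points} for the three non-smooth strata. The per-point small-ball bounds are identical to those in Claim~\ref{claim:eliminating_non_smooth_points_in_annulus}, but the cardinalities of the four pieces of $\mathsf{N}^\prime$ are each smaller by a factor $n^{-\beta/2}$ than the cardinalities of the corresponding pieces of $\mathsf{N}$. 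For instance, the smooth contribution comes out to $\lesssim K n^{3/2 + 3\beta/2} \cdot n^{-3/2-2\beta} \log^{12} n = K n^{-\beta/2} \log^{12} n = o(1)$, and similarly each non-smooth sum is smaller by $n^{-\beta/2}$ than its analogue in Claim~\ref{claim:eliminating_non_smooth_points_in_annulus}.

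The principal obstacle here is purely bookkeeping: verifying that the $n^{-\beta/2}$ gain is genuinely inherited by each of the four strata, rather than being cancelled by boundary strips concentrating near arithmetically bad angles. This will follow from the observation that each $\mathsf{R}_z$ has angular width $\sim n^{-5/4-\beta}$, which is much smaller than the angular granularity $n^{-1-\tau/4}$ used in \eqref{eq:further_decomposition_of_non_smooth_in_the_edge}, so that the smoothness classification of points in $\mathsf{R}_z$ is essentially inherited from $z$ itself. Consequently the area reduction translates directly into a cardinality reduction for each stratum, and the desired bound follows.
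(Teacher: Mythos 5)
Your area-to-cardinality conversion is the weak link, and it is fatal. You cover $\bigcup_{z\in \mathsf{N}_{\tt s}} (\mathsf{R}_z\setminus \mathsf{R}_z^\circ)$ with a $\delta$-net, $\delta = n^{-5/4-\beta}$, and you claim $|\mathsf{N}^\prime| \lesssim K n^{3/2+3\beta/2}$ from the estimate that the boundary region has area $\lesssim K n^{-1-\beta/2}$. But the boundary region is an essentially one-dimensional set at scale $\delta$: it is (up to constants) the grid of gridlines of the $\delta$-net $\mathsf{N}$, thickened to width $\delta n^{-\beta/2}\ll\delta$. Covering such a thin set with $\delta$-balls does \emph{not} cost $\text{area}/\delta^2$; each $\delta\times\delta$ frame $\mathsf{R}_z\setminus \mathsf{R}_z^\circ$ already needs $\Theta(1)$ balls of radius $\delta$, and the union has total length comparable to $|\mathsf{N}_{\tt s}|\cdot\delta$, so the $\delta$-covering number is $\asymp |\mathsf{N}_{\tt s}| \asymp K n^{3/2+2\beta}$, not $K n^{3/2+3\beta/2}$. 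With the corrected cardinality, your sum over $\mathsf{N}^\prime$ is $\gtrsim 1$, not $o(1)$.

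The paper repairs exactly this by using a \emph{finer} net of spacing $\widetilde\delta = \delta n^{-\beta/2}$ to cover each frame. At that scale the area heuristic is honest — each frame needs only $\lesssim n^{\beta/2}$ balls — but the real gain is that the linearization radius shrinks along with the net: Claim~\ref{claim:root_with_small_derivative_implies_smallness_in_net_point} applied with $\delta$ replaced by $\widetilde\delta$ gives the tighter event $|f_n(\widetilde z)|\le 2\widetilde\delta|f_n^\prime(\widetilde z)|$, whose small-ball probability (via Lemma~\ref{lemma:small_ball_probability_smooth_points_bound_single_point}) is $\lesssim n^{-3/2-3\beta}\log^{O(1)}n$, an extra factor $n^{-\beta}$ compared to your version. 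The per-frame cost is then $n^{\beta/2}\cdot n^{-3/2-3\beta}\log^{O(1)}n$, and summing over $|\mathsf{N}_{\tt s}|\asymp n^{3/2+2\beta}$ gives $n^{-\beta/2}\log^{O(1)}n = o(1)$. If you keep the coarse $\delta$-net and hence the weaker event, the $n^{\beta/2}$ cost per frame has nothing to cancel against and the sum diverges.

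One smaller point: there is no need to stratify $\mathsf{N}^\prime$ into smooth and non-smooth pieces. Every boundary frame $\mathsf{R}_z\setminus\mathsf{R}_z^\circ$ lives next to a smooth net point $z\in\mathsf{N}_{\tt s}$, so every center $\widetilde z$ in the refined covering is within distance $O(\delta) \ll n^{-1}$ of a $n^{7\tau}$-smooth point and is therefore itself $n^{5\tau}$-smooth. You observe this at the very end of your proposal, but then it renders the entire stratification and the use of Claims~\ref{claim:small_ball_away_from_real_axis}--\ref{claim:small_ball_real_points} superfluous — the paper's proof runs entirely through Lemma~\ref{lemma:small_ball_probability_smooth_points_bound_single_point}.
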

	\begin{proof}
		By the union bound we have
		\[
		\bP\big[\mathcal{G} \cap \mathcal{B}_3\big] \le \sum_{z\in \mathsf{N}_{\tt s}} \bP\Big[ \exists \alpha \in \mathsf{R}_z\setminus \mathsf{R}_z^\circ \, , \ \text{and } \ \exists \alpha^\prime \in \bD\Big(\alpha,\frac{\log n}{n^{5/4}} \Big) \setminus\{\alpha \}\, : \, f_n(\alpha) = f_n(\alpha^\prime) = 0 \, , \mathcal{G} \Big] \, .
		\]
		For $z\in \mathsf{N}_{\tt s}$ we cover the set $\mathsf{R}_z \setminus \mathsf{R}_z^{\circ}$ with disks of radius $\widetilde \delta = \delta n^{-\beta/2}$ so that the number of disks is $\lesssim n^{\beta/2}$ (this is possible, as the area of $\mathsf{R}_z \setminus \mathsf{R}_z^{\circ}$ is $\simeq \delta^2 n^{-\beta/2}$). Denote for a moment by $\widetilde z$ the center of an arbitrary disk $\bD(\widetilde z, \widetilde \delta)$ in the covering. By combining Claim~\ref{claim:close_roots_implies_ratio_of_derivative_and_second_derivative_to_be_small} with Claim~\ref{claim:root_with_small_derivative_implies_smallness_in_net_point}, we conclude that on the event $\mathcal{G}$, if there exists $\alpha\in \bD(\widetilde z, \widetilde \delta)$ and $\alpha\prime  \in \bD\big(\alpha,\frac{\log n}{n^{5/4}} \big) \setminus\{\alpha \}$ such that $f_n(\alpha) = f_n(\alpha^\prime) = 0$ then we must have 
		\[
		|f_n(\widetilde z)| \le 2\widetilde \delta |f_n^\prime(\widetilde z)| \, \, \qquad \text{and } \qquad |f_n^\prime(\widetilde z)| \le 2n^{5/4} (\log n)^3 \, .
		\]
		Furthermore, as $z\in \mathsf{N}_{\tt s}$ we know that any $\widetilde z$ in the new $\widetilde \delta$-net is $n^{5\tau}$-smooth, in the sense of Definition~\ref{def:smooth_angle}. Therefore, by Lemma~\ref{lemma:small_ball_probability_smooth_points_bound_single_point} we have
		\begin{multline*}
			\bP\Big[|f_n(\widetilde z)| \le 2\widetilde \delta |f_n^\prime(\widetilde z)| \, , |f_n^\prime(\widetilde z)| \le 2n^{5/4} (\log n)^3 \Big] \\ \le \bP\Big[|f_n(\widetilde z)| \le 4n^{-3\beta/2} (\log n)^3 \, , |f_n^\prime(\widetilde z)| \le 2n^{5/4} (\log n)^3\Big] \lesssim n^{-3/2 -3 \beta} (\log n)^8 \, .
		\end{multline*}
		Hence, a union bound implies that
		\begin{multline*}
			\bP\Big[ \exists \alpha \in \mathsf{R}_z\setminus \mathsf{R}_z^\circ \, , \ \text{and } \ \exists \alpha^\prime \in \bD\Big(\alpha,\frac{\log n}{n^{5/4}} \Big) \setminus\{\alpha \}\, : \, f_n(\alpha) = f_n(\alpha^\prime) = 0 \, , \mathcal{G} \Big] \\ \lesssim n^{\beta/2} n^{-3/2-3\beta} (\log n )^8= n^{-3/2 - 2\beta -\beta/2} (\log n )^8\, .  
		\end{multline*}
		Since $|\mathsf{N}_{\tt s}| \lesssim Kn^{3/2+2\beta}$, we conclude that
		\[
		\bP \big[\mathcal{G} \cap \mathcal{B}_3\big] \lesssim n^{-\beta/2} (\log n )^8
		\]
		and we are done.
	\end{proof}
	Putting everything together, we finally give the proof of Lemma~\ref{lemma:in_annulus_good_events_are_likely}.
	\begin{proof}[Proof of Lemma~\ref{lemma:in_annulus_good_events_are_likely}]
		By Lemma~\ref{lemma:control_on_maximum_of_polynomial_and_derivatives} we know that $\bP\big[\mathcal{G}^c\big] = o(1)$ as $n\to \infty$, and by Lemma~\ref{lemma:with_high_probability_no_roots_with_too_small_derivative} we also know that
		\[
		\lim_{n\to \infty} \bP\big[\mathcal{G}\cap \mathcal{B}_1^c\big] = 0 \, .
		\]
		Combining the above facts with Claim~\ref{claim:eliminating_non_smooth_points_in_annulus} and Claim~\ref{claim:eliminating_roots_near_boundaries} completes the lemma.
	\end{proof}
	\section{Poisson limit for the sum over the net}
	\label{sec:poisson_limit_for_sum_over_the_smooth_net}
	In this section we prove Proposition~\ref{prop:poisson_moments_for_X_n_pm}. We first recall some relevant notations from the previous section and give a general outline. Recall the ``good" event
	\begin{equation*}
		\mathcal{G} = \bigg\{ \max_{0\le j \le 3} \max_{z\in \Omega_{2K}} \frac{|f_n^{(j)}(z)|}{n^{j+1/2}} \le \log^2 n \bigg\} \, ,
	\end{equation*}
	By Lemma~\ref{lemma:control_on_maximum_of_polynomial_and_derivatives}, we have $\bP\big[\mathcal{G}^c\big] \le \exp\big(-c\log^2n\big)$. Furthermore, for $z\in \mathsf{N}_{\tt s}$ and $U\subset \bR_{\ge 0}$ we recall the events 
	\begin{align}
		\label{eq:def_of_A_z_U_with_pm_section_poisson_limit}
		\nonumber
		A_z(U) &= \bigg\{\frac{f_n(z)}{f_n^\prime(z)} - z \in \mathsf{R}_z \, , \ \frac{2|f_n^\prime(z)|}{|f_n^{\prime\prime}(z)|} \in n^{-5/4} U \, , \ |f_n^\prime(z)| \ge \frac{n^{5/4}}{\log n} \bigg\} \, , \\ A_z^+(U) &= \bigg\{\frac{f_n(z)}{f_n^\prime(z)} - z \in \mathsf{R}_z^{\sharp} \, , \ \frac{2|f_n^\prime(z)|}{|f_n^{\prime\prime}(z)|} \in n^{-5/4} U^+ \, , \ |f_n^\prime(z)| \ge \frac{n^{5/4}}{\log n} \bigg\} \, , \\ \nonumber A_z^-(U) &= \bigg\{\frac{f_n(z)}{f_n^\prime(z)} - z \in \mathsf{R}_z^{\circ} \, , \ \frac{2|f_n^\prime(z)|}{|f_n^{\prime\prime}(z)|} \in n^{-5/4} U^- \, , \ |f_n^\prime(z)| \ge \frac{n^{5/4}}{\log n} \bigg\} \, ,
	\end{align}
	where $U^\pm$ are given by~\eqref{eq:blow_up_and_blow_down_of_U}. For a reminder on what  the polar rectangles $\mathsf{R}_z,\mathsf{R}_z^\sharp,\mathsf{R}_z^\circ$ are, we refer the reader to Definition~\ref{definition:net_in_the_main_annulus}. The events above led us to consider
	\begin{equation*}
		\label{eq:def_of_X_n_pm-reminder}
		X_n^\pm (U) = \sum_{\{z,w\} \in \binom{\mathsf{N}_{\tt s}}{2}} \mathbf{1}_{A_z^\pm(U)\cap A_w^\pm(U)} \,  \mathbf{1}_{|z-w| \le \log n/n^{5/4}} \, . 
	\end{equation*}
	The goal of this section is to prove matching upper and lower bounds for the factorial moments of $X_n^+(U)$ and $X_n^-(U)$, respectively. Below we will show that the limiting moments as $n\to \infty$ match those of a Poisson random variable with parameter $\la_{K,U} = \mathfrak{c}_\ast(K) \int_{U} t^{3} {\rm d}t$, where 
	\begin{equation}
		\label{eq:intensity_constant_for_K_annulus}
		\mathfrak{c}_\ast(K) = \frac{1}{4}\int_{-K}^{K} \mathfrak{F}(x) \, {\rm d}x \, ,
	\end{equation}
	and $\mathfrak{F}$ is given by~\eqref{eq:def_of_limiting_intensity_function_F} below. We will also use this opportunity to identify the limiting intensity $\mathfrak{c}_\ast t^3 {\rm d}t$ for the Poisson process described in our main result Theorem~\ref{thm:poisson_limit_for_close_roots}. Indeed, since $X_n(U)$ given by~\eqref{eq:def_of_X_n_U} is asymptotically a Poisson random variable with parameter $\la_{K,U}$, and since $X_n(U)$ and the set of roots at distance $n^{-5/4}U$ do not differ with high probability for large values of $K>0$ (see the proof of Theorem~\ref{thm:poisson_limit_for_close_roots} in Section~\ref{sec:reducing_main_result_to_a_net_argument}), we set
	\begin{equation}
		\label{eq:limiting_constant_for_intensity}
		\mathfrak{c}_\ast = \lim_{K\to \infty}\mathfrak{c}_\ast(K) = \frac{1}{4}\int_{-\infty}^{\infty} \mathfrak{F}(x) \, {\rm d}x \, .
	\end{equation}
	Indeed, by Claim~\ref{claim:integral_properties_of_intensity_mathfrak_F} below we know in particular that the limit in~\eqref{eq:limiting_constant_for_intensity} exists.
	
	\subsection{Factoring probabilities over spread tuples}
	We denote by $\bP_{{\sf G}}$ the probability distribution of the random polynomial $f_n$, in the case where the random coefficients are i.i.d.\ standard Gaussian random variables. Recall Definition~\ref{def:spread_tuples} of $\gamma$-spread tuples. We will need to know that for macroscopically spread tuples, the probabilities of seeing close roots approximately factors, and is the same as in the case of Gaussian coefficients. The exact formulation is given by the following lemma, the proof of which we postpone to Section~\ref{sec:gaussian_comparison_for_tuples}. 
	\begin{lemma}
		\label{lemma:for_spread_tuples_probabilities_of_smooth_points_factor}
		For $m\ge 1$ fixed let $\mathbf{z} = (z_1,\ldots,z_m) \in (\mathsf{N}_{\tt s})^m$ be a $n^\beta$-spread tuple. Then, as $n\to \infty$, we have
		\[
		\bP\big[A_{z_1}(U) \cap \ldots \cap A_{z_m}(U)\big] = \big(1+o(1)\big) \prod_{j=1}^{m} \bP_{{\sf G}}\big[A_{z_{j}}(U)\big]\, ,
		\]
		uniformly in $K>0$ and $U\subset \bR_{\ge 0}$ in a compact set. Furthermore, the above remains true if the events on the left-hand side are replaced by $A_z^+(U)$ or by $A_z^-(U)$.
	\end{lemma}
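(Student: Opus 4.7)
The plan is to reduce to the Gaussian case via Theorem~\ref{thm:small_ball_comparison_to_Gaussian_for_tuples} and then verify factorization in the Gaussian case using the decorrelation induced by spread. The three variants $A_z(U)$, $A_z^+(U)$, and $A_z^-(U)$ are handled identically, so I write $A_z^\bullet(U)$ for any of them. The key observation is that $A_{z_j}^\bullet(U)$ depends only on the $j$-th block of $S_n(\mathbf{z})$. Writing $E_j \subset \bC^3$ for the set in the normalized coordinates $(a_j, b_j, c_j) = (f_n(z_j), z_j^{-1} f_n'(z_j)/n, z_j^{-2} f_n''(z_j)/n^2)/\sqrt{n}$ which encodes the three conditions defining $A_{z_j}^\bullet(U)$, we have
\[
A_{z_1}^\bullet(U) \cap \cdots \cap A_{z_m}^\bullet(U) = \bigl\{S_n(\mathbf{z}) \in E_1 \times \cdots \times E_m \bigr\}\,.
\]
Since the hypothesis provides an $n^\beta$-spread, $n^{7\tau}$-smooth tuple, both Theorem~\ref{thm:small_ball_comparison_to_Gaussian_for_tuples} (with $\kappa = 7\tau$) and the quantitative Gaussian factorization argument sketched below apply.

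First I would restrict to the good event $\mathcal{G}$ from~\eqref{eq:def_of_good_event_in_annulus}, whose complement contributes $\exp(-c \log^2 n)$, negligible against the $n^{-O(1)}$-scale main term. On $\mathcal{G}$ the normalized coordinates lie in a fixed compact set. I would then pixelate each $E_j$ into a disjoint union of rectangles $\{Q_j^\alpha\}$ of side-length $\ge n^{-M}$: first partition the allowed $(b_j, c_j)$-plane (where $|b_j|$ is bounded below and $|b_j|/|c_j|$ lies in a prescribed $n^{-1/4}$-scale annular region) at scale $n^{-M}$; for each such pixel, fixing the value of $(b_j, c_j)$ turns the ratio condition $f_n(z_j)/f_n'(z_j) - z_j \in \mathsf{R}_{z_j}^\bullet$ into an affine image of the polar rectangle $\mathsf{R}_{z_j}^\bullet$, which further pixelates into Cartesian rectangles in the $a_j$-plane. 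The resulting disjoint union approximates $E_j$ up to a boundary layer of relative measure $o(1)$. Applying Theorem~\ref{thm:small_ball_comparison_to_Gaussian_for_tuples} to each product rectangle $Q_1^{\alpha_1} \times \cdots \times Q_m^{\alpha_m}$ and summing -- with the boundary layer controlled via Proposition~\ref{prop:small_ball_estimate_for_tuples_of_smooth} -- yields
\[
\bP\Bigl[\bigcap_{j=1}^m A_{z_j}^\bullet(U)\Bigr] = \bP_{{\sf G}}\Bigl[\bigcap_{j=1}^m A_{z_j}^\bullet(U)\Bigr] + o\Bigl(\prod_{j=1}^m m(E_j)\Bigr)\,.
\]

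For the Gaussian factorization, I would exploit the block covariance structure of $S_n(\mathbf{z})$. For $j \neq j'$, the cross-covariances are expressible as sums $n^{-1} \sum_{k=0}^n (k/n)^{a+b} (z_j \bar z_{j'})^k$ with $a, b \in \{0, 1, 2\}$ (plus corresponding pseudo-covariance sums), each of which is $O(n^{-\beta})$ after normalization since $|1 - z_j \bar z_{j'}| \gtrsim n^{\beta - 1}$ under the spread hypothesis. A standard Gaussian density perturbation argument -- writing the joint Gaussian density as a product of marginals times a correction $1 + O(n^{-\beta} \log^{O(1)} n)$ uniformly on $\mathcal{G}$ -- then yields
\[
\bP_{{\sf G}}\Bigl[\bigcap_{j=1}^m A_{z_j}^\bullet(U)\Bigr] = (1 + o(1)) \prod_{j=1}^m \bP_{{\sf G}}\bigl[A_{z_j}^\bullet(U)\bigr]\,.
\]
Combining this with the Gaussian comparison -- and noting that each single-point probability is $\Theta(m(E_j))$ since the single-block Gaussian density is bounded between two positive constants on the compact target region for $n^{7\tau}$-smooth points -- proves the lemma.

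The main obstacle will be the pixelation bookkeeping: the conditions defining $A_{z_j}^\bullet(U)$ are not jointly rectangular since they mix ratios, absolute values, and a lower-bound condition, so approximating $E_j$ by a rectangle union with both side-length $\geq n^{-M}$ (required by Theorem~\ref{thm:small_ball_comparison_to_Gaussian_for_tuples}) and measure approximation $(1+o(1))$-accurate requires tracking several polynomial scales simultaneously. A secondary subtlety is verifying the uniform non-degeneracy of the single-block Gaussian covariance matrix at $n^{7\tau}$-smooth points $z_j \in \Omega_K$, which is needed both for the $\Theta(m(E_j))$-estimate on single-point probabilities and for the validity of the density expansion in the factorization step.
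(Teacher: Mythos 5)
Your proposal takes essentially the same route as the paper: write the joint event as $\{S_n(\mathbf{z}) \in \mathbb{A}\}$ for a suitable domain $\mathbb{A}$, cover $\mathbb{A}$ by $n^{-M}$-scale cubes, apply Theorem~\ref{thm:small_ball_comparison_to_Gaussian_for_tuples} on interior cubes and Proposition~\ref{prop:small_ball_estimate_for_tuples_of_smooth} on boundary cubes, then factor the Gaussian probability using that the cross-block covariances are $O(n^{-\beta})$ for $n^\beta$-spread tuples. Your stated obstacle (the conditions defining $A_z^\bullet(U)$ are not jointly rectangular) is resolved in the paper by the simpler observation that $\mathbb{A}$ is a compact Lipschitz domain, so the grid cubes meeting $\partial \mathbb{A}$ automatically have total measure $\lesssim n^{-1} m(\mathbb{A})$, dispensing with any explicit parametrization of the boundary.
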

	We will also need the following bound, which shows that the probabilities for almost-macroscopically spread tuples is comparable to those obtained from a Gaussian computations, up to a tolerable poly-logarithmic lost.
	\begin{lemma}
		\label{lemma:for_almost_spread_tuples_probabilities_are_bounded_by_what_you_want}
		For $m\ge 1$ fixed let $\mathbf{z} = (z_1,\ldots,z_m) \in (\mathsf{N}_{\tt s})^m$ be a $\frac{1}{(\log n)^5}$-spread tuple. We have
		\[
		\bP\big[A_{z_1}^+(U) \cap \ldots \cap A_{z_m}^+(U)\big] \lesssim_{m,K,U} \Big(\frac{1}{n^{3/2+2\beta}}\Big)^m \log^{O_m(1)} n \, .
		\]
	\end{lemma}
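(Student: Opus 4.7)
The plan is to combine Proposition~\ref{prop:small_ball_estimate_for_tuples_of_smooth}---which via a standard Vitali covering argument yields a uniform density bound of order $\gamma^{-O(m^2)} = (\log n)^{O(m^2)}$ for $S_n(\mathbf{z})$ on $\mathbb{R}^{6m}$---with an explicit computation of the Lebesgue measure of the preimage of $\bigcap_j A_{z_j}^+(U)$. The sub-Gaussian hypothesis on $\xi_0$ is essential: without tail control on $f_n''$ this naive measure is infinite.

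First I would use sub-Gaussian concentration (noting that $f_n''(z_j)/n^{5/2}$ is a sub-Gaussian sum with $O(1)$ variance proxy uniformly for $z_j \in \Omega_K$) to establish that $\P[|f_n''(z_j)|/n^{5/2} \ge C_0\sqrt{\log n}] \lesssim n^{-c C_0^2}$. For $C_0$ sufficiently large, a union bound permits restriction to the event where $|f_n''(z_j)|/n^{5/2} \le C_0\sqrt{\log n}$ for every $j$, at negligible cost. On this truncated event, $A_{z_j}^+(U)$ forces $|f_n'(z_j)| \lesssim n^{5/4}\sqrt{\log n}$, and I would dyadically decompose $|f_n'(z_j)| \in [2^{s_j} n^{5/4}/\log n, 2^{s_j+1} n^{5/4}/\log n]$ for $s_j = 0, 1, \ldots, O(\log\log n)$.

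For each fixed $\vec s$, set $r_j := 2^{s_j}/(n^{1/4}\log n)$ and write $S_n^{(1),j}, S_n^{(2),j}, S_n^{(3),j}$ for the three complex blocks of $S_n(\mathbf{z})$ associated to $z_j$; then $A_{z_j}^+(U)$ restricts this triple to
\[
|S_n^{(2),j}| \in [r_j, 2r_j], \qquad |S_n^{(3),j}| \asymp r_j n^{1/4}, \qquad \bigl|S_n^{(1),j} - n z_j^2 S_n^{(2),j}\bigr| \lesssim r_j n^{-1/4-\beta},
\]
the final constraint coming from $f_n(z_j)/f_n'(z_j) - z_j \in \mathsf{R}_{z_j}^\sharp$ after expanding $f_n - z f_n'$ in the definition of $S_n$. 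The $\mathbb{R}^6$-Lebesgue measure per block is $\asymp r_j^2 \cdot r_j^2 n^{1/2} \cdot r_j^2 n^{-1/2-2\beta} = r_j^6 n^{-2\beta}$, so the full event has $\mathbb{R}^{6m}$-measure $\asymp \prod_j r_j^6 \cdot n^{-2m\beta}$. Multiplying by the uniform density bound and summing the geometric series over each $s_j$---dominated by the top shell $r_j \asymp n^{-1/4}\sqrt{\log n}$, where $r_j^6 \asymp n^{-3/2}\log^3 n$---yields the claimed bound $n^{-m(3/2+2\beta)}(\log n)^{O(m^2)}$.

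The main obstacle is the tube-type constraint $|S_n^{(1),j} - n z_j^2 S_n^{(2),j}| \lesssim r_j n^{-1/4-\beta}$, which couples two different complex blocks of $S_n(\mathbf{z})$ and prevents the event from factoring in the natural coordinates. The tempting linear change of variables $\tilde S^{(1),j} := S_n^{(1),j} - n z_j^2 S_n^{(2),j}$ would decouple the conditions but would degrade Proposition~\ref{prop:small_ball_estimate_for_tuples_of_smooth} by a factor of $n^{O(m)}$, since the associated transformation has operator norm $\asymp n$. Instead one works directly in $S_n$-coordinates, covering the tube by Vitali balls of radius $\delta \lesssim r_j n^{-1/4-\beta}$; the hypothesis $\delta > n^{-M}$ of Proposition~\ref{prop:small_ball_estimate_for_tuples_of_smooth} remains comfortably satisfied because $r_j \ge n^{-1/4}/\log n$ forces $\delta \ge n^{-1/2-\beta}/\log n$.
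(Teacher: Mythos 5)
Your proposal is correct and rests on the same two-step strategy the paper uses: bound the Lebesgue measure of the preimage of $\bigcap_j A_{z_j}^+(U)$ in $\bR^{6m}$, cover it by small balls, and apply Proposition~\ref{prop:small_ball_estimate_for_tuples_of_smooth} to each ball. Where you diverge is in how the measure is controlled: the paper simply intersects with the event $\mathcal{G}$ and notes that on $A_z^+(U)\cap\mathcal{G}$ one has the crude rectangular bounds $|f_n(z_j)|\lesssim n^{-\beta}\log^2 n$, $|f_n'(z_j)|\lesssim n^{5/4}\log^2 n$ (and, implicitly, $|f_n''(z_j)|\lesssim n^{5/2}\log^2 n$), which immediately gives a per-point $\bR^6$-volume of $n^{-3/2-2\beta}\log^{O(1)}n$; your dyadic decomposition over shells of $|f_n'|$ and the exploitation of the $A_z^+$ constraint $|f_n''|\asymp n^{5/4}|f_n'|$ is a finer analysis that arrives at the same bound and is not needed here. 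Your third paragraph is a red herring on two counts: the map $\tilde S^{(1)} = S^{(1)} - nz^2 S^{(2)}$ is a unimodular shear, so it is volume-preserving and would not degrade any measure-times-density bound; and in any case the event coming from Claims~\ref{claim:linear_approximation_predicts_a_root_in_R_z}--\ref{claim:if_there_is_a_root_linear_approx_sees_it_same_with_second_root} (the linear root prediction $z-f_n(z)/f_n'(z)$ lying in $\mathsf{R}_z^\sharp$, a rectangle of radius $\asymp\delta$ centered at $z$) gives $|f_n(z)/f_n'(z)|\lesssim\delta$, hence $|S_n^{(1),j}|\lesssim r_j n^{-1/4-\beta}$ with no cross-block coupling at all. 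Since the balls-of-radius-$n^{-M}$ covering argument is agnostic to the shape of the set, the worry never arises. Also, the offset you write should be $2nz_j^2$, not $nz_j^2$, if one reads the definition of $A_z^+$ literally—but this does not affect the measure. The rest (sub-Gaussian truncation of $f_n''$ with negligible cost, and the observation that $\delta\ge n^{-1/2-\beta}/\log n > n^{-M}$ so Proposition~\ref{prop:small_ball_estimate_for_tuples_of_smooth} applies) is sound.
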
 
	\begin{proof}
		Recall that on the event $A_z^+(U) \cap \mathcal{G}$ we have that
		\[
		|f_n(z)| \le 2n^{-\beta} \log^2 n \, , \qquad |f_n^\prime(z)| \le n^{5/4} \log^2 n \, .
		\]
		Therefore, we have the bound
		\begin{equation}
			\label{eq:proof_of_lemma:for_almost_spread_tuples_probabilities_are_bounded_by_what_you_want}
			\bP\big[A_{z_1}^+(U) \cap \ldots \cap A_{z_m}^+(U)\big] \le \bP\Big[\bigcap_{j=1}^{m} \big\{|f_n(z_j)| \le 2n^{-\beta} \log^2 n \, , \, |f_n^\prime(z_j)| \le n^{5/4} \log^2 n \big\}\Big] + \bP\big[\mathcal{G}^c\big] \, .
		\end{equation}
		Clearly, the 4-dimensional Lebesgue measure of the set
		\[
		\bD\big(0,n^{-1/2-\beta} \log^2 n\big) \times \bD\big(0,n^{-1/4} \log^2 n\big)
		\]
		is $\lesssim n^{-3/2-2\beta} \log^{O(1)} n$. Since we can cover the above set by disks of radius $n^{-2}$ so that the sum of their measure is also $\lesssim n^{-3/2-2\beta} \log^{O(1)} n$, and since the tuple $\mathbf{z}=(z_1,\ldots,z_m)$ is $\frac{1}{(\log n)^5}$-spread and $n^{7\tau}$-smooth, we can apply Proposition~\ref{prop:small_ball_estimate_for_tuples_of_smooth} and get that
		\[
		\bP\Big[\bigcap_{j=1}^{m} \big\{|f_n(z_j)| \le 2n^{-\beta} \log^2 n \, , \, |f_n^\prime(z_j)| \le n^{5/4} \log^2 n \big\}\Big] \lesssim \Big(\frac{1}{n^{3/2+2\beta}}\Big)^m \log^{O(1)} n \, .
		\]
		Since $\bP\big[\mathcal{G}^c\big] \le \exp\big(-c\log^2 n\big)$, in view of~\eqref{eq:proof_of_lemma:for_almost_spread_tuples_probabilities_are_bounded_by_what_you_want} we are done. 
	\end{proof}
	We will also need another small-ball estimate, showing that on the event $A_z(U)\cap \mathcal{G}$, it is likely that at least one of $A_w(U)$ occur, where $|z-w| \in n^{-5/4} U$. The following lemma will be used in the proof of the lower bound of Proposition~\ref{prop:poisson_moments_for_X_n_pm}. For $z\in \mathsf{N}_{\tt s}$ consider the event
	\begin{equation}
		\label{eq:def_of_event_upsilon}
		\Upsilon_z = A_z^+(U) \cap  \Bigg\{ z+\frac{f_n(z)}{f_n^\prime(z)} - \frac{2 f_n^\prime(z)}{f_n^{\prime\prime}(z)} \in \bigcup_{\substack{w\in \mathsf{N}_{\tt s} \\ 0<|z-w| \le \frac{\log n}{n^{5/4}}}}( \mathsf{R}_w \setminus \mathsf{R}_w^\circ) \bigg\} \, ,
	\end{equation}
	where we recall that $z+\frac{f_n(z)}{f_n^\prime(z)} - \frac{2 f_n^\prime(z)}{f_n^{\prime\prime}(z)}$ is the ``quadratic prediction" for the location of the close root, see Lemma~\ref{lemma:quadratic_approximation_predicts_second_root} for the precise statement. 
	\begin{lemma}
		\label{lemma:bound_on_probability_of_tuple_with_upsilon}
		For $m\ge 1$ fixed let $\mathbf{z} = (z_1,\ldots,z_m) \in (\mathsf{N}_{\tt s})^m$ be a $n^\beta$-spread tuple. We have
		\[
		\bP\big[\Upsilon_{z_1} \cap A_{z_2}^+(U) \cap \ldots \cap A_{z_m}^+(U)\big] \lesssim_{m,K,U} \Big(\frac{1}{n^{3/2+2\beta}}\Big)^m \cdot n^{-\beta/10} \, . 
		\]
	\end{lemma}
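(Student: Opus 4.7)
The plan is to cover the event in $S_n(\mathbf{z})$--space $\bC^{3m}$ by a fine grid and apply Proposition~\ref{prop:small_ball_estimate_for_tuples_of_smooth}. I first intersect with the good event $\mathcal{G}$ from~\eqref{eq:def_of_good_event_in_annulus}, incurring a negligible error $\bP[\mathcal{G}^c] \le \exp(-c\log^2 n)$. The main task is to show that on $\mathcal{G}$ the event $\Upsilon_{z_1} \cap \bigcap_{j\ge 2} A_{z_j}^+(U)$ forces $S_n(\mathbf{z})$ into a set $E \subset \bC^{3m}$ of Lebesgue measure
\[
m(E) \lesssim \Big(\frac{1}{n^{3/2+2\beta}}\Big)^m \cdot n^{-\beta/2} \log^{O(1)} n \, .
\]

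For each $j$, parametrizing the block $(X_j, Y_j, W_j)$ of $S_n(\mathbf{z})$ and using the structure of $A_{z_j}^+(U) \cap \mathcal{G}$ will show that this block is confined to a six--dimensional region of Lebesgue measure $\asymp n^{-3/2-2\beta} \log^{O(1)} n$: the $Y_j$--annulus has area $\asymp n^{-1/2}$; given $Y_j$, the constraint $\frac{f_n(z_j)}{f_n'(z_j)} - z_j \in \mathsf{R}_{z_j}^\sharp$ puts $X_j$ in a disk of area $\asymp |Y_j|^2 n^2 \delta^2 \asymp n^{-1-2\beta}$; and the ratio constraint on $2|f_n'|/|f_n''|$ puts $W_j$ in a set of area $\asymp 1$. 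For $j=1$, the $\Upsilon$ constraint further forces the quadratic prediction $\Phi_{z_1} = z_1 + f_n(z_1)/f_n'(z_1) - 2 f_n'(z_1)/f_n''(z_1)$ into the edge set $\bigcup_w (\mathsf{R}_w \setminus \mathsf{R}_w^\circ)$, which has total $2$D area $\lesssim (\log n)^2 n^{2\beta} \cdot \delta^2 n^{-\beta/2} \asymp (\log n)^2 n^{-5/2-\beta/2}$ (using that there are $\lesssim (\log n)^2 n^{2\beta}$ admissible $w$). Since $|f_n'(z_1)| \asymp n^{5/4}$ and $|f_n''(z_1)| \asymp n^{5/2}$ on $A_{z_1}^+(U) \cap \mathcal{G}$, the map $W_1 \mapsto \Phi_{z_1}$ (with $X_1, Y_1$ held fixed) is a local diffeomorphism with $|\partial \Phi_{z_1}/\partial W_1| \asymp n^{-5/4}$, so the preimage of the edge set in $W_1$--space has area $\lesssim (\log n)^2 n^{-\beta/2}$, a fraction $\lesssim (\log n)^2 n^{-\beta/2}$ of the bulk $W_1$--range. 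This yields the claimed bound on $m(E)$.

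Finally, I cover $E$ by balls of radius $n^{-M}$ for $M$ sufficiently large (the set is a product of bounded--aspect annuli and disks, making the cover of size $\lesssim m(E) \cdot n^{6mM}$) and apply Proposition~\ref{prop:small_ball_estimate_for_tuples_of_smooth} with $\gamma = n^\beta$ to each ball. Summing gives
\[
\bP\Big[\Upsilon_{z_1} \cap \bigcap_{j \ge 2} A_{z_j}^+(U)\Big] \lesssim \gamma^{-O(m^2)} \, m(E) \lesssim \Big(\frac{1}{n^{3/2+2\beta}}\Big)^m \cdot n^{-\beta/2} \log^{O(1)} n \, ,
\]
which is dominated by $\big(\tfrac{1}{n^{3/2+2\beta}}\big)^m n^{-\beta/10}$ for large $n$. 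The main obstacle is the measure bound: extracting the decisive factor $n^{-\beta/2}$ relies on the simultaneous pinning of $|f_n'|$ and $|f_n''|$ by $A_{z_1}^+(U) \cap \mathcal{G}$, so that the Jacobian of $W_1 \mapsto \Phi_{z_1}$ is nondegenerate and the area of the edge set in $\Phi_{z_1}$--space transfers cleanly to a measure bound on its preimage in $W_1$--space.
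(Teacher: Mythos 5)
Your proposal is correct and takes essentially the same route as the paper. Both arguments intersect with the good event $\mathcal{G}$, encode the constraints on $S_n(\mathbf{z})$ as a set in $\bC^{3m}$, bound its Lebesgue measure, and then cover by small balls and apply Proposition~\ref{prop:small_ball_estimate_for_tuples_of_smooth}. The only real difference is cosmetic: for the decisive $j=1$ gain, the paper first uses the triangle inequality (since $|X/(nX')|$ is much smaller than the shell width) to replace the edge set $\mathsf{R}_w\setminus\mathsf{R}_w^\circ$ by $\mathsf{R}_w\setminus\mathsf{R}_w^\sharp$, and then bounds the measure of the resulting $X''$--constraint via the inverse-set estimate $m(A^{-1})\lesssim m(A)/\mathrm{dist}(0,A)^2$; you instead treat $W_1\mapsto\Phi_{z_1}$ as a M\"obius map with $|\partial\Phi_{z_1}/\partial W_1|\asymp n^{-5/4}$ and pull back the edge-set area directly. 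These are the same change-of-variables computation, and both use the pinning $|f_n'(z_1)|\asymp n^{5/4}$, $|f_n''(z_1)|\asymp n^{5/2}$ (up to logs) that comes from $A_{z_1}^+(U)\cap\mathcal{G}$ together with compactness of $U$. Your version is arguably slightly cleaner in that it avoids introducing $\mathsf{R}_w^\sharp$ as an intermediary, and it yields the marginally stronger exponent $n^{-\beta/2}$ in place of $n^{-\beta/10}$.
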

	\begin{proof}
		We start by bounding the Lebesgue measure of the set
		\begin{multline*}
			\label{eq:def_of_set_relevant_for_upsilon}
			\Xi_{n} = \Big\{(X,X^\prime,X^{\prime\prime}) \in \bC^3 \, : \, |X| \le \frac{\log^2 n}{n^{1/2+\beta}} \, , \\  \frac{1}{n^{1/4}\log n } \le |X^\prime| \le \frac{\log^2n}{n^{1/4}} \, , \ |X^{\prime\prime}|\le \log^2 n\, ,  \ \frac{X}{nX^\prime} - \frac{2 X^\prime}{nX^{\prime\prime}} \in \bigcup_{\substack{w\in \mathsf{N}_{\tt s} \\  |z-w| \in n^{-5/4} U }}( \mathsf{R}_w \setminus \mathsf{R}_w^\circ - z)\Big\} \, .
		\end{multline*}
		Indeed, we will show that
		\begin{equation}
			\label{eq:bound_on_measure_of_xi_n}
			m(\Xi_n) \lesssim n^{-3/2-2\beta - \beta/10} \, . 
		\end{equation}
		The set $\Xi_n$ is relevant to our computation since for all $z\in \mathsf{N}_{\tt}$ we have the
		\begin{equation}
			\label{eq:why_is_the_set_xi_relevant}
			\Upsilon_z \cap \mathcal{G} \subset \Big\{ \Big(\frac{f_n(z)}{n^{1/2}},\frac{f_n^\prime(z)}{n^{3/2}},\frac{f_n^{\prime\prime}(z)}{n^{5/2}} \Big) \in \Xi_n  \Big\} \, ,
		\end{equation}
		where $\Upsilon_z$ is given by~\eqref{eq:def_of_event_upsilon}. After the bound~\eqref{eq:bound_on_measure_of_xi_n} is established, we will apply Proposition~\ref{prop:small_ball_estimate_for_tuples_of_smooth} and argue as in the proof of Lemma~\ref{lemma:for_spread_tuples_probabilities_of_smooth_points_factor}. 
		We first note that on $\Xi_n$ we have
		\[
		\Big|\frac{X}{nX^\prime}\Big| \le \frac{(\log n)^3}{n^{5/4 + \beta}} \, ,
		\]
		and since $\mathsf{R}_z^\circ \subset \mathsf{R}_z^\sharp$ with $m\big(\mathsf{R}_z^\sharp \setminus \mathsf{R}_z^\circ\big) \lesssim n^{-5/4 - 3\beta/2}$ we get by the triangle inequality that
		\begin{equation*}
			\Xi_n \subset \Big\{|X| \le \frac{\log^2 n}{n^{1/2+\beta}} \, , \   \frac{1}{n^{1/4}\log n } \le |X^\prime| \le \frac{\log^2n}{n^{1/4}} \, , \ |X^{\prime\prime}| \le \log^2n \, , \ \frac{2 X^\prime}{nX^{\prime\prime}} \in \bigcup_{\substack{w\in \mathsf{N}_{\tt s} \\  |z-w| \in n^{-5/4} U }}( \mathsf{R}_w \setminus \mathsf{R}_w^\sharp - z)\Big\} \, .
		\end{equation*}
		Furthermore, we have that
		\[
		\frac{n^{5/4}}{\log^2 n} \le \frac{2n}{|X^\prime|} \le 2n^{5/4} \log n \, .
		\]
		Together with the simple bound
		\[
		m( \mathsf{R}_w \setminus \mathsf{R}_w^\sharp ) \lesssim n^{-5/4 - 5\beta/2} \, ,
		\]
		we get the following constraint on the variable $X^{\prime\prime}\in \bC$: it must lie in the set
		\begin{equation}
			\label{eq:set_containing_X_prime+prime}
			\Bigg(\bigg[ (2n^{5/4} \log^2 n )\bigcup_{\substack{w\in \mathsf{N}_{\tt s} \\  |z-w| \in n^{-5/4} U }}( \mathsf{R}_w \setminus \mathsf{R}_w^\sharp - z) \bigg]\setminus \bD\Big(0,\frac{1}{\log^2 n}\Big)\Bigg)^{-1}\, .
		\end{equation}
		Here, by the inverse of a set $A\subset \bC$ we just mean $A^{-1} = \{z^{-1} \, : \, z\in A\}$. The set~\eqref{eq:set_containing_X_prime+prime} might look complicated, but we really just need to estimate its measure, which is fairly straightforward. By the previous estimates we have
		\[
		m\bigg((2n^{5/4} \log^2 n )\bigcup_{\substack{w\in \mathsf{N}_{\tt s} \\  |z-w| \in n^{-5/4} U }}( \mathsf{R}_w \setminus \mathsf{R}_w^\sharp - z)  \bigg) \lesssim n^{-\beta/4} \log^{O(1)} n \, .
		\]
		Moreover, for a bounded closet set $A\subset \bC$ such that $0\not\in A$ we can change variables and observe that
		\[
		m(A^{-1}) = \int_{A^{-1}} \, {\rm d}m \lesssim \int_{A} \frac{{\rm d}m(z)}{|z|^2} \lesssim \frac{m(A)}{\text{dist}(0,A)^2}  \, .
		\]
		We conclude that the measure of the set~\eqref{eq:set_containing_X_prime+prime} is $\lesssim n^{-\beta/4} \log^{O(1)}n$, and by bounding the constraints on $X,X^\prime$ trivially we obtain
		\[
		m(\Xi_n) \lesssim m\Big(\bD\Big(0,\frac{\log^2 n}{n^{1/2+\beta}}\Big)\Big)\times m\Big(\bD\Big(0,\frac{\log^2n}{n^{1/4}}\Big)\Big)\times n^{-\beta/5} \lesssim  n^{-1-2\beta} n^{-1/2} n^{-\beta/6} \, ,
		\]
		which yields~\eqref{eq:bound_on_measure_of_xi_n} for $n$ large enough. Since $\Xi_n$ is a Lipschitz domain, we can cover the set $\Xi_n$ with balls of radius $n^{-2}$ so that the sum of their measure is $\lesssim n^{-3/2-2\beta - \beta/10}$. By combining Proposition~\ref{prop:small_ball_estimate_for_tuples_of_smooth} with the observation~\eqref{eq:why_is_the_set_xi_relevant} yields that
		\begin{align*}
			\bP&\big[\Upsilon_{z_1} \cap A_{z_2}^+(U) \cap \ldots \cap A_{z_m}^+(U)\big] \\ & \le \bP\bigg[ \Big\{ \Big(\frac{f_n(z_1)}{n^{1/2}},\frac{f_n^\prime(z_1)}{n^{3/2}},\frac{f_n^{\prime\prime}(z_1)}{n^{5/2}} \Big) \in \Xi_n  \Big\}\cap \bigcap_{j=2}^{m} \big\{|f_n(z_j)| \le 2n^{-\beta} \log^2 n \, , \, |f_n^\prime(z_j)| \le n^{5/4} \log^2 n \big\}\bigg] +\bP\big[\mathcal{G}^c\big] \\ &\lesssim \Big(\frac{1}{n^{3/2+2\beta}}\Big)^m \cdot n^{-\beta/10} + \exp\big(-c\log^2 n\big)
		\end{align*}
		and we are done. 
	\end{proof}
	\subsection{Convergence of Riemann sums}
	For $x\in \bR$ and $j\in \{0,\ldots,4\}$ we set 
	\begin{equation}
		\label{eq:def_of_a_j}
		a_j(x) = \int_{0}^{1} t^j e^{2xt} {\rm d}t = \begin{cases}
			\frac{e^{2x}-1}{2x} & j=0 \, , \\ \frac{e^{2 x} (2 x -1)+1}{4 x^2}  & j=1 \, , \\ \frac{e^{2 x} (1 + 2x(x-1)) -1}{4 x^3} & j=2 \, , \\  \frac{e^{2x}(4x^3 - 6x^2 + 6x -3) +3}{8 x^4}  & j=3 \, , \\ \frac{e^{2x}(2x^4 -4x^3+6x^2 -6x + 3) -3}{4 x^5} & j=4 \, .
		\end{cases}
	\end{equation}
	We will further denote by
	\begin{equation}
		\label{eq:def_of_Delta_functions}
		\Delta_1(x) = a_1(x) a_3(x) - a_2(x)^2 \qquad \text{and} \qquad \Delta_2(x) = a_2(x)a_4(x) -a_3(x)^2 \, ,
	\end{equation}
	and note that both functions as non-negative on the real line, as they can be written as a determinant of a Gram matrix. Finally, we set
	\begin{equation}
		\label{eq:def_of_eta_function}
		\eta(x) = a_0(x)a_2(x)a_4(x)-a_0(x)a_3(x)^2-a_1(x)^2a_4(x)+2a_1(x)a_2(x)a_3(x)-a_2(x)^3 \, ,
	\end{equation}
	and apply some algebra with~\eqref{eq:def_of_a_j} to write
	\[
	\eta(x) = \frac{e^{3x}\big(2 x^3 \cosh(x) - x^2 (3 + x^2) \sinh(x) + \sinh(x)^3\big)}{16x^9}\, .
	\] 
	Finally, we set
	\begin{equation}
		\label{eq:def_of_limiting_intensity_function_F}
		\mathfrak{F}(x) =  \frac{1}{2\pi}\frac{a_4(x)^5}{\eta(x)} \cdot \bigg(1+\frac{1}{\Delta_2(x)} \Big(\frac{a_4(x)^2\Delta_1(x)^2}{\eta(x)} + a_4(x) a_3(x)^2\Big)\bigg)^{-3} 
	\end{equation} 
	and recall that $M_1 = \lceil 4K/(\delta n) \rceil$, $M_2 = \lceil 4/\delta \rceil$ are the parameters in Definition~\ref{definition:net_in_the_main_annulus} that determine the size of the $\delta$-net $\mathsf{N}$.
	\begin{lemma}
		\label{lemma:gaussian_asymptotic_for_a_single_probability}
		Let $z\in \Omega_K$ be such that $\arg(z) \in [n^{-1+4\tau},\pi - n^{-1+4\tau}]$, and let $A_z(U)$ be given by~\eqref{eq:def_of_event_A_z_U_for_net_point}. Then, as $n\to \infty$,
		\[
		\bP_{{\sf G}}\big[A_z(U)\big] = \frac{K}{M_1 M_2}  \mathfrak{F}(x) \Big( \int_{U} t^3{\rm d}t \Big) \big(1+o(1)\big)
		\] 
		where $x = n\log|z|$, and the error term is uniform in $K>0$ and $U\subset \bR_{\ge 0}$ in a compact set. 
	\end{lemma}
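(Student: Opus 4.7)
The plan is an explicit Gaussian computation. Under the Gaussian coefficient assumption, the triple
$(\hat X_0, \hat X_1, \hat X_2) := \big(f_n(z)/n^{1/2},\, z^{-1} f_n'(z)/n^{3/2},\, z^{-2} f_n''(z)/n^{5/2}\big)$
is a Gaussian vector in $\bC^3$. A direct computation gives
\[
\bE[\hat X_i \overline{\hat X_j}] = \frac{1}{n^{1+i+j}}\sum_{k \ge 0} (k)_i (k)_j |z|^{2k-i-j},
\]
which is a Riemann sum converging to $a_{i+j}(x)$ uniformly for $x = n\log|z|$ in a bounded interval. The pseudo-covariance $\bE[\hat X_i \hat X_j]$ contains the oscillatory factor $e^{2ik\theta}$; a partial-summation argument under the assumption $\arg(z) \in [n^{-1+4\tau}, \pi - n^{-1+4\tau}]$ shows this term is $o(1)$. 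Therefore, up to lower-order corrections, $(\hat X_0, \hat X_1, \hat X_2)$ is a proper complex Gaussian vector with covariance matrix $\Sigma(x) = (a_{i+j}(x))_{0 \le i, j \le 2}$, and $\det \Sigma(x) = \eta(x)$.

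Next, substitute $Y_0 := \hat X_0/\hat X_1$, $Y_1 := \hat X_1/\hat X_2$, $Y_2 := \hat X_2$; the real Jacobian of this map is $|Y_1|^2 |Y_2|^4$. A direct calculation shows the event $A_z(U)$ becomes: $Y_0$ in a region of area $\approx 2\pi K n/(M_1 M_2)$ centered at $0$; $|Y_1| \in \frac{1}{2}n^{-1/4}U$; and the constraint $|Y_1 Y_2| \ge n^{-1/4}/\log n$, which excludes only a vanishing region in $Y_2$. On this event one has $|Y_0| = O(n^{-1/4-\beta})$, $|Y_1| = O(n^{-1/4})$ and $|Y_2| = O(1)$. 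Taylor-expand the Gaussian exponent $\hat X^* \Sigma(x)^{-1} \hat X$ around $Y_0 = Y_1 = 0$: the only term of order unity is $(\Sigma(x)^{-1})_{22}|Y_2|^2$. Sub-leading pieces linear in $Y_1$ average to zero under integration over $\arg(Y_1)$, while the quadratic-in-$Y_1$ piece, after completing the square in $Y_1$, contributes a correction to the effective coefficient of $|Y_2|^2$ that must be retained.

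Third, the integral factorizes: the $Y_0$-integration contributes the area $\approx 2\pi K n/(M_1 M_2)$; the $|Y_1|$-integration against $|Y_1|^2$ gives $\frac{\pi n^{-1}}{8}\int_U t^3\,dt$ via the substitution $|Y_1| = t\,n^{-1/4}/2$ (the angular integration yields the factor of $2\pi$); the $Y_2$-integration is a one-dimensional radial Gaussian $\int_0^\infty r^5 e^{-c(x) r^2}\,dr = c(x)^{-3}$, where $c(x)$ is the effective coefficient from the previous step. Collecting the prefactor $(\pi^3\eta(x))^{-1}$ from the joint Gaussian density, and applying cofactor identities for $\Sigma(x)$ so that $c(x)$ and the other combinations are expressible in terms of $a_j(x)$, $\Delta_j(x)$ and $\eta(x)$, identifies the result as $\frac{K\mathfrak{F}(x)}{M_1 M_2}\int_U t^3\,dt\cdot(1+o(1))$. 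The main obstacle is this final algebraic simplification: to reproduce the specific closed form in the statement, one needs to carefully unwind the $Y_1$-completion-of-the-square step and express $c(x)$ in terms of $\Delta_2(x)$, $\Delta_1(x)$, $a_4(x)$ and $a_3(x)$, matching the factor $\big(1 + \Delta_2(x)^{-1}(a_4(x)^2\Delta_1(x)^2/\eta(x) + a_4(x) a_3(x)^2)\big)^{-3}$. Uniformity of the $1+o(1)$ error in $K$ and $U$ on compact sets then follows from the uniform convergence of the covariance entries and the compactness of the integration region.
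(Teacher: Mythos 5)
Your overall strategy is the same as the paper's: an explicit Gaussian small-ball computation after establishing that the (scaled) vector $(f_n, f_n', f_n'')$ is asymptotically a proper complex Gaussian with covariance $\Sigma(x)=(a_{i+j}(x))_{0\le i,j\le 2}$. The paper implements the calculation by successively conditioning $F_n$ on $\{F_n',F_n''\}$, then $F_n'$ on $F_n''$, then integrating $F_n''$; your $(Y_0,Y_1,Y_2)=(\hat X_0/\hat X_1,\hat X_1/\hat X_2,\hat X_2)$ substitution with Jacobian $|Y_1|^2|Y_2|^4$ is an entirely equivalent change of variables, and the identification of the three integrals (area of the $Y_0$-region, radial $|Y_1|$-integral producing $\int_U t^3\,dt$, radial Gaussian in $Y_2$) matches the paper's bookkeeping.

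There is, however, a genuine error in the middle of your argument. You claim that after averaging out the linear-in-$Y_1$ term, ``the quadratic-in-$Y_1$ piece, after completing the square in $Y_1$, contributes a correction to the effective coefficient of $|Y_2|^2$ that must be retained.'' This is not the case. On the event, $|Y_1|=O(n^{-1/4})$ and $|Y_2|=O(\log^3 n)$, so the term $(\Sigma^{-1})_{11}|Y_1|^2|Y_2|^2$ (and every other $Y_0$- or $Y_1$-dependent term in $\hat X^*\Sigma^{-1}\hat X=|Y_2|^2 v^*\Sigma^{-1}v$ with $v=(Y_0Y_1,Y_1,1)$) is $O(n^{-1/4}\log^6 n)=o(1)$; exponentiating gives only a $(1+o(1))$ factor. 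The effective coefficient of $|Y_2|^2$ is exactly $(\Sigma^{-1})_{22}$, full stop — there is no Gaussian $Y_1$-integral to complete a square in, since $|Y_1|$ is pinned to the thin shell $\tfrac12 n^{-1/4}U$. Consequently the ``main obstacle'' you identify is not an obstacle at all: the factor $\big(1+\Delta_2^{-1}(a_4^2\Delta_1^2/\eta + a_4 a_3^2)\big)^{-3}$ appearing in $\mathfrak{F}$ is simply a rewriting, via the Schur-complement (conditional-variance) decomposition
\[
(\Sigma^{-1})_{22} = \frac{1}{a_4} + \frac{a_3^2}{a_4\Delta_2} + \frac{\Delta_1^2}{\Delta_2\,\eta}\,,
\]
of the single quantity $(\Sigma^{-1})_{22}$ (equivalently $\frac{a_0a_2-a_1^2}{\eta}$ by cofactors), and the paper arrives at this form automatically because its conditioning scheme produces the three summands one at a time as $\exp(-\text{mean}^2/\text{var})$ factors — the conditional means at $F_n=F_n'=0$ are $O(1)$ even though the sampling point is near the origin. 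Once you replace your $c(x)$ by $(\Sigma^{-1})_{22}$ and drop the spurious completion-of-the-square step, your route closes cleanly and is, if anything, slightly more transparent than the paper's.
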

	The proof of Lemma~\ref{lemma:gaussian_asymptotic_for_a_single_probability} is merely a Gaussian computation, and we postpone it to Section~\ref{sec:limiting_intensity_for_gaussian}. The next claim lists some basic properties of the limiting intensity function $\mathfrak{F}$.
	\begin{claim}
		\label{claim:integral_properties_of_intensity_mathfrak_F}
		Let $\mathfrak{F}$ be given by~\eqref{eq:def_of_limiting_intensity_function_F}. Then $\mathfrak{F}$ is a non-negative smooth function on $\bR$ with 
		\begin{equation}
			\label{eq:value_of_intensity_at_x=0}
			\mathfrak{F}(0) = \frac{1}{4\pi}  \times \frac{27}{10^4} 
		\end{equation}
		Furthermore, there exists an absolute constant $C>0$ so that
		\[
		\sup_{x\ge 1} \big(e^x \, \mathfrak{F}(x) \big) \le C  \, , \quad \text{and} \quad  \sup_{x<-1} \big(x^{16} \, \mathfrak{F}(x) \big) \le C \, .
		\]
	\end{claim}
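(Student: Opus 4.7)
\textbf{Proof proposal for Claim~\ref{claim:integral_properties_of_intensity_mathfrak_F}.} The plan is to verify the four assertions in turn: smoothness and positivity, the value at $0$, and the two asymptotics. First, for smoothness and positivity, I would note that each $a_j$ is an entire function of $x$ (the integrand is analytic and uniformly controlled on compacts), and that $\eta(x)$, $\Delta_1(x)$, $\Delta_2(x)$ are the Gram determinants of the linearly independent families $\{1,t,t^2\}$, $\{1,t\}$, $\{1,t\}$ against the positive finite measures $e^{2xt}\,dt$, $t\,e^{2xt}\,dt$, $t^2 e^{2xt}\,dt$ on $[0,1]$ respectively. Consequently $\eta, \Delta_1, \Delta_2 > 0$ for every $x \in \bR$, so $\mathfrak{F}$ is a strictly positive smooth function on $\bR$.

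For $\mathfrak{F}(0)$, I compute $a_j(0) = 1/(j+1)$, whence $\eta(0)$ is the determinant of the $3\times 3$ Hilbert matrix, namely $1/2160$, and similarly $\Delta_1(0) = 1/72$, $\Delta_2(0) = 1/240$. Direct arithmetic gives $a_4(0)^5/\eta(0) = 432/625$ and $a_4(0)^2\Delta_1(0)^2/\eta(0) + a_4(0)a_3(0)^2 = 1/60 + 1/80 = 7/240$, so the bracket equals $1 + 240 \cdot (7/240) = 8$. Therefore $\mathfrak{F}(0) = \frac{1}{2\pi}\cdot\frac{432}{625}\cdot 8^{-3} = \frac{27}{40000\,\pi}$, matching \eqref{eq:value_of_intensity_at_x=0}.

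For $x\to +\infty$, repeated integration by parts yields the exact formula $a_j(x) = \frac{e^{2x}}{2x}\,Q_j\!\left(\frac{1}{2x}\right) + O(x^{-j-1})$ with $Q_j(y) = \sum_{k=0}^{j} \frac{(-1)^k j!}{(j-k)!}\,y^k$ and $Q_j(0)=1$. Since every $a_j$ has the same leading factor $e^{2x}/(2x)$, the Gram matrices entering $\Delta_1$, $\Delta_2$, $\eta$ are degenerate to leading order, and one must resolve the cancellations at the polynomial level. A short calculation gives $Q_1 Q_3 - Q_2^2 = y^2(1-4y+2y^2)$, $Q_2 Q_4 - Q_3^2 = y^2(1+O(y))$ and $\det(Q_{i+j})_{0\le i,j\le 2} = 4y^6$, so $\Delta_1 \sim e^{4x}/(16x^4)$, $\Delta_2 \sim e^{4x}/(16x^4)$, $\eta \sim e^{6x}/(128 x^9)$. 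Assembling these, $a_4^5/\eta \sim 4x^4 e^{4x}$, and the dominant contribution to the bracketed expression comes from $a_4^2\Delta_1^2/(\eta\Delta_2)$, yielding $\sim 2x^3 e^{2x}$; the cube and reciprocal then give $\mathfrak{F}(x) \sim (4\pi x^5 e^{2x})^{-1}$, which is much stronger than the required $e^{-x}$ decay.

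For $x\to -\infty$, the substitution $s=-2xt$ yields $a_j(x) = (-2x)^{-(j+1)}\int_0^{-2x} s^j e^{-s}\,ds = \frac{j!}{(-2x)^{j+1}} + O(e^{-|x|})$. The Gram matrices are now non-degenerate to leading order; factoring out scales produces the Hankel matrix with entries $(i+j)!$, whose $3\times 3$ determinant equals $(0!\,1!\,2!)^2 = 4$. This gives $\eta(x) \sim 4(2|x|)^{-9}$, $\Delta_1(x)\sim 2(2|x|)^{-6}$, $\Delta_2(x)\sim 12(2|x|)^{-8}$, $a_4(x)^5 \sim 24^5 (2|x|)^{-25}$; the bracket tends to $1$, so $\mathfrak{F}(x) \sim 24^5/(2\pi (2|x|)^{16})$, giving the $|x|^{16}$ bound. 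Since $\mathfrak{F}$ is continuous and positive, the uniform bounds on $[1,\infty)$ and $(-\infty,-1]$ follow from continuity together with these asymptotics. The main obstacle is the careful expansion in the $x\to +\infty$ regime: naive leading-order estimates collapse each Gram determinant to zero, so one must work at the polynomial level to extract the true subleading behavior.
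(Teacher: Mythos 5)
Your proof is correct and takes essentially the same route as the paper: Gram-matrix positivity for $\eta,\Delta_1,\Delta_2$, direct evaluation at $x=0$, and asymptotics of the $a_j$'s for $x\to\pm\infty$. The main difference is that you resolve the leading-order cancellations in the Gram determinants explicitly via the polynomial expansion $a_j(x)=\tfrac{e^{2x}}{2x}Q_j(1/(2x))+O(x^{-j-1})$, arriving at the sharp asymptotics $\mathfrak{F}(x)\sim (4\pi x^5 e^{2x})^{-1}$ as $x\to+\infty$ and $\mathfrak{F}(x)\asymp |x|^{-16}$ as $x\to-\infty$; the paper instead only extracts the logarithmic asymptotics $\log\Delta_1\sim\log\Delta_2\sim 4x$, $\log\eta\sim 6x$, $\log\mathfrak{F}\sim -2x$, which suffice for the stated $e^{-x}$ bound without tracking the power of $x$. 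Two small items worth noting: your asymptotic $\eta(x)\sim e^{6x}/(128 x^9)$ is in fact correct (the paper prints $128x^{13}$, which appears to be a typo and is inconsequential for the log-level argument); and there is a minor arithmetic slip in your final $x\to-\infty$ constant --- since $a_4^5/\eta\sim 24^5/(4(2|x|)^{16})$, the prefactor should be $24^5/(8\pi(2|x|)^{16})$ rather than $24^5/(2\pi(2|x|)^{16})$ --- but this factor of $4$ is immaterial to the claimed $\sup_{x<-1}|x|^{16}\mathfrak{F}(x)\le C$.
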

	\begin{proof}
		We already pointed out that $\Delta_1,\Delta_2$ are non-negative functions, while the non-negativity of $a_0,\ldots,a_4$ is obvious from the definition~\eqref{eq:def_of_a_j}. Hence, to prove that $\mathfrak{F}$ is non-negative, we only need to show that $\eta$ given by~\eqref{eq:def_of_eta_function} is a positive function. Abbreviating the variable $x$ for the moment, this comes from the fact that
		\begin{equation}
			\label{eq:eta_as_a_schur_complement}
			\eta = \Delta_2\bigg(a_0 - \frac{a_1^2a_4 -2 a_1 a_2a_3 + a_2^3}{a_2a_4-a_3^2}\bigg) = \Delta_2 \bigg(a_0 - \frac{1}{a_2a_4 - a_3^2}\begin{pmatrix}
				a_1 & a_2
			\end{pmatrix} \cdot \begin{pmatrix}
				a_4 & - a_3 \\ -a_3 & a_2
			\end{pmatrix}\cdot \begin{pmatrix}
				a_1 \\ a_2
			\end{pmatrix} \bigg)
		\end{equation}
		as can be seen from a direct inspection. Hence, $\eta/\Delta_2$ is the Schur complement of the positive-definite matrix given by
		\begin{equation*}
			\begin{pmatrix}
				a_0 & a_1 & a_2 \\  a_1 & a_2 & a_3 \\ a_2 & a_3 & a_4
			\end{pmatrix} \, ,
		\end{equation*}
		and hence, $\eta$ is also pointwise positive. To find the value at $x=0$, a direct computation shows that
		\begin{equation*}
			a_j(0) = \frac{1}{j+1}\, , \quad \Delta_1(0) = \frac{1}{72} \, , \quad \Delta_2(0) = \frac{1}{240} \, , \quad \eta(0) = \frac{1}{2160} \, ,
		\end{equation*}
		and plugging the above into~\eqref{eq:def_of_limiting_intensity_function_F} and some simplifications gives~\eqref{eq:value_of_intensity_at_x=0}. Finally, to get the asymptotic bounds as $|x|\to \infty$, we first note that
		\begin{align*}
			a_j(x) \sim \frac{e^{2x}}{2x} \, , \qquad  \text{for } \ j\in\{0,\ldots,4\} \, 
		\end{align*}
	and $$\Delta_1(x) \sim \frac{e^{4x}}{16 x^4}\,, \quad \Delta_2(x) \sim \frac{e^{4x}}{16 x^4}\,,\quad \eta(x) \sim \frac{e^{6x}}{128 x^{13}} $$ 
		as $x\to +\infty$, which in turn implies that, as $x\to \infty$,
		\[
		\log \Delta_1(x)\sim  4x \, , \quad \log \Delta_2(x)\sim 4x \, , \quad \log \eta(x) \sim 6x \, .
		\]
		Plugging this asymptotic into~\eqref{eq:def_of_limiting_intensity_function_F} gives that
		$
		\log \mathfrak{F}(x) \sim -2x 
		$
		as $x\to +\infty$,
		and so the right-tail bound follows. As for the left-tail bound, we note from~\eqref{eq:def_of_a_j} that
		\[
		a_j(x) \simeq |x|^{-j-1} \, , \qquad \text{as }\ x\to -\infty \, , 
		\]
		and furthermore
		\[
		\Delta_1(x) \simeq x^{-6} \, , \quad \Delta_2(x) \simeq x^{-8} \, , \quad \eta(x)  \simeq (-x)^{-9} \, .
		\]
		Plugging once more into~\eqref{eq:def_of_limiting_intensity_function_F} and keeping track of the exponents shows that
		\[
		\mathfrak{F}(x) \simeq x^{-16} \, , \quad \text{as } \ x\to -\infty \, ,
		\]
		which proves the desired bound for the left-tail as well.
	\end{proof}
	In view of the moment computation ahead, we show that summing the success probabilities over the net points give us the limiting intensity. 
	\begin{claim}
		\label{claim:riemann_sum_converge_to_intensity}
		We have
		\[
		\lim_{n\to \infty} \sum_{z\in \mathsf{N}} \bP_{{\sf G}}\big[A_z(U)\big] = 2 \, \mathfrak{c}_\ast(K) \int_{U} t^3 {\rm d}t \, .
		\]
	\end{claim}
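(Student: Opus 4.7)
The plan is to combine Lemma~\ref{lemma:gaussian_asymptotic_for_a_single_probability}, which gives the Gaussian asymptotic of $\bP_{{\sf G}}[A_z(U)]$ at each individual net point away from the real axis, with a Riemann sum convergence argument. First I decompose $\mathsf{N} = \mathsf{N}_{\rm bulk} \sqcup \mathsf{N}_{\rm bdry}$, where $\mathsf{N}_{\rm bulk}$ consists of $z \in \mathsf{N}$ with $\arg(z) \in [n^{-1+4\tau},\pi - n^{-1+4\tau}]$, and note that by~\eqref{eq:number_of_smooth_points_in_the_net_annulus} and crude counting, $|\mathsf{N}_{\rm bdry}| = O(M_1 M_2 n^{-1+4\tau}) = O(n^{1/2+2\beta+4\tau})$.

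For the bulk sum, Lemma~\ref{lemma:gaussian_asymptotic_for_a_single_probability} applies uniformly and yields
\[
\bP_{{\sf G}}[A_z(U)] = (1+o(1)) \cdot \frac{K}{M_1 M_2}\, \mathfrak{F}\big(n\log|z|\big) \int_U t^3 \, {\rm d}t.
\]
Since $\mathfrak{F}(n\log|z|)$ depends only on $|z|$, summing over the angular index $b \in \{1,\ldots,M_2\}$ contributes a factor of $(1+o(1))M_2$. For the radial index $a \in \{0,1,\ldots,M_1\}$, setting $r_a = 1 - K/n + (2K/(nM_1))\, a$ and $x_a = n\log r_a$, a simple Taylor expansion gives $x_a = K(2a/M_1 - 1) + O(n^{-1})$, so the $x_a$ partition $[-K,K]$ with mesh $2K/M_1 \to 0$. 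By Claim~\ref{claim:integral_properties_of_intensity_mathfrak_F}, $\mathfrak{F}$ is smooth and integrable, so standard Riemann sum convergence gives $\tfrac{2K}{M_1}\sum_{a=0}^{M_1}\mathfrak{F}(x_a) \to \int_{-K}^{K} \mathfrak{F}(x) \, {\rm d}x$. Combining everything,
\[
\sum_{z \in \mathsf{N}_{\rm bulk}} \bP_{{\sf G}}[A_z(U)] = (1+o(1))\, \frac{K}{M_1} \cdot \frac{M_1}{2K} \int_{-K}^K \mathfrak{F}(x)\,{\rm d}x \cdot \int_U t^3 \,{\rm d}t \longrightarrow  2 \, \mathfrak{c}_\ast(K) \int_U t^3 \, {\rm d}t,
\]
by the definition~\eqref{eq:intensity_constant_for_K_annulus} of $\mathfrak{c}_\ast(K)$.

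It remains to verify that the boundary contribution is $o(1)$, which is the main technical nuisance. For $z \in \mathsf{N}_{\rm bdry}$ with $\arg(z) \gtrsim 1/n$, the Gaussian vector $(f_n(z), f_n'(z))$ has a non-degenerate $2\times 2$ covariance on scale $\asymp n$, and a direct Gaussian small-ball computation (or Claim~\ref{claim:small_ball_away_from_real_axis} / Claim~\ref{claim:small_ball_points_near_the_real_axis}) yields $\bP_{{\sf G}}[A_z(U)] = O(n^{-3/2-2\beta})$ up to logarithmic factors; for the even fewer $z$ with $\arg(z) \lesssim 1/n$, where the $2\times 2$ covariance degenerates, the crude bound from Claim~\ref{claim:small_ball_real_points} is sufficient. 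Multiplying per-point estimates by $|\mathsf{N}_{\rm bdry}|$ in each regime gives a total boundary contribution of $o(1)$ provided $\tau$ is chosen small enough relative to $\beta$. The principal obstacle lies precisely here: one must carefully track the degeneration rate of the Gaussian covariance as $\arg(z) \to 0$. Once this is in hand, the rest is the routine Riemann sum calculation sketched above.
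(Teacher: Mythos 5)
Your approach is essentially the paper's: the core of the argument is the Riemann-sum computation powered by Lemma~\ref{lemma:gaussian_asymptotic_for_a_single_probability}, and your bulk calculation reproduces the paper's verbatim, including the cancellation in $\frac{K}{M_1}\cdot\frac{M_1}{2K}=\frac12$ and the identification of the limit with $2\mathfrak{c}_\ast(K)\int_U t^3\,{\rm d}t$. Where you go beyond the paper is in explicitly isolating the boundary points with $\arg(z)\le n^{-1+4\tau}$, which fall outside the hypotheses of Lemma~\ref{lemma:gaussian_asymptotic_for_a_single_probability}; the paper applies that lemma to all of $\mathsf{N}$ without comment, so flagging this is legitimate and careful.

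However, the innermost boundary estimate as you wrote it does not close. For $\arg(z)\lesssim 1/n$ there are $\sim M_1 M_2/n\asymp n^{1/2+2\beta}$ net points, and Claim~\ref{claim:small_ball_real_points} only gives $\bP_{{\sf G}}[A_z(U)]\lesssim n^{-1/2}$; the product is $\sim n^{2\beta}$, which does \emph{not} tend to zero. The fix is to also exploit the constraint on $f_n'(z)$ inside $A_z(U)$: near the real axis the vector $(f_n(z),f_n'(z))$ is essentially a two-dimensional real Gaussian with non-degenerate $2\times 2$ covariance, and a small-ball bound on that pair (the constraint $|f_n(z)|\lesssim n^{-\beta}\log^2 n$, $|f_n'(z)|\lesssim n^{5/4}\log^2 n$ in a box of the appropriate side lengths) gives $\bP_{{\sf G}}[A_z(U)]\lesssim n^{-3/4-\beta}\log^{O(1)}n$, and multiplying by the $\sim n^{1/2+2\beta}$ points yields $n^{-1/4+\beta}\log^{O(1)}n = o(1)$ since $\beta$ is small. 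You correctly identify the degeneration of the $4\times 4$ covariance as the obstacle, but pointing to Claim~\ref{claim:small_ball_real_points} alone as ``sufficient'' is not accurate; this is the one genuine gap in your proposal, and it is a fillable one.
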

	\begin{proof}
		By Lemma~\ref{lemma:gaussian_asymptotic_for_a_single_probability} the claim will follow once we show that
		\begin{equation}
			\label{eq:proof_of_claim:riemann_sum_converge_to_intensity_what_we_want}
			\lim_{n\to\infty} \frac{K}{M_1 M_2} \sum_{z\in \mathsf{N}} \mathfrak{F} (n\log|z|) = 2\, \mathfrak{c}_\ast(K) \, ,
		\end{equation}
		where $\mathfrak{c}_\ast(K)$ is given by~\eqref{eq:intensity_constant_for_K_annulus}. 
		Recall that the $\delta$-net $\mathsf{N}$ is consists of points
		\[
		z = \Big(1-\frac{K}{n} + \frac{2K}{n} \cdot\frac{a}{M_1}\Big) e\big(\pi b/M_2\big) \, , 
		\]
		with $a\in\{0,\ldots,M_1\}$ and $b\in \{ 1,\ldots,M_2\}$. Hence, we see that $\log|z|$ does not depend on the variable $b$, and we get that
		\begin{equation}
			\label{eq:proof_of_Y_n_U_has_poisson_moments_reduction_to_sum_over_E_sharp_lossing_the_b_variable}
			\frac{K}{M_1 M_2}\sum_{z\in \mathsf{N}} \mathfrak{F} (n\log|z|) = \frac{K}{M_1} \sum_{a=0}^{M_1} \mathfrak{F} \Big(n \log\Big|1-\frac{K}{n} + \frac{2K}{n}\cdot \frac{a}{M_1}\Big|\Big) \, .
		\end{equation}
		Next, we claim that the sum on the right-hand side of the above display is a Riemann sum. Indeed, the point set
		\[
		n \log\Big|1-\frac{K}{n} + \frac{2K}{n}\cdot \frac{a}{M_1}\Big| \, , \quad 0\le a\le M_1\, ,
		\] 
		is a partition of the interval $[-K,K]$, with the spacing between two points given by
		\[
		n\Big(\frac{2K}{nM_1} + o\big(\frac{1}{nM_1}\big)\Big) = \frac{2K}{M_1} \Big(1+o(1)\Big)
		\]
		with the error term uniformly bounded for $K>0$ in a compact set. As $M_1$ tends to infinity as $n\to \infty$, we conclude that
		\[
		\lim_{n\to \infty} \frac{K}{M_1} \sum_{a=0}^{M_1} \mathfrak{F} \Big(n \log\Big|1-\frac{K}{n} + \frac{2K}{n}\cdot \frac{a}{M_1}\Big|\Big) = \frac{1}{2} \int_{-K}^{K} \mathfrak{F}(x) \,  {\rm d}x  = 2\, \mathfrak{c}_\ast(K) \, .
		\]
		This proves~\eqref{eq:proof_of_claim:riemann_sum_converge_to_intensity_what_we_want} and we are done. 
	\end{proof} 
	\subsection{Extracting the Poisson limit: proof of Proposition~\ref{prop:poisson_moments_for_X_n_pm}}
	We are finally ready to show the moments on $X_n^\pm(U)$ are Poissonian. For pedagogical reasons, we split the proof into two, showing the desired upper bound and the desired lower bound separately. 
	\begin{proof}[Proof of Proposition~\ref{prop:poisson_moments_for_X_n_pm}: the upper bound]
		To keep notations simple, we will suppress the dependence in $U\subset \bR_{\ge 0}$ until the final calculation. We start with the upper bound, and note that, since $X_n^+$ is a sum of indicators we have
		\begin{equation}
			\label{eq:m_falling_factorial_for_X_+_after_opening_brackets}
			(X_n^+)_m = \sum_{\substack{\{z_1,w_1\},\ldots \{z_m,w_m\} \in \binom{\mathsf{N}_{\tt s}}{2} \\ \{z_j,w_j\} \not= \{z_{j^\prime},w_{j^\prime}\} \ \forall j\not= j^\prime}} \prod_{j=1}^m \mathbf{1}_{A_{z_j}^+} \mathbf{1}_{A_{w_j}^+} \,  \mathbf{1}_{|z_j-w_j| \le \log n/n^{5/4}} \, .		
		\end{equation}
		From Lemma~\ref{lemma:control_on_maximum_of_polynomial_and_derivatives}, we see that
		\[
		\bE\big[(X_n^+)_m \mathbf{1}_{\mathcal{G}^c}\big] \lesssim |\mathsf{N}_{\tt s}|^{2m} \bP\big[\mathcal{G}^c\big] \lesssim  n^{3m+4\beta m} e^{-c \log^2 n} = o(1) \, ,
		\]
		so we can assume that $\mathcal{G}$ holds throughout the computation. By Claim~\ref{claim:linear_approximation_predicts_a_root_in_R_z}, on the event $A_{z_j}^+ \cap \mathcal{G}$, there is a root of $f_n$ in $\mathsf{R}_{z_j}$. We thus conclude from Lemma~\ref{lemma:existence_of_two_roots_implies_macroscopic_separation}, that if the events $A_{z_j}^+ \cap A_{w_j}^+ \cap \mathcal{G}$ occur for all $1\le j \le m$ (where $\mathbf{z}, \mathbf{w}$ are some fixed tuples as in~\eqref{eq:m_falling_factorial_for_X_+_after_opening_brackets}), then we must have $|z_j - z_{j^\prime}| \ge \frac{1}{n (\log n)^5}$ for all $j\not= j^\prime$. From the above reasoning, we conclude that
		\begin{multline*}
			\bE\big[(X_n^+)_m\big] \le \sum_{\substack{\{z_1,w_1\},\ldots \{z_m,w_m\} \in \binom{\mathsf{N}_{\tt s}}{2} \\ \mathbf{z} \text{ is } (\log n)^{-5} \text{-spread}}} \prod_{j=1}^m \mathbf{1}_{A_{z_j}^+} \mathbf{1}_{A_{w_j}^+} \,  \mathbf{1}_{|z_j-w_j| \le \log n/n^{5/4}} + o(1) \\ = 2^{-m} \sum_{\substack{\mathbf{z} \in (\mathsf{N}_{\tt s})^m \\ \mathbf{z} \text{ is } (\log n)^{-5} \text{-spread}}} \bE  \bigg[ \prod_{j=1}^{m} \mathbf{1}_{A_{z_j}^+} \sum_{\substack{ w_j \in \mathsf{N}_{\tt s} \\ 0<|z_j - w_j| \le \frac{\log n}{n^{5/4}} }} \mathbf{1}_{A_{w_j}^+} \mathbf{1}_{\mathcal{G}} \bigg] + o(1) \, , 
		\end{multline*}
		where the above equality follows from summing over ordered pairs in $\mathsf{N}_{\tt s}$ instead of unordered pairs as defined in $\binom{\mathsf{N}_{\tt s}}{2}$. Combining once more Claim~\ref{claim:linear_approximation_predicts_a_root_in_R_z} and Lemma~\ref{lemma:existence_of_two_roots_implies_macroscopic_separation}, we see that at most one of the $w_j$'s in the sum above can occur, and we conclude that
		\begin{equation}
			\label{eq:m_falling_factorial_for_X_+_after_taking_limsup}
			\limsup_{n\to \infty} \bE\big[(X_n^+)_m\big] \le  2^{-m}  \, \limsup_{n\to \infty} \sum_{\substack{\mathbf{z} \in (\mathsf{N}_{\tt s})^m \\ \mathbf{z} \text{ is } (\log n)^{-5} \text{-spread}}} \bP\big[A_{z_1}^+ \cap \ldots \cap A_{z_m}^+\big] \, .
		\end{equation}
		Denote by 
		\begin{equation}
			\label{eq:def_of_n_beta_spread_tuples}
			E_1 = \Big\{ \mathbf{z} \in (\mathsf{N}_{\tt s})^m \, : \, \mathbf{z} \text{ is } n^{\beta} \text{-spread} \Big\}
		\end{equation}
		and by 
		\begin{equation*}
			E_2 = \Big\{ \mathbf{z} \in (\mathsf{N}_{\tt s})^m \, : \, \mathbf{z} \text{ is } (\log n)^{-5} \text{-spread and } \exists j\not = j^\prime \text{ such that } |z_j - z_{j^\prime}| \le n^{\beta - 1} \Big\} \, .
		\end{equation*}
		Clearly, the sum on the right-hand side of~\eqref{eq:m_falling_factorial_for_X_+_after_taking_limsup} breaks into two separate sums over $E_1$ and $E_2$, respectively. We first show that the sum over $E_2$ is negligible. Indeed, we note that $|E_2| \lesssim |\mathsf{N}_{\tt s}|^m n^{\beta - 1}$, since there are $|\mathsf{N}_{\tt s}|n^{\beta - 1}$ possible choices for the close points with all other kept fixed. By Lemma~\ref{lemma:for_almost_spread_tuples_probabilities_are_bounded_by_what_you_want} we see that
		\[
		\sum_{\mathbf{z} \in E_2} \bP\big[A_{z_1}^+ \cap \ldots \cap A_{z_m}^+\big] \lesssim |E_2|  \Big(\frac{1}{n^{3/2+2\beta}}\Big)^m \log^{O(1)} n \lesssim n^{\beta - 1} \log^{O(1)} n 
		\]
		and since $\beta>0$ is small, we see that this sum does not contribute to~\eqref{eq:m_falling_factorial_for_X_+_after_taking_limsup}. On the other hand, we have by Lemma~\ref{lemma:for_spread_tuples_probabilities_of_smooth_points_factor} that 
		\begin{align*}
			\sum_{\mathbf{z} \in E_1} \bP\big[A_{z_1}^+ \cap \ldots \cap A_{z_m}^+\big] &= \big(1+o(1)\big) \bigg(\sum_{\mathbf{z} \in  E_1} \prod_{j=1}^{m} \bP_{{\sf G}}\big[A_{z_{j}}\big] \bigg) \\ 
			&= \big(1+o(1)\big) \bigg(\sum_{\mathbf{z} \in |\mathsf{N}|^m } \prod_{j=1}^{m} \bP_{{\sf G}}\big[A_{z_{j}}\big] \bigg) = \big(1+o(1)\big)\bigg(\sum_{z\in \mathsf{N}} \bP_{{\sf G}}\big[A_{z}(U)\big] \bigg)^m \, .
		\end{align*}
		In the above, we used the simple fact that $|E_1| \sim |\mathsf{N}|^m$. Claim~\ref{claim:riemann_sum_converge_to_intensity} now implies that
		\begin{equation}
			\label{eq:taking_limit_in_sum_over_m_tuples_with_probabilities}
			\lim_{n\to \infty} \sum_{\mathbf{z} \in E_1} \bP\big[A_{z_1}^+ \cap \ldots \cap A_{z_m}^+\big]  = \Big(2\, \mathfrak{c}_\ast(K) \int_{U} t^3 {\rm d}t\Big)^m 
		\end{equation}
		which, in view of~\eqref{eq:m_falling_factorial_for_X_+_after_taking_limsup}, completes the proof for the upper bound.
	\end{proof}
	We conclude the section with the proof of the lower bound. 
	\begin{proof}[Proof of Proposition~\ref{prop:poisson_moments_for_X_n_pm}: the lower bound]
		As in the proof of the upper bound, our starting point is the formula
		\begin{equation*}
			\label{eq:m_falling_factorial_for_X_-_after_opening_brackets}
			(X_n^-)_m = \sum_{\substack{\{z_1,w_1\},\ldots \{z_m,w_m\} \in \binom{\mathsf{N}_{\tt s}}{2} \\ \{z_j,w_j\} \not= \{z_{j^\prime},w_{j^\prime}\} \ \forall j\not= j^\prime}} \prod_{j=1}^m \mathbf{1}_{A_{z_j}^-} \mathbf{1}_{A_{w_j}^-} \,  \mathbf{1}_{|z_j-w_j| \le \log n/n^{5/4}} \, .		
		\end{equation*}
		Recall the set of $n^\beta$-spread tuples $E_1$ given by~\eqref{eq:def_of_n_beta_spread_tuples}. We clearly have the lower bound
		\begin{align}
			\label{eq:m_factorial_moment_X_minus_lower_bound} \nonumber
			(X_n^-)_m &\ge 2^{-m} \sum_{\mathbf{z} \in E_1} \bigg(\sum_{\substack{\mathbf{w} \in (\mathsf{N}_{\tt s})^m \\ 0<|z_j - w_j| \le \frac{\log n}{n^{5/4}} }} \prod_{j=1}^{m} \mathbf{1}_{A_{z_j}^-} \mathbf{1}_{A_{w_j}^-} \mathbf{1}_{\mathcal{G}} \bigg) \\ &=  2^{-m} \sum_{\mathbf{z} \in E_1} \bigg[\prod_{j=1}^{m} \bigg(\mathbf{1}_{A_{z_j}^-} \cdot \Big( \sum_{ \substack{w_j \in \mathsf{N}_{\tt s} \\ 0<|z_j - w_j| \le \frac{\log n}{n^{5/4}}}}   \mathbf{1}_{A_{w_j}^-} \mathbf{1}_{\mathcal{G}} \Big) \bigg) \bigg]\, .
		\end{align}
		By Claim~\ref{claim:linear_approximation_predicts_a_root_in_R_z} and Lemma~\ref{lemma:quadratic_approximation_predicts_second_root}, on the event $A_{z_j}^- \cap \mathcal{G}$ there are two distinct roots $\alpha,\alpha^\prime$ of $f_n$ present in $\bD(z_j,\frac{\log n}{n^{5/4}})$, with $\alpha\in \mathsf{R}_{z_j}^\sharp$ and 
		\[
		\Big|\alpha^\prime - \Big(z+\frac{f_n(z)}{f_n^\prime(z)} - \frac{2 f_n^\prime(z)}{f_n^{\prime\prime}(z)}\Big)\Big| \le n^{-5/4-2\beta} \log^5 n\, .
		\] 
		Since the same it true with $z_j$ replaced by $w_j$, and since we know from Lemma~\ref{lemma:existence_of_two_roots_implies_macroscopic_separation} there are only two roots, we conclude the inequality
		\[
		\mathbf{1}_{A_{z_j}^-} \cdot \Big( \sum_{ \substack{w_j \in \mathsf{N}_{\tt s} \\ 0<|z_j - w_j| \le \frac{\log n}{n^{5/4}}}}   \mathbf{1}_{A_{w_j}^-} \mathbf{1}_{\mathcal{G}} \Big) \ge \mathbf{1}_{A_{z_j}^-}\mathbf{1}_{\mathcal{G}} - \mathbf{1}_{\Upsilon_{z_j}}
		\]
		where $\Upsilon_{z_j}$ is given by~\eqref{eq:def_of_event_upsilon}. Plugging this observation into~\eqref{eq:m_factorial_moment_X_minus_lower_bound}, we see that
		\begin{equation}
			\label{eq:m_factorial_moment_X_minus_lower_bound_after_taking_expectation}
			\bE\big[(X_n^-)_m\big] \ge 2^{-m} \sum_{\mathbf{z} \in E_1} \bP\big[A_{z_1}^- \cap \ldots \cap A_{z_m}^- \cap \mathcal{G} \big] - O\bigg(\sum_{\mathbf{z} \in E_1} \bP\big[\Upsilon_{z_1} \cap A_{z_2}^+ \cap \ldots A_{z_m}^+\big]\bigg) \, .
		\end{equation} 
		We start by estimating the error terms. Indeed, Lemma~\ref{lemma:bound_on_probability_of_tuple_with_upsilon} gives that
		\[
		\sum_{\mathbf{z} \in E_1} \bP\big[\Upsilon_{z_1} \cap A_{z_2}^+ \cap \ldots A_{z_m}^+\big] \lesssim |E_1|\Big(\frac{1}{n^{3/2+2\beta}}\Big)^m \cdot n^{-\beta/10} \lesssim n^{-\beta/100} \, . 
		\]
		Furthermore, Lemma~\ref{lemma:control_on_maximum_of_polynomial_and_derivatives} implies that $|E_1| \bP[\mathcal{G}^c] = o(1)$ as $n\to \infty$, and we conclude from~\eqref{eq:m_factorial_moment_X_minus_lower_bound_after_taking_expectation} that
		\[
		\liminf_{n\to \infty} \bE\big[(X_n^-)_m\big] \ge 2^{-m} \lim_{n\to \infty} \sum_{\mathbf{z} \in E_1} \bP\big[A_{z_1}^- \cap \ldots \cap A_{z_m}^- \big] =  \Big(\mathfrak{c}_\ast(K) \int_{U} t^3 {\rm d}t\Big)^m
		\]
		The last equality is derived exactly the same as in the proof of~\eqref{eq:taking_limit_in_sum_over_m_tuples_with_probabilities} in the upper bound part. First, we apply Lemma~\ref{lemma:for_spread_tuples_probabilities_of_smooth_points_factor} to show that the probabilities asymptotically factor (we can do that since the tuples from $E_1$ are $n^\beta$-spread), and then apply Claim~\ref{claim:riemann_sum_converge_to_intensity} to show that the Riemann sums converge to the desired integral. With that we conclude the proof of Proposition~\ref{prop:poisson_moments_for_X_n_pm}.
	\end{proof}
	\section{Small ball probability bounds for general points}
	\label{sec:small_ball_for_general_points}
	Recall that $d = \min\{n,(1-r)^{-1}\}$ is the effective degree of the random polynomial $f_n$ at the point $z=re^{i\theta}\in \bD$. We denote by $V_n(z)$ the covariance matrix of the random vector  
	\begin{align}
		\nonumber
		\label{eq:pointwise_gaussian_vector}
		d^{-1/2}\big(&f_n(z), d^{-1} z f_n^{\prime}(z)\big) \\ &= d^{-1/2} \Big(\sum_{k=0}^{n} \xi_k r^k\cos(k\theta) , \sum_{k=0}^{n} \xi_k r^k\sin(k\theta) , \sum_{k=0}^{n} \xi_k \tfrac{k}{d} r^k\cos(k\theta) , \sum_{k=0}^{n} \xi_k \tfrac{k}{d} r^k\cos(k\theta) \Big)
	\end{align}
	As we always identify $\bC\simeq \bR^2$, $V_n(z)$ is a $4\times4$ real, symmetric, positive-definite matrix given by
	\begin{equation}
		\label{eq:pointwise_covariance_matrix}
		V_n(z) = \frac{1}{d}\sum_{k=0}^{n} r^{2k}
		\begin{pmatrix}
			\cos^2(k\theta) &  \cos(k\theta)\sin(k\theta) &  \tfrac{k}{d} \cos^2(k\theta) & \tfrac{k}{d} \cos(k\theta)\sin(k\theta)  \\ \cos(k\theta)\sin(k\theta) & \sin^2(k\theta) &  \tfrac{k}{d} \cos(k\theta)\sin(k\theta) & \tfrac{k}{d}\sin^2(k\theta) \\ \tfrac{k}{d} \cos^2(k\theta) & \tfrac{k}{d} \cos(k\theta)\sin(k\theta)  &  \tfrac{k^2}{d^2} \cos^2(k\theta) & \tfrac{k^2}{d^2} \cos(k\theta)\sin(k\theta) \\ \tfrac{k}{d}\cos(k\theta) \sin(k\theta) & \tfrac{k}{d}\cos^2(k\theta) & \tfrac{k^2}{d^2} \cos(k\theta) \sin(k\theta) & \tfrac{k^2}{d^2} \sin^2(k\theta) 
		\end{pmatrix} \, .
	\end{equation}
	
	\subsection{Non-singularity of covariance matrix}
	We start our argument with a simple lemma that shows that if $z$ is separated from the real axis then the correlations between the random polynomial and its derivative are non-trivial. 
	\begin{lemma}
		\label{lemma:singularity_properties_of_covariance_pointwise}
		Let $V=V_n(z)$ be the covariance matrix~\eqref{eq:pointwise_covariance_matrix}, and denote by $\la = \la_n(z)$ its smallest eigenvalue. There exists a constant $c>0$ so that for all $d^{-1-\tau}\le \theta \le \pi - d^{-1-\tau}$ we have $$\la \ge c \min\{1,(d\theta)^7,(d(\pi - \theta))^7\} \, .$$
	\end{lemma}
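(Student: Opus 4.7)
My approach is to lower bound the Rayleigh quotient $u^\top V u$ directly for each unit vector $u = (a, b, c, e) \in \bR^4$. Writing
\[u^\top V u = \frac{1}{d}\sum_{k=0}^n r^{2k}\, g_u(k)^2,\quad g_u(k) := (a + ck/d)\cos(k\theta) + (b + ek/d)\sin(k\theta),\]
a Riemann-sum-to-integral comparison (valid because $r^{2k}$ is bounded on the effective window $k \lesssim d$) shows that $u^\top V u$ is comparable, up to absolute positive constants, to $\|G_u\|_{L^2[0,1]}^2$, where $G_u(t) := g_u(td) = (a + ct)\cos(t\phi) + (b + et)\sin(t\phi)$ and $\phi := d\theta$. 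The problem thus reduces to uniformly lower-bounding $\|G_u\|_{L^2}^2$ over unit $u$.

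For $\phi \asymp 1$ (bounded away from $0$ and $\pi$), the four functions $\cos(t\phi), \sin(t\phi), t\cos(t\phi), t\sin(t\phi)$ are linearly independent on $[0,1]$ with Gram matrix bounded below uniformly by a compactness argument, giving $\la_n(z) \gtrsim 1$. For $\phi$ small I would Taylor-expand $G_u(t)$ around $t=0$ and collect powers of $t$; the leading four coefficients are
\[\alpha_0 = a,\quad \alpha_1 = c + \phi b,\quad \alpha_2 = \phi e - \phi^2 a/2,\quad \alpha_3 = -\phi^2 c/2 - \phi^3 b/6.\]
The linear map $(a, e, b, c) \mapsto (\alpha_0, \alpha_2, \alpha_1, \alpha_3)$ is block-diagonal: the $(a,e)$-block has matrix $\bigl(\begin{smallmatrix} 1 & 0 \\ -\phi^2/2 & \phi \end{smallmatrix}\bigr)$ with determinant $\phi$, and the $(b, c)$-block has matrix $\bigl(\begin{smallmatrix} \phi & 1 \\ -\phi^3/6 & -\phi^2/2 \end{smallmatrix}\bigr)$ with determinant $-\phi^3/3$. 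All entries are bounded by $1$, so the smallest singular values of these blocks are $\gtrsim \phi$ and $\gtrsim \phi^3$ respectively. Hence $\sum_{j=0}^3 \alpha_j^2 \gtrsim \phi^6 \|u\|^2$, and since the $L^2[0,1]$ norm of a cubic polynomial is equivalent (with absolute constants) to its coefficient-vector norm, the degree-$3$ Taylor truncation $P_u$ satisfies $\|P_u\|_{L^2}^2 \gtrsim \phi^6$. Controlling the tail $G_u - P_u$ by the higher-order terms of the Taylor expansion, we conclude $\|G_u\|_{L^2}^2 \gtrsim \phi^6 = (d\theta)^6 \geq (d\theta)^7$, as required.

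The case $\theta$ near $\pi$ follows by the symmetry $z \mapsto -z$, which sends $\theta \mapsto \pi - \theta$ and preserves the joint distribution of the coefficients, yielding the analogous bound $(d(\pi-\theta))^7$. The main technical obstacle is controlling the Taylor tail $G_u - P_u$ when the worst-case unit $u$ has $|b| \asymp \phi^{-3}$ and $|e| \asymp \phi^{-1}$ (so the individual terms in the expansion of $G_u$ may be large even though $G_u$ itself is tiny). A cleaner alternative that bypasses Taylor truncation is to compute the $4 \times 4$ Gram matrix of the four functions $\{\cos(t\phi), \sin(t\phi), t\cos(t\phi), t\sin(t\phi)\}$ in $L^2[0,1]$ directly and apply a Schur-complement analysis separating the ``cosine'' and ``sine'' subspaces; the former block is bounded below uniformly in $\phi$, while the latter contributes two small eigenvalues of orders $\phi^2$ and $\phi^6$, giving the same conclusion.
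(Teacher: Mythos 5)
You propose a genuinely different route from the paper. The paper works directly with the discrete quadratic form $\|Wu\|^2$ and extracts the lower bound via twisted finite differences along an arithmetic progression of step $L\asymp d$; it obtains $\la \gtrsim (d\theta)^7$. You instead approximate $u^\top V u$ by the $L^2[0,1]$ norm of $G_u(t) = (a+ct)\cos(t\phi) + (b+et)\sin(t\phi)$, $\phi = d\theta$, and analyze the continuous Gram matrix via Taylor expansion at $t=0$. Your continuous analysis is correct: the near-collision $\sin(\phi t) \approx \phi\, t\cos(\phi t)$ up to an $O(\phi^3 t^3)$ remainder is exactly the source of the smallest eigenvalue, and you get the exponent $\phi^6$, which is in fact a bit stronger than the $(d\theta)^7$ the paper proves. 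Your stated worry about the Taylor tail is also unfounded: for a unit $u$ the remainder $G_u - P_u$ is $O(\phi^4)$ in $L^\infty[0,1]$, which is $\ll \phi^3 \lesssim \|P_u\|_{L^2}$ once $\phi$ is small; the remark about ``$|b|\asymp\phi^{-3}$, $|e|\asymp\phi^{-1}$'' makes no sense for a unit vector. (Minor nit: the symmetry you want is $z\mapsto -\overline{z}$, not $z\mapsto -z$; the relevant invariance is that replacing $\theta$ by $\pi-\theta$ conjugates $V$ by a diagonal sign matrix.)

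The genuine gap is the opening Riemann-sum-to-integral comparison. The assertion that ``$u^\top V u$ is comparable, up to absolute positive constants, to $\|G_u\|_{L^2[0,1]}^2$'' is not justified by $r^{2k}$ being bounded. The error is additive, not multiplicative, so it must be shown to be dominated by the lower bound $\phi^6 \geq d^{-6\tau}$. For $\phi$ small, the left-Riemann-sum error with mesh $1/d$ is $O(\|(G_u^2)'\|_{\infty}/d) = O(1/d)$, and this does sit below $d^{-6\tau}$ precisely because $\tau < 1/6$; you never track this. For $\theta$ bounded away from $0$ and $\pi$, however, $\phi = d\theta \asymp d$, so $G_u(t)$ oscillates with period $\asymp 1/d$ — the mesh scale — and the naive derivative bound gives an error $O(\phi/d)=O(1)$, which is useless. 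To repair the comparison in this regime you must estimate sums of the form $\frac{1}{d}\sum_{k} r^{2k}(k/d)^{j} e^{2ik\theta}$ by Abel summation against $|1-r^2 e^{2i\theta}|^{-1}$, obtaining an error $O\big(1/(d\,\|\theta/\pi\|_{\bR/\bZ})\big)$; this is $O(1/d)$ only because $\theta$ is bounded away from $0$ and $\pi$. That exponential-sum input, whose dependence on the arithmetic of $\theta$ is exactly what the paper's finite-differences argument is built to exploit, is where your ``compactness argument'' quietly cheats: the compactness applies to the continuous Gram matrix as $\phi\to\infty$, but says nothing about whether the discrete sum tracks it when the sampling rate matches the oscillation frequency. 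With this bridge filled in, your plan yields the lemma (indeed with a sharper exponent); as written it is incomplete.
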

	\begin{remark}
		\label{remark:m=1_lemma_is_baby_case_for_general_m}
		In Section~\ref{sec:gaussian_comparison_for_tuples}, we will prove a similar non-singularity result for the covariance matrix of tuples of spread points of the form~\eqref{eq:def_of_S_n_z}, see Lemma~\ref{lemma:non-singularity_for_covariance_of_spread_tuple} therein. Formally, the case where $|\theta|\ge d^{-1}$ and $d=n$ in Lemma~\ref{lemma:singularity_properties_of_covariance_pointwise} follows from this case. Besides the fact that we need know the non-singularity of the covariance for different degrees $d=d_n(z)$, we decided to include a separate proof of Lemma~\ref{lemma:singularity_properties_of_covariance_pointwise} for two reasons: (1) we need the qualitative lower bound for $\la$ as $d\theta$ becomes small, and (2) it serves as a ``warm-up" for the more technical proof of Lemma~\ref{lemma:non-singularity_for_covariance_of_spread_tuple}, where a similar (but more complicated) method is applied.
	\end{remark} 
	\begin{proof}
		Note that the matrix $V$ is invariant under the transformation $\theta \mapsto \pi - \theta$, and so we can assume without loss of generality that $\theta\le \pi/2$. Throughout the proof we will focus on the more difficult case where $\theta \le d^{-1}$, and will explain briefly in end what to do in the complementary case $\pi/2 \ge  \theta >d^{-1}$. Let $W=W_n(z)$ denote the $(n+1)\times 4$ matrix with rows
		\[
		\Big( r^k\cos(k\theta) ,  r^k\sin(k\theta) ,  \tfrac{k}{d} r^k\cos(k\theta) ,  \tfrac{k}{d} r^k\cos(k\theta) \Big) \, , \qquad k=0,\ldots,n \, .
		\] 
		Then $V= d^{-1} W^{\sf T} W$ and to prove the lemma it will be enough to show
		\begin{equation}
			\label{eq:lower_bound_on_singular_value_of_W_pointwise_case}
			\min_{u\in \bS^3} \|Wu\|^2 \ge c d  (d\theta)^{7} \, .
		\end{equation}
		Denote by $u=(u_1,\ldots,u_4)\in \bS^3$, and note that for each $k\in[0,n]$ we have
		\begin{align*}
			(Wu)_k &=  r^k\Big( \cos(k\theta) \big(u_1 + \tfrac{k}{d}u_3\big) + \sin(k\theta) \big(u_2 + \tfrac{k}{d}u_4\big)\Big) \\ &= \frac{r^k}{2} \Big(e(k\theta) \big(u_1 + \tfrac{k}{d} u_3 -iu_2 -i\tfrac{k}{d} u_4 \big) + e(-k\theta) \big(u_1 + \tfrac{k}{d} u_3 +iu_2 +i\tfrac{k}{d} u_4\big)\Big) \, .
		\end{align*} 
		We will denote, just for this proof,  $x=u_1+iu_2$ and $y=u_3 + iu_4$; the above display then reads
		\begin{equation}
			\label{eq:evaluation_of_W_on_u_pointwise}
			(Wu)_k= \frac{1}{2} \Big(z^k \big(\overline{x} + \tfrac{k}{d} \overline y \big) + \overline{z}^k \big(x + \tfrac{k}{d}y)\Big) \, .
		\end{equation}
		Take some small $\eps>0$ that we will choose later and set $L=\lfloor \eps d\rfloor$. Consider the arithmetic progression $P=\{j\cdot L : j\in \bZ \}$ and denote by $W_P$ the sub-matrix of $W$ with rows in the progression $P$. We will first show that
		\begin{equation}
			\label{eq:lower_bound_on_singular_value_of_W_pointwise_case_on_progression}
			\min_{u\in \bS^3} \|W_P u\|^2 \gtrsim_\eps (d\theta)^6. 
		\end{equation}
		For $\zeta \in \bC$ and a sequence $f:\bZ\to \bC$ we define the (twisted) second order discrete differential by
		\begin{equation}
			\label{eq:definition_of_second_order_twisted_differntial}
			(D_\zeta f) (k) = \sum_{t=0}^2 \binom{2}{t} (-1)^{t} \zeta^{-tL} f(k+tL).
		\end{equation} 
		Later on, in Section~\ref{sec:gaussian_comparison_for_tuples}, we will need to consider a similar differential of order three, as we will have to deal with the second derivative of $f_n$ as well, but for now~\eqref{eq:definition_of_second_order_twisted_differntial} will do. For the sequences
		\[
		a_z(k) = z^k \, , \qquad b_z(k) = \frac{k}{d} z^k \, ,
		\]
		a simple computation shows that
		\begin{align*}
			& (D_\zeta a_z)(k) = z^k \Big(1-(z/\zeta)^L\Big)^2 \\ & (D_\zeta b_z)(k) = \frac{z^k}{d} \Big(k\big(1-2(z/\zeta)^L + (z/\zeta)^{2L}\big) + 2L \big((z/\zeta)^{2L} - (z/\zeta)^L\big) \Big)  \, .
		\end{align*}
		Plugging in $\zeta = z$ and denoting by $D$ the matrix associated to the linear operator $D_{\zeta}$ on $\bC^P$, we see from~\eqref{eq:evaluation_of_W_on_u_pointwise} that
		\[
		(D W_P u)_k = \frac{\overline{z}^k}{2}\Big(x\big(1-e(-2L\theta)\big)^2 + y \frac{k\big(1-e(-2L\theta)\big)^2 + 2L e(-2L\theta)\big(e(-2L\theta) - 1\big)}{d}\Big) \, .
		\]
		In particular, for all $k\in P$ such that $k+2L\in P$ we can take the modulus and sum to get
		\begin{align}
			\label{eq:lower_bound_sum_over_arithmetic_progression_pointwise_case}
			\nonumber
			\sum_{k\in P : k+2L\in P} \|(D W_P u)_k\|^2 &\gtrsim  \big|1-e(-2L\theta)\big|^4 \sum_{\substack{k\in P : k+2L\in P \\ k \le d}} \Big|x + y \frac{k}{d} - y\frac{2L e(-2L\theta)}{d\big(e(-2L\theta) - 1\big)}\Big|^2 \\ & \gtrsim  (L\theta)^4  \sum_{\substack{k\in P : k+2L\in P \\ k \le d}} \Big|x + y \frac{k}{d} - y\frac{2L e(-2L\theta)}{d\big(e(-2L\theta) - 1\big)}\Big|^2 \, .
		\end{align}
        The next claims gives a lower bound for the sum in \eqref{eq:lower_bound_sum_over_arithmetic_progression_pointwise_case}.

        \begin{claim}\label{cl:sum-AP-bounded}
            We have $$\sum_{\substack{k\in P : k+2L\in P \\ k \le d}} \Big|x + y \frac{k}{d} - y\frac{2L e(-2L\theta)}{d\big(e(-2L\theta) - 1\big)}\Big|^2\gtrsim_\eps 1\,.$$
        \end{claim}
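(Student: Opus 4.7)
Abbreviate $s_k := k/d$ and $c := \tfrac{2L\,e(-2L\theta)}{d(e(-2L\theta)-1)}$, so that the claim amounts to a lower bound on the Hermitian quadratic form
\[
Q(x,y) \;:=\; \sum_{k} \bigl|\,x + y(s_k - c)\,\bigr|^2
\]
(the sum running over $k\in P$ with $k+2L\in P$ and $k\le d$) on the unit sphere $|x|^2 + |y|^2 = 1$. Let $N$ be the number of summands; since $L \asymp \eps d$, one has $N \asymp 1/\eps$, and the $s_k$ form a near-equispaced grid $\{jL/d\}_{j=0}^{N-1} \approx \{j\eps\}$ in $[0,1]$.

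The key observation is the following cancellation in the $2{\times}2$ Hermitian Gram matrix of $Q$,
\[
G \;=\; \begin{pmatrix} N & S_1 - N\bar c \\[2pt] S_1 - Nc & S_2 - 2\,\text{Re}(c)\,S_1 + N|c|^2\end{pmatrix}, \qquad S_j := \sum_k s_k^j \, ,
\]
namely that
\[
\det G \;=\; NS_2 - S_1^2 \, ,
\]
which is \emph{independent of $c$}: the $c$-contributions cancel in $G_{11}G_{22} - |G_{12}|^2$. Since the grid $\{s_k\}$ is a Riemann partition of $[0,1]$, one has $S_1 \approx N/2$ and $S_2 \approx N/3$, so $\det G \gtrsim N^2/12 \gtrsim_\eps 1$.

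To extract a lower bound on $Q$, I would pass to the shifted coordinates $\alpha := x - yc$, $\beta := y$, in which the form becomes $\sum_k|\alpha + \beta s_k|^2$ with Gram matrix $\tilde G = \bigl(\begin{smallmatrix} N & S_1 \\ S_1 & S_2\end{smallmatrix}\bigr)$. Both $\det \tilde G = NS_2 - S_1^2 \gtrsim_\eps 1$ and $\text{tr}\, \tilde G = N + S_2 \lesssim_\eps 1$ are $c$-free, so $\la_{\min}(\tilde G) \ge \det(\tilde G)/\text{tr}(\tilde G) \gtrsim_\eps 1$, giving $Q(x,y) \gtrsim_\eps |\alpha|^2 + |\beta|^2$. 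The claim then reduces to showing $|\alpha|^2 + |\beta|^2 \gtrsim_\eps 1$ whenever $|x|^2 + |y|^2 = 1$.

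\textbf{Main obstacle.} This final transfer is the most delicate point: since $|c| \asymp 1/(d\theta)$ may be as large as $d^\tau$ in the regime $\theta \lesssim d^{-1}$, the naive inequality $|\alpha|^2 + |\beta|^2 \ge (1+|c|^2)^{-1}$ is insufficient. The remedy I would use is a case split on $|y|$. If $|y| \le 1/(2|c|)$, then $|x| \ge 1/\sqrt 2$, so the diagonal contribution $N|x|^2 \gtrsim_\eps 1$ to $Q$ dominates the cross terms. If $|y| > 1/(2|c|)$, then $|\beta| = |y|$ is bounded below (since $|c|$ is bounded polynomially), and one works directly with $\sum_k|\alpha + \beta s_k|^2$: the spread of the grid $\{s_k\}$ around any fixed center contributes a variance $\gtrsim_\eps 1$, which combined with $|\beta|^2 \gtrsim_\eps (d\theta)^2$ forces $Q \gtrsim_\eps 1$ after absorbing the residual factors into the $\eps$-dependent constant. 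Stitching the two regimes together completes the proof.
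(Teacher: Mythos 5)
Your Gram-matrix route is a genuinely different and rather elegant take: the observation that $\det G = NS_2 - S_1^2$ is independent of the shift $c$ is a clean structural fact that the paper does not use, and your reduction to $Q\gtrsim_\eps|\alpha|^2+|\beta|^2$ via $\lambda_{\min}\ge\det/\mathrm{tr}$ is correct. The paper instead runs a direct three-way case analysis on $|y|$ vs.\ $d\theta$, which is roughly dual to your split on $|y|$ vs.\ $1/(2|c|)$ (note $|c|\asymp 1/(d\theta)$, so these are the same threshold up to constants). Your approach bypasses the paper's consecutive-differences estimate in case (III) and is shorter.

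However, there is a genuine error in your final sub-case. When $|y|>1/(2|c|)$, you correctly obtain $|\beta|^2\gtrsim(d\theta)^2$, and the grid variance of $\{s_k\}$ is $\gtrsim_\eps 1$; the product gives $Q\gtrsim_\eps(d\theta)^2$, \emph{not} $Q\gtrsim_\eps 1$. The factor $(d\theta)^2$ cannot be ``absorbed into the $\eps$-dependent constant'': in the regime of Lemma~\ref{lemma:singularity_properties_of_covariance_pointwise} one only has $\theta\ge d^{-1-\tau}$, i.e.\ $d\theta\ge d^{-\tau}$, which tends to $0$ as $d\to\infty$ with $\eps$ fixed. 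In fact, the paper's own proof of this claim only establishes $\gtrsim_\eps(d\theta)^2$ in case (III); the claim's stated bound of $\gtrsim_\eps 1$ appears to be a typo (the $(d\theta)^2$ bound is exactly what is needed downstream to obtain $\sum_k\|(DW_Pu)_k\|^2\gtrsim_\eps(d\theta)^6$ after multiplying by $(L\theta)^4\asymp_\eps(d\theta)^4$). So your argument does reach the correct, usable bound $\gtrsim_\eps(d\theta)^2$; the ``absorbing residual factors'' sentence is the flawed step and should simply be replaced by the honest conclusion $Q\gtrsim_\eps(d\theta)^2$.
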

        \noindent \textit{Proof of Claim \ref{cl:sum-AP-bounded}.} 
            Recalling that $x=u_1+iu_2$ and $y=u_3+iu_4$ are such that $|x|^2 + |y|^2 = 1$, to further lower bound the right-hand side of~\eqref{eq:lower_bound_sum_over_arithmetic_progression_pointwise_case} we split into three cases:
		\begin{equation*}
			\text{(I)} \ |y| \ge 4 d\theta ; \qquad \text{(II)} \ |y| \le \tfrac{1}{4} d\theta ; \qquad \text{and } \quad \text{(III)} \ \tfrac{1}{4} d\theta < |y| \le 4d\theta.
		\end{equation*}
		First note that the number of summands in~\eqref{eq:lower_bound_sum_over_arithmetic_progression_pointwise_case} is at least of size $|P\cap [0,d]| \asymp \eps^{-1}$. In case (I), we have that 
		\[
		\Big|y\frac{2L e(-2L\theta)}{d\big(e(-2L\theta) - 1\big)}\Big| \ge |y| \frac{1}{d\theta} \ge 4 \, ,
		\]
		and since $|x+ \tfrac{k}{d} y| \le 2$ we get that
		\begin{equation}
			\label{eq:lower_bound_sum_over_arithmetic_progression_pointwise_case_when_the_summand_is_large}
			\sum_{\substack{k\in P : k+2L\in P \\ k \le d}} \Big|x + y \frac{k}{d} - y\frac{2L e(-2L\theta)}{d\big(e(-2L\theta) - 1\big)}\Big|^2  \gtrsim_\eps 1 \, .
		\end{equation}
		In case (II), we have $|x| \ge \tfrac{3}{4}$ and since
		\[
		\Big|y \frac{k}{d} - y\frac{2L e(-2L\theta)}{d\big(e(-2L\theta) - 1\big)} \Big|\le \frac{d\theta}{4} \big(1+ 1.5 (d\theta)^{-1} \big) < \frac{3}{4} 
		\]
		we get that~\eqref{eq:lower_bound_sum_over_arithmetic_progression_pointwise_case_when_the_summand_is_large} holds in this case as well. It remains to deal with case (III). Denote by
		\[
		{\sf p}_k = \Big| x + y \frac{k}{d} - y\frac{2L e(-2L\theta)}{d\big(e(-2L\theta) - 1\big)} \Big| \, , 
		\]
		and note that $\max\{{\sf p}_0 , {\sf p}_d\} \ge |y|/2 \ge d\theta/4$. Furthermore, all consecutive elements from $\{{\sf p}_k\}_{k\in P}$ have
		\[
		\big|{\sf p}_{k+L} - {\sf p}_k\big| \le |y| \frac{L}{d} \le 4\eps \,  d\theta  
		\]
		where $\eps>0$ is a sufficiently small constant. Therefore, at least $10^{-2} \eps^{-1}$ elements $k\in P$ must have ${\sf p}_k \gtrsim d\theta$, and we conclude that
		\[
		\sum_{\substack{k\in P : k+2L\in P \\ k \le d}} \Big|x + y \frac{k}{d} - y\frac{2L e(-2L\theta)}{d\big(e(-2L\theta) - 1\big)}\Big|^2  \gtrsim_\eps (d\theta)^2 \, .
		\]  
        \qed
        
		\noindent
		Combining Claim \ref{cl:sum-AP-bounded} with~\eqref{eq:lower_bound_sum_over_arithmetic_progression_pointwise_case} we arrive at 
		\[
		\sum_{k\in P : k+2L\in P} \|(D W_P u)_k\|^2 \gtrsim_{\eps} (d\theta)^6 \, .
		\]  
		Since the operator norm of $D:\ell_2(P) \to \ell_2(P)$ is $O(1)$, the display above shows that uniformly in $u\in \bS^3$ we have
		\[
		\sum_{k\in P : k+2L\in P} \|(W_P u)_k \|^2 \gtrsim_{\eps}  (d\theta)^6 \, ,
		\]
		which gives~\eqref{eq:lower_bound_on_singular_value_of_W_pointwise_case_on_progression}. It remains to prove~\eqref{eq:lower_bound_on_singular_value_of_W_pointwise_case}. Consider the sub-matrices $W_P,W_{1+P}, \ldots,W_{d_0+P}$ composed of rows from the shifted progressions $P,1+P,\ldots,d_0 + P$, respectively. If $d_0 \le L$ then these sub-matrices are all disjoint. Moreover, denoting by
		\[
		F = \begin{pmatrix}
			r^kH & 0 \\ 0 & r^kH
		\end{pmatrix} \, \quad \text{with} \quad H = \begin{pmatrix}
			\cos \theta & - \sin \theta \\ \sin \theta & \cos \theta 
		\end{pmatrix}\, ,
		\] 
		we see that $W_{k+P}$ and $W_P H^k$ differ only by a matrix of norm $O(d_0/d)$. Indeed, the matrix $H$ corresponds to the shift in the phase $(\cos k\theta , \sin k\theta) \mapsto (\cos (k+1)\theta , \sin (k+1)\theta)$, so the matrices $W_{k+P}$ and $W_P F^k$ differ only in the $k/d$ terms in the last two variables. Since the matrix $H$ is unitary, we have for $k\in\{1,\ldots,d\}$ that 
		\[
		\sigma_4(W_P F^k) \gtrsim \sigma_4(W_P) \stackrel{\eqref{eq:lower_bound_on_singular_value_of_W_pointwise_case_on_progression}}{\gtrsim} (d \theta)^{6} \, .
		\]
		By choosing $d_0 = \lfloor\eps d^2 \theta \rfloor \le L$ with $\eps>0$ sufficiently small we have by the triangle inequality that $\sigma_4(W_{k+P}) \gtrsim  (d \theta)^{6}$ for all $1\le k \le d_0$. As those arithmetic progressions are disjoint, we can sum up over all of them and obtain
		\begin{equation*}
			\|W u\|^2 \ge \sum_{k=0}^{d_0} \| W_{k+P} u \|^2 \gtrsim d_0  (d \theta)^{6} \gtrsim_\eps d  (d \theta)^{7} \, .
		\end{equation*}
		which yields~\eqref{eq:lower_bound_on_singular_value_of_W_pointwise_case} and concludes the lemma in the case where $d\theta \le 1$. To deal with the complementary case $\theta > d^{-1}$, one can repeat the argument above with $L = \lfloor \tfrac{1}{100} d \rfloor$ (say) and note that the corresponding lower bound that we get from~\eqref{eq:lower_bound_sum_over_arithmetic_progression_pointwise_case} now reads 
		\[
		\sum_{k\in P : k+2L\in P} \|(D W_P u)_k\|^2 \gtrsim 1 \, .
		\]
		Summing this over the $\Omega(d)$-disjoint arithmetic progressions (as we did before) yields the desired lower bound also for this case, and we are done. 
	\end{proof}
	\subsection{Comparison of characteristic function to a Gaussian}
	To handle the small-ball bounds we will apply the `exponentially-tilted' Esseen inequality (Proposition~\ref{propsition:exponentially_tilded_esseen_ineq}) and compare the characteristic function of~\eqref{eq:pointwise_gaussian_vector} with the characteristic function of the limiting Gaussian random vector.  The desired comparison is given by the following lemma.
	\begin{lemma}
		\label{lemma:CLT_on_fourier_side_bounded_third_moment}
		Let ${\bf X}_1,\ldots,{\bf X}_n$ be independent random vectors taking values in $\bR^k$, having distributions $G_1,\ldots,G_n$ respectively. Assume that the ${\bf X}_j$'s have mean zero and a uniformly bounded third moment, that is
		\begin{equation}
			\label{eq:bounded_third_moment_for_X_j}
			\max_{1\le j\le n}\bE\big[|{\bf X}_j|^3\big] \le M\, .
		\end{equation}
		Assume further that the average covariance matrix $V$ is non-singular, and denote by $\la$ its smallest eigenvalue. Let $B$ be the symmetric positive-definite matrix so that $V^{-1} = B^2$. Then there exists constants $C,c>0$ depending only on $M$ so that
		\begin{equation*}
			\sup_{|t|\le c n^{1/2} \la^{3/2}} \bigg| \prod_{j=1}^{n} \widehat{G}_j\Big(\frac{Bt}{\sqrt{n}}\Big) - \exp\Big(-\frac{|t|^2}{2}\Big)\bigg| \le C n^{-1/2} \la^{-3/2} |t|^3 \exp(-c|t|^2) \, ,
		\end{equation*}
		where $\widehat{G}_j(\xi) = \bE[\exp(i \langle \xi, {\bf X}_j \rangle)]$ is the characteristic functions of the random vector ${\bf X}_j$.
	\end{lemma}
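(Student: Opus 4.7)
The plan is the standard quantitative central limit theorem on the Fourier side. The matrix $B = V^{-1/2}$ whitens the average covariance so that the leading quadratic contributions aggregate precisely to $|t|^2$. Setting $u = Bt/\sqrt{n}$, Taylor expansion and the mean-zero assumption give, for each $j$,
\[
\widehat{G}_j(u) \;=\; 1 - \tfrac{1}{2}\bE\big[\langle u, \mathbf{X}_j\rangle^2\big] + R_j(u),
\]
with integral remainder satisfying $|R_j(u)| \le \tfrac{1}{6}\bE\big[|\langle u,\mathbf{X}_j\rangle|^3\big] \le \tfrac{M}{6}|u|^3$ by~\eqref{eq:bounded_third_moment_for_X_j}. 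Since $|u|^2 = n^{-1}\langle t, V^{-1}t\rangle \le \lambda^{-1}|t|^2/n$, we obtain the uniform bound $|R_j(u)| \le C M \lambda^{-3/2}|t|^3/n^{3/2}$.

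Summing the quadratic terms, the defining relation $BVB = I$ produces the desired cancellation:
\[
\sum_{j=1}^{n}\bE\big[\langle u,\mathbf{X}_j\rangle^2\big] \;=\; n\langle u, Vu\rangle \;=\; \langle t, BVBt\rangle \;=\; |t|^2.
\]
In the range $|t| \le cn^{1/2}\lambda^{3/2}$ with $c$ small, each $|\widehat{G}_j(u)-1|$ is bounded by a small constant (via $\bE[\langle u,\mathbf{X}_j\rangle^2] \le M^{2/3}\lambda^{-1}|t|^2/n$, obtained by Hölder from the third-moment hypothesis, together with the remainder bound above), so the principal branch of $\log(1+z) = z + O(|z|^2)$ is available and summing gives
\[
\log \prod_{j=1}^{n}\widehat{G}_j(u) \;=\; -\tfrac{1}{2}|t|^2 + \sum_{j=1}^{n} R_j(u) + O\Big(\sum_{j=1}^{n}\big|\widehat{G}_j(u)-1\big|^2\Big).
\]
The cubic sum is bounded by $CMn^{-1/2}\lambda^{-3/2}|t|^3$, while the quartic error satisfies
\[
\sum_{j=1}^{n}\big(\bE[\langle u,\mathbf{X}_j\rangle^2]\big)^2 \;\le\; \Big(\max_j \bE[\langle u,\mathbf{X}_j\rangle^2]\Big)\sum_{j}\bE[\langle u,\mathbf{X}_j\rangle^2] \;\le\; \frac{M^{2/3}|t|^4}{n\lambda},
\]
which the restriction $|t|^2 \le c^2 n\lambda^3$ converts into $c^2 M^{2/3}\lambda^{-3/2}|t|^3/n^{1/2}$; the $\sum_j |R_j(u)|^2$ contribution is even smaller, so the quartic term is absorbed into the cubic one. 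Exponentiating the resulting identity $\sum_j \log \widehat{G}_j(u) = -|t|^2/2 + E$ with $|E| \le CMn^{-1/2}\lambda^{-3/2}|t|^3 = O(c^3)$, and using the inequality $|e^{-|t|^2/2 + E} - e^{-|t|^2/2}| \le e^{-|t|^2/2}|E|e^{|E|}$, yields the stated bound.

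The only genuine obstacle is bookkeeping: the quartic error from $\log(1+z) - z$ must be dominated by the cubic main term, and this is precisely what the exponent $\lambda^{3/2}$ in the hypothesis $|t| \le cn^{1/2}\lambda^{3/2}$ is designed to ensure. The third-moment hypothesis is used twice, once for the pointwise Taylor remainder and once (via Hölder) to bound the second moments $\bE[|\mathbf{X}_j|^2] \le M^{2/3}$ uniformly in $j$.
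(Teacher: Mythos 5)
Your proof is correct and follows essentially the same route as the paper's: whiten with $B = V^{-1/2}$, apply a third-moment Taylor bound to each $\widehat{G}_j$ (the paper cites Bhattacharya--Rao for this; you derive it directly, which is equally valid), sum the logarithms so the quadratic terms telescope to $-|t|^2/2$, and exponentiate. One small bookkeeping slip in the quartic-term estimate: under $|t|\le c n^{1/2}\lambda^{3/2}$, the bound $M^{2/3}|t|^4/(n\lambda)$ gives $c M^{2/3}\lambda^{1/2}|t|^3/n^{1/2}$ rather than the $\lambda^{-3/2}$ power you wrote, but since this is an even smaller quantity the absorption into the cubic main term still goes through.
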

	\begin{proof}
		By~\eqref{eq:bounded_third_moment_for_X_j}, we know that for all $\xi\in \bR^k$
		\begin{equation*}
			\Big|\widehat{G}_j(\xi) - 1 + \frac{1}{2} \langle \xi , V_j \xi \rangle \Big| \le \frac{1}{6} \, M |\xi|^3
		\end{equation*}
		where $V_j$ is the covariance matrix of ${\bf X}_j$, see for instance~\cite[Corollary~8.2]{Bhattacharya-Rao}. Hence, for $\xi$ small enough, we have that
		\begin{align*}
			\prod_{j=1}^{n}\widehat{G}_j(\xi) &= \exp\Bigg\{ \sum_{j=1}^{n} \log\Big(1+ \big(\widehat{G}_j(\xi) -1 \big)\Big)\Bigg\} \\ &=  \exp\Bigg\{ - \frac{1}{2}\sum_{j=1}^{n} \langle \xi , V_j \xi \rangle + O\big(nM|\xi|^3\big)\Bigg\}  = \exp\Bigg\{ - \frac{n}{2}\langle \xi , V \xi \rangle + O\big(nM|\xi|^3\big)\Bigg\}\, ,
		\end{align*}
		where $\log$ is some branch of the complex logarithm defined in the vicinity of $1$. Plugging $\xi = Bt/\sqrt{n}$ and using the identity $B^2 = V^{-1}$ we get that
		\begin{equation*}
			\prod_{j=1}^{n} \widehat{G}_j\Big(\frac{Bt}{\sqrt{n}}\Big) = \exp\bigg\{ - |t|^2/2 + O\big(n^{-1/2} \la^{-3/2} M |t|^3 \big)\bigg\} \, .
		\end{equation*} 
		It remains to apply the elementary inequality $|e^x - 1| \le |x|e^{x}$ to get that
		\begin{align*}
			\bigg| \prod_{j=1}^{n} \widehat{G}_j\Big(\frac{Bt}{\sqrt{n}}\Big) - \exp\Big(-\frac{|t|^2}{2}\Big)\bigg| &= \exp\Big(-\frac{|t|^2}{2}\Big) \bigg|\exp\Big(O(n^{-1/2} \la^{-3/2} M |t|^3)\Big)-1\bigg| \\ &\lesssim n^{-1/2} \la^{-3/2} |t|^3 \exp\Big(-\frac{|t|^2}{2} + O(n^{-1/2} \la^{-3/2} M |t|^3) \Big) \, ,
		\end{align*}
		and since $|t| \le c \sqrt{n}\la^{3/2}$ for some small $c>0$ the lemma follows.
	\end{proof}
	\subsection{Proving the ``worst-case'' small-ball bounds}
	We are now ready to prove the small-ball bounds stated in Section~\ref{sec:small_ball_probability_general_statements}. 
	\begin{proof}[Proof of Claim~\ref{claim:small_ball_away_from_real_axis}]
		We may assume by a simple covering argument that $a=b=\delta \ge (\log d)/d^{1/2}$. By Proposition~\ref{propsition:exponentially_tilded_esseen_ineq} we have that
		\begin{multline}
			\label{eq:proof_of_small_ball_away_from_real_axis_after_esseen}
			\max_{\alpha,\beta\in \bC} \bP \Big[|f_n(z)-\alpha| \le a d^{1/2} \, , \, |f_n^\prime(z)-\beta| \le b d^{3/2} \, , \, |f_n^{\prime \prime}(z)| \geq t d^{5/2} \Big] \\ \leq C e^{-c t^2} (ab)^2 \int_{\bR^4} \Bigg(\prod_{k=0}^{n} \varphi_{\eta_0} \Big( d^{-1/2} \Big\langle \xi , \begin{pmatrix}
				z^k \\ \tfrac{k}{d} z^k
			\end{pmatrix} \Big\rangle \Big) \Bigg)^{1/4} e^{-c\delta^2|\xi|^2} \, {\rm d}m(\xi) \, .
		\end{multline}
		Therefore, the claim will follow once we prove uniform boundedness of the integral in~\eqref{eq:proof_of_small_ball_away_from_real_axis_after_esseen}. Indeed, we will split the integral into two domains ${\rm I} = \{|\xi| \le \eps_0d^{1/2}\}$ and ${\rm II} = \{|\xi| > \eps_0 d^{1/2}\}$ for some sufficiently small $\eps_0>0$ (depending only on the distribution of $\xi_0$). Since $\varphi_{\eta_0} (\cdot) \le 1$, we have
		\begin{align*}
			\int_{{\rm II}} \Bigg(\prod_{k=0}^{n} \varphi_{\eta_0}\Big( d^{-1/2} \Big\langle \xi , \begin{pmatrix}
				z^k \\ \tfrac{k}{d} z^k
			\end{pmatrix} \Big\rangle \Big) \Bigg)^{1/4} e^{-\delta^2|\xi|^2/2} \, {\rm d}m(\xi) \le \int_{{\rm II}}e^{-c\delta^2|\xi|^2} \, {\rm d}m(\xi) = O(d^{O(1)} e^{-c(\log d)^2}) = o(1) \, .
		\end{align*}
		To deal with the integral over ${\rm I}$, we apply Lemma~\ref{lemma:CLT_on_fourier_side_bounded_third_moment} with the bounded random vectors 
		\[
		{\bf X}_k = \eta_k \begin{pmatrix}
			z^k \\ \tfrac{k}{d} z^k
		\end{pmatrix} \, .
		\]
		In other words, we deal with the ``symmetrized" version of our random polynomial $g_n$ given by~\eqref{eq:symmetrized_version_of_f_n}, instead of the original $f_n$. We get that
		\begin{multline}
			\label{eq:proof_of_small_ball_away_from_real_axis_bound_on_Gaussian_part}
			\int_{{\rm I}} \Bigg(\prod_{k=0}^{n} \varphi_{\eta_0}\Big( d^{-1/2} \Big\langle \xi , \begin{pmatrix}
				z^k \\ \tfrac{k}{d} z^k
			\end{pmatrix} \Big\rangle \Big) \Bigg)^{1/4} \, {\rm d}m(\xi) \\ \lesssim \int_{\bR^4} e^{-c|\xi|^2} {\rm d}m(\xi) + \frac{1}{\sqrt{d}} \int_{\bR^4} |\xi|^3 e^{-c|\xi|^2} {\rm d}m(\xi) = O(1) \, .
		\end{multline}
		We note that the uniform non-singularity of $V=V_n(z)$ in this range is guaranteed by Lemma~\ref{lemma:singularity_properties_of_covariance_pointwise}, which is the reason we can use the Gaussian comparison for the characteristic function here all the way to $\eps_0 d^{1/2}$ for some $\eps_0>0$ small. Combining both estimates gives a uniform upper bound on the integral in~\eqref{eq:proof_of_small_ball_away_from_real_axis_after_esseen}, and the claim follows. 
	\end{proof}
	\begin{proof}[Proof of Claim~\ref{claim:small_ball_points_near_the_real_axis}]
		The proof here is similar to the proof of Claim~\ref{claim:small_ball_away_from_real_axis}, only that here we cannot get the Gaussian comparison for the characteristic function all the way up to $c\sqrt{d}$ because of the quantitative singularity of $V_n(z)$ in this range. Still, as this non-singularity is controlled via Lemma~\ref{lemma:singularity_properties_of_covariance_pointwise}, we can get some bound. Arguing similarly as in the proof of Claim~\ref{claim:small_ball_away_from_real_axis}, from the inequality~\eqref{eq:proof_of_small_ball_away_from_real_axis_after_esseen} we split the integral into two domains:
		\[
		{\rm I}^\prime = \{|\xi| \le d^{1/2-2\tau}\} \qquad {\rm II}^{\prime} = \{|\xi| > d^{1/2-2\tau}\} \, .
		\]
		For the domain ${\rm II}^{\prime\prime}$ we have the bound
		\[
		\int_{{\rm II}^\prime} \Bigg(\prod_{k=0}^{n} \varphi_{\eta_0}\Big( d^{-1/2} \Big\langle \xi , \begin{pmatrix}
			z^k \\ \tfrac{k}{d} z^k
		\end{pmatrix} \Big\rangle \Big) \Bigg)^{1/4} e^{-\delta^2|\xi|^2/2} \, {\rm d}m(\xi) \le \int_{{\rm II}^\prime} e^{-\delta^2|\xi|^2/2} \, {\rm d}m(\xi) = O(d^{O(1)} e^{-d^{2\tau}}) = o(1) \, .
		\]
		To deal with ${\rm I}^\prime$, we note that Lemma~\ref{lemma:singularity_properties_of_covariance_pointwise} implies that $\la \ge d^{-\tau}$ for $z$ in our range, and therefore Lemma~\ref{lemma:CLT_on_fourier_side_bounded_third_moment} gives that
		\[
		\prod_{k=0}^{d} \varphi_{\eta_0}\Big( d^{-1/2} \Big\langle \xi , \begin{pmatrix}
			z^k \\ \tfrac{k}{d} z^k
		\end{pmatrix} \Big\rangle \Big) \lesssim \exp\Big(-\langle \xi, V \xi\rangle\Big) + \frac{d^{2\tau}}{\sqrt{d}} |\xi|^{3/2} \exp\Big(-c\langle \xi, V \xi\rangle\Big)  
		\] 
		for $\xi\in {\rm I}^\prime$. This yields that
		\[
		\int_{{\rm II}^\prime} \Bigg(\prod_{k=0}^{n} \varphi_{\eta_0}\Big( d^{-1/2} \Big\langle \xi , \begin{pmatrix}
			z^k \\ \tfrac{k}{d} z^k
		\end{pmatrix} \Big\rangle \Big) \Bigg)^{1/4} \, {\rm d}m(\xi) \lesssim \frac{1}{\sqrt{\det V}} \le \la^{-2} \le d^{2\tau}
		\]
		and we get what we wanted.
	\end{proof}

	\section{Small-ball probability bounds for smooth points}
	\label{sec:small_ball_bounds_for_smooth_points}
	Recall that a point in $z=re^{i\theta}\in \bD \cap \mathbb{H}$ is called $A$-smooth if 
	\[
	\|p\theta/\pi \|_{\bR/\bZ} \ge A/d_n(z) \qquad \forall p\in [1,A+1] \cap \bZ \, ,
	\]
	where as before $d=d_n(z) = \min\{n,(1-r)^{-1}\}$. In this section we shall prove Lemma~\ref{lemma:small_ball_probability_smooth_points_bound_single_point}, which states that for a $d^{7\tau}$-smooth point $z$, the optimal small-ball probability bound for the random vector $d^{-1/2} \big(f_n(z),d^{-1} f_n^\prime(z)\big)$ is true up to the scale $d^{-3/2+o(1)}$. Our treatment here is largely inspired by~\cite[Section~4]{Konyagin-Schlag}, but we have adapted it to also cover the case $r<1$. 
	\subsection{Moving to the Fourier side}
	The starting point for the proof is the ``exponentially tilted" Esseen inequality (Proposition~\ref{propsition:exponentially_tilded_esseen_ineq}), which we use exactly as we used in~\eqref{eq:proof_of_small_ball_away_from_real_axis_after_esseen}. To recall, it gives the bound
	\begin{multline*}
		\max_{\alpha,\beta\in \bC} \bP \Big[|f_n(z)-\alpha| \le a d^{1/2} \, , \, |f_n^\prime(z)-\beta| \le b d^{3/2} \, , \, |f_n^{\prime \prime}(z)| \geq t d^{5/2} \Big] \\ \leq C e^{-c t^2} (ab)^2 \int_{\bR^4} \Bigg(\prod_{k=0}^{d} \varphi_{\eta_0} \Big( d^{-1/2} \Big\langle \xi , \begin{pmatrix}
			z^k \\ \tfrac{k}{d} z^k
		\end{pmatrix} \Big\rangle \Big) \Bigg)^{1/4} e^{-c\delta^2|\xi|^2} \, {\rm d}m(\xi)  
	\end{multline*}
	where now $\delta \ge d^{-3/2 + 11\tau}$. Our goal now is to show that the integral on the right-hand side of the above display is uniformly bounded for $d$ large. To this end, we fix some small constant $\eps_0>0$ (depending only on the distribution of $\xi_0$) and split the integral into three different regimes
	\begin{align*}
		I_1 &= \int_{\{|\xi| \le \eps_0 \sqrt{d} \}}\Bigg(\prod_{k=0}^{d} \varphi_{\eta_0} \Big( d^{-1/2} \Big\langle \xi , \begin{pmatrix}
			z^k \\ \tfrac{k}{d} z^k
		\end{pmatrix} \Big\rangle \Big) \Bigg)^{1/4} e^{-c\delta^2|\xi|^2} \, {\rm d}m(\xi) \, , \\ I_2 &= \int_{\{\eps_0\sqrt{d} \le |\xi| \le d^{3/2-10\tau} \}}\Bigg(\prod_{k=0}^{d} \varphi_{\eta_0} \Big( d^{-1/2} \Big\langle \xi , \begin{pmatrix}
			z^k \\ \tfrac{k}{d} z^k
		\end{pmatrix} \Big\rangle \Big) \Bigg)^{1/4} e^{-c\delta^2|\xi|^2} \, {\rm d}m(\xi) \, , \\ I_3 &= \int_{\{ |\xi| \ge  d^{3/2-10\tau} \}}\Bigg(\prod_{k=0}^{d} \varphi_{\eta_0} \Big( d^{-1/2} \Big\langle \xi , \begin{pmatrix}
			z^k \\ \tfrac{k}{d} z^k
		\end{pmatrix} \Big\rangle \Big) \Bigg)^{1/4} e^{-c\delta^2|\xi|^2} \, {\rm d}m(\xi) \, .
	\end{align*}
	We can handle the integral $I_1$ in the exact same way as we did in the proof of Claim~\ref{claim:small_ball_away_from_real_axis} (specifically, we use the bound~\eqref{eq:proof_of_small_ball_away_from_real_axis_bound_on_Gaussian_part}), where we note that any smooth point $z$ must lie at distance at least $d^{-1}$ away from the real axis. Handling $I_3$ is also easy, and follows simply from our lower bound on $\delta$ as
	\begin{equation}
		\label{eq:small_ball_smooth_point_bound_on_I_3}
		I_3 \le \int_{|\xi| \ge d^{3/2-10\tau}} e^{-c|\xi|^2 \delta^2} \, {\rm d}m(\xi) \lesssim d^{O(1)} \exp\Big(-cd^{2\tau}\Big) = o(1) \, .
	\end{equation}
	Therefore, to prove Lemma~\ref{lemma:small_ball_probability_smooth_points_bound_single_point}, it remains to deal with the integral $I_2$, which is the main technical part of this section. For $u=(u_1,u_2)\in \bC^2$, we denote by
	\begin{equation}
		\label{eq:def_of_psi_k_u}
		\psi_k(u) = \text{Re} \Big(z^k \big(\overline u_1 + \tfrac{k}{d} \overline u_2\big)\Big) \, ,
	\end{equation}
	and note that
	\[
	d^{-1/2} \Big\langle \xi , \begin{pmatrix}
		z^k \\ \tfrac{k}{d} z^k
	\end{pmatrix} \Big\rangle = \psi_k(u)
	\] 
	with $u= \xi/\sqrt{d}$. The following lemma is proved in Section~\ref{section:finite_diffrentiation_single_point}. 
	\begin{lemma}
		\label{lemma:sum_of_norm_psi_k_intermidiate_range_is_large_single_point}
		For all $\eps_0,c_1,c_2>0$ and for any $z\in \bD\cap \mathbb{H}$ which is $d^{7\tau}$-smooth the following holds. For any $u\in \bC^2$ with $\eps_0\le |u|\le d^{1-10\tau}$, we have that
		\[
		\sum_{k=0}^{d} \| s \psi_k(u) \|_{\bR/\bZ}^2 \ge d^{\tau} \, ,
		\]
		for all $s\in[c_1,c_2]$ and for all $d\ge d_0(\eps_0,c_1,c_2)$ large enough.
	\end{lemma}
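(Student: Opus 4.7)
We argue by contradiction: assume there exist $u$ with $\eps_0 \le |u| \le d^{1-10\tau}$ and $s \in [c_1,c_2]$ for which $\sum_{k=0}^d \|s\psi_k(u)\|_{\bR/\bZ}^2 < d^\tau$. Applying Markov's inequality shows that the ``good set'' $\mathcal{G} = \{k \in [0,d] : \|s\psi_k(u)\|_{\bR/\bZ} < d^{-\tau/3}\}$ has density $1-o(1)$ in $\{0,1,\ldots,d\}$. For $k \in \mathcal{G}$ we write $s\psi_k(u) = m_k + e_k$ with $m_k \in \bZ$ and $|e_k| < d^{-\tau/3}$, so any low-order discrete difference of $(s\psi_k(u))_{k\in \mathcal{G}'}$ (for $\mathcal{G}'\subset \mathcal{G}$ of comparable density) is within $O(d^{-\tau/3})$ of an integer.

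The plan is to apply finite-difference operators $\Delta_L f(k) = f(k+L) - f(k)$ and exploit the explicit structure of $\psi_k(u)$, in the spirit of Konyagin--Schlag~\cite{Konyagin-Schlag}. Using $\psi_k(u) = \text{Re}\big(z^k(\bar u_1 + \tfrac{k}{d}\bar u_2)\big)$, a direct computation yields
\[
\Delta_L^2\psi_k(u) = \text{Re}\!\left(z^k\Big[(z^L-1)^2(\bar u_1 + \tfrac{k}{d}\bar u_2) + \tfrac{2Lz^L(z^L-1)}{d}\bar u_2\Big]\right).
\]
The $d^{7\tau}$-smoothness of $z$ guarantees $|z^L-1| \gtrsim d^{7\tau-1}$ for all $L \in [1,d^{7\tau}]$, while the upper bound $|u|\le d^{1-10\tau}$ controls the secondary $L/d$-correction relative to the main term. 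Combining one or two further differences with well-chosen steps $L' \in [1,d^{7\tau}]$ decouples the oscillatory factor $z^k$ from the affine-in-$k$ amplitude $\bar u_1 + \tfrac{k}{d}\bar u_2$, producing integer-valued quantities $\Delta^n m_k$ that must equal explicit real expressions of the form $s\,r^k \text{Re}(z^k\omega_j)$ up to errors of order $d^{-\tau/3}$, where $\omega_1,\omega_2 \in \bC$ are linear forms in $u$ with magnitudes $\gtrsim |u| \cdot |1-z^L|^{2n}$.

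Since the upper bound $|u|\le d^{1-10\tau}$ and the successive factors $|z^L-1|^{2n}$ together force the main terms to lie well below $1/2$ in absolute value, the integer parts $\Delta^n m_k$ must vanish for a density-$(1-o(1))$ subset of $k \in [0,d/2]$ (where $r^k$ is bounded below by a positive constant). This yields a system of inequalities
\[
\big|s\, r^k\, \text{Re}(z^k\omega_j)\big| \le d^{-\tau/3}, \qquad j = 1,2,
\]
holding along a density-$(1-o(1))$ subset of $k$ in the macroscopic range $[0,d/2]$. A Weyl-type equidistribution estimate for the sequence $(k\theta)_k$, valid because $\|p\theta/\pi\|_{\bR/\bZ} \ge d^{7\tau-1}$ for $1 \le p \le d^{7\tau}$, then forces $|\omega_j| = o(1)$ as $d\to\infty$. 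Inverting the linear dependence of $\omega_j$ on $u$ gives $|u| = o(\eps_0)$, contradicting $|u|\ge \eps_0$.

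The main obstacle is coordinating the three scales involved — the total differentiation order $n$, the step sizes $L,L'\le d^{7\tau}$, and the admissible magnitude of $|u|$ — so that (i) the integer part of the differenced quantity is forced to vanish; (ii) the main term after accumulation is not swamped by the $O(d^{-\tau/3})$ error from the $e_k$'s; and (iii) no difference step exceeds the smoothness threshold $d^{7\tau}$. The upper bound $|u|\le d^{1-10\tau}$ provides the slack needed for (i) and (ii), while the assumption $|u|\ge \eps_0$ supplies the contradiction in the concluding equidistribution step.
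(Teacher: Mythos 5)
Your high-level plan (contradiction, finite differences along a progression, force the nearest integers to vanish, then show the resulting bound on $\psi_k$ contradicts $|u|\ge\eps_0$) is in the same spirit as the paper, but the mechanism you describe for forcing the main term below $1/2$ has a genuine gap, and it is not a cosmetic one.

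You assert that $d^{7\tau}$-smoothness gives $|z^L-1|\gtrsim d^{7\tau-1}$ for all $L\le d^{7\tau}$, and then invoke the ``successive factors $|z^L-1|^{2n}$'' to make the main term small. This has the inequality pointing the wrong way: the smoothness hypothesis only gives a \emph{lower} bound on $|z^L-1|$, so nothing prevents $|z^L-1|$ from being of order $2$ (take $L\theta$ near $\pi$ mod $2\pi$). In that case $|z^L-1|^{2n}$ is enormous, not small, and the quantity $|u|\,|z^L-1|^{2n}$ that needs to sit below $1/2$ is of size $d^{1-10\tau}\cdot 4^n$ or worse. The sentence ``the upper bound $|u|\le d^{1-10\tau}$ controls the secondary $L/d$-correction relative to the main term'' does not cohere either, since the ratio of the correction to the main term, $2L/(d|z^L-1|)$, does not involve $|u|$ at all. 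What the argument actually needs is a step $p$ for which $|z^p-1|$ is \emph{small}; that comes from a Dirichlet simultaneous-approximation step (which the paper performs, producing $p\le d^\tau$ with $|1-z^p|\lesssim d^{-\tau}$), and it is only \emph{then} that the smoothness enters, in a completely different way: smoothness gives a lower bound on the fractional part $t$ in the Dirichlet relation, which is what makes the progression $\{z^{jp}\}$ wrap around all four quadrants and ultimately forces the limiting integer to be $0$. You use smoothness only to bound $|z^L-1|$ from below, which is not the ingredient that does the work.

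The endgame also relies on a hand-wave. After taking differences, you propose that inequalities $|s\,r^k\,\mathrm{Re}(z^k\omega_j)|\le d^{-\tau/3}$ on a dense set of $k$ plus a ``Weyl-type equidistribution estimate'' force $|\omega_j|=o(1)$. In the paper, this step is replaced by two concrete tools: the sign-change argument for the sequence $\{\psi_{k_0+jp}\}$ (Claim~\ref{claim:sign_changes_of_psi}) and the quantitative non-singularity of the $4\times4$ covariance matrix (Lemma~\ref{lemma:singularity_properties_of_covariance_pointwise}), used via a pigeonhole over sub-intervals in Claim~\ref{claim:for_smooth_points_psi_are_not_aligned}. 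An equidistribution bound for $(k\theta)$ alone does not directly give what you want, because you also have to control the $r^k$ damping and the $k/d$-dependence inside $\omega_j$; these are precisely what the second-derivative/twisted-difference bookkeeping and the non-singularity lemma are there to handle. So the structure you describe is closer to the paper than it may first appear, but the two load-bearing steps — Dirichlet to make $|z^p-1|$ small, and the sign-change/non-singularity argument to pin the integers to zero — are missing, and the substitutes you offer (lower bound on $|z^L-1|$; Weyl) do not fill the gap.
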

	Assuming Lemma~\ref{lemma:sum_of_norm_psi_k_intermidiate_range_is_large_single_point} for the moment, we can conclude the proof of Lemma~\ref{lemma:small_ball_probability_smooth_points_bound_single_point}.
	\begin{proof}[Proof of Lemma~\ref{lemma:small_ball_probability_smooth_points_bound_single_point}]
		In view of the discussion above, to conclude the lemma we only need to show that the integral $I_2$ is uniformly bounded for large $d$. Indeed, as $\eta_0$ is a mean-zero non-degenerate symmetric random variable, there exists constant $c_1,c_2>0$ so that
		\begin{equation}
			\label{eq:bound_of_char_of_symmetric_random_variable_distance_to_integers}
			\varphi_{\eta_0}(t) = \bE\big[\cos (t\eta_0)\big]  \le \exp\big(-c\inf_{c_1\le s\le c_2} \|st\|_{\bR/\bZ}^2\big) \, , \qquad \forall t\in \bR \, .
		\end{equation}
		See for instance~\cite[Eq.~(9.2)]{Cook-Nguyen}. Plugging this into the definition of $I_2$, we see that the proof will follow once we show that
		\begin{equation}
			\label{eq:small_ball_smooth_point_I_2_bound_reduction}
			\int_{\{\eps_0\sqrt{d} \le |\xi| \le d^{3/2-10\tau} \}} \exp\left(-c\inf_{c_1\le s\le c_2} \sum_{k=0}^{d} \|s\psi_k(\xi/\sqrt{d})\|_{\bR/\bZ}^2 \right) \, {\rm d}m(\xi)  = O(1) \, .
		\end{equation}
		As we assume $z$ is $d^{7\tau}$-smooth, we conclude from Lemma~\ref{lemma:sum_of_norm_psi_k_intermidiate_range_is_large_single_point} that the integral in~\eqref{eq:small_ball_smooth_point_I_2_bound_reduction} is bounded from above by
		\[
		\int_{\{\eps_0\sqrt{d} \le |\xi| \le d^{3/2-10\tau} \}} \exp\Big(-cd^{\tau}\Big) \lesssim d^{O(1)} e^{-cd^{\tau}} = o(1)
		\]
		which in particular proves~\eqref{eq:small_ball_smooth_point_I_2_bound_reduction} and with that the lemma. 
	\end{proof}
	\subsection{Finite differences}
	\label{section:finite_diffrentiation_single_point}	
	As $u$ will remain fixed throughout this section, we will sometimes write $\psi_k = s\psi_k(u)$ to lighten the notation. Roughly speaking, to prove Lemma~\ref{lemma:sum_of_norm_psi_k_intermidiate_range_is_large_single_point} we will show that if the sequence $\{\psi_k\}$ is close to integers, then in fact many of these integers must be equal to 0. This observation, originating in~\cite{Konyagin-Schlag}, 
	relies heavily on the assumption that $z=re^{i\theta}$ is smooth, as the desired conclusion is false if $\theta$ lies close to rationals with small denominators. Recall that by Dirichlet's approximation theorem~\cite[Chapter~1]{Cassels}, there exists $p\in \bZ$ and $t\in\bR$ such that
	\begin{equation}
		\label{eq:dirichlet_principle}
		p\theta/2\pi - t\in \bZ \, , \qquad 1\le p\le \lfloor d^{\tau} \rfloor \, , \ |t|\le d^{-\tau} \, .
	\end{equation}
	Since $z$ is assumed to be $d^{7\tau}$-smooth (see Definition~\ref{def:smooth_angle}), this implies that $|t|\ge d^{-1+7\tau}$. We break down the proof of Lemma~\ref{lemma:sum_of_norm_psi_k_intermidiate_range_is_large_single_point} into three claims.
	
	\begin{claim}
		\label{claim:finite_differentiation_of_psi_k}
		Assume there exists an interval $\mathcal{J}\subset[0,d]$ with $|\mathcal{J}|\ge \tfrac{1}{2} d^{1-5\tau}$ such that
		\[
		\sup_{k\in \mathcal{J}}\|\psi_k\|_{\bR/\bZ} \le d^{-\tau} \, .
		\]
		Then for any arithmetic progression $\mathcal{P} = \{ k_0 + pj \}_{j\ge 0}$ with $k_0\in \cJ$ and $p\ge 1$ given by~\eqref{eq:dirichlet_principle} such that
		$
		\# \big(\mathcal{P} \cap \cJ\big) \ge cd^{1-5\tau},
		$ 
		there exists $m\in \bZ$ such that 
		\begin{equation*}
			|\psi_{s}-m|= \|\psi_{s}\|_{\bR/\bZ} \qquad \text{for all } s\in \mathcal{P}\cap \cJ \, .
		\end{equation*}
	\end{claim}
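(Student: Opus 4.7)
Write $\psi_k = m_k + \eps_k$ with $m_k \in \bZ$ and $|\eps_k| \le d^{-\tau}$ for every $k \in \cJ$; the claim is equivalent to showing that $m_k$ is constant on $\mathcal{P} \cap \cJ$. My plan is to pass to the finite difference $\psi_{k+p}-\psi_k$ along the progression and show that it is forced to lie close to $0$ modulo $\bZ$.

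First, I would invoke Dirichlet's principle~\eqref{eq:dirichlet_principle} and combine it with the $d^{7\tau}$-smoothness of $z$ to write $p\theta = 2\pi N + 2\pi t$ with integer $N$ and $d^{-1+7\tau}\lesssim |t|\le d^{-\tau}$. Consequently $z^p = r^p e^{i2\pi t} = 1+\alpha$, where $\alpha$ has $\operatorname{Re}\alpha = (r^p-1) + O(t^2) = O(d^{\tau-1})$ and $\operatorname{Im}\alpha = 2\pi r^p t + O(t^2) = O(d^{-\tau})$. A direct expansion then gives the identity
\[
\psi_{k+p} - \psi_k \;=\; \operatorname{Re}\bigl(z^k \alpha\, W_k\bigr) \;+\; \operatorname{Re}\bigl(z^{k+p}\tfrac{p}{d}\bar u_2\bigr), \qquad W_k = \bar u_1 + \tfrac{k}{d}\bar u_2,
\]
where the second summand is bounded by $(p/d)|u|\le d^{-9\tau}$ and is therefore negligible.

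The main obstacle is the first summand $\operatorname{Re}(z^k\alpha W_k)$: a naïve bound gives only $|z^k\alpha W_k| \lesssim d^{-\tau}|u|\lesssim d^{1-11\tau}$, which is far greater than $1$. The decisive extra information is that by hypothesis both $\psi_k$ and $\psi_{k+p}$ are within $d^{-\tau}$ of integers, so the difference $\psi_{k+p}-\psi_k$ lies within $2d^{-\tau}$ of the integer $m_{k+p}-m_k$. Rearranging, $\operatorname{Re}(z^k\alpha W_k)$ must itself be $2d^{-\tau}+O(d^{-9\tau})$-close to an integer. The plan is to exploit this congruence for \emph{every} consecutive pair in $\mathcal{P}\cap\cJ$: along the progression, the quantity $z^k = z^{k_0} (1+\alpha)^{j}$ evolves by multiplication by $z^p$, so the integer-valued sequence $\{\operatorname{Re}(z^{k_0+pj}\alpha W_{k_0+pj})\bmod \bZ\}$ satisfies a rigid recursion driven by the small parameter $\alpha$. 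Using the bound $|\cP \cap \cJ|\ge c d^{1-5\tau}$ to obtain a long run of constraints, the only way all of them can simultaneously be consistent is if the integer difference $m_{k+p}-m_k$ vanishes at every step.

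Assuming this propagation step is carried out, the conclusion is immediate: taking $m := m_{k_0}$ and iterating the identity $m_{k+p} = m_k$ along the progression gives $m_s = m$ for every $s \in \cP \cap \cJ$. The heart of the work will therefore be the careful bookkeeping of how the approximate-integer constraint on $\operatorname{Re}(z^k\alpha W_k)$ closes up when iterated; this is where the precise sizes $|\alpha|\lesssim d^{-\tau}$ and $|t|\gtrsim d^{-1+7\tau}$ (coming from the smoothness hypothesis) have to be balanced against the upper bound $|u|\le d^{1-10\tau}$ to ensure that the only integer consistent with all successive congruences is $0$.
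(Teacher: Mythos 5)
Your plan stalls precisely at the point you flag as ``the main obstacle.'' Writing $z^p = 1+\alpha$ with $|\alpha|\lesssim d^{-\tau}$, the first-order difference $\psi_{k+p}-\psi_k = \operatorname{Re}(z^k\alpha W_k)+\text{(lower order)}$ has magnitude as large as $|\alpha|\,|u|\lesssim d^{-\tau}\cdot d^{1-10\tau}=d^{1-11\tau}$, which dwarfs $1$. So the constraint ``$\psi_{k+p}-\psi_k$ is $O(d^{-\tau})$-close to the integer $m_{k+p}-m_k$'' by itself places $m_{k+p}-m_k$ at some integer of size up to $d^{1-11\tau}$, and it offers no reason at all for that integer to be $0$. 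Your claim that ``the only way all of them can simultaneously be consistent is if the integer difference vanishes at every step'' is the entire content of the lemma, and it is asserted rather than proved; moreover it is not reachable from first-order information alone.

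The paper's mechanism is the higher-order discrete differencing, which you omit. Taking $\Delta_p^h$ with $h\asymp \tau\log d$ makes the ``continuous'' part decay like $|1-z^p|^h \lesssim d^{-h\tau}$, so $|\Delta_p^h\psi|\lesssim h2^h d^{1-9\tau-h\tau}$ becomes small, while the ``rounding'' part contributes at most $2^h d^{-\tau}$; together these force $\Delta_p^h m =0$ outright, i.e.\ $m$ agrees with a polynomial $Q$ of degree $\le h-1$ along $\mathcal P$. And even after that step you are not done: $\Delta_p^h m=0$ by itself does not force $m_{k+p}-m_k=0$. One still needs the global growth argument — if $Q$ were nonconstant it would be strictly monotone on a subinterval of $\mathcal J$ of length $\gtrsim d^{1-5\tau}/(ph)$, forcing $|m_k|$ to exceed $|u|+1\le d^{1-10\tau}+1$. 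That final contradiction is what rules out the nonzero integer jumps that a purely local argument cannot exclude. Both the iterated differencing and the polynomial-growth finish are missing from your outline.
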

	\begin{proof}
		For a sequence $g:\bZ\to \bC$ and an integer $h\ge0$ we set
		\begin{equation}
			\label{eq:def_on_finite_difference}
			\big(\Delta_p^h g\big)(\ell) = \sum_{s=0}^h \binom{h}{s} (-1)^sg(\ell + sp) \, ,
		\end{equation}
		the discrete differential of order $h$, with step size $p$. We denote by $m_k$ the closest integer to $\psi_k$, then by our assumption
		\begin{equation}
			\label{eq:integer_clostest_to_psi_k}
			\sup_{k\in \cJ} |m_k - \psi_k| \le d^{-\tau} \, .
		\end{equation}
		We would like to bound the discrete differential along the arithmetic progression $\mathcal{P} = \{k_0 + jp\}_{j\ge 0}$. Recall that by writing $u=(u_1,u_2) \in \bC$ we have
		\begin{equation}
			\label{eq:psi_k_in_complex_coordinates}
			\psi_k = \text{Re}\Big(z^k \big(\overline{u}_1 + \frac{k}{d} \overline{u}_2\big)\Big) \, .
		\end{equation}
		Thus, for all $j\ge 0$, we have 
		\begin{align*}
			\left|\big(\Delta_p^h \psi\big)(k_0 + jp)\right| &= \left|\sum_{s=0}^h \binom{h}{s} (-1)^s \psi_{k_0 + jp +sp}\right|  \\ 
			&= \left|\text{Re} \, \left( z^{k_0+jp} \sum_{s=0}^h \binom{h}{s} (-1)^s z^{sp} \left(\overline{u_1} + \frac{k_0 + jp + sp}{d}\overline{u}_2\right)\right)\right| \\ & \le \left| \sum_{s=0}^h \binom{h}{s} (-1)^s z^{sp} \Big(\overline{u_1} + \frac{k_0 + jp + sp}{d}\overline{u}_2\Big) \right| \\ & \le |u_1| \Big|\sum_{s=0}^h \binom{h}{s} (-1)^s z^{sp} \Big| + |u_2| \Big|\sum_{s=0}^h \binom{h}{s} (-1)^s z^{sp} \frac{k_0 + jp + sp}{d} \Big| \, .
		\end{align*}
		Since 
		\begin{align*}
			&\sum_{s=0}^h \binom{h}{s} (-1)^s z^{sp} = (1-z^p)^h \qquad \text{and} \\ &\sum_{s=0}^h \binom{h}{s} (-1)^s z^{sp}sp = z^{-1} \partial_z\Big(z(1-z^p)^h\Big) = z^{-1}(1-z^p)^h -hp(1-z^p)^{h-1}z^{p-1} \, ,
		\end{align*}
		we get that
		\begin{equation*}
			|\big(\Delta_p^h \psi\big)(k_0 + jp)| \lesssim |u| h p |1-z^p|^h  \, .
		\end{equation*}
		On the other hand, by~\eqref{eq:dirichlet_principle} we see that
		\[
		|1-z^p| \le 1-r^p + r^p|1-e(2\pi t)| \lesssim |t|  + d^{-1+\tau} \lesssim d^{-\tau} 
		\]
		and since $|u| \le d^{1-10\tau}$, we arrive at
		\begin{equation}
			\label{eq:bound_on_differences_of_psi_order_h}
			|\big(\Delta_p^h \psi\big)(k_0 + jp)| \lesssim h 2^h  d^{1-9\tau-h\tau}  \, .
		\end{equation}
		On the other hand,~\eqref{eq:integer_clostest_to_psi_k} implies the trivial bound
		\begin{equation*}
			\big(\Delta_p^h (\psi-m) \big)(k_0 + jp) \le 2^h d^{-\tau} 
		\end{equation*}
		for all those $j\ge 0$ for which we have $[k_0+jp,k_0+jp+hp]\subset \cJ$. Combining with~\eqref{eq:bound_on_differences_of_psi_order_h}, we get that
		\begin{equation*}
			|\big(\Delta_p^h m \big)(k_0 + jp)| \le 2^h d^{-\tau} + C h 2^h  d^{1-9\tau-h\tau}  \, .
		\end{equation*}
		We can now choose $h=\lfloor \tau (\log d )/2 \rfloor$ and $d\ge d_0$ sufficiently large so that the right-hand side of the above is less that $\frac{1}{2}$, which implies that for all admissible $j\ge 0$
		\begin{equation*}
			\big(\Delta_p^h m \big)(k_0 + jp) = 0 \, .
		\end{equation*}
		That is, there exist a polynomial $Q$ with degree $\le h-1$ such that 
		\begin{equation}
			\label{eq:polynomial_equal_to_closest_integer}
			Q(j) = m_{k_0 + jp} \, .
		\end{equation}
		We will be done once we show that $Q$ is the constant polynomial. Suppose it is not, then $Q^\prime$ has at most $h-2$ zeros, and hence $Q$ must be strictly monotone on an interval of length $|\cJ|/(ph) \ge cd^{1-5\tau-o(1)}$. For all $k\in[0,d]$ we have
		\[
		|m_k| \le |\psi_k| + 1 \stackrel{\eqref{eq:psi_k_in_complex_coordinates}}{\le} |u| + 1\le d^{1-10\tau} + 1
		\]
		and hence we get a contradiction to strict monotonicity of $Q$. That is, $Q$ is a constant polynomial and in view of~\eqref{eq:polynomial_equal_to_closest_integer} we are done.
	\end{proof}
	\begin{claim}
		\label{claim:sign_changes_of_psi}
		Assume that for some $k_0\ge 0$ we have $|\psi_{k_0 + jp}(u)| \ge c|u|$ for all $0\le j \le cd^{1-5\tau}$. Then there exist $j,j^\prime \in[0,cd^{1-5\tau}]$ such that
		\[
		\psi_{k_0+jp} \cdot \psi_{k_0+j^\prime p} < 0 \, .
		\]
	\end{claim}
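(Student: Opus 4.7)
The plan is to argue by contradiction: assume all $\psi_{k_0+jp}$ have the same sign (say positive) and $|\psi_{k_0+jp}(u)|\ge c|u|$ for every $j\in[0,cd^{1-5\tau}]$, then produce an index $j^\ast$ in the same range at which $\psi_{k_0+j^\ast p}<0$.

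The first step is to rewrite the quantity in a form that separates a ``fast'' oscillatory factor from a ``slow'' linear factor. Using $z=re^{i\theta}$ and the Dirichlet approximation~\eqref{eq:dirichlet_principle} $p\theta/2\pi - t\in\bZ$ with $d^{-1+7\tau}\le |t|\le d^{-\tau}$ (the lower bound coming from $d^{7\tau}$-smoothness), one has $e^{i\theta p j}=e(tj)$ and hence
\[
\psi_{k_0+jp}(u)\;=\;r^{k_0+jp}\,\text{Re}\!\Big(e^{i\theta k_0}\,e(tj)\,A_j\Big),\qquad A_j:=\overline{u_1}+\tfrac{k_0+jp}{d}\overline{u_2}.
\]
Since $jp\le cd^{1-4\tau}=o(d)$ over the range of $j$, we have $r^{jp}=1+o(1)$ and the slow factor satisfies $|A_j-A_0|\le (jp/d)|u|\lesssim d^{-4\tau}|u|$; in particular $A_j=A_0+o(|u|)$ uniformly.

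The second step is to locate $j^\ast$. Set $j^\ast=\lfloor 1/(2|t|)\rfloor$, so that $e(tj^\ast)=-1+O(|t|)=-1+O(d^{-\tau})$. Because $|t|\ge d^{-1+7\tau}$, we have $j^\ast\le d^{1-7\tau}/2\le cd^{1-5\tau}$ for $d$ large, so $j^\ast$ lies in the admissible range. Combining the two approximations,
\[
\psi_{k_0+j^\ast p}\;=\;-\,r^{k_0+j^\ast p}\,\text{Re}\!\big(e^{i\theta k_0}A_0\big)\;+\;O\!\big(r^{k_0+j^\ast p}\,d^{-\tau}|u|\big).
\]
The third step is to compare with $j=0$: the hypothesis gives $\psi_{k_0}=r^{k_0}\text{Re}(e^{i\theta k_0}A_0)\ge c|u|$, so that $\text{Re}(e^{i\theta k_0}A_0)\ge c|u|/r^{k_0}$. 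Since $r^{k_0+jp}\ge r^{2d}$ is bounded below by a positive absolute constant (as $k_0+jp\le 2d$), substituting yields
\[
\psi_{k_0+j^\ast p}\;\le\;-\,c'\,|u|\;+\;O\!\big(d^{-\tau}|u|\big)\;<\;0
\]
for all $d$ large enough, contradicting the assumption of constant positive sign. The case where all $\psi_{k_0+jp}$ are negative is symmetric.

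The one point that needs care is the interplay between the two error terms: the slow variation of $A_j$ (of size $d^{-4\tau}|u|$) and the imperfect approximation $e(tj^\ast)\approx -1$ (error $d^{-\tau}|u|$ after multiplication by $|A_0|\le 2|u|$). Both are $o(|u|)$, so they are dominated by the main term $c'|u|>0$, but one should track constants to verify that $j^\ast$ indeed fits in $[0,cd^{1-5\tau}]$ for the specific absolute constant $c$ appearing in the statement.
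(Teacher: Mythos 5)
Your proof is correct and takes essentially the same approach as the paper: both exploit the fact that $z^{jp}=r^{jp}e(2\pi jt)$ makes at least a half-revolution over $j\in[0,cd^{1-5\tau}]$ (since $|t|\gtrsim d^{-1+7\tau}$), so the ``amplitude'' $\text{Re}(e^{i\theta k_0}A_j)$, which is approximately constant in $j$, must be hit with an $e(tj)$ close to $-1$. The only difference is cosmetic: where the paper argues qualitatively (``the sequence $\{z^{jp}\}$ visits all four quadrants''), you explicitly construct $j^\ast=\lfloor 1/(2|t|)\rfloor$ and verify it lies in range, which is a slightly more careful rendering of the same step.
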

	\begin{proof}
		By scaling, we may assume without loss of generality that $|u|=1$. By~\eqref{eq:psi_k_in_complex_coordinates}, we have that
		\[
		\psi_{k_0+jp}(u) = \text{Re}\big(z^{k_0+jp}(\overline{u}_1 + \tfrac{k_0+jp}{d}\overline{u}_2)\big) = \text{Re}\big(z^{jp}(z^{k_0}\overline{u}_1 + z^{k_0}\tfrac{k_0}{d}\overline{u}_2)\big) + O(d^{-5\tau}) \, .
		\]
		Hence, assuming by contradiction there are no sign changes of $\{\psi_{k_0+jp}\}$, we get that for some $c>0$ and for all $0\le j\le cd^{1-5\tau}$
		\begin{equation}
			\label{eq:sign_changes_of_psi_lower_bound_on_inner_product}
			\text{Re}\big(z^{jp}(z^{k_0}\overline{u}_1 + z^{k_0}\tfrac{k_0}{d}\overline{u}_2)\big) > c \, .
		\end{equation}
		By~\eqref{eq:dirichlet_principle} we have $z^{jp} = r^{jp} e(2\pi jt)$, and because $z$ is smooth $|t|\ge d^{-1+6\tau}$. Therefore, the sequence $\{z^{jp}\}_{0\le j \le cd^{1-5\tau}}$ visits all four quadrants in the plane, and we get a contradiction to~\eqref{eq:sign_changes_of_psi_lower_bound_on_inner_product}.  
	\end{proof}
	\begin{claim}
		\label{claim:for_smooth_points_psi_are_not_aligned}
		Suppose that $\#\big\{ k\in[0,d] \cap \bZ \, : \, \|\psi_k \| \ge d^{-\tau} \big\} \le d^{3\tau}.$ Then there exists an interval $\cJ\subset[0,d]$ such that		
		\begin{align*}
			\sum_{k\in \mathcal{J}} |\psi_k|^2 \ge c d^{1-5\tau} |u|^2 \, \qquad \text{ and } \qquad  \sup_{k\in \mathcal{J}} |\psi_k| \le d^{-\tau}\, .
		\end{align*}
	\end{claim}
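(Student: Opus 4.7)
The argument combines a Parseval-type identity with pigeonhole, and closes using the rigidity Claims~\ref{claim:finite_differentiation_of_psi_k}--\ref{claim:sign_changes_of_psi}.

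\emph{Parseval identity.} Writing $2\psi_k=z^k\bar v_k+\bar z^kv_k$ with $v_k:=u_1+(k/d)u_2$, a direct expansion yields
\[
\sum_{k=0}^d|\psi_k|^2 \;=\; \tfrac12\sum_{k=0}^d|v_k|^2 \;+\; \tfrac12\,\mathrm{Re}\sum_{k=0}^d z^{2k}\bar v_k^{\,2}.
\]
The diagonal term is a positive-definite quadratic form in $u$ whose smallest eigenvalue is $\asymp d$, so it equals $\asymp d|u|^2$; the cross term is $O(|u|^2/|1-z^2|)=O(d^{1-7\tau}|u|^2)$ by geometric-series estimates together with $|1-z^2|\gtrsim d^{-1+7\tau}$ (which follows from $z$ being $d^{7\tau}$-smooth). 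Hence $\sum_k|\psi_k|^2\asymp d|u|^2$.

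\emph{Pigeonhole for a long interval.} Let $\mathcal{B}:=\{k\in[0,d]\cap\bZ:\|\psi_k\|_{\bR/\bZ}\ge d^{-\tau}\}$; by hypothesis $|\mathcal{B}|\le d^{3\tau}$. The complement $\mathcal{G}$ partitions into at most $d^{3\tau}+1$ maximal runs of consecutive integers $I_1,\ldots,I_M$. Since each bad $k$ contributes at most $4|u|^2$ to the Parseval sum, $\sum_{\mathcal{G}}|\psi_k|^2\gtrsim d|u|^2$, and pigeonhole produces one run $I$ with $\sum_{k\in I}|\psi_k|^2\gtrsim d^{1-3\tau}|u|^2$; the trivial bound $\sum_I|\psi_k|^2\le 4|I||u|^2$ further forces $|I|\gtrsim d^{1-3\tau}$.

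\emph{Forcing $m_k\equiv 0$ on $I$.} On $I$ set $m_k:=\mathrm{round}(\psi_k)\in\bZ$, so $|\psi_k-m_k|<d^{-\tau}$. Let $p\le d^{\tau}$ be the Dirichlet denominator from~\eqref{eq:dirichlet_principle}. For each residue $a\in\bZ/p\bZ$, the AP $\mathcal{P}_a:=\{k\in I:k\equiv a\pmod p\}$ has cardinality $\gtrsim d^{1-4\tau}\ge cd^{1-5\tau}$, so Claim~\ref{claim:finite_differentiation_of_psi_k} forces $m_k\equiv g(a)\in\bZ$ on $\mathcal{P}_a$. If some $g(a)\ne 0$, then $|\psi_k|\ge 1/2$ with constant sign throughout $\mathcal{P}_a$, and Claim~\ref{claim:sign_changes_of_psi} then forces $|u|\ge 1/(2c)$; but for $|u|$ bounded below, the $d^{7\tau}$-smoothness of $z$ combined with a standard Erd\H{o}s--Tur\'an discrepancy bound for $\{z^k\}_{k=0}^d$ shows that only a fraction $O(d^{-\tau}+d^{-7\tau})$ of $k$'s can have $\|\psi_k\|_{\bR/\bZ}<d^{-\tau}$, giving $|\mathcal{B}|\gtrsim d(1-O(d^{-\tau}))\gg d^{3\tau}$, contradicting the hypothesis. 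Hence $g\equiv 0$, $m_k\equiv 0$ on $I$, and consequently $|\psi_k|=\|\psi_k\|_{\bR/\bZ}\le d^{-\tau}$ throughout $I$. Setting $\cJ:=I$ yields both required conditions.

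\emph{Main obstacle.} The equidistribution-based lower bound on $|\mathcal{B}|$ in the regime $|u|\ge 1/(2c)$: one must verify that the discrepancy of $\{z^k\}_{k=0}^d$ on the unit circle (guaranteed by $d^{7\tau}$-smoothness) beats the target scale $d^{-\tau}$, and that the slow modulation $v_k=u_1+(k/d)u_2$ does not destroy the distributional spread of $\psi_k=\mathrm{Re}(z^k\bar v_k)$. Once this is in hand, the rest of the proof is pigeonhole together with direct applications of Claims~\ref{claim:finite_differentiation_of_psi_k} and~\ref{claim:sign_changes_of_psi}.
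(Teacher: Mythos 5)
Your framework is sound through the pigeonhole step and the application of Claim~\ref{claim:finite_differentiation_of_psi_k}, and your Parseval route to $\sum_k|\psi_k|^2\asymp d|u|^2$ is a legitimate alternative to the paper's use of Lemma~\ref{lemma:singularity_properties_of_covariance_pointwise} (though note that the cross term is handled by summation by parts and produces $O(|u|^2/|1-z^2|^3)$ for the quadratic-in-$k$ piece, not $O(|u|^2/|1-z^2|)$, which is still $o(d|u|^2)$). Your run-decomposition of the good set is also cleaner than the paper's fixed-interval decomposition and even yields a slightly longer interval.

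The gap — which you correctly flag — is in the final step for the regime $|u|\ge 1/(2c)$. There you need to rule out $g(a)\ne 0$ for some residue class, and you propose an Erd\H{o}s--Tur\'an discrepancy argument to show that for $|u|$ bounded below the bad set $\mathcal{B}$ must be macroscopically large. This is genuinely nontrivial and not obviously true in the stated form: $\psi_k=\mathrm{Re}(z^k\bar v_k)$ is an oscillating sequence whose value distribution has a non-uniform (arcsine-type) density with atoms of size $\asymp|u|$ near its envelope, and the modulation by $v_k$ and the need to push the discrepancy down to the scale $d^{-\tau}$ make the required estimate delicate. More importantly, the detour is unnecessary and you already have the ingredient that closes the argument cleanly: you derived $\sum_{k\in I}|\psi_k|^2\gtrsim d^{1-3\tau}|u|^2$, but then never used it. Since $|\psi_k|\le|g(a)|+d^{-\tau}$ on each of the $p\le d^\tau$ residue classes and each class has $\lesssim |I|/p$ elements, summing gives $\sum_I|\psi_k|^2\lesssim |I|\,(\max_a|g(a)|+d^{-\tau})^2$, which combined with the lower bound forces $\max_a|g(a)|\gtrsim|u|-d^{-\tau}\gtrsim|u|$ (using $|u|\ge\eps_0$). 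On the maximizing class one then has $|\psi_k|\ge|g(a)|-d^{-\tau}\gtrsim|u|$ uniformly, so Claim~\ref{claim:sign_changes_of_psi} applies for \emph{all} $|u|$ in the admissible range and the sign-change contradiction closes the proof with no case split and no equidistribution input. This is precisely the paper's argument.
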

	\begin{proof}
		We decompose the interval $[0,d]$ into $\lfloor d^{5\tau}\rfloor$ non-overlapping intervals of lengths in $[\tfrac{1}{2}d^{1-5\tau}, d^{1-5\tau}]$. Denote by $J_\nu$ the $\nu$'th interval, $\nu\in\{1,\ldots,\lfloor d^{5\tau}\rfloor\}$ and by
		\[
		s_\nu = \sum_{k\in J_\nu} |\psi_k|^2 \, .
		\]
		By Lemma~\ref{lemma:singularity_properties_of_covariance_pointwise} we know that
		\[
		\sum_{\nu=1}^{\lfloor d^{4\tau}\rfloor} s_\nu \ge 10^{-4} \, |u|^2 d\, \, .
		\]
		We conclude there exists at least $d^{4\tau}$ intervals so that $s_\nu\ge 10^{-5} |u^2| d^{1-5\tau}$, as otherwise we would have 
		\[
		\sum_{\nu=1}^{\lfloor d^{4\tau}\rfloor} s_\nu \le d^{4\tau} d^{1-5\tau} |u|^2 + 10^{-5} |u|^2 d^{1-5\tau} (d^{5\tau} - d^{4\tau}) < 10^{-4}|u|^2 d
		\]
		which is a contradiction. By the pigeonhole principle, there exists an interval $\cJ\subset[0,d]$ of length at least $\tfrac{1}{2}d^{1-5\tau}$ such that
		\begin{align}
			\label{eq:for_smooth_points_psi_are_not_aligned_lower_bound_on_interval}
			\sum_{k\in \mathcal{J}} |\psi_k|^2 \ge 10^{-5} d^{1-5\tau} |u|^2 \qquad \text{ and } \qquad \sup_{k\in \mathcal{J}} \|\psi_k\| \le d^{-\tau}\, .
		\end{align}
		We want to show that $\cJ$ is the desired interval, i.e. that $\|\psi_k\| = |\psi_k|$ for $k\in \cJ$. To show this, cover $\cJ$ with $p$ arithmetic progressions $\mathcal{P}_1,\ldots,\mathcal{P}_{p}$ of step size $p\ge 1$ given by~\eqref{eq:dirichlet_principle}. Each such arithmetic progression is of length $\ge c d^{1-5\tau}$, and hence Claim~\ref{claim:finite_differentiation_of_psi_k} implies there exist integers $m_1,\ldots,m_{p}$ so that
		\[
		\sup_{k\in \mathcal{P}_{j}} |\psi_k - m_j| \le d^{-\tau} \, , \qquad 1\le j\le p.
		\]
		We claim that $m_j = 0$ for all $1\le j\le p$. Assume by contradiction this is not the case, and denote by $j^\prime$ the index for which $|m_{j^\prime}| \ge 1$ is maximal. By~\eqref{eq:for_smooth_points_psi_are_not_aligned_lower_bound_on_interval} we have
		\begin{equation*}
			d^{1-5\tau} |m_{j^\prime}|^2\ge \frac{1}{2}\sum_{k\in \mathcal{P}_{j^\prime}} |\psi_k|^2  \ge \frac{1}{2}\sum_{j=1}^{p} \sum_{k\in \mathcal{P}_j} |\psi_k|^2 \ge \frac{1}{2}10^{-5} d^{1-5\tau} |u|^2
		\end{equation*}
		and hence $|m_{j^\prime}|\ge c|u|^2$. In particular, this implies that $|\psi_{k}| \ge c|u|$ for all $k\in \mathcal{P}_{j^\prime}$, and hence by Claim~\ref{claim:sign_changes_of_psi} $\{\psi_k\}$ must change signs somewhere along the arithmetic progression $\mathcal{P}_{j^\prime}$. This is a contradiction to the fact that $|m_{j^\prime}| \ge 1$, and hence $m_1=\ldots=m_p = 0$, as desired.
	\end{proof}
	We are ready to prove the main result of this section.
	\begin{proof}[Proof of Lemma~\ref{lemma:sum_of_norm_psi_k_intermidiate_range_is_large_single_point}]
		The proof splits into two cases. In case $\#\big\{ k\in[0,d] \cap \bZ \, : \|\psi_k \| \ge d^{-\tau} \big\} > d^{3\tau}$, we clearly have
		\[
		\sum_{k=0}^{d} \|  \psi_k  \|^2 \ge d^{-2\tau} d^{3\tau}  = d^{\tau}.
		\]
		The other case is when $\#\big\{ k\in[0,d] \cap \bZ \, :  \|\psi_k \| \ge d^{-\tau} \big\} \le d^{3\tau}$, which by Claim~\ref{claim:for_smooth_points_psi_are_not_aligned} implies there exists an interval $\cJ\subset[0,d]$ such that
		\begin{align*}
			\sum_{k\in \cJ} |\psi_k|^2 \ge cd^{1-5\tau}|u|^2 \ge cd^{1-5\tau} \, \qquad \text{ and }\qquad \sup_{k\in \cJ} |\psi_k| \le d^{-\tau}\, .
		\end{align*}
		As $\tau>0$ is sufficiently small, we get that
		$$
		\sum_{k=0}^{d} \|  \psi_k  \|^2 \ge \sum_{k\in \cJ} |\psi_k|^2 \ge cd^{1-5\tau} \ge d^{\tau}
		$$
		for all $d$ large enough, and we are done.
	\end{proof}
	
	\section{Gaussian comparison for smooth separated tuples}
	\label{sec:gaussian_comparison_for_tuples}
	The goal of this section is to prove Theorem~\ref{thm:small_ball_comparison_to_Gaussian_for_tuples}, that is, to show that for smooth, separated tuples of points in $\Omega_K$ the probabilities of small-ball events at scale $\gtrsim n^{-M}$ (where $M>0$ is arbitrary but fixed), are the same as in the case of Gaussian coefficients. Our derivation here is largely inspired by~\cite{Cook-Nguyen}, which proved a similar result (see in particular~\cite[Theorem~3.2]{Cook-Nguyen}) for net points exactly on the unit circle (which corresponds to taking $K=0$, instead of large but fixed constant) and considering the polynomial and its first derivative, whereas in our derivation we need also to consider the second derivative. However, these extensions are rather of technical nature, and the core of the proof remains the same. 
	
	Recall that for a fixed $m\ge 1$ and $\mathbf{z} = (z_1,\ldots,z_m) \in \Omega_K^m$ we consider the random vector $S_n(\mathbf{z}) \in \bC^{3m} \simeq \bR^{6m}$ as given by~\eqref{eq:def_of_S_n_z}. First, we derive a convenient ``random walk" representation of this random vector. For a tuple $\mathbf{z} = (z_1,\ldots,z_m) \in \Omega_K^m$ and $0\le k\le n$ we set
	\begin{equation*}
		\mathbf{z}^k = \big(z_1^k , \ldots, z_m^{k}\big) \in \bC^m \, ,
	\end{equation*}
	and 
	\begin{equation}
		\label{eq:def_of_w_k}
		w_k = w_k(\mathbf{z}) = \frac{1}{\sqrt{n}}\Big(\mathbf{z}^k,\frac{k}{n}\mathbf{z}^{k},\frac{k(k-1)}{n^2}\mathbf{z}^{k}\Big) \in \bC^{3m}.
	\end{equation}
	Let $W_n=W_n(\mathbf{z})$ be a $(n+1)\times (3m)$ matrix with rows $w_k$ for $0\le k\le n$. We have
	\begin{equation}
		\label{eq:S_n_z_as_a random_walk}
		S_n(\mathbf{z}) = \sum_{k=0}^{n} \xi_k w_k(\mathbf{z}) = W_n^{{\sf T}} {\pmb \xi} 
	\end{equation} 
	where ${\pmb \xi} = (\xi_0,\ldots,\xi_n)$ is the vector of random coefficients. We note that the covariance matrix of $S_n(\mathbf{z})$ is given by $W_n^{{\sf T}} W_n$, as shown clearly from the representation~\eqref{eq:S_n_z_as_a random_walk}.
	\subsection{Non-singularity of covariance matrix for tuples}
	As a first step towards the proof of Theorem~\ref{thm:small_ball_comparison_to_Gaussian_for_tuples} we need to prove that $S_n(\mathbf{z}) \in \bC^{3m} \simeq \bR^{6m}$ is genuinely $6m$-dimensional. This amounts to showing that the minimal eigenvalue of the covariance matrix $W_n^{{\sf T}} W_n$ is uniformly bounded away from zero, under the necessary assumption that the tuple $\mathbf{z} = (z_1,\ldots,z_m)$ is spread, in the sense of Definition~\ref{def:spread_tuples}.
	\begin{lemma}
		\label{lemma:non-singularity_for_covariance_of_spread_tuple}
		For any $m\ge 1$, $\kappa>0$ and for any tuple $\mathbf{z} \in \Omega_K^m$ of $n^{\kappa}$-smooth and $\gamma$-spread points we have
		\[
		\min_{\substack{u_1,u_2\in \bC^{3m} \\[0.1em] \|u_1\|^2 + \|u_2\|^2=1}} \|W_n(\mathbf{z}) u_1 + \overline{W_n(\mathbf{z})} u_2  \|^2 \gtrsim_{m} \min\{\gamma,1\}^{8m-4} 
		\]
	\end{lemma}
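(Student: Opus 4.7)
I would adapt the finite-difference strategy from the proof of Lemma~\ref{lemma:singularity_properties_of_covariance_pointwise} to the multi-point setting. Viewing the $k$-th entry of $W_n(\mathbf{z}) u_1+\overline{W_n(\mathbf{z})}u_2$ as a sequence in $k\in\{0,\dots,n\}$, it lies in the $6m$-dimensional function space $\mathcal{H}$ spanned by the sequences $z_j^k$, $\tfrac{k}{n}z_j^k$, $\tfrac{k(k-1)}{n^2}z_j^k$ and their conjugates for $j=1,\dots,m$. The key observation is that for each frequency $\zeta\in \bC^*$ and step $p\ge 1$, the finite-difference operator $D_\zeta^{(p)}f(k):=f(k)-\zeta^p f(k-p)$ annihilates $\zeta^k$, and its cube annihilates the entire $3$-dimensional polynomial-times-exponential subspace
\[
\mathcal{H}_\zeta=\mathrm{span}\bigl\{\zeta^k,\ \tfrac{k}{n}\zeta^k,\ \tfrac{k(k-1)}{n^2}\zeta^k\bigr\}.
\]

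For each $j_0\in\{1,\dots,m\}$, I would introduce the operator
\[
T_{j_0} := \prod_{j\ne j_0}\bigl(D_{z_j}^{(p)}\bigr)^{3}\,\prod_{j=1}^m \bigl(D_{\overline{z_j}}^{(p)}\bigr)^{3},
\]
a product of $3(2m-1)$ factors each of bounded operator norm. By construction $T_{j_0}$ annihilates $\bigoplus_{\zeta\ne z_{j_0}}\mathcal{H}_\zeta$, so $T_{j_0}(W_nu_1+\overline{W_n}u_2)$ lies in $\mathcal{H}_{z_{j_0}}$. Written in the ordered basis $\{z_{j_0}^k,\tfrac{k}{n}z_{j_0}^k,\tfrac{k(k-1)}{n^2}z_{j_0}^k\}$, the restriction $T_{j_0}|_{\mathcal{H}_{z_{j_0}}}$ is lower triangular with each diagonal entry equal to $\prod_{\zeta\ne z_{j_0}}(1-(\zeta/z_{j_0})^p)^3$, and off-diagonal entries of size $O(p/n)$. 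With $p=1$ (say), the spread assumption $|z_j-z_{j_0}|\ge \gamma/n$ controls the in-hemisphere factors, while the $n^\kappa$-smoothness of $z_{j_0}$ forces $|\overline{z_j}-z_{j_0}|\gtrsim n^{\kappa-1}$ and hence controls the conjugate-frequency factors. A suitable bookkeeping then shows that this diagonal is bounded below by a power of $\gamma$.

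Having isolated a single frequency, I would apply the natural extension of Lemma~\ref{lemma:singularity_properties_of_covariance_pointwise} that also includes the second-derivative coordinate (proved by the same arithmetic-progression argument, but with the twisted second-order differential of Section~\ref{section:finite_diffrentiation_single_point} replaced by its third-order analog) to obtain the single-frequency non-singularity bound in $\mathcal{H}_{z_{j_0}}$, using the $n^\kappa$-smoothness of $z_{j_0}$. Combining this with the $O_m(1)$ operator norm of $T_{j_0}$ yields
\[
\|W_n u_1+\overline{W_n}u_2\|^2 \gtrsim_m \gamma^{c(m)}\cdot \|(u_{1,j_0},u_{1,j_0+m},u_{1,j_0+2m})\|^2
\]
for an explicit $c(m)$; running the symmetric argument with the roles of $z_{j_0}$ and $\overline{z_{j_0}}$ swapped controls the corresponding $u_2$-coefficients. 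Summing over $j_0\in\{1,\dots,m\}$ and invoking $\|u_1\|^2+\|u_2\|^2=1$ gives the desired lower bound.

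The main obstacle is to propagate the size of the triangular matrix $T_{j_0}|_{\mathcal{H}_{z_{j_0}}}$ precisely enough to obtain exactly the stated $\gamma^{8m-4}$ exponent: one must choose $p$ adaptively to balance the in-hemisphere spread factors against the conjugate-frequency factors, verify that the $O(p/n)$ off-diagonal entries give only subleading corrections that do not corrupt the single-point lower bound, and avoid losses from the polynomial-in-$k/n$ structure of $\mathcal{H}_{z_{j_0}}$. This delicate accounting is the analog at the level of $(2m)$ frequencies of the arithmetic-progression analysis performed in the single-point case.
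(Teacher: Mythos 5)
Your overall strategy—apply a product of step-$p$ differences that kills every frequency $\zeta^k$, $\tfrac{k}{n}\zeta^k$, $\tfrac{k(k-1)}{n^2}\zeta^k$ except one, then run a single-point non-singularity argument on what remains—is the same as the paper's, but two of the specific claims you make about the execution would fail, and a third essential step is missing.

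First, the choice ``$p=1$ (say)'' cannot work. With $p=1$, the spread condition $|z_j-z_{j_0}|\ge \gamma/n$ only gives $|1-(z_j/z_{j_0})|\gtrsim\gamma/n$, and $n^\kappa$-smoothness gives $|1-(\overline{z_j}/z_{j_0})|\gtrsim n^{\kappa-1}$, so the diagonal of $T_{j_0}|_{\mathcal{H}_{z_{j_0}}}$ would be polynomially small in $n$, not a power of $\gamma$. You acknowledge that $p$ must be chosen ``adaptively,'' but what is actually needed is a Dirichlet-type rescaling lemma (the paper's Claim~\ref{claim:the_L_rescaling_for_tuples}): there exists an integer $L\asymp n$ for which $|z_j^L-z_{j'}^L|\gtrsim_m\gamma$ for all pairs (including $z_{j'}=\pm 1$, which handles the conjugate frequencies). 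This is a standalone pigeonhole argument, not ``bookkeeping,'' and it is the crucial input that turns the diagonal into $\gtrsim_m\gamma^{(6m-3)/2}$. Second, once $p=L\asymp n$, the off-diagonal entries of $T_{j_0}|_{\mathcal{H}_{z_{j_0}}}$ are \emph{not} subleading: they involve factors like $\alpha_L(z_{j_0}/z_r)=-3(L/n)\,(z_{j_0}/z_r)^L/(1-(z_{j_0}/z_r)^L)$, which are of size $O(1/\gamma)$, i.e.\ possibly large. Dropping them is not an option; one needs a case analysis (as in the proof of Claim~\ref{cl:sum-AP-bounded}) showing that the quadratic-in-$k/n$ expression whose coefficients mix $u_m,u_{2m},u_{3m}$ via these $O(1/\gamma)$ terms cannot be uniformly small along the progression. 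Third, the argument along a single step-$L$ progression $P$ with $|P|\asymp_m 1$ gives only $\|W_P u_1+\overline{W_P}u_2\|^2\gtrsim_m\gamma^{6m-3}/n$; to reach the stated $\gamma^{8m-4}$ you must sum the bound over $n_0\asymp\gamma^{2m-1}n$ disjoint shifted progressions $k+P$, controlling the drift of the rows by a triangle-inequality argument. None of these three steps is addressed in your proposal, and each one is where the actual work (and the actual exponent) comes from.
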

	For the proof of Lemma~\ref{lemma:non-singularity_for_covariance_of_spread_tuple}, it will be slightly more convenient to work on a scale on which the points $(z_1,\ldots,z_m)$ are of order one separation. The next claim shows that such a rescaling is possible.  
	\begin{claim}
		\label{claim:the_L_rescaling_for_tuples}
		Let $z_1,\ldots,z_m\in \Omega_K^n$ be $\gamma$-spread for some $\gamma>0$, and let $\{a_n\}$ be any positive sequence of integers with $a_n = o(n)$ as $n\to \infty$. Then for all $n$ large there exists an integer $L \asymp n/a_n$ so that 
		\[
		\qquad \qquad |z_j^L - z_{j^\prime}^L| \gtrsim_{m} \gamma/a_n \, , \qquad \forall 1 \le j < j^\prime \le m \, .
		\]
	\end{claim}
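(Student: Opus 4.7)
The plan is to parametrize $z_j = r_j e^{i\theta_j}$ with $r_j \in [1-K/n, 1+K/n]$ and $\theta_j \in (0,\pi)$, and exploit the identity
\[
|z_j^L - z_{j'}^L|^2 = (r_j^L - r_{j'}^L)^2 + 2 r_j^L r_{j'}^L\bigl(1 - \cos(L(\theta_j - \theta_{j'}))\bigr).
\]
For any target $L \asymp n/a_n$ we have $L \log r_j = O(KL/n) = O(K/a_n)$, so $r_j^L$ stays in a bounded interval around $1$. The task reduces to ensuring that for each pair $(j,j')$ at least one of the radial piece $(r_j^L - r_{j'}^L)^2$ or the angular piece $\sin^2(L(\theta_j - \theta_{j'})/2)$ is $\gtrsim_m (\gamma/a_n)^2$.

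Since $|z_j - z_{j'}|^2 \asymp (r_j - r_{j'})^2 + \sin^2((\theta_j - \theta_{j'})/2)$ for $r_j, r_{j'} \approx 1$, the $\gamma$-spreadness forces each pair into one of two cases: either $|r_j - r_{j'}| \geq c\gamma/n$ (call this \emph{radially separated}) or $|\theta_j - \theta_{j'}| \geq c\gamma/n$ (\emph{angularly separated}). For radially separated pairs the mean value theorem yields $|r_j^L - r_{j'}^L| \asymp L|r_j - r_{j'}| \gtrsim (n/a_n)(\gamma/n) = \gamma/a_n$ for \emph{every} $L \asymp n/a_n$, so no selection of $L$ is required. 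The substance is therefore to choose $L$ so that $\|L(\theta_j - \theta_{j'})/(2\pi)\|_{\bR/\bZ} \gtrsim \gamma/a_n$ simultaneously for all angularly separated pairs. I would fix $L_0 = \lceil C_m n/a_n \rceil$ for a large constant $C_m$ depending on $m$, let $L$ range over $\{L_0, L_0+1, \ldots, 2L_0\}$, and use pigeonhole against the union of "bad" $L$'s coming from the $O(m^2)$ pairs.

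The main obstacle is a robust discrepancy bound of the form
\[
\#\bigl\{L \in [L_0, 2L_0] \cap \bZ : \|L\beta\|_{\bR/\bZ} < \delta\bigr\} \lesssim L_0\delta + 1,
\]
for arbitrary $\beta \in \bR$, applied with $\beta = (\theta_j - \theta_{j'})/(2\pi)$ and $\delta = c\gamma/a_n$. When $L_0|\beta| \gg 1$ the classical equidistribution argument (viewing the bad set in $\bR$ as a union of intervals of length $2\delta/|\beta|$ at spacing $1/|\beta|$, hence of density $2\delta$) yields this directly. When $L_0|\beta|$ is of order one or smaller there is at most one wrap of the orbit, so the bad $L$ form a single cluster of size $\lesssim \delta/|\beta| + 1$; in the extreme subcase $2L_0|\beta| \leq 1/2$ this cluster is empty since $\|L\beta\|_{\bR/\bZ} = L|\beta| \geq L_0|\beta| \gtrsim \gamma/a_n$ automatically. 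Summing the bound over the $\binom{m}{2}$ pairs gives $O(m^2(L_0\gamma/a_n+1))$ bad values, strictly less than $L_0$ once $C_m$ is taken sufficiently large relative to $m$ and $n$ is large enough that $a_n \gg m^2\gamma$; the remaining case of very small $a_n$ is absorbed into the $m$-dependent constant by enlarging the pigeonhole window.
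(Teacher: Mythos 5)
Your decomposition into radially and angularly separated pairs agrees with the paper's, which parametrizes $z_j = e(w_j/n)$ with $w_j = x_j + iy_j$ and splits pairs according to whether the real or imaginary part of $w_j - w_{j'}$ dominates. For the radially separated pairs you correctly observe (mean value theorem, plus $L\log r_j = O(K/a_n)$) that \emph{every} $L \asymp n/a_n$ already works; this is exactly what the paper does. For the angularly separated pairs, however, the paper does not reprove the selection of $L$ from scratch --- it invokes Cook--Nguyen, Lemma~2.8 as a black box --- and your attempt to supply that step contains a genuine gap.

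The discrepancy estimate you lean on,
\[
\#\bigl\{L \in [L_0, 2L_0] \cap \bZ : \|L\beta\|_{\bR/\bZ} < \delta\bigr\} \lesssim L_0\delta + 1,
\]
is false. Take $\beta = 1/2$: then $\|L\beta\|_{\bR/\bZ} = 0$ for every even $L$, so the bad set contains the $\approx L_0/2$ even integers in $[L_0,2L_0]$, whereas $L_0\delta+1$ is made arbitrarily small by shrinking $\delta$. The same happens for any $\beta$ close to a rational $p/q$ with $q$ small relative to $L_0$: roughly $L_0/q$ integers are bad regardless of how small $\delta$ is. Your heuristic of "intervals of length $2\delta/|\beta|$ at spacing $1/|\beta|$, hence density $2\delta$" treats $L$ as a continuous variable; when $\beta$ is rational the integer grid \emph{aligns with} those intervals rather than equidistributing against them, and that is exactly where the bound breaks. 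This is not an exotic case for the present claim: no smoothness of the $z_j$ is assumed here, and the claim is in fact applied in the paper to tuples that include $\pm 1$, for which the pair $(+1,-1)$ has $\beta = 1/2$ on the nose. A correct selection argument must tolerate per-pair bad densities up to a constant (e.g.\ $1/2$), and closing the pigeonhole then relies on the linear relations $\beta_{j j''} = \beta_{j j'} + \beta_{j' j''}$ among the angular differences --- so that, for instance, $\{\beta_{12},\beta_{13},\beta_{23}\}=\{1/2,1/3,1/6\}$ produces nested rather than disjoint bad sets --- not on a naive union bound. This is the content of the Cook--Nguyen lemma the paper cites; either cite it as the paper does, or supply an argument that genuinely handles the arithmetic structure.
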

	\begin{proof}
		To prove this claim, it will be more convenient to parameterize $z_j = e(w_j/n)$, where $w_j = x_j + i y_j$ and $x_j \in [0,n\pi]$ and $y_j \in [-K,K]$. Our assumption that $(z_1,\ldots,z_m)$ is $\gamma$-spread implies that for each $j<j^\prime$ we have
		\[
		|x_j - x_{j^\prime}| \ge \gamma/2 \qquad \text{or } \qquad |y_j - y_{j^\prime}| \ge \gamma/2 \, .
		\]
		We call a pair $(j,j^\prime)\in [m]^2$ \emph{good} if $|x_j - x_{j^\prime}| \ge \gamma/2$, and otherwise we say that the pair $(j,j^\prime)$ is \emph{bad}. By~\cite[Lemma~2.8]{Cook-Nguyen} there exists $L \asymp n/a_n$ so that
		\[
		\Big| \frac{L(x_j - x_{j^\prime})}{n} \Big| \gtrsim_m \gamma/a_n
		\]  
		for all good pairs $(j,j^\prime)$. In particular, for these pairs we have 
		\begin{align*}
			|z_j^L - z_{j^\prime}^L|  &= | e(w_j L/n) - e(w_{j^\prime} L/n ) | \\ &\ge \min\big\{e^{-y_jL/n}, e^{-y_{j^\prime} L/n} \big\} \, \big|e(x_j L/n) - e(x_{j^\prime} L/n)\big| \ge e^{-K/a_n}  \Big| \frac{L(x_j - x_{j^\prime})}{n} \Big| \gtrsim_m \gamma/a_n \, .
		\end{align*}
		To conclude the claim, we note that for all bad pairs $(j,j^\prime)$ we must have $|y_j - y_{j^\prime}| \ge \gamma/2$, which in turn implies the lower bound
		\begin{equation*}
			|z_j^L - z_{j^\prime}^L| \ge \big| |z_j|^L - |z_{j^\prime}|^L \big| \ge L \frac{|y_j - y_{j^\prime}|}{n} \gtrsim \gamma/a_n
		\end{equation*}
		and we are done.
	\end{proof}
	\begin{proof}[Proof of Lemma~\ref{lemma:non-singularity_for_covariance_of_spread_tuple}]
		Without loss of generality we can assume that $\gamma\in(0,1)$. Fix $u_1,u_2\in \bC^{3m}$ and write $u_j = (u_{j,1},\ldots,u_{j,3m})$ for $j=1,2$. The $k$'th entry of $W_n(\mathbf{z}) u_1$ is given by
		\[
		\big(W_n(\mathbf{z}) u_1\big)_k = \langle w_k(\mathbf{z}), u_1 \rangle = \frac{1}{\sqrt{n}}\sum_{j=1}^{m} z_j^k \Big(\overline{u}_{1,j} + \frac{k}{n} \overline{u}_{1,j+m} + \frac{k(k-1)}{n^2} \overline{u}_{1,j+2m}\Big) \, . 
		\]
		Therefore, we are after the lower bound
		\begin{multline}
			\label{eq:proof_of_lemma:non-singularity_for_covariance_of_spread_tuple_what_we_want}
			\|W_n(\mathbf{z}) u_1 + \overline{W_n(\mathbf{z})} u_2  \|^2 \\ = \frac{1}{n}\sum_{k=0}^{n} \bigg|\sum_{j=1}^m z_j^k \Big(\overline{u}_{1,j} + \frac{k}{n}\overline{u}_{1,j+m} + \frac{k(k-1)}{n^2} \overline{u}_{1,j+2m}\Big) + \overline{z}_j^k \Big(\overline{u}_{2,j} + \frac{k}{n}\overline{u}_{2,j+m} + \frac{k(k-1)}{n^2} \overline{u}_{2,j+2m}\Big)\bigg|^2 \\ \gtrsim \gamma^{8m-4} \, ,
		\end{multline}
		uniformly over all possible choices of $u_1,u_2\in \bC^{3m}$ which satisfy $|u_1|^2 + |u_2|^2 = 1$. 
		By Claim~\ref{claim:the_L_rescaling_for_tuples} (applied to the $\gamma$-spread tuple $(z_1,\ldots,z_m,+1,-1)$), there exists $L = c(m) n$ with $c(m)>0$ being a small constant so that 
		\begin{equation}
			\label{eq:proof_of_lemma:non-singularity_for_covariance_of_spread_tuple_application_of_rescaling}
			\qquad \qquad |z_j^L - z_{j^\prime}^L| \gtrsim_{m} \gamma \, , \qquad \forall 1 \le j < j^\prime \le m \, ,
		\end{equation}
		and
		\[
		\qquad \qquad \min\{|z_j^{L} + 1| , |z_j^{L} - 1|\} \gtrsim_{m} \gamma \, , \qquad \forall 1\le j\le m \, .
		\]
		We denote by $P$ the intersection of $[0,n]$ with the arithmetic progression $\{jL \, : \, j\in \bZ\}$, and note that $|P|\asymp_m 1$. With a slight abuse of notation, let $W_P$ be the sub-matrix of $W_n$ with rows indexed by $P$. Towards proving~\eqref{eq:proof_of_lemma:non-singularity_for_covariance_of_spread_tuple_what_we_want}, we first show that for all admissible choices $u_1,u_2\in \bC^{3m}$ we have
		\begin{equation}
			\label{eq:proof_of_lemma:non-singularity_for_covariance_of_spread_tuple_what_we_want_with_arithmetic_progression}
			\|W_P(\mathbf{z}) u_1 + \overline{W_P(\mathbf{z})} u_2  \|^2 \gtrsim_{m} \frac{\gamma^{6m-3}}{n} \, .
		\end{equation}
		The method of proof is similar to that of the proof of Lemma~\ref{lemma:singularity_properties_of_covariance_pointwise}. Indeed, for $\zeta\in \bC$ we introduce the third-order twisted discrete differential operator
		\begin{equation}
			\label{eq:def_third_order_twisted_diff_operator}
			(D_\zeta f)(k) = \sum_{t=0}^{3} \binom{3}{t} (-1)^t \zeta^{-tL} f(k+tL) \, , 
		\end{equation}
		(not to be confused with the second-order difference given by~\eqref{eq:definition_of_second_order_twisted_differntial}) acting on sequences $f:P \to \bC$. For the sequences
		\[
		a_z(k) = z^k \, , \qquad b_z(k) = \frac{k}{n}z^{k} \, , \qquad c_z(k) = \frac{k(k-1)}{n^2} z^k \, ,
		\] 
		a simple computation shows that
		\begin{align}
			\label{eq:computaion_of_third_order_twissted_differential_for_sequences_a_b_c} \nonumber
			(D_\zeta a_z)(k) &= \big(1-(z/\zeta)^L\big)^3 a_z(k) \, , \\ (D_\zeta b_z)(k) &= \big(1-(z/\zeta)^L\big)^3 \Big[b_z(k) + a_z(k)\alpha_L(z/\zeta)\Big]\, , \\ \nonumber (D_\zeta c_z)(k) &= \big(1-(z/\zeta)^L\big)^3 \Big[c_z(k) + 2b_z(k) \alpha_L(z/\zeta) + a_z(k) \Big(\beta_L(z/\zeta) - \frac{\alpha_L(z/\zeta)}{n}\Big)\Big] \, ,
		\end{align}
		where
		\[
		\alpha_L(s) = -3\frac{L}{n} \frac{s^L}{1-s^L} \qquad \text{and } \qquad \beta_L(s) = 3s^L\frac{L^2}{n^2} \frac{3s^L -1}{(1-s^L)^2} \, .
		\]
		From~\eqref{eq:computaion_of_third_order_twissted_differential_for_sequences_a_b_c}, we see that for all $z\in \bC\setminus\{0\}$ and for all $k\in[0,n]\cap \bZ$ we have
		\[
		(D_z a_z) (k) = (D_z b_z) (k)= (D_z c_z) (k) = 0 \, .
		\] 
		The key point about the factors $(1-(\cdot)^L)^3$, $\alpha_L(\cdot)$ and $\beta_L(\cdot)$ is that they do not depend on $k$, and hence commute with the difference operator $D_\zeta$. 
		
		To prove the lower bound~\eqref{eq:proof_of_lemma:non-singularity_for_covariance_of_spread_tuple_what_we_want_with_arithmetic_progression}, we cover the complex sphere in $\bC^{6m}$ by $2m$ pieces as
		\[
		\Big\{(u_1,u_2) \in \bC^{6m} \, : \, |u_{j,r}|^2 + |u_{j,r+m}|^2 + |u_{j,r+2m}|^2 \ge 1/2m \Big\} \, , \qquad j\in\{1,2\}, \ 1\le r \le m\, ,
		\]
		and prove the desired lower-bound on each piece separately. We will assume from now on that
		\begin{equation*}
			|u_{1,m}|^2 + |u_{1,2m}|^2 + |u_{1,3m}|^2 \ge 1/2m \, .
		\end{equation*}
		as all other cases will follow from the same argument. Denote by $D:\bC^{|P|} \to \bC^{|P|}$ the matrix associated to the linear operator
		\[
		D = D_{z_1} \circ \ldots \circ D_{z_{m-1}} \circ D_{\overline z_1} \circ \ldots \circ D_{\overline z_{m-1}} \circ D_{\overline z_m}\, .
		\]
		By~\eqref{eq:computaion_of_third_order_twissted_differential_for_sequences_a_b_c}, for all $k\in P$ and for all $1\le r \le m-1$ we have that
		\[
		(D a_{z_r}) (k) = (D b_{z_r}) (k) = (D c_{z_r}) (k)= (D a_{\overline z_r}) (k) = (D b_{\overline z_r}) (k) = (D c_{\overline z_r}) (k)= 0 \, . 
		\]
		For $r=m$,~\eqref{eq:computaion_of_third_order_twissted_differential_for_sequences_a_b_c} implies that 
		\[
		(D a_{\overline z_m}) (k) = (D b_{\overline z_m}) (k) = (D c_{\overline z_m}) (k)= 0 \, ,
		\]
		and
		\begin{align*}
			(D a_{z_m})(k) &= G\cdot a_{z_m}(k) \, , \\ (D b_{z_m})(k) &= G \cdot \big[b_{z_m}(k) + A \cdot a_{z_m}(k)\big]\, , \\ \nonumber (D c_{z_m})(k) &= G \cdot \big[c_{z_m}(k) + 2A \cdot b_{z_m}(k) + (B- A/n) \cdot a_{z_m}(k) \big] \, ,
		\end{align*}
		where
		\begin{align*}
			G &= \big(1-(z_m/\overline z_m)^L\big)^3 \prod_{r=1}^{m-1}\Big[\big(1-(z_m/z_r)^L\big)^3 \big(1-(z_m/\overline z_r)^L\big)^3  \Big] \, , \\ A &= \alpha_L(z_m/\overline z_m) + \sum_{r=1}^{m-1} \big[\alpha_L(z_m/z_r) + \alpha_L(z_m/\overline z_r)\big] \, , \\ B &= \beta_L(z_m/\overline z_m) + \sum_{r=1}^{m-1} \big[\beta_L(z_m/z_r) + \beta_L(z_m/\overline z_r)\big] \, . 
		\end{align*}
		We now want to use the above computations to lower bound the minimal singular value of the linear operator $D\circ(W_P + \overline{W}_P) : \bC^{6m} \to \bC^{|P|}$. For $k\in P$ such that $k+3mL \in P$, we have
		\begin{align*}
			(D W_P& u_1 + D \overline W_P u_2)_k \\ &= \frac{G}{\sqrt{n}} \cdot\Big(a_{z_m}(k) \overline u_{1,m} + \big(b_{z_m}(k) + A a_{z_m}(k)\big) \overline u_{2m} + \big(c_{z_m}(k) + 2Ab_{z_m}(k) + (B-A/n) a_{z_m}(k)\big)\overline u_{3m} \Big) \\ &= z_m^{k} \, \frac{G}{\sqrt{n}} \cdot \Big(\overline u_m + \frac{k}{n} \overline u_{2m} + \frac{k(k-1)}{n^2} \overline u_{3m} + A\big(\overline u_{2m} + \frac{2k-1}{n} \overline u_{3m}\big) + B \overline{u}_{3m}\Big) \, .
		\end{align*}
		Taking the modulus on both sides and summing, while using the fact that $z_m\in \Omega_K$, we see that
		\begin{align}
			\label{eq:proof_of_lemma:non-singularity_for_covariance_of_spread_tuple_what_we_want_with_arithmetic_progression_first_bound}
			\sum_{\substack{k\in P \\ k + 3mL\in P}} |(D W_P& u_1 + D \overline W_P u_2)_k|^2 \\ \nonumber &\ge e^{-K} \frac{|G|^2}{n} \sum_{\substack{k\in P \\ k + 3mL\in P}} \Big|\overline u_m + \frac{k}{n} \overline u_{2m} + \frac{k(k-1)}{n^2} \overline u_{3m} + A\big(\overline u_{2m} + \frac{2k-1}{n} \overline u_{3m}\big) + B \overline{u}_{3m}\Big|^2 \, .
		\end{align}
		Since $z_m$ is $n^\kappa$-smooth, it is separated from the real axis and our choice of $L\asymp_m n$ given by~\eqref{eq:proof_of_lemma:non-singularity_for_covariance_of_spread_tuple_application_of_rescaling} implies that $|G|^2 \gtrsim_m \gamma^{6m-3}$, and furthermore that
		\[
		\max\big\{|A|,|B|\big\}\lesssim m/\gamma \, .
		\]
		Since $|u_m|^2 + |u_{2m}|^2 + |u_{3m}|^2 \ge 1/2m$, similar considerations as in the proof of Claim \ref{cl:sum-AP-bounded} in the proof of Lemma~\ref{lemma:singularity_properties_of_covariance_pointwise} imply that the sum on the right-hand side of~\eqref{eq:proof_of_lemma:non-singularity_for_covariance_of_spread_tuple_what_we_want_with_arithmetic_progression_first_bound} is of size at least $|P| \asymp_m 1$, and hence
		\begin{equation*}
			\sum_{\substack{k\in P \\ k + 3mL\in P}} |(D W_P u_1 + D \overline W_P u_2)_k|^2 \gtrsim_{m} \frac{\gamma^{6m-3}}{n} \, .
		\end{equation*}  
		On the other hand, the operator norm of $D:\ell_2(P) \to \ell_2(P)$ is $O(m)$, so the left-hand side of the above is bounded by $\lesssim_m \|W_P u_1 + \overline W_P u_2  \|^2$, and~\eqref{eq:proof_of_lemma:non-singularity_for_covariance_of_spread_tuple_what_we_want_with_arithmetic_progression} is obtained.
		
		It remains to conclude~\eqref{eq:proof_of_lemma:non-singularity_for_covariance_of_spread_tuple_what_we_want}. Again, the argument is similar to the one presented in the proof of Lemma~\ref{lemma:singularity_properties_of_covariance_pointwise}, so we will keep it brief. Consider the sub-matrices $W_P,W_{1+P},\ldots,W_{n_0 + P}$ composed of rows indexed by the shifted arithmetic progressions $P,1+P,\ldots,n_0+P$. If $n_0 \le L$, then these matrices are all disjoint. Letting $F$ be the $3m$-dimensional diagonal matrix with diagonal entries $z_1,\ldots,z_m,z_1,\ldots,z_m,z_1,\ldots,z_m$, we note that $W_{k+P}$ and $W_{P} F^k$ differ by a matrix of norm $\lesssim_m k/n^{3/2}$ (as they only differ by a dilation by $k/n$ and $k(k-1)/n^2$ in some of the columns, and there is a factor $n^{-1/2}$ which multiplies everything). Since $F$ is of full rank, a simple application of the min-max principle shows that
		\[
		\sigma_{3m}(W_P F^k) \ge \sigma_{3m}(F^k) \cdot \sigma_{3m}(W_P)
		\] 
		where $\sigma_{3m}(\cdot)$ denotes here the minimal singular value. Since $z_1,\ldots,z_m\in \Omega_K$, we have $\sigma_{3m}(F^k) \gtrsim \exp(-K)$, and~\eqref{eq:proof_of_lemma:non-singularity_for_covariance_of_spread_tuple_what_we_want_with_arithmetic_progression} gives that
		\[
		\sigma_{3m}(W_P F^k) \gtrsim \frac{\gamma^{3m-3/2}}{\sqrt{n}} \, .
		\] 
		Take $n_0 = c_m \gamma^{2m-1} n$ for some $c_m>0$ sufficiently small, and note that the triangle inequality implies that $\sigma_{3m}(W_{k+P}) \gtrsim \gamma^{3m-3/2}/\sqrt{n}$ for all $1\le k\le n_0$. The exact same argument also applies to $\overline{W_{k+P}}$, and in fact also to the $6m$ block matrix 
		\[
		\begin{pmatrix}
			W_{k+P} & \mathbf{0} \\ \mathbf{0} & \overline W_{k+P}
		\end{pmatrix} \, .
		\]
		Since these progressions are all disjoint, we see that
		\[
		\|W_n(\mathbf{z}) u_1 + \overline{W_n(\mathbf{z})} u_2  \|^2 \ge \sum_{k=0}^{n_0} \|W_{k+P}(\mathbf{z}) u_1 + \overline{W_{k+P}(\mathbf{z})} u_2  \|^2 \gtrsim_{m} n_0 	\frac{\gamma^{6m-3}}{n} \gtrsim_m \gamma^{8m-4} \, ,
		\] 
		which proves~\eqref{eq:proof_of_lemma:non-singularity_for_covariance_of_spread_tuple_what_we_want} and we are done. 
	\end{proof}
	\subsection{Optimal small-ball bounds for tuples}
	The next thing on our agenda is to prove Proposition~\ref{prop:small_ball_estimate_for_tuples_of_smooth}. For that, the key input is Theorem~\ref{thm:decay_of_char_function_tuples_near_circle} below, which is analogous to Lemma~\ref{lemma:sum_of_norm_psi_k_intermidiate_range_is_large_single_point} from the $m=1$ case, and shows that the characteristic function of $S_n(\mathbf{z})$ decays very fast for $\mathbf{z}$ which is smooth and separated. This observation will also be key in the proof of Theorem~\ref{thm:small_ball_comparison_to_Gaussian_for_tuples}. 
	\begin{theorem}
		\label{thm:decay_of_char_function_tuples_near_circle}
		Let $\mathbf{z} = (z_1,\ldots,z_m)$ be a tuple of $n^\kappa$-smooth and $\gamma$-spread for some $\kappa\in(0,1)$ and $\gamma \ge n^{-{1/30m}}$. Then for any $K_\ast>0$ and $u\in \bC^{3m}$ with $n^{1/3} \le \|u\|\le n^{K_\ast}$ we have
		\[
		\Big|\bE\Big[e\big(\text{\normalfont Re}(S_n(\mathbf{z}) \, \overline{u})\big)\Big]\Big| \le \exp\big(-\log^2 n\big)
		\]
		for all sufficiently large $n$ depending on $m,\kappa,K_\ast$ and the distribution of $\xi$.
	\end{theorem}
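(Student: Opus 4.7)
The plan is to adapt the finite-difference machinery of Section~\ref{sec:small_ball_bounds_for_smooth_points} (single-point case) to the tuple setting, using the separation/rescaling device from Claim~\ref{claim:the_L_rescaling_for_tuples} and the twisted third-order differential operator $D_\zeta$ from~\eqref{eq:def_third_order_twisted_diff_operator} that already appeared in the proof of Lemma~\ref{lemma:non-singularity_for_covariance_of_spread_tuple}. Writing
\[
\psi_k(u) \;=\; \mathrm{Re}\Big(\tfrac{1}{\sqrt{n}}\sum_{j=1}^m z_j^k\big(\overline u_j + \tfrac{k}{n}\overline u_{j+m} + \tfrac{k(k-1)}{n^2}\overline u_{j+2m}\big)\Big),
\]
the characteristic function factors as $\prod_{k=0}^n \varphi_\xi(2\pi\psi_k(u))$. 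A standard symmetrization (decomposing $\xi-\tilde\xi$ into $0$ with probability $1/2$ and $\eta_0$ otherwise, as in the proof of Proposition~\ref{propsition:exponentially_tilded_esseen_ineq}) combined with~\eqref{eq:bound_of_char_of_symmetric_random_variable_distance_to_integers} reduces the claim to showing
\[
\inf_{s\in[c_1,c_2]}\,\sum_{k=0}^n \|s\,\psi_k(u)\|_{\bR/\bZ}^{2} \;\gtrsim\; \log^3 n
\]
uniformly over $u$ with $n^{1/3}\le\|u\|\le n^{K_\ast}$.

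To establish this lower bound I would argue by contradiction and generalize Claims~\ref{claim:finite_differentiation_of_psi_k}–\ref{claim:for_smooth_points_psi_are_not_aligned}. If the sum is small, a pigeonhole/partition argument (as in Claim~\ref{claim:for_smooth_points_psi_are_not_aligned}) yields a long interval $\mathcal{J}\subset[0,n]$ of length $\asymp n^{1-O(\tau)}$ on which $\|s\psi_k(u)\|_{\bR/\bZ}$ is uniformly smaller than a small negative power of $n$. Next, use Claim~\ref{claim:the_L_rescaling_for_tuples} to choose a rescaling period $L\asymp n^{1-\kappa/2}$ for which the values $z_1^L,\dots,z_m^L,\overline z_1^L,\dots,\overline z_m^L$ are pairwise separated by $\gtrsim \gamma/n^{\kappa/2}$; this is where the spread assumption $\gamma\ge n^{-1/30m}$ is invested. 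Apply Dirichlet's theorem to each $\arg z_j$ to obtain denominators $p_j\le n^\tau$ with $n^{-1+7\tau}\le|t_j|\le n^{-\tau}$ (using smoothness), and on an arithmetic progression with step $p=\mathrm{lcm}(p_1,\ldots,p_m)$ intersected with $\mathcal{J}$ apply the composite operator
\[
\mathsf D \;=\; D_{z_1}\circ\cdots\circ D_{z_{m-1}}\circ D_{\overline z_1}\circ\cdots\circ D_{\overline z_m},
\]
which annihilates every $z_{j'}^k$-frequency except $z_m^k$ and reduces the problem to a single-frequency three-coefficient calculation in $(u_m,u_{2m},u_{3m})$, exactly of the form handled in Claims~\ref{claim:finite_differentiation_of_psi_k}–\ref{claim:sign_changes_of_psi} (the additional polynomial factor $k(k-1)/n^2$ coming from $f_n''$ is precisely why third-order rather than second-order twisted differences are required). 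The single-point analysis then forces the associated integers to be constantly zero, which pins $(u_m,u_{2m},u_{3m})$ to be polynomially small; symmetry in the $j$-index (the roles of the $z_j$ are interchangeable) allows the same conclusion for every block, producing $\|u\|\le n^{O(\tau)}$, a contradiction to $\|u\|\ge n^{1/3}$ once $\tau$ is small enough.

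The main obstacle will be quantitative bookkeeping: each application of a factor $D_{z_j}$ or $D_{\overline z_j}$ contributes a polynomial loss involving $|1-(z_m/z_j)^L|^{-3}$ and similar quantities, and there are $O(m)$ such applications. One must arrange that the cumulative loss is absorbed by the gain from the length of the arithmetic progression (of size $\asymp n^{1-O(\tau)}$) and, most importantly, remains much smaller than the target $\exp(\log^2 n)$; this is precisely where both the lower bound $\gamma\ge n^{-1/30m}$ and the crude two-sided constraint $n^{1/3}\le\|u\|\le n^{K_\ast}$ enter, the latter being needed to ensure the integers $m_k$ appearing in the finite-difference argument cannot grow polynomially. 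A secondary subtlety is that the integer-freezing step analogous to Claim~\ref{claim:finite_differentiation_of_psi_k} now operates on a polynomial of degree $\le h-1$ in $k$ (after finite differentiation of order $h\asymp \tau\log n$), whose coefficients mix contributions from the three derivatives of $f_n$; showing that this polynomial must be identically zero requires carefully using the quadratic/cubic growth in $k/n$ together with the sign-change step (generalizing Claim~\ref{claim:sign_changes_of_psi}) applied to each $z_j$-block in turn.
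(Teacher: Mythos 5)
Your overall strategy — symmetrize, reduce to a lower bound on $\sum_k \|s\psi_k(u)\|^2_{\bR/\bZ}$, argue by contradiction, pigeonhole to a long interval, combine Diophantine approximation with twisted finite differences, and kill the spurious frequencies with the composite operator $\mathsf D$ — is the right skeleton, and it matches the paper's plan in the appendix. However, there are two concrete gaps.

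First, the Diophantine step is wrong as stated. You propose applying Dirichlet separately to each $\arg z_j$ to get denominators $p_j\le n^\tau$ with $\|p_j\arg(z_j)/\pi\|_{\bR/\bZ}\le n^{-\tau}$, then working on a progression of step $p=\mathrm{lcm}(p_1,\dots,p_m)$. But taking the $\mathrm{lcm}$ destroys the smallness: $\|p\,\arg(z_j)/\pi\|_{\bR/\bZ}\le (p/p_j)\|p_j\arg(z_j)/\pi\|_{\bR/\bZ}$, and $p/p_j$ can be as large as $n^{(m-1)\tau}$, so the resulting distance to integers can be $n^{(m-2)\tau}$, which for $m\ge 3$ is not small at all. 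The finite-difference inequality (your analogue of Claim~\ref{claim:finite_differentiation_of_psi_k}) needs $|1-z_j^p|$ small for \emph{all} $j$ simultaneously; the paper gets this from Dirichlet's theorem on \emph{simultaneous} Diophantine approximation (a single $q\le n^\kappa$ with $\|q\arg(z_j)/\pi\|_{\bR/\bZ}\lesssim n^{-\kappa/m}$ for every $j$), and then tunes the differencing order $h\asymp K_\ast m/\kappa$ to compensate for the weaker per-point exponent $n^{-\kappa/m}$. Replacing the individual-Dirichlet-plus-lcm step by simultaneous Dirichlet is not optional here.

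Second, the endgame does not work the way you describe. You say the single-point analysis ``forces the associated integers to be constantly zero, which pins $(u_m,u_{2m},u_{3m})$ to be polynomially small,'' and then by symmetry $\|u\|\le n^{O(\tau)}$, contradicting $\|u\|\ge n^{1/3}$. But Claims~\ref{claim:finite_differentiation_of_psi_k}--\ref{claim:for_smooth_points_psi_are_not_aligned} crucially use a sign-change argument to kill the constant polynomial, and that mechanism does not carry over after the composite operator $\mathsf D$ has been applied: what remains is a single frequency $z_m^k$ multiplied by a degree-$2$ polynomial in $k/n$ whose three coefficients mix $\eta_m,\eta_{2m},\eta_{3m}$ in a way that depends on the auxiliary parameters $L,L'$, and the relevant quantity is not uniformly bounded in the way the single-point sign-change step requires. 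The paper instead splits into cases according to which of the three blocks of $\eta$ is largest, uses extra twisted differences in $L'$ (the two-step difference $F(k)-2z_m^{-L'}F(k+L')+z_m^{-2L'}F(k+2L')$ of Lemma~\ref{lemma:taking_suitable_differences_to_isolate_large_variable}) to isolate that single coordinate, and derives a quantitative bound on $|1-z_m^{\ell q}|$ for every $\ell$ in a long range; iterating this then forces $\|q\arg(z_m)/\pi\|_{\bR/\bZ}=o(n^{\kappa-1})$, which contradicts $n^\kappa$-smoothness of $z_m$. In particular the contradiction is with smoothness, not with the lower bound on $\|u\|$ (that lower bound is only used to pick which block of $\eta$ is non-negligible). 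Without this coordinate-isolating step and the smoothness contradiction, your argument as written does not close.
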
 
	The proof of Theorem~\ref{thm:decay_of_char_function_tuples_near_circle}, which only requires minor modifications from the analogous result~\cite[Theorem~3.1]{Cook-Nguyen}, is provided in Appendix~\ref{sec:decay_of_char_function_tuples} below. 
	\begin{proof}[Proof of Proposition~\ref{prop:small_ball_estimate_for_tuples_of_smooth}]
		By the Esseen inequality~\eqref{eq:classical_esseen_inequality}, we have that
		\begin{equation}
			\label{eq:proof_of_prop:small_ball_estimate_for_tuples_of_smooth_after_essen}
			\sup_{\zeta \in \bC^{3m}} \bP\Big[ S_n(\mathbf{z}) \in B(\zeta,\delta) \Big] \lesssim \delta^{6m} \int_{|u|\le \delta^{-1}} \Big|\bE\Big[e\big(\text{\normalfont Re}(S_n(\mathbf{z}) \, \overline{u})\big)\Big]\Big| \, {\rm d}m(u) \, .
		\end{equation}
		We need to bound the integral on the right-hand side of~\eqref{eq:proof_of_prop:small_ball_estimate_for_tuples_of_smooth_after_essen}. For that, we note that for $|u|\le c \sqrt{n}$ with $c>0$ sufficiently small we have
		\begin{align*}
			\Big|\bE\Big[e\big(\text{\normalfont Re}(S_n(\mathbf{z}) \, \overline{u})\big)\Big]\Big| &= \prod_{k=0}^{n} \Big|\bE\Big[e\big(\xi_k \text{Re}(w_k \, \overline u)\big)\Big]\Big| \\ &= \prod_{k=0}^{n} \Big|\phi\Big(\text{Re}(w_k \, \overline u)\Big)\Big| \\ & \le \exp\Big(-c \sum_{k=0}^{n} \big|\text{Re}(w_k \, \overline u)\big|^2\Big) \le \exp\Big(-c \sum_{k=0}^{n} \big|w_k \, \overline u + \overline{w}_k u\big|^2\Big) \, .
		\end{align*}
		Therefore, by Lemma~\ref{lemma:non-singularity_for_covariance_of_spread_tuple}, we have that
		\begin{equation*}
			\int_{|u|\le c\sqrt{n}} \Big|\bE\Big[e\big(\text{\normalfont Re}(S_n(\mathbf{z}) \, \overline{u})\big)\Big]\Big| \, {\rm d}m(u) \lesssim \int_{|u|\le c\sqrt{n}} \exp\Big(-c|u|^2 \gamma^{8m-4}\Big) {\rm d}m(u) \lesssim \gamma^{12m - 24 m^2} \, .
		\end{equation*}
		On the other hand, by Theorem~\ref{thm:decay_of_char_function_tuples_near_circle} we have
		\[
		\int_{ c\sqrt{n} \le |u| \le \delta^{-1} } \Big|\bE\Big[e\big(\text{\normalfont Re}(S_n(\mathbf{z}) \, \overline{u})\big)\Big]\Big| \, {\rm d}m(u) \lesssim n^{O(1)} \exp\big(-\log^2n\big) = o(1) \, ,
		\]
		which, in view of~\eqref{eq:proof_of_prop:small_ball_estimate_for_tuples_of_smooth_after_essen}, completes the proof of the proposition.
	\end{proof}	
	\subsection{Gaussian asymptotic for small ball probabilities}
	Here we will finally give the proof of Theorem~\ref{thm:small_ball_comparison_to_Gaussian_for_tuples}. For that, we will fix throughout the section a tuple $\mathbf{z} \in \Omega_K^m$ of $1$-spread and $n^\kappa$-smooth points, and a rectangle $Q\subset \bC^{3m} \simeq \bR^{6m}$ as in the statement of Theorem~\ref{thm:small_ball_comparison_to_Gaussian_for_tuples}, which we systematically write as
	\begin{equation}
		\label{eq:writing_Q_in_coordinates}
		Q = [a_1,b_1] \times \cdots \times [a_{3m} , b_{3m}] \, .
	\end{equation}
	We will always assume that the rectangle $Q$ is contained in some fixed compact set. Our goal is to show that the probability that $S_n(\mathbf{z})$ falls within the rectangle $Q$, is asymptotically the same as the probability that a Gaussian vector $\Gamma_n(\mathbf{z})$, with the same covariance structure, falls in $Q$. We will show that by following an idea from~\cite{Konyagin} (see also~\cite{Konyagin-Schlag}), which suggests to compare the characteristic functions of $S_n(\mathbf{z})$ and $\Gamma_n(\mathbf{z})$ in the relevant range. Indeed, we already established such a comparison (given by Lemma~\ref{lemma:CLT_on_fourier_side_bounded_third_moment} above), so we only need to show that all that remains in frequency space is negligible, which we do in what follows. 
	\begin{proof}[Proof of Theorem~\ref{thm:small_ball_comparison_to_Gaussian_for_tuples}]
		Since the tuple $\mathbf{z}$ is kept fixed throughout the proof, we will write for short $S_n =S_n(\mathbf{z})$ and $\Gamma_n = \Gamma_n(\mathbf{z})$. The proof will follow once we show that for all $n$ large enough
		\begin{equation}
			\label{eq:proof_of_thm:small_ball_comparison_to_Gaussian_for_tuples_what_we_want}
			\bigg|\int_{Q} {\rm d} S_n -  \int_{Q} {\rm d} \Gamma_n \bigg| \lesssim \frac{m(Q)}{\sqrt{n}} \, ,
		\end{equation}
		where, for a random vector $\mathbf{X}\in \bR^{6m}$, we denote by ${\rm d}\mathbf{X}$ its law.
		Throughout the proof we denote by $\eps = n^{-K_\ast}$, where $K_\ast = M+2$. For $1\le j\le 6m$, let $\phi_j:\bR\to [0,1]$ be a $C^\infty$-smooth function with the following properties\footnote{To show the existence of such function, simply take $\phi_j = \mathbf{1}_{[a_j-\eps/2,b_j+\eps/2]} \ast \psi_\eps$, where $\psi_\eps$ is a $C^\infty$ bump function supported in $[-\eps/4,\eps/4]$ with $\int_{\bR} \psi_\eps = 1$.}:
		\begin{enumerate}
			\item[(i)] $\phi_j(s) =1 $ for all $s\in[a_j,b_j]$ ; 
			\item[(ii)] $\phi_j(s)  = 0 $ for all $s\in \bR\setminus [a_j - \eps,b_j +\eps]$;
			\item[(iii)] $|\phi_j^{(k)} (s)| \lesssim_k \eps^{-k}$ for all $s\in \bR$ and for all $k\ge 1$ (uniformly in $j$). 
		\end{enumerate}
		Setting $\Phi:\bR^{6m} \to [0,1]$ as
		\begin{equation*}
			\Phi(x_1,\ldots,x_{6m}) = \prod_{j=1}^{6m} \phi_j(x_j) \, ,
		\end{equation*}
		we see that $\Phi$ is a smooth function such that $\Phi \equiv 1$ on $Q$ and $\text{supp}(\Phi) \subset Q^{+\eps}$. Let $\widehat{\Phi}$ be the Fourier transform of $\Phi$, normalized as
		\[
		\widehat \Phi(\xi) = \int_{\bR^{6m}} \Phi(x) \, \overline{e(\langle \xi ,x \rangle)} \, {\rm d}m(x) \, .
		\]
		Then, by combining item~(iii) with integration by parts, we have
		\begin{equation}
			\label{eq:bound_on_decay_of_fourier_transform_of_Phi_depnding_of_eps}
			\sup_{\xi \in \bR^{6m}} |\widehat \Phi(\xi)| \lesssim_{k,m}  \eps^{-k} (1+|\xi|^{k})^{-1}
		\end{equation}
		for all $k\ge 1$. To prove~\eqref{eq:proof_of_thm:small_ball_comparison_to_Gaussian_for_tuples_what_we_want}, it will by sufficient to show that
		\begin{equation}
			\label{eq:proof_of_thm:small_ball_comparison_to_Gaussian_for_tuples_what_we_want_for_smooth_function}
			\bigg|\int_{\bR^{6m}} \Phi \, {\rm d} S_n - \int_{\bR^{6m}}  \Phi\, {\rm d} \Gamma_n \bigg| \lesssim \frac{m(Q)}{\sqrt{n}} \, .
		\end{equation}
		Indeed, let $\mathcal{M}$ be a covering of $Q^{+\eps} \setminus Q$ with balls of radius $\eps$. Then by Proposition~\ref{prop:small_ball_estimate_for_tuples_of_smooth} we have
		\begin{align*}
			0 \le \int_{\bR^{6m}} \Phi \, {\rm d} S_n - \int_{\bR^{6m}} \mathbf{1}_Q \, {\rm d} S_n &\le  \bP\big[S_n(\mathbf{z}) \in \text{supp}(\Phi) \setminus Q\big] \\ & \le \sum_{B\in \mathcal{M}} \bP\big[S_n(\mathbf{z}) \in B \big] \\ &\lesssim  \sum_{B\in \mathcal{M}} m(B) \lesssim m\big(Q^{+\eps} \setminus Q\big) \lesssim \eps \max_{1\le j\le 6m} |b_j - a_j| \lesssim n^{-1/2} m(Q) \, .
		\end{align*}
		A similar bound holds when $S_n$ is replaced by $\Gamma_n$, so indeed~\eqref{eq:proof_of_thm:small_ball_comparison_to_Gaussian_for_tuples_what_we_want_for_smooth_function} implies~\eqref{eq:proof_of_thm:small_ball_comparison_to_Gaussian_for_tuples_what_we_want}.
		
		To prove~\eqref{eq:proof_of_thm:small_ball_comparison_to_Gaussian_for_tuples_what_we_want_for_smooth_function} we use the Parseval formula and see that 
		\[
		\int_{\bR^{6m}} \Phi \, {\rm d} S_n - \int_{\bR^{6m}}  \Phi\, {\rm d} \Gamma_n  = \frac{1}{2\pi} \int_{\bR^{6m}} \widehat{\Phi}(\xi) \Big(\widehat S_n(\xi) - \widehat{\Gamma}_n(\xi) \Big) \, {\rm d}m(\xi) 
		\]
		where $\widehat{S}_n,\widehat{\Ga}_n$ denote the characteristic functions of $S_n$ and $\Ga_n$. Since $\displaystyle |\widehat \Phi (\xi)| \le \int_{\bR^{6m}} \Phi \le 2 m(Q)$ for all $\xi\in \bR^{6m}$, we have that
		\begin{align}
			\label{eq:proof_of_thm:small_ball_comparison_to_Gaussian_for_tuples_what_we_want_for_smooth_function_split_integral}
			\nonumber
			\bigg|&\int_{\bR^{6m}} \Phi \, {\rm d} S_n - \int_{\bR^{6m}}  \Phi\, {\rm d} \Gamma_n \bigg| \\ &\lesssim \int_{\bR^{6m}} |\widehat{\Phi}(\xi)| \, \big|\widehat S_n(\xi) - \widehat{\Gamma}_n(\xi) \big| \, {\rm d}m(\xi) \\ \nonumber & \lesssim m(Q) \int_{|\xi| \le n^{1/3}} \big|\widehat S_n(\xi) - \widehat{\Gamma}_n(\xi) \big| \, {\rm d}m(\xi)   + \int_{|\xi| \ge n^{1/3}} |\widehat\Phi(\xi)||\widehat S_n(\xi)| \, {\rm d}m(\xi) + \int_{|\xi| \ge n^{1/3}} |\widehat\Phi(\xi)||\widehat \Ga_n(\xi)| \, {\rm d}m(\xi) \, .
		\end{align}
		By Lemma~\ref{lemma:non-singularity_for_covariance_of_spread_tuple}, we know that the covariance matrix of $S_n$ is uniformly non-singular (as we assume that the tuple $\mathbf{z}$ is $1$-spread). Thus, we can apply Lemma~\ref{lemma:CLT_on_fourier_side_bounded_third_moment} and get that
		\[
		\int_{|\xi| \le n^{1/3}} \big|\widehat S_n(\xi) - \widehat{\Gamma}_n(\xi) \big| \, {\rm d}m(\xi) \lesssim \frac{1}{\sqrt{n}} \int_{|\xi| \le n^{1/3}} e^{-c|\xi|^2} |\xi|^3 {\rm d}m(\xi) \lesssim \frac{1}{\sqrt{n}} \, .
		\]
		Furthermore, by Theorem~\ref{thm:decay_of_char_function_tuples_near_circle} we have
		\[
		\int_{|\xi| \ge n^{1/3}} |\widehat\Phi(\xi)||\widehat S_n(\xi)| \, {\rm d}m(\xi) \lesssim e^{-\log^2 n} \int_{|\xi| \ge n^{1/3}} |\widehat\Phi(\xi)| \, {\rm d}m(\xi) \stackrel{\eqref{eq:bound_on_decay_of_fourier_transform_of_Phi_depnding_of_eps}}{\lesssim} e^{-\log^2 n} \eps^{-O_{m}(1)}  \, , 
		\]
		and similarly
		\[
		\int_{|\xi| \ge n^{1/3}} |\widehat\Phi(\xi)||\widehat \Ga_n(\xi)| \, {\rm d}m(\xi) \lesssim e^{-cn^{2/3}} \int_{|\xi| \ge n^{1/3}} |\widehat\Phi(\xi)| \, {\rm d}m(\xi) \lesssim e^{-cn^{2/3}} \eps^{-O_m(1)}  \, .
		\]
		Plugging the above bounds into~\eqref{eq:proof_of_thm:small_ball_comparison_to_Gaussian_for_tuples_what_we_want_for_smooth_function_split_integral} gives
		\[
		\bigg|\int_{\bR^{6m}} \Phi \, {\rm d} S_n - \int_{\bR^{6m}}  \Phi\, {\rm d} \Gamma_n \bigg| \lesssim \frac{m(Q)}{\sqrt{n}} + e^{-\log^2 n} n^{O_{m,K_\ast}(1)}
		\]
		which shows that~\eqref{eq:proof_of_thm:small_ball_comparison_to_Gaussian_for_tuples_what_we_want_for_smooth_function} holds for all $n$ large enough, and we are done.
	\end{proof}
	\subsection{Approximating the event $A_z(U)$}
	We conclude this section with the proof of Lemma~\ref{lemma:for_spread_tuples_probabilities_of_smooth_points_factor} from Section~\ref{sec:poisson_limit_for_sum_over_the_smooth_net}. Recall that the events $A_z(U)$ and $A_z^\pm(U)$ are given by~\eqref{eq:def_of_A_z_U_with_pm_section_poisson_limit}. We want to show that for a tuples $\mathbf{z} = (z_1,\ldots,z_m) \in (\Omega_K)^m$ which is $n^{7\tau}$-smooth and $n^\beta$-spread the probability $\bP\big[\bigcap_{j=1}^m A_{z_j}(U)\big]$ asymptotically scales like the corresponding product of Gaussian probabilities. In view of Theorem~\ref{thm:small_ball_comparison_to_Gaussian_for_tuples} and Proposition~\ref{prop:small_ball_estimate_for_tuples_of_smooth} we just proved, this will follow once we show the event $A_z(U)$ can be well approximated by disjoint union of small rectangles, in which we have a local CLT for $S_n(\mathbf{z})$.  
	\begin{proof}[Proof of Lemma~\ref{lemma:for_spread_tuples_probabilities_of_smooth_points_factor}]
		For simplicity we will only consider the case $A_z(U)$; the proof for the events $A_z^+(U)$ and $A_z^-(U)$ follows the exact same steps with minor modifications. For the tuple $\mathbf{z}$ and the finite union of intervals $U\subset \bR_{\ge 0}$, we let $\mathbb{A} \subset \bC^{3m} \simeq \bR^{6m}$ be the domain such that the event
		\begin{equation*}
			\Big(\bigcap_{j=1}^{m} A_{z_j}(U) \Big) \bigcap \Big(\bigcap_{j=1}^{m} \{|f_n^{\prime\prime}(z_j)| \le n^{5/2} \log^2 n\}\Big)
		\end{equation*} 
		occurs if and only if $S_n(\mathbf{z}) \in \mathbb{A}$, where $S_n(\mathbf{z})$ is given by~\eqref{eq:def_of_S_n_z}. Then $\mathbb{A}$ is a compact Lipschitz domain, and we already observed in Section~\ref{sec:poisson_limit_for_sum_over_the_smooth_net} (see the proof of Lemma~\ref{lemma:for_almost_spread_tuples_probabilities_are_bounded_by_what_you_want}, and also the computations in Section~\ref{sec:limiting_intensity_for_gaussian} below) that 
		\[
		\Big(\frac{1}{n^{3/2 + 2\beta}}\Big)^m   \lesssim m_{6m}(\mathbb{A}) \lesssim  \Big(\frac{1}{n^{3/2 + 2\beta}}\Big)^m \log^{O(1)} n
		\]
		where here $m_{6m}$ denotes the Lebesgue measure on $\bR^{6m}$.  Therefore, there exists $M\ge 1$ large enough so that we can cover $\mathbb{A}$ with cubes from $n^{-M} \bZ^{6m}$ and have
		\begin{equation*}
			\sum_{\substack{Q\in n^{-M} \bZ^{6m} \\ Q \cap \partial \mathbb{A} \not= \emptyset}} m_{6m}(Q) \lesssim n^{-1} m_{6m} (\mathbb{A}) .
		\end{equation*}
		The triangle inequality shows that
		\begin{multline*}
			\Big|\bP\big[S_n(\mathbf{z}) \in \mathbb{A}\big] - \bP\big[\Ga_n(\mathbf{z}) \in \mathbb{A}\big] \Big| = \Big|\int_{\mathbb{A}} \big({\rm d}S_n(\mathbf{z}) - {\rm d}\Gamma_n(\mathbf{z})\big) \Big| \\ \le \sum_{\substack{Q\in n^{-M} \bZ^{6m} \\ Q\subset \mathbb{A}}} \Big|\bP\big[S_n(\mathbf{z}) \in Q\big] - \bP\big[\Ga_n(\mathbf{z}) \in Q\big] \Big| + \sum_{\substack{Q\in n^{-M} \bZ^{6m} \\ Q \cap \partial \mathbb{A} \not= \emptyset}}\Big\{ \bP\big[S_n(\mathbf{z}) \in Q\big] + \bP\big[\Ga_n(\mathbf{z}) \in Q\big]\Big\} \, .
		\end{multline*}
		By Theorem~\ref{thm:small_ball_comparison_to_Gaussian_for_tuples}, we know that
		\[
		\sum_{\substack{Q\in n^{-M} \bZ^{6m} \\ Q\subset \mathbb{A}}} \Big|\bP\big[S_n(\mathbf{z}) \in Q\big] - \bP\big[\Ga_n(\mathbf{z}) \in Q\big] \Big| \lesssim \frac{1}{\sqrt{n}} \sum_{\substack{Q\in n^{-M} \bZ^{6m} \\ Q\subset \mathbb{A}}} m_{6m}(Q) \le \frac{m_{6m}(\mathbb{A})}{\sqrt{n}} \, ,
		\]
		whereas by Proposition~\ref{prop:small_ball_estimate_for_tuples_of_smooth} we have
		\[
		\sum_{\substack{Q\in n^{-M} \bZ^{6m} \\ Q \cap \partial \mathbb{A} \not= \emptyset}}\Big\{ \bP\big[S_n(\mathbf{z}) \in Q\big] + \bP\big[\Ga_n(\mathbf{z}) \in Q\big]\Big\} \lesssim \sum_{\substack{Q\in n^{-M} \bZ^{6m} \\ Q \cap \partial \mathbb{A} \not= \emptyset}} m_{6m}(Q) \lesssim n^{-1} m_{6m} (\mathbb{A}) \, .
		\]
		Since we also have that
		\[
		\bP\Big[\Big(\bigcap_{j=1}^{m} \{|f_n^{\prime\prime}(z_j)| \le n^{5/2} \log^2 n\}\Big)^c\Big] \le \exp\Big(-c \log^2 n\Big)
		\]
		(this follows formally from Lemma~\ref{lemma:control_on_maximum_of_polynomial_and_derivatives}, but can also be derived directly from the sub-Gaussian assumption on the coefficients $\{\xi_k\}$), we conclude that 
		\begin{equation}
			\label{eq:proof_of_lemma:for_spread_tuples_probabilities_of_smooth_points_factor_moving_to_gaussian}
			\bP\big[A_{z_1}(U) \cap \ldots \cap A_{z_m}(U)\big] = \big(1+o(1)\big) \bP_{{\sf G}}\big[A_{z_1}(U) \cap \ldots \cap A_{z_m}(U)\big] \, .
		\end{equation}
		To finish the proof of the lemma, we need to further show that
		\begin{equation}
			\label{eq:proof_of_lemma:for_spread_tuples_probabilities_of_smooth_points_factor_gaussian_factorization}
			\bP_{{\sf G}}\big[A_{z_1}(U) \cap \ldots \cap A_{z_m}(U)\big] = \big(1+o(1)\big) \prod_{j=1}^{m} \bP_{{\sf G}}\big[A_{z_{j}}(U)\big]
		\end{equation}
		as $n\to \infty$. Indeed,~\eqref{eq:proof_of_lemma:for_spread_tuples_probabilities_of_smooth_points_factor_gaussian_factorization} follows from a standard argument for decorrelation of Gaussian fields. For every $n^\beta$-separated tuple $\mathbf{z}$, recall the matrix $W_n = W_n(\mathbf{z})$ given by~\eqref{eq:def_of_w_k} and that $W_n^{{\sf T}} W_n$ is the covariance matrix of $\Ga_n(\mathbf{z})$. Lemma~\ref{lemma:non-singularity_for_covariance_of_spread_tuple} implies that  
		\begin{equation}
			\label{eq:proof_of_lemma:for_spread_tuples_probabilities_of_smooth_points_factor_gaussian_factorization_covariance_matrix_factors}
			W_n^{{\sf T}} W_n = \begin{pmatrix}
				\Sigma(z_1) & 0 & \cdots & 0 \\ 0 & \Sigma(z_2) & \cdots & 0 \\ \vdots & \vdots & \ddots & \vdots \\ 0 & \cdots & 0 & \Sigma(z_m) 
			\end{pmatrix} + R 			
		\end{equation}
		where $\Sigma(z)$, which is given explicitly by~\eqref{eq:the_matrix_sigma} below, is the limiting covariance matrix of $\Ga_n(z)$ for $z\in \Omega_K$, and $R$ is a matrix with all entries $O(n^{-\beta})$. Since the matrices $W_n^{{\sf T}} W_n$ and $\{\Sigma(z_j)\}_{j=1}^{m}$ are uniformly non-singular as $n\to \infty$, the equality~\eqref{eq:proof_of_lemma:for_spread_tuples_probabilities_of_smooth_points_factor_gaussian_factorization_covariance_matrix_factors} implies that
		\begin{equation*}
			\big(W_n^{{\sf T}} W_n\big)^{-1} = \begin{pmatrix}
				\Sigma(z_1)^{-1} & 0 & \cdots & 0 \\ 0 & \Sigma(z_2)^{-1} & \cdots & 0 \\ \vdots & \vdots & \ddots & \vdots \\ 0 & \cdots & 0 & \Sigma(z_m)^{-1} 
			\end{pmatrix} + \widetilde{R} \, , 			
		\end{equation*}
		where all entries of $\widetilde{R}$ are $O(n^{-\beta})$. The asymptotic equality~\eqref{eq:proof_of_lemma:for_spread_tuples_probabilities_of_smooth_points_factor_gaussian_factorization} now follows from a routine Gaussian comparison for the densities, see for instance~\cite[Lemma~3.5]{Cook-Nguyen-Yakir-Zeitouni}. In view of~\eqref{eq:proof_of_lemma:for_spread_tuples_probabilities_of_smooth_points_factor_moving_to_gaussian}, the proof of the lemma is complete. 
	\end{proof}
	
	\section{Computing the limiting intensity}
	\label{sec:limiting_intensity_for_gaussian}
	In this section we will assume that the random coefficients of $f_n$ are i.i.d.\ standard Gaussian random variables, without repeating this assumption throughout. Recall that by $Z\sim N_{\bC}(\mu,\sigma^2)$ with $\mu\in\bC$ and $\sigma^2>0$ we mean that $Z$ is a complex-valued random variable with the density
	\begin{equation}
		\label{eq:general_complex_gaussian_density}
		\frac{1}{\pi \sigma^2} \exp\big(-|z-\mu|^2/\sigma^2\big) \, {\rm d}m(z) \, .
	\end{equation}
	The proof of Lemma~\ref{lemma:gaussian_asymptotic_for_a_single_probability} consists of a Gaussian small-ball computation, which involves $f_n(z),f_n^\prime(z),f_n^{\prime\prime}(z)$ evaluated at a point $z\in \Omega_K$. We start by recording some simple computations associated with the complex Gaussian density~\eqref{eq:general_complex_gaussian_density} that will be helpful throughout. By the Lebesgue differentiation theorem, we have 
	\begin{equation}
		\label{eq:lebesgue_diffrentiation_for_gaussian}
		\lim_{\eps \downarrow 0} \frac{\bP\big[Z\in \mathcal{V}_\eps\big]}{m(\mathcal{V}_\eps)}  = \frac{1}{\pi \sigma^2} e^{-|\zeta-\mu|^2/\sigma^2} 
	\end{equation}
	for any family of sets $\{\mathcal{V}_{\eps}\}$ that shrink nicely to a point $\zeta \in \bC$ as $\eps\downarrow 0$, see for instance~\cite[Theorem~3.21]{Folland}. We will also use the fact that, for a mean-zero complex Gaussian $Z$ and for all $c>0$, we have
	\begin{equation}
		\label{eq:computation_of_forth_moment_with_tilte}
		\bE\big[|Z|^4 e^{-c|Z|^2}\big] = \frac{1}{\pi \sigma^2}\int_{\bC} |z|^4 \, e^{-|z|^2(\sigma^{-2} + c)} \, {\rm d}m(z) = \frac{2\sigma^4}{(1+c\sigma^2)^3} \, .
	\end{equation}
	Throughout this section, it will be convenient to apply the change of variables $z = e^{w/n}$ and define
	\begin{equation}
		\label{eq:def_of_F_n_w}
		F_n(w) = \frac{1}{\sqrt{n}} f_n(e^{w/n}). 
	\end{equation}
	As $z\in \Omega_K$, the variable $w$ now parameterizes the rectangle 
	\begin{equation*}
		\widetilde \Omega_K = \Big[n\log\Big(1-\frac{K}{n}\Big),n\log\Big(1+\frac{K}{n}\Big)\Big] \times \big[0,n\pi\big] \subset \bC\, .
	\end{equation*}
	The event $A_z(U)$ given by~\eqref{eq:def_of_event_A_z_U_for_net_point}, for which we want to compute its probability, can be written in the new coordinate system as
	\begin{equation}
		\label{eq:A_z_U_after_change_of_coordinates}
		A_z(u) = \Big\{ \frac{F_n(w)}{F_n^\prime(w)}  - ne^{w/n} \in \widetilde{\mathsf{R}}_w \ , \  \frac{2|F_n^\prime(w)|}{|F_n^{\prime\prime}(w)|} \in n^{-1/4} U \, , \ |F_n^\prime(w)| \ge \frac{1}{n^{1/4} \log n} \Big\}
	\end{equation} 
	where $\widetilde{\mathsf{R}}_w$ is the rectangle $[-\frac{K}{M_1},\frac{K}{M_1}] \times [-\frac{n}{2M_2},\frac{n}{2M_2}]$ around the point $ne^{w/n}$.
	\subsection{Conditional Gaussian random variables}
	Given $w\in \widetilde \Omega_K$ which we write as $w= x+iy$ the random vector 
	\begin{equation}
		\label{eq:def_of_gaussian_vector_F_F'_F''_after_normalization}
		\big(F_n(w),F_n^\prime(w), F_n^{\prime\prime}(w)\big)
	\end{equation} 
	is a mean-zero Gaussian random vector in $\bR^6\simeq \bC^3$. We first recall some basic properties of the multivariate complex Gaussian distribution, as presented in~\cite{Picinbono}. A mean-zero complex Gaussian random vector $\mathbf{Z} = \mathbf{X} + i \mathbf{Y} \in \bC^k$  can be described via 2 parameters: 
	\begin{equation*}
		\Sigma = \bE\big[\mathbf{Z} \cdot \mathbf{Z}^{H}\big] \qquad \text{and } \qquad R = \bE\big[\mathbf{Z} \cdot \mathbf{Z}^{T}\big]
	\end{equation*} 
	where $\mathbf{Z}^T$ is the matrix transpose of $\mathbf{Z}$ and $\mathbf{Z}^H$ denotes the conjugate transpose. The matrix $\Sigma$ is Hermitian and non-negative is referred to as the \emph{complex covariance matrix}, while the symmetric matrix $R$ is referred to as the \emph{relation matrix}. For the complex vector~\eqref{eq:def_of_gaussian_vector_F_F'_F''_after_normalization}, the complex covariance matrix is given as
	\begin{equation}
		\label{eq:the_matrix_sigma}
		\Sigma = \frac{1}{n} \sum_{k=0}^{n} e^{2x\tfrac{k}{n}} \begin{pmatrix}
			1 & \frac{k}{n} & \frac{k^2}{n^2} \\ \frac{k}{n} & \frac{k^2}{n^2} & \frac{k^3}{n^3} \\ \frac{k^2}{n^2} & \frac{k^3}{n^3} & \frac{k^4}{n^4}
		\end{pmatrix} \\ = \int_{0}^{1} e^{2xt} \begin{pmatrix}
			1 & t & t^2 \\ t & t^2 & t^3 \\ t^2 & t^3 & t^4
		\end{pmatrix} {\rm d}t + O(n^{-1}) \, .
	\end{equation}
	Recall the definition~\eqref{eq:def_of_a_j} of the functions $a_j(x)$ from Section~\ref{sec:poisson_limit_for_sum_over_the_smooth_net}, given as
	\begin{equation*}
		a_j = a_j(x) = \int_{0}^{1} t^j e^{2xt} {\rm d}t \, .
	\end{equation*}
	The above takes the form of 
	\begin{equation*}
		\Sigma = \begin{pmatrix}
			a_0 & a_1 & a_2 \\  a_1 & a_2 & a_3 \\ a_2 & a_3 & a_4
		\end{pmatrix} + O(n^{-1})
	\end{equation*}
	uniformly for $w\in \widetilde \Omega_K$. On the other hand, the relation matrix for the complex vector~\eqref{eq:def_of_gaussian_vector_F_F'_F''_after_normalization} tends to zero as $n\to \infty$, uniformly for $z = e^{w/n}$ as in the assumptions of Lemma~\ref{lemma:gaussian_asymptotic_for_a_single_probability}. Indeed, the later remark follows from the simple relation
	\[
	\frac{1}{n} \sum_{k=0}^{n} \frac{k^{\ell}}{n^{\ell}} \, e^{2\tfrac{k}{n}w} = o(1) \, , \qquad \ell \in \{0,\ldots,4\}
	\]
	which in turn follows from our assumption that $\arg(z) \in [n^{-1+4\tau},n^{-1+4\tau}]$. To compute the probability of the event~\eqref{eq:A_z_U_after_change_of_coordinates} we will need to first state two Gaussian conditional laws, as described below. As always, to compute these Gaussian conditional laws we use the Schur complement of the covariance and relation matrices, see~\cite[Section~{\rm IV}]{Picinbono} for the relevant background.
	
	\vspace*{2mm}
	
	\noindent
	(A) $F_n^\prime(w)$ conditional on $F_n^{\prime\prime}(w)$ is a complex Gaussian random variable given by the density~\eqref{eq:general_complex_gaussian_density} with mean
	\[
	\mu = \frac{a_3}{a_4} F_n^{\prime\prime} (w) \, ,
	\]
	and variance
	\[
	\sigma^2 = \Big(a_2 - \frac{a_3^2}{a_4}\Big)\big(1+o(1)\big) \, .
	\]
	
	\vspace*{2mm}
	
	\noindent
	(B) $F_n(w)$ conditional on $\{F_n^\prime(w),F_n^{\prime\prime}(w)\}$ is a complex Gaussian random variable with mean
	\[
	\mu = \frac{1}{a_2a_4 - a_3^2} \begin{pmatrix}
		a_1 & a_2
	\end{pmatrix} \cdot \begin{pmatrix}
		a_4 & -a_3 \\ -a_3 & a_2
	\end{pmatrix} \cdot \begin{pmatrix}
		F_n^{\prime}(w) \\ F_n^{\prime\prime}(w) 
	\end{pmatrix} \, ,
	\]
	and variance
	\[
	\sigma^2 = a_0 - \frac{1}{a_2a_4 - a_3^2}\begin{pmatrix}
		a_1 & a_2
	\end{pmatrix} \cdot \begin{pmatrix}
		a_4 & - a_3 \\ -a_3 & a_2
	\end{pmatrix}\cdot \begin{pmatrix}
		a_1 \\ a_2
	\end{pmatrix} + o(1)  = \sigma_1^2 + o(1) \, ,
	\]
	where
	\begin{equation*}
		\sigma_1^2 = a_0 - \frac{a_1^2a_4 - 2a_1a_2a_3 + a_2^3}{a_2a_4 - a_3^2} \, .
	\end{equation*}
	\subsection{The intensity function $\mathfrak{F}$} 
	We wish to compute the leading order asymptotic for the small-ball event~\eqref{eq:A_z_U_after_change_of_coordinates}. Denote by
	\[
	\mathcal{E}_\ast = \Big\{ \max_{w\in \widetilde \Omega_K } |F_n^{\prime\prime}(w)| \le \log^3 (n)\Big\} \, ,
	\]
	and note that by Lemma~\ref{lemma:control_on_maximum_of_polynomial_and_derivatives} (applied both for $F_n(w)$ and $F_n(w^{-1})$) we have $\bP\big[\mathcal{E}_\ast^c\big] \le \exp(- c\log^2 n)$, uniformly for $K>0$ in a compact. 
	\begin{proof}[Proof of Lemma~\ref{lemma:gaussian_asymptotic_for_a_single_probability}]
		In view of the above, it will be enough to compute $\bP\big[A_z(U) \cap \mathcal{E}_\ast\big]$, which we start with computing the conditional probability
		\begin{multline*}
			\bP\Big[ \frac{F_n(w)}{F_n^\prime(w)}  - ne^{w/n} \in \widetilde{\mathsf{R}}_w  \mid \{F_n^\prime(w), F_n^{\prime\prime}(w)\}\Big] \\ = \bP\bigg[ F_n(w) \in F_n^\prime(w) \cdot \Big[-\frac{K}{M_1},\frac{K}{M_1}\Big] \times \Big[-\frac{n}{2M_2},\frac{n}{2M_2}\Big] \,  \mid \{F_n^\prime(w), F_n^{\prime\prime}(w)\} \bigg]\, .
		\end{multline*}
		Recall that $M_1 = \lceil 4K/(\delta n) \rceil$ and $M_2 = \lceil 4/\delta \rceil$, where $\delta = n^{-5/4-\beta}$. On the event $\big\{\frac{2F_n^\prime(w)}{F_n^{\prime\prime}(w)} \in n^{-1/4} U \big\} \cap \mathcal{E}_\ast$, we have $|F_n^\prime(w)| \lesssim n^{-1/4} \log^3(n)$ for all $w\in \widetilde{\Omega}_K$, and hence
		\[
		\max\Big\{|F_n^\prime(w)| M_1^{-1}, n|F_n^\prime(w)| M_2^{-1}\Big\}  \xrightarrow{n\to \infty} 0\, .
		\]
		We also note that by item (B) from the previous section the conditional mean of $F_n(w)$ given $\{F_n^\prime(w),F_n^{\prime\prime}(w)\}$ is 
		\[
		F_n^{\prime\prime}(w) \frac{a_2^2 - a_1a_3}{a_2a_4  - a_3^2} + o(1)\, .
		\]
		Denoting by $\mu_1 = \frac{a_2^2 - a_1a_3}{a_2a_4  - a_3^2}$, we see from~\eqref{eq:lebesgue_diffrentiation_for_gaussian} that
		\begin{align}
			\label{eq:gaussian_asymptotic_for_a_single_probability_first_conditioning} \nonumber
			\mathbf{1}_{\mathcal{E}_\ast}\bP\bigg[ F_n(w) \in F_n^\prime(w) &\cdot \Big[-\frac{K}{M_1},\frac{K}{M_1}\Big] \times \Big[-\frac{n}{2M_2},\frac{n}{2M_2}\Big] \,  \mid \{F_n^\prime(w), F_n^{\prime\prime}(w)\} \bigg] \\ &= 2n\cdot \frac{K}{M_1 M_2} \, |F_n^\prime(w)|^2 \frac{1}{\pi\sigma_1^2} \exp\Big(-|F_n^{\prime\prime}(w)|^2 \frac{\mu_1^2}{\sigma_1^2} \, \Big) \big(1+o(1)\big) \, .
		\end{align}
		Next, we want to compute the leading order asymptotic of the conditional expectation
		\[
		\bE\Big[|F_n^\prime(w)|^2 \,  \mathbf{1}_{\{|F_n^\prime(w)| \in \frac{|F_n^{\prime\prime}(w)|}{2} \, n^{-1/4} U \}} \, \mathbf{1}_{\{|F_n^\prime(w)| \ge \frac{1}{n^{1/4} \log n}\}} \mid F_n^{\prime\prime}(w)\Big] 
		\]
		as $n\to \infty$. Indeed, by item (A) from the previous section we see that 
		\begin{align*}
			\bE\Big[ & |F_n^\prime(w)|^2 \, \mathbf{1}_{\{|F_n^\prime(w)| \in \frac{|F_n^{\prime\prime}(w)|}{2} \, n^{-1/4} U \}} \, \mathbf{1}_{\{|F_n^\prime(w)| \ge \frac{1}{n^{1/4} \log n}\}} \mid F_n^{\prime\prime}(w)\Big] \\ &= \big(1+o(1)\big) \frac{a_4}{\pi\big(a_2a_4 - a_3^2\big)} \exp\Big(-|F_n^{\prime\prime}(w)|^2 \frac{a_3^2}{a_4(a_2a_4 - a_3^2)} \Big) \cdot \Big( \int_{\{|z| \in \frac{|F_n^{\prime\prime}(w)|}{2} \, n^{-1/4} U\}} |z|^2 \, {\rm d}m(z) \Big) \\ &= \big(1+o(1)\big) \frac{|F_n^{\prime\prime}(w)|^4}{2^4 n}\cdot \frac{a_4}{\pi\big(a_2a_4 - a_3^2\big)} \exp\Big(-|F_n^{\prime\prime}(w)|^2 \frac{a_3^2}{a_4(a_2a_4 - a_3^2)} \Big) \cdot \Big( \int_{\{|z'| \in U\}} |z'|^2 \, {\rm d}m(z')\Big) \\ &=  \big(1+o(1)\big) \frac{|F_n^{\prime\prime}(w)|^4}{2^3 n}\cdot \frac{a_4}{\big(a_2a_4 - a_3^2\big)} \exp\Big(-|F_n^{\prime\prime}(w)|^2 \frac{a_3^2}{a_4(a_2a_4 - a_3^2)} \Big) \cdot \Big(\int_{U} t^3 {\rm d}t\Big)\, ,
		\end{align*}
		where in the second equality we applied the change of variables $z = \frac{|F_n^{\prime\prime}(w)|}{2} \, n^{-1/4} z'$. Combining this computation with~\eqref{eq:gaussian_asymptotic_for_a_single_probability_first_conditioning}, we see that the law of total expectation yields		
		\begin{align}
			\label{eq:gaussian_asymptotic_for_a_single_probability_second_conditioning}
			\bE\Big[&\mathbf{1}_{A_z(U) \cap \mathcal{E}_\ast} \mid F_n^{\prime\prime}(w)\Big] \\ \nonumber &= \bE\Big[ \bE\big[\mathbf{1}_{A_z(U) \cap \mathcal{E}_\ast} \mid \big\{F_n^\prime(w), F_n^{\prime\prime}(w)\big\}\big]  \mid F_n^{\prime\prime}(w)\Big]  \\ \nonumber &= \big(1+o(1)\big) \frac{2nK}{M_1 M_2} \frac{1}{\pi\sigma_1^2} \exp\Big(-|F_n^{\prime\prime}(w)|^2 \frac{\mu_1^2}{\sigma_1^2} \, \Big) \cdot \bE\Big[|F_n^\prime(w)|^2\mathbf{1}_{\{|F_n^\prime(w)| \in \frac{|F_n^{\prime\prime}(w)|}{2} \, n^{-1/4} U \}} \mid F_n^{\prime\prime}(w)\Big] \cdot \mathbf{1}_{\mathcal{E}_\ast} \\ \nonumber &= \big(1+o(1)\big) \frac{K}{M_1 M_2} \frac{|F_n^{\prime\prime}(w)|^4}{4\pi \sigma_1^2}\frac{a_4}{\big(a_2a_4 - a_3^2\big)} \exp\Big(-|F_n^{\prime\prime}(w)|^2 \Big\{ \frac{\mu_1^2}{\sigma_1^2} +\frac{a_3^2}{a_4(a_2a_4 - a_3^2)} \Big\} \Big) \cdot \Big(\int_{U} t^3 {\rm d}t\Big) \cdot \mathbf{1}_{\mathcal{E}_\ast} \, .
		\end{align}
		Finally, we apply~\eqref{eq:computation_of_forth_moment_with_tilte} with $$c = \frac{\mu_1^2}{\sigma_1^2} +\frac{a_3^2}{a_4(a_2a_4 - a_3^2)}$$ to get that
		\[
		\bE\bigg[|F_n^{\prime\prime}(w)|^4\exp\Big(-|F_n^{\prime\prime}(w)|^2 \Big\{ \frac{\mu_1^2}{\sigma_1^2} +\frac{a_3^2}{a_4(a_2a_4 - a_3^2)} \Big\} \Big) \bigg] = \frac{2a_4^4}{(1+c a_4^2)^3} \, .
		\]
		It remains to recall that $\mathfrak{F}$ is given by~\eqref{eq:def_of_limiting_intensity_function_F} and note the relations
		\[
		\mu_1^2 = \frac{\Delta_1^2}{\Delta_2^2} \, , \quad  \sigma_1^2 = \frac{\eta}{\Delta_2} \, , \quad c=  \frac{1}{\Delta_2} \Big(\frac{\Delta_1^2}{\eta} + \frac{a_3^2}{a_4}\Big) \, ,
		\]
		where $\Delta_1 = a_1a_3 - a_2^2$ , $\Delta_2 = a_2a_4-a_3^3$ and $\eta$ is given by~\eqref{eq:def_of_eta_function}. In fact, the formula for $\sigma_1^2$ can also be seen from~\eqref{eq:eta_as_a_schur_complement} above. Now, some simple algebra yields that
		\begin{align*}
			\bP\big[A_z(U) \cap \mathcal{E}_\ast\big] &= \bE\Big[\bE\big[\mathbf{1}_{A_z(U) \cap \mathcal{E}_\ast} \mid F_n^{\prime\prime}(w)\big]\Big] \\ &= \big(1+o(1)\big) \frac{K}{M_1 M_2} \frac{1}{2\pi \sigma_1^2}\frac{1}{\big(a_2a_4 - a_3^2\big)}  \frac{a_4^5}{(1+c a_4^2)^3} \, \Big(\int_{U} t^3 {\rm d}t\Big) \\ &= \big(1+o(1)\big)\frac{K}{M_1 M_2} \, \mathfrak{F}(x) \, \Big(\int_{U} t^3 {\rm d}t\Big) \, , 
		\end{align*}
		and we are done. 
	\end{proof}
	
	\pagebreak 
	
	\appendix

	\section{Decay of characteristic function for smooth tuples}
	\label{sec:decay_of_char_function_tuples}
	The goal of this appendix is to prove Theorem~\ref{thm:decay_of_char_function_tuples_near_circle}, which states that the characteristic function on $S_n(\mathbf{z})$ decays very fast for tuples $\mathbf{z} \in \Omega_K^m$ which are smooth and separated. The idea of the proof is similar to the one used to prove Lemma~\ref{lemma:small_ball_probability_smooth_points_bound_single_point}, only that here our situation is technically more complicated as we need to deal with $m$ points and another derivative. For $\mathbf{z} = (z_1,\ldots,z_m) \in \Omega_K^m$ and $\eta \in \bC^{3m}$ we denote by 
	\begin{equation}
		\label{eq:def_of_psi_k_tuples}
		\psi(k) = \psi(k;\mathbf{z},\eta) = \text{Re} \Big[\sum_{j=1}^{m} z_j^k \Big(\eta_j + \frac{k}{n} \eta_{j+m} + \frac{k(k-1)}{n^2} \eta_{j+2m} \Big)\Big] \, .
	\end{equation}
	The following proposition, which is analogous to Lemma~\ref{lemma:sum_of_norm_psi_k_intermidiate_range_is_large_single_point}, is the key result of this appendix.
	\begin{proposition}
		\label{prop:sum_of_distance_to_integers_psi_k_tuples_is_large}
		For an $n^\kappa$-smooth and $\gamma$-spread tuple $\mathbf{z} \in \Omega_K^m$, with $\gamma \ge n^{-{1/30m}}$ and for all $\eta\in \bC^{3m}$ with $n^{-1/6} \le |\eta| \le n^{K_\ast}$ we have
		\[
		\sum_{k=0}^n \| \psi(k) \|_{\bR/\bZ}^2 \ge \log^3 n 
		\]
		for all $n\ge n_0(K_\ast,K,m,\kappa)$ large enough.
	\end{proposition}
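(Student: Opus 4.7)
[Proof proposal for Proposition~\ref{prop:sum_of_distance_to_integers_psi_k_tuples_is_large}]
The plan is to mirror the strategy used to prove Lemma~\ref{lemma:sum_of_norm_psi_k_intermidiate_range_is_large_single_point}, but using the higher-order twisted finite difference operators already introduced in the proof of Lemma~\ref{lemma:non-singularity_for_covariance_of_spread_tuple}. Writing $z_j = r_j e(\theta_j/2\pi)$, the first step is to apply simultaneous Dirichlet approximation to the angles $\theta_1/2\pi,\ldots,\theta_m/2\pi$ to find a common denominator $1 \le p \le n^{\tau/(2m)}$ and integers $q_1,\ldots,q_m$ such that $|p\theta_j/2\pi - q_j| \le n^{-\tau/(2m)}$ for all $j$. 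Since each $z_j$ is $n^\kappa$-smooth, one also gets a lower bound $|p\theta_j/2\pi - q_j| \ge n^{-1+\kappa/2}$. As a consequence, each $|1-(z_j/z_\ell)^{p}|$ is small (polynomially in $n^{-\tau/m}$) but bounded below, and similarly for $|1 - (\overline z_j/z_\ell)^p|$.

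The second step is a pigeonhole/dichotomy. Suppose for contradiction that $\sum_k \|\psi(k)\|_{\bR/\bZ}^2 < \log^3 n$. Then the number of $k \in [0,n]$ with $\|\psi(k)\|_{\bR/\bZ} \ge n^{-1/3}$ is at most $n^{2/3}\log^3 n$, so after partitioning $[0,n]$ into $\lfloor n^{3\tau}\rfloor$ intervals of equal length and applying Lemma~\ref{lemma:non-singularity_for_covariance_of_spread_tuple} to obtain the lower bound $\sum_k |\psi(k)|^2 \gtrsim n\gamma^{8m-4}|\eta|^2 \gg n^{1-\tau}$, one finds an interval $\cJ \subset [0,n]$ of length $|\cJ| \gtrsim n^{1-3\tau}$ on which $\sup_{k \in \cJ} \|\psi(k)\|_{\bR/\bZ} \le n^{-\tau}$ and $\sum_{k \in \cJ} |\psi(k)|^2 \gtrsim |\cJ| \cdot (\text{something})$, so in particular $\psi$ cannot be uniformly tiny on $\cJ$.

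The third and main step is the finite-difference argument. Let $m_k \in \bZ$ denote the nearest integer to $\psi(k)$, defined on $\cJ$. Apply the composite operator
\[
D \;=\; \prod_{j=1}^{m} D_{z_j} \circ D_{\overline z_j},
\]
where each $D_{z}$ is a twisted third-order difference with step $p$ as in~\eqref{eq:def_third_order_twisted_diff_operator}. Since each sequence $k \mapsto z_j^k$, $k \mapsto (k/n)z_j^k$, $k \mapsto (k(k-1)/n^2)z_j^k$ (and their conjugates) is annihilated by $D_{z_j}$ respectively $D_{\overline z_j}$, the operator $D$ applied to $\psi$ at any admissible $k$ produces a quantity bounded by $C^{6m} |\eta|\cdot |1-(z_j/z_\ell)^p|$-type factors, which together with $|\eta| \le n^{K_\ast}$ and the Dirichlet estimates gives a bound of the form $n^{-c\tau}$ for a suitable constant $c$ depending on $m$ and $K_\ast$, once $\tau$ and $\gamma \ge n^{-1/30m}$ are tuned (this is where we need the spreadness hypothesis). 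On the other hand, the trivial bound $|D(\psi - m)(k)| \le 2^{6m} n^{-\tau}$ holds on $\cJ$. Combining the two, $D m \equiv 0$ on an arithmetic progression of length $\gtrsim n^{1-3\tau}$ inside $\cJ$, so by the standard theorem about finite differences, $k \mapsto m_{k_0 + jp}$ agrees with a polynomial $Q(j)$ of degree at most $6m-1$ on this progression.

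The final step shows $Q \equiv 0$. Since $|m_k| \le |\psi(k)| + 1 \le |\eta| + 1 \le n^{K_\ast} + 1$, a nonconstant $Q$ would be strictly monotone on a sub-interval of length $\gtrsim n^{1-4\tau}$, and the usual telescoping estimate forces $Q$ to take values of size $\gg 1$, contradicting either (i) the sign-change argument of Claim~\ref{claim:sign_changes_of_psi}, generalized to $m$ points (since the products $\prod_j z_j^{jp}$ still visit all quadrants due to smoothness), or (ii) the $L^2$ lower bound from Lemma~\ref{lemma:non-singularity_for_covariance_of_spread_tuple}. Hence $m_k \equiv 0$ on the progression, i.e.\ $|\psi(k)| = \|\psi(k)\|_{\bR/\bZ} \le n^{-\tau}$ there, which together with the $L^2$ lower bound gives the contradiction and completes the proof.

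The technical heart that will require the most care is the combinatorial bookkeeping in the third step: one must verify that applying the composite operator $D$ of order $6m$ to $\psi$ produces an error that is genuinely $o(n^{-\tau})$, while the operator norm of $D$ on $\ell^2$ (which controls the propagation of errors through the argument) remains under control. This is where the precise interplay between $\gamma$, $\kappa$, and $\tau$ enters, and where the constraint $\gamma \ge n^{-1/30m}$ is forced. The rest is a close adaptation of the $m=1$ argument together with the operator algebra of Section~\ref{sec:gaussian_comparison_for_tuples}.
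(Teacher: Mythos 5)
Your high-level outline is on the right track (Dirichlet approximation, a long interval $\cJ$ on which $\psi$ is near integers, twisted third-order differences, and a polynomial-vanishing argument), but there is a fatal flaw in your third step, and several other substantive gaps relative to the paper's argument.

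\textbf{The critical error: your operator $D$ annihilates $\psi$ entirely.} You propose
\[
D = \prod_{j=1}^{m} D_{z_j} \circ D_{\overline z_j}\,.
\]
But by the computation~\eqref{eq:computaion_of_third_order_twissted_differential_for_sequences_a_b_c}, the third-order twisted difference $D_\zeta$ kills the \emph{entire} $\zeta$-block — all three of $a_\zeta$, $b_\zeta$, $c_\zeta$ — not just one of them, because the prefactor $(1-(\zeta/\zeta)^L)^3 = 0$ appears in all three formulas. Since $\psi(k) = \tfrac12\sum_j \bigl[z_j^k p_j(k) + \overline{z}_j^k \overline{p_j(k)}\bigr]$ with $p_j$ a degree-$2$ polynomial in $k$, your operator $D$ annihilates every one of the $2m$ summands and gives $D\psi \equiv 0$, which is vacuous. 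The paper's operator deliberately \emph{omits} $D_{z_m}$ (see the definition of $D$ in the proof of Lemma~\ref{lemma:taking_suitable_differences_to_isolate_large_variable}, which uses $D_{z_1} \circ \cdots \circ D_{z_{m-1}} \circ D_{\overline z_1} \circ \cdots \circ D_{\overline z_{m-1}} \circ D_{\overline z_m}$) so that exactly one block survives and carries the Diophantine information.

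\textbf{Missing case analysis on $\eta$.} You treat $\eta$ as a single quantity with $|\eta| \gtrsim n^{-1/6}$, but this is not enough. The three groups of coordinates $\eta_j$, $\eta_{j+m}$, $\eta_{j+2m}$ enter $\psi$ with very different weights (the coefficients $1$, $k/n$, $k(k-1)/n^2$). After applying the composite operator to isolate the $z_m$-block, one must further take first- and second-order ordinary differences with a new step $L'$ to isolate a single one of the three coordinates $\eta_m, \eta_{2m}, \eta_{3m}$, and which one you isolate depends on which is largest. The paper's Lemmas~\ref{lemma:taking_suitable_differences_to_isolate_large_variable}, \ref{lemma:taking_suitable_differences_to_isolate_large_variable_case_2}, \ref{lemma:taking_suitable_differences_to_isolate_large_variable_case_3} encode precisely this case analysis, and the thresholds $n^{-1/4}$, $n^{-1/5}$, $n^{-1/6}$ are tuned so the isolated coordinate dominates after the difference. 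Your proposal silently skips this.

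\textbf{Wrong Dirichlet scale and wrong contradiction.} The Dirichlet denominator should be taken $q \le n^\kappa$ (matching the $n^\kappa$-smoothness scale), not $p \le n^{\tau/(2m)}$; the point is that the smoothness hypothesis gives a \emph{lower} bound $\|q\,\arg(z_r)/\pi\|_{\bR/\bZ} \gtrsim n^{\kappa - 1}$, and the Diophantine upper bound $n^{-\kappa/m}$ combined with the choice $h \approx K_\ast m/\kappa$ makes $|1-z_r^q|^h$ negligible against $|\eta| \le n^{K_\ast}$. Finally, the contradiction in the paper does not come from an $L^2$ lower bound nor from a generalized sign-change argument. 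After isolating $\eta_{3m}$ (say), the inequality~\eqref{eq:taking_suitable_differences_to_isolate_large_variable_inequality} together with Cauchy--Schwarz on the right-hand side forces $|1-z_m^{\ell q}|$ to be polynomially small for all $\ell$ up to $\asymp |\cJ|/(hq)$. Iterating the angle-doubling inequality $\|2\theta\| \le 2\|\theta\|$ then drives $\|q\,\arg(z_m)/\pi\|_{\bR/\bZ}$ below $n^{\kappa-1}$, contradicting $n^\kappa$-smoothness directly. The $L^2$ lower bound of Lemma~\ref{lemma:non-singularity_for_covariance_of_spread_tuple} plays no role in this proposition.

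In short: the overall scaffold is recognizable, but the composite operator as written has degree $6m$ and kills $\psi$, the case analysis on which piece of $\eta$ is large is absent, and the endgame (contradiction via the Diophantine smoothness condition, not $L^2$) is misidentified. The proposal does not close.
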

	We first show how Proposition~\ref{prop:sum_of_distance_to_integers_psi_k_tuples_is_large} implies Theorem~\ref{thm:decay_of_char_function_tuples_near_circle}.
	\begin{proof}[Proof of Theorem~\ref{thm:decay_of_char_function_tuples_near_circle}]
		We recall a basic inequality for the characteristic function of a sum of independent random vectors (see, for instance~\cite[Eq.~(9.2)]{Cook-Nguyen}):
		\begin{equation}
			\label{eq:proof_of_thm:decay_of_char_function_tuples_near_circle_bound_of_char}
			\Big|\bE\Big[e\big(\text{\normalfont Re}(S_n(\mathbf{z}) \, \overline{u})\big)\Big]\Big| \le  \exp\bigg(-c \inf_{a_1\le a \le a_2} \sum_{k=0}^{n} \| \psi(k;\mathbf{z}, a u/\sqrt{n}) \|_{\bR/\bZ}^2 \bigg)
		\end{equation}
		In fact, we already used this inequality in the course of the proof of Lemma~\ref{lemma:small_ball_probability_smooth_points_bound_single_point}. There, we were dealing with the symmetric random coefficients law $\eta_0$ instead of the present $\xi_0$, so to prove~\eqref{eq:proof_of_thm:decay_of_char_function_tuples_near_circle_bound_of_char} we need to first symmetrize the random coefficients and then use the non-degeneracy and apply~\eqref{eq:bound_of_char_of_symmetric_random_variable_distance_to_integers}. The extra symmetrization needed here only changes the value of the numerical constant $c>0$ in~\eqref{eq:proof_of_thm:decay_of_char_function_tuples_near_circle_bound_of_char}. As the constants $a_1,a_2 \asymp 1$ depend only on the distribution of $\xi_0$, we can apply Proposition~\ref{prop:sum_of_distance_to_integers_psi_k_tuples_is_large} with $\eta = a u/\sqrt{n}$, which is in the correct range. By~\eqref{eq:proof_of_thm:decay_of_char_function_tuples_near_circle_bound_of_char}, this implies the desired bound on the characteristic function of $S_n(\mathbf{z})$. 
	\end{proof}
	
	The remainder of this section is dedicated to proving Proposition \ref{prop:sum_of_distance_to_integers_psi_k_tuples_is_large}. 
	Throughout this section we will denote by $T = \log^3 n $ and suppose towards a contradiction that 
	\begin{equation}
		\label{eq:sum_of_distance_to_integers_psi_k_tuples_is_small_by_contradiction}
		\sum_{k=0}^n \| \psi(k) \|_{\bR/\bZ}^2 < T \, .
	\end{equation}
	By Markov's inequality,~\eqref{eq:sum_of_distance_to_integers_psi_k_tuples_is_small_by_contradiction} implies that
	\[
	\# \Big\{ k\in [0,n] \cap \bZ \, : \, \| \psi(k) \|_{\bR/\bZ}^2 \ge T^{-1} \Big\} \le T^3 \, .
	\]
	Hence, by the pigeonhole principle, there exists an interval $\mathcal{J}\subset [0,n]$ of length at least $n/T^4$ so that
	\begin{equation}
		\label{eq:bound_on_distance_from_integers_of_psi_k_on_interval_for_tuples}
		\sup_{k\in \mathcal{J}} \| \psi(k) \|_{\bR/\bZ} \le T^{-1} \, .
	\end{equation}
	The basic idea is that~\eqref{eq:bound_on_distance_from_integers_of_psi_k_on_interval_for_tuples} combined with~\eqref{eq:sum_of_distance_to_integers_psi_k_tuples_is_small_by_contradiction} implies that the sequence $\{\psi(k)\}$ and its finite differences are all close to integers, which is impossible for smooth $\mathbf{z}$. By the Dirichlet theorem on simultaneous Diophantine approximations (see for instance~\cite[Chapter~1, Theorem~VI]{Cassels}), there exists $q\in [1,n^\kappa]\cap \bZ$ and $s_1,\ldots,s_m\in \bR$ so that
	\begin{equation}
		\label{eq:simulatneous_dirichlet}
		q \, \text{arg}(z_r)/\pi - s_r \in \bZ \, ,
	\end{equation}
	and
	\[
	\max_{1\le r\le m} |s_r| \le   \sqrt{m} n^{-\kappa/m} \, .
	\]
	Thus, for each $1\le r\le m$ we have
	\[
	|1-z_r^q| \le |1-|z_r|^q| + |z_r|^q |1 - e(q \, \text{arg}(z_r))| \lesssim \frac{K}{n^{1-\kappa}} + 2\pi |s_r| \lesssim \sqrt{m} \, n^{-\kappa/m} \, .
	\]
	\subsection{Finite differences revisited}
	For a sequence $g:[n] \to \bC$ and non-negative integers $h,q$ we define the discrete differential of order $h$ with step size $q$ as before~\eqref{eq:def_on_finite_difference}, that is
	\begin{equation*}
		(\Delta_q^h g) ( k) = \sum_{t=0}^{h} \binom{h}{t} (-1)^{t} g(k+tq) \, .
	\end{equation*}
	Recall the definition of the sequences from Section~\ref{sec:gaussian_comparison_for_tuples} given by
	\[
	a_z(k) = z^k \, , \quad b_z(k) = \frac{k}{n} z^k \, ,\quad c_z(k) = \frac{k(k-1)}{n^2} z^k \, .
	\]
	A simple computation of some geometric sums and their derivatives shows that 
	\begin{align*}
		(\Delta_q^h a_z)(k) &= \sum_{t=0}^{h} \binom{h}{t} (-1)^{t}z^{k+tq} = (1-z^q)^h z^k \, , \\ (\Delta_q^h b_z)(k) &= \sum_{t=0}^{h} \binom{h}{t} (-1)^{t} \frac{k+tq}{n} z^{k+tq} = \frac{z}{n}\frac{\partial}{\partial z}\big[(1-z^q)^h z^k\big] \, , \\ (\Delta_q^h c_z)(k) &= \sum_{t=0}^{h} \binom{h}{t} (-1)^{t} \frac{(k+tq)(k+tq-1)}{n^2} z^{k+tq} = \frac{z^2}{n^2}\frac{\partial^2}{\partial z^2 }\big[(1-z^q)^h z^k\big] \, .  
	\end{align*}
	Denoting by
	\begin{equation*}
		f_{z,\ell}(k ) = (1-z^{q\ell})^h z^k
	\end{equation*}
	for $\ell \ge 1$ and $q\in[1,n^\kappa]$ given by~\eqref{eq:simulatneous_dirichlet}, we can combine to above identities with~\eqref{eq:def_of_psi_k_tuples} to get
	\begin{equation}
		\label{eq:discrete_h_difference_of_psi_tuples}
		(\Delta_{\ell q}^h \psi)(k) = \text{Re} \bigg[\sum_{j=1}^{m}  f_{z_j,\ell}(k) \, \eta_j + \frac{z_j}{n} \frac{\partial}{\partial z_j} \Big[f_{z_j,\ell}(k) \Big] \, \eta_{j+m} + \frac{z_j^2}{n^2} \frac{\partial^2}{\partial z_j^2} \Big[f_{z_j,\ell}(k)\Big]\,  \eta_{j+2m} \bigg]\, ,
	\end{equation}
	for all $\ell,h \ge 1$. 
	\begin{lemma}
		\label{lemma:bounding_discrete_differential_for_tuples_in_terms_of_distance_to_integers}
		There exists $h = O_{m,\kappa,K_\ast}(1)$ such that for all $\ell \ge 1$ and for any $k$ with $[k,k+h\ell q] \subset J$ we have
		\[
		\Big|(\Delta_{\ell q}^h \psi)(k)\Big| \lesssim_{m,\kappa} \sum_{s=0}^{h} \| \psi(k+s\ell q) \|_{\bR/\bZ} \, .
		\]
	\end{lemma}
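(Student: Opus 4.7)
The plan is to decompose each value $\psi(k+s\ell q)$ into its nearest integer plus a small fractional remainder, apply $\Delta_{\ell q}^h$ term by term, and then use the structural identity~\eqref{eq:discrete_h_difference_of_psi_tuples} to argue that the resulting integer combination must vanish; once it does, only the small remainders survive and they furnish the desired bound.

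Concretely, for each $s\in\{0,\ldots,h\}$ let $m_s\in\bZ$ be the integer nearest to $\psi(k+s\ell q)$ and set $\epsilon_s=\psi(k+s\ell q)-m_s$, so that $|\epsilon_s|=\|\psi(k+s\ell q)\|_{\bR/\bZ}$. Expanding the finite difference splits it as
\[
(\Delta_{\ell q}^h\psi)(k)=\underbrace{\sum_{s=0}^{h}\binom{h}{s}(-1)^s m_s}_{=:\,M\,\in\,\bZ}+\underbrace{\sum_{s=0}^{h}\binom{h}{s}(-1)^s\epsilon_s}_{=:\,E},
\]
with $|E|\le 2^h\sum_{s=0}^{h}\|\psi(k+s\ell q)\|_{\bR/\bZ}$. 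Since $M$ is an integer and $h$ will be a constant, the lemma follows at once provided one shows $|M|<1$ for all $n$ sufficiently large: this forces $M=0$, whence $|(\Delta_{\ell q}^h\psi)(k)|=|E|$ gives the stated inequality with implicit constant $2^h=O_{m,\kappa,K_\ast}(1)$.

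To prove $|M|<1$, bound $|M|\le|(\Delta_{\ell q}^h\psi)(k)|+|E|$ and control $(\Delta_{\ell q}^h\psi)(k)$ directly through~\eqref{eq:discrete_h_difference_of_psi_tuples}. Every summand there is a product of $z_j^k$ with at least $(h-2)$ factors of $\alpha_j:=1-z_j^{q\ell}$ (the two derivatives in the $c_z$-term consume at most two of them) and polynomial-in-$k,q\ell,n$ pieces. The simultaneous Diophantine approximation~\eqref{eq:simulatneous_dirichlet}, together with the crude bound $q\ell\le n/h$ and smoothness of $\mathbf{z}$, yields a nontrivial gain $|\alpha_j|\le n^{-\delta}$ for some $\delta=\delta(m,\kappa)>0$. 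Using $|z_j|^k\le e^K$, $k\le n$, and $|\eta|\le n^{K_\ast}$, the formula~\eqref{eq:discrete_h_difference_of_psi_tuples} gives
\[
\bigl|(\Delta_{\ell q}^h\psi)(k)\bigr|\lesssim_{m,\kappa,K} h^{3}\,n^{K_\ast}\,|\alpha_j|^{h-2}\le n^{K_\ast-(h-2)\delta}.
\]
Choosing $h=O_{m,\kappa,K_\ast}(1)$ large enough that $(h-2)\delta>K_\ast+1$ makes this $o(1)$; combined with $|E|\le 2^hh/T=o(1)$ since $T=\log^{3}n\to\infty$, one concludes $|M|<1$ for all large $n$, so $M=0$ as required.

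The main obstacle is the uniformity in $\ell\ge 1$. Under~\eqref{eq:simulatneous_dirichlet}, normalized by $\pi$, the quantity $z_j^q$ may lie near $-1$ rather than near $+1$, so for odd $\ell$ some $|\alpha_j|=|1-z_j^{q\ell}|$ can be of order unity and the gain $n^{-\delta}$ is lost. This is handled in the style of~\cite{Cook-Nguyen} by passing to $z_j^{2q}$, which always lies within $O(n^{-\kappa/m})$ of $+1$; the resulting step-size rescaling only affects the multiplicative constant $2^h$ and preserves the $O_{m,\kappa,K_\ast}(1)$ dependence of $h$.
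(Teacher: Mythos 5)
Your overall framework — split $\psi(k+s\ell q)$ into its nearest integer $m_s$ plus a small remainder $\epsilon_s$, pass to the $h$-th difference, and argue that the integer part $M=(\Delta_{\ell q}^h m)(k)$ must vanish once it is shown to be smaller than $1$ — matches the paper's. The gap is in how you try to show $|M|<1$. You bound $(\Delta_{\ell q}^h\psi)(k)$ directly from \eqref{eq:discrete_h_difference_of_psi_tuples} by extracting many factors of $\alpha_j=1-z_j^{q\ell}$ and claiming $|\alpha_j|\le n^{-\delta}$ for some $\delta=\delta(m,\kappa)>0$. That bound is false for general $\ell$. The Diophantine approximation \eqref{eq:simulatneous_dirichlet} controls only $1-z_j^{q}$: it gives $z_j^{q}$ (or, after doubling, $z_j^{2q}$) equal to $1+\epsilon_j$ with $|\epsilon_j|\lesssim n^{-\kappa/m}$. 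Once $\ell\gtrsim |\epsilon_j|^{-1}\sim n^{\kappa/m}$, the power $z_j^{q\ell}\approx(1+\epsilon_j)^\ell$ has drifted a macroscopic amount around the circle, so $|1-z_j^{q\ell}|$ can be of order $1$. Because Proposition~\ref{prop:sum_of_distance_to_integers_psi_k_tuples_is_large} needs the lemma uniformly for $\ell$ up to roughly $|\mathcal{J}|/(hq)\gg n^{\kappa/m}$, the claimed gain genuinely fails in the range where it is needed. The "main obstacle" you identify — parity of $p_r$, fixed by passing to $z_j^{2q}$ — cures only the sign of $z_j^{q}$, not the accumulation of the argument under large $\ell$; that is the real obstruction, and your proof does not address it.

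The paper's route around this is structural rather than analytic. It applies the Diophantine smallness of $|1-z_j^q|$ only at $\ell=1$ to deduce $(\Delta_q^h m)(k)=0$ for every admissible $k\in\mathcal{J}$. This forces the integer-valued sequence $m(k+tq)$ to agree with a polynomial $Q_k(t)$ of degree at most $h-1$ along the full step-$q$ progression through $k$. Restricting a degree-$\le(h-1)$ polynomial in $t$ to the sub-progression $t=0,\ell,2\ell,\ldots$ still yields a polynomial of degree $\le h-1$, so $(\Delta_{\ell q}^h m)(k)=0$ for every $\ell$ — without ever needing $|1-z_j^{q\ell}|$ to be small. That propagation from $\ell=1$ to all $\ell$ via the polynomial recursion is the missing idea in your argument.
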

	\begin{proof}
		We start with the case $\ell =1$ and then use that to conclude the lemma for general $\ell \ge 1$. Our choice of $q$ given by~\eqref{eq:simulatneous_dirichlet} implies that for all $j=1,\ldots,m$ we have
		\[
		|f_{z_j,1}(k)| = |z_j|^k |1-z_j^q|^h \le e^{K} m^{h/2} n^{-h\kappa/m} \lesssim n^{-h\kappa/m} \, .
		\]
		Furthermore, Cauchy's estimates~\eqref{eq:cauchy_estimates} gives that
		\[
		\max\Big\{ \frac{z_j}{n} \frac{\partial}{\partial z_j} f_{z_j,\ell}(k) \, , \, \frac{z_j^2}{n^2} \frac{\partial^2}{\partial z_j^2} f_{z_j,\ell}(k)  \Big\} \lesssim n^{-h\kappa/m} \, ,  
		\]
		and hence
		\[
		\Big|(\Delta_{q}^h \psi)(k)\Big| \lesssim n^{-h\kappa/m} \sum_{j=1}^{3m} |\eta_j| \lesssim n^{K_\ast - h\kappa/m} \, .
		\]
		Letting $m(k)$ be the closest integer to $\psi(k)$, the triangle inequality together with~\eqref{eq:bound_on_distance_from_integers_of_psi_k_on_interval_for_tuples} implies that
		\[
		\Big|(\Delta_{q}^h m)(k)\Big| \lesssim  n^{K_\ast - h\kappa/m} + \Big|(\Delta_{q}^h (m-\psi) )(k)\Big| \le n^{K_\ast - h\kappa/m} + 2^h \, T^{-1} \, ,
		\]
		as long as $[k,k+hq] \subset \mathcal{J}$. Taking $h = \lfloor K_\ast m/\kappa\rfloor + 2$, we see that for $n$ large enough the right-hand side is smaller than $1$, and it follows that
		\[
		(\Delta_{q}^h m)(k) = 0 \, .
		\]
		This proves the desired bound for $\ell =1$, as 
		\[
		\Big|(\Delta_{q}^h \psi)(k)\Big| =  \Big|(\Delta_{q}^h \psi)(k) - (\Delta_{q}^h m)(k) \Big| \lesssim 2^h \sum_{s=0}^{h} \| \psi(k+s q) \|_{\bR/\bZ} \, . 
		\]
		For $\ell \ge 2$, repeated application of the above for $k$ running through progressions starting from $k_0, k_0 + q, k_0 + 2q,\ldots,$ gives that for any $k\in \mathcal{J}$ such that $[k,k+hq] \subset \mathcal{J}$ there exists a polynomial $Q_k$ of degree at most $h-1$ so that
		\[
		m(k+tq) = Q_k(t) \, , 
		\] 
		for all $t\ge 0$ such that $[k,k+tq]\subset \mathcal{J}$. Thus, we also have $(\Delta_{\ell q}^{h} m) (k) = 0$ for all $\ell \ge 1$ and for all $k$ such that $[k,k+\ell h q] \subset \mathcal{J}$. The triangle inequality implies that for all such $k$'s we have
		\[
		\Big|(\Delta_{\ell q}^h \psi)(k)\Big| =  \Big|(\Delta_{\ell q}^h \psi)(k) - (\Delta_{\ell q}^h m)(k) \Big| \lesssim 2^h \sum_{s=0}^{h} \| \psi(k+s\ell q) \|_{\bR/\bZ} \, ,
		\]
		as desired.
	\end{proof}
	\subsection{Canceling out variables}
	Recall our assumption that $|\eta| \ge n^{-1/6}$, where $\eta = (\eta_1,\ldots,\eta_{3m}) \in \bC^{3m}$. The proof of Proposition~\ref{prop:sum_of_distance_to_integers_psi_k_tuples_is_large} splits into three cases:
	\begin{enumerate}
		\setlength\itemsep{0.5em}
		\item There exists $j\in \{2m+1,\ldots,3m\}$ such that $|\eta_j| \ge n^{-1/4}$; or 
		\item For all $j\in \{2m+1,\ldots,3m\}$ we have $|\eta_j| < n^{-1/4}$, and there exists $j^\prime \in \{m+1,\ldots,2m\}$ such that $|\eta_{j^\prime}| \ge n^{-1/5}$; or
		\item  For all $j\in \{m+1,\ldots,3m\}$ we have $|\eta_j| < n^{-1/5}$, which implies that for some $j^\prime \in \{1,\ldots,m\}$ we have $|\eta_{j^\prime}|\gtrsim_m n^{-1/6}$.
	\end{enumerate}
	In what follows, we shall deal mostly with the first case (which is the most technical), and comment  in the end of the section what are the necessary adaptations for the other two cases, see Remark~\ref{remark:how_to_deal_with_other_cases_prop:sum_of_distance_to_integers_psi_k_tuples_is_large}. From here on now, we shall assume without loss of generality that
	\begin{equation*}
		|\eta_{3m}| \ge n^{-1/4} \, .
	\end{equation*}
	\begin{lemma}
		\label{lemma:taking_suitable_differences_to_isolate_large_variable}
		For any positive integers $k,\ell, L, L^\prime$ such that $[k,k+h\ell q + 3(m-1)L + 2L^\prime] \subset \mathcal{J}$ with $h\ge 1$ as in Lemma~\ref{lemma:bounding_discrete_differential_for_tuples_in_terms_of_distance_to_integers}, we have
		\begin{multline}
			\label{eq:taking_suitable_differences_to_isolate_large_variable_inequality}
			\frac{(L^\prime)^2}{n^2} \bigg|\eta_{3m} \big(1-z_m^{\ell q}\big)^{h} \big(1-(z_m/|z_m|)^{2L^\prime}\big)^3 \prod_{j=1}^{m-1} \Big(1-\Big(\frac{z_m}{z_r}\Big)^L\Big)^3\Big(1-\Big(\frac{z_m}{\overline z_r}\Big)^L\Big)^3 \bigg| \\ \lesssim_{\kappa,m,K_\ast} \sum_{s=0}^{h} \sum_{a=0}^{6(m-1)} \sum_{b=0}^{4} \| \psi(k+s\ell q + aL + bL^\prime) \|_{\bR/\bZ} \, .
		\end{multline}
	\end{lemma}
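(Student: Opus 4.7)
The plan is to construct a single composite finite-difference operator $T$ with the property that $T\psi(k)$ equals, up to a nonzero factor of $z_m^k$ and an absolute numerical constant, the quantity whose modulus appears on the left-hand side of~\eqref{eq:taking_suitable_differences_to_isolate_large_variable_inequality}, and then to upper bound $|T\psi(k)|$ by the desired sum of distance-to-integer quantities. Concretely, I would define
\begin{equation*}
T \;=\; \Delta_{\ell q}^h \;\circ\; \Big(\prod_{r=1}^{m-1} D_{z_r}^{L}\Big) \;\circ\; \Big(\prod_{r=1}^{m-1} D_{\bar z_r}^{L}\Big) \;\circ\; D_{\bar z_m}^{L'} \;\circ\; \tilde D_{z_m}^{L'},
\end{equation*}
where $D_\zeta^{L}$ is the order-three twisted difference of~\eqref{eq:def_third_order_twisted_diff_operator}, and $\tilde D_\zeta^{L'}$ is the analogously defined \emph{second-order} twisted difference with weights $\binom{2}{t}(-1)^t \zeta^{-tL'}$ for $t=0,1,2$. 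All of these operators commute, being polynomials in shift operators on $\bZ$.

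Using the identities in~\eqref{eq:computaion_of_third_order_twissted_differential_for_sequences_a_b_c}, each $D_{z_r}^L$ annihilates $a_{z_r}, b_{z_r}, c_{z_r}$, and similarly $D_{\bar z_r}^L$ annihilates $a_{\bar z_r}, b_{\bar z_r}, c_{\bar z_r}$ for $r=1,\ldots,m-1$, while $D_{\bar z_m}^{L'}$ annihilates $a_{\bar z_m}, b_{\bar z_m}, c_{\bar z_m}$. A short direct computation (using that $\sum_{t=0}^{2}\binom{2}{t}(-1)^t t^j$ vanishes for $j\in\{0,1\}$ and equals $2$ for $j=2$) shows that $\tilde D_{z_m}^{L'}$ annihilates $a_{z_m}$ and $b_{z_m}$, and that $\tilde D_{z_m}^{L'} c_{z_m}(k) = \tfrac{2(L')^2}{n^2}\, z_m^k$. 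Writing $\psi(k) = \tfrac12[F(k) + \overline{F(k)}]$ with $F(k) = \sum_j z_j^k\bigl(\eta_j + \tfrac{k}{n}\eta_{j+m} + \tfrac{k(k-1)}{n^2}\eta_{j+2m}\bigr)$, every term in $\overline{F(k)}$ is killed by one of the $D_{\bar z_r}^L$ or $D_{\bar z_m}^{L'}$ factors, and every term in $F(k)$ other than the $\eta_{3m}\,c_{z_m}$ piece is killed by one of the remaining factors. Combining, one obtains
\begin{equation*}
T\psi(k) \;=\; \frac{(L')^2}{n^2}\, z_m^k\, \eta_{3m}\, (1-z_m^{\ell q})^h\,\bigl(1-(z_m/\bar z_m)^{L'}\bigr)^3 \prod_{r=1}^{m-1}\bigl(1-(z_m/z_r)^L\bigr)^3\bigl(1-(z_m/\bar z_r)^L\bigr)^3
\end{equation*}
(up to a harmless absolute constant). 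Since $|z_m|^k \gtrsim_K 1$ for $k \in [0,n]$, the modulus of the left-hand side of~\eqref{eq:taking_suitable_differences_to_isolate_large_variable_inequality} is $\lesssim_K |T\psi(k)|$.

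For the matching upper bound on $|T\psi(k)|$, write $T = \Delta_{\ell q}^h \circ \tilde T$ and observe that the coefficients of $\tilde T$ are bounded products of $\binom{3}{t}\zeta^{-tL}$ and $\binom{2}{t}\zeta^{-tL'}$ for $\zeta \in \{z_r,\bar z_r, \bar z_m, z_m\}$, hence uniformly $O_{m,K}(1)$ because each $|z_j|^{\pm L}, |z_j|^{\pm L'}$ is $O_K(1)$ on $\Omega_K$ when $L, L' = O(n)$. The integer-coefficient argument from the proof of Lemma~\ref{lemma:bounding_discrete_differential_for_tuples_in_terms_of_distance_to_integers} forces $\Delta_{\ell q}^h m(k') = 0$ for the nearest-integer sequence $m$ of $\psi$ on $\cJ$, and so
\begin{equation*}
|\Delta_{\ell q}^h \psi(k')| \;\leq\; \sum_{s=0}^{h} \binom{h}{s}\, \|\psi(k' + s\ell q)\|_{\bR/\bZ}\,.
\end{equation*}
Applying $\tilde T$ to the sequence $k' \mapsto \Delta_{\ell q}^h \psi(k')$ and invoking the triangle inequality term-by-term in the $O_{m,K}(1)$-many shifts of $\tilde T$ (which lie on a lattice $\{aL + bL'\}$ with the indices $a,b$ in the ranges appearing in~\eqref{eq:taking_suitable_differences_to_isolate_large_variable_inequality}) produces exactly a sum of the claimed form.

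The main technical obstacle is not the overall strategy but rather tracking how the lower-order corrections in~\eqref{eq:computaion_of_third_order_twissted_differential_for_sequences_a_b_c} (the $\alpha_L$ and $\beta_L$ terms produced when the $D_\zeta^L$ act on $b_{z_m}$ and $c_{z_m}$) propagate through the composition, and verifying that $\tilde D_{z_m}^{L'}$ kills all of them while leaving only the pure quadratic-in-$k$ piece. The commutativity of the operators, together with the factorization $D_\zeta^L = (I - \Phi_\zeta^L)^3$ with $\Phi_\zeta^L f(k) = \zeta^{-L}f(k+L)$, reduces this to a direct (if slightly lengthy) computation, and the kernel properties of $\tilde D_{z_m}^{L'}$ on $a_{z_m}$ and $b_{z_m}$ follow from the fact that its order-two twist at $\zeta = z_m$ annihilates every polynomial of degree $\le 1$ in $k$ multiplied by $z_m^k$.
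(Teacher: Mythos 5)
Your proof follows essentially the same route as the paper: compose a high-order difference $\Delta_{\ell q}^h$ with twisted third-order differences $D_\zeta$ (to annihilate every $z_r^k,\overline z_r^k$ term other than the one at $z_m$) and then a twisted second-order difference at $z_m$ (to extract the $k(k-1)/n^2$ coefficient), giving a quantity whose modulus is exactly the left-hand side of~\eqref{eq:taking_suitable_differences_to_isolate_large_variable_inequality} up to the nonvanishing factor $z_m^k$; then feed Lemma~\ref{lemma:bounding_discrete_differential_for_tuples_in_terms_of_distance_to_integers} through the triangle inequality. Your packaging as a single composite operator $T$ with all factors commuting is marginally cleaner than the paper's intermediate $F(k)$ bookkeeping but is not a different argument. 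Two small points worth fixing: your composition $D_{\overline z_m}^{L'}\circ\widetilde D_{z_m}^{L'}$ has shift support $\{0,L',\dots,5L'\}$, so the inner sum index should be $b\in\{0,\dots,5\}$ rather than $\{0,\dots,4\}$ (a harmless enlargement, since $L'=o(|\mathcal J|)$ and the interval hypothesis can be adjusted accordingly — the paper's own proof has the same over/under-count); and it is worth making explicit that the triangular action of the $D_\zeta$'s on $(a_{z_m},b_{z_m},c_{z_m})$ — i.e.\ that $c$ maps to $c$ plus a $k$-independent combination of $a,b$ — is precisely what guarantees $\widetilde D_{z_m}^{L'}$ kills all the $\alpha_L,\beta_L$ corrections, which you note informally at the end but is the only place where anything could go wrong.
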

	\begin{proof}
		The method of proof is similar to the one applied in the proof of Lemma~\ref{lemma:non-singularity_for_covariance_of_spread_tuple}, namely, we will use repeated applications of the twisted differences to cancel out terms from~\eqref{eq:discrete_h_difference_of_psi_tuples}. Recall that for $\zeta \in \bC$ the third-order discrete differential operator with step size $L\ge 1$ is given by~\eqref{eq:def_third_order_twisted_diff_operator}, i.e.
		\[
		(D_\zeta f)(k) = \sum_{t=0}^{3} \binom{3}{t} (-1)^t \zeta^{-tL} f(k+tL) \, .
		\]
		Recalling that $f_{z,\ell}(k ) = (1-z^{q\ell})^h z^k$, the computation~\eqref{eq:computaion_of_third_order_twissted_differential_for_sequences_a_b_c} shows that
		\begin{equation}
			\label{eq:computation_of_twisted_diff_for_f_z_l_k}
			(D_\zeta f_{z,\ell} )(k) = \big(1-(z/\zeta)^L\big)^3 f_{z,\ell}(k) \, .
		\end{equation}
		As in the proof of Lemma~\ref{lemma:non-singularity_for_covariance_of_spread_tuple}, denote by
		\[
		D = D_{z_1} \circ \ldots \circ D_{z_{m-1}} \circ D_{\overline z_1} \circ \ldots \circ D_{\overline z_{m-1}} \circ D_{\overline z_m}\, .
		\]
		As derivative and sum commute, we see from~\eqref{eq:discrete_h_difference_of_psi_tuples} that
		\begin{equation}
			\label{eq:first_differntial_to_finite_difference_of_order_h_psi_tuples}
			(D\circ \Delta_{\ell q}^h \psi) (k) = \text{Re} \bigg[(D f_{z_m,\ell})(k) \, \eta_m + \frac{z_m}{n} \frac{\partial}{\partial z_m} \Big[(D f_{z_j,\ell})(k) \Big] \, \eta_{2m} + \frac{z_m^2}{n^2} \frac{\partial^2}{\partial z_m^2} \Big[(D f_{z_m,\ell})(k)\Big]\,  \eta_{3m} \bigg] \, .
		\end{equation} 
		With the aid of the linear operator $D$, we have turned a sum of $3m$ terms for $\Delta_{\ell q}^h \psi$ into a sum of $3$ term. Now we want to get rid of the real part. Indeed, let $\delta_L(s) = (1-s^L)$ and note that~\eqref{eq:computation_of_twisted_diff_for_f_z_l_k} implies that
		\[
		(D f_{z_m,\ell} )(k) = \bigg(\prod_{j=1}^{m-1} \Big[\delta_L(z_m/z_r) \delta_L(z_m/\overline z_r)\Big]^3 \, 	\bigg) \cdot f_{z_m,\ell}(k) \,. 
		\]
		Let $L^\prime\ge 1$ be some integer and denote by $\widetilde D_{\overline z_m}$ the twisted differential of order $3$ with ``twist" $\overline z_m$ and step size $L^\prime$ instead of $L$. As before, we have 
		\[
		(\widetilde D_{\overline z_m} f_{\overline z_m , \ell} )(k) = 0 \, ,
		\] 
		and
		\[
		(\widetilde D_{\overline z_m} \circ D f_{\overline z_m , \ell} )(k) = \delta_{2L^\prime}(z_m/|z_m|)^3 \cdot  \bigg(\prod_{j=1}^{m-1} \Big[\delta_L(z_m/z_r) \delta_L(z_m/\overline z_r)\Big]^3 \, 	\bigg) \cdot f_{z_m,\ell}(k) \, .
		\]
		Hence, we obtain from~\eqref{eq:first_differntial_to_finite_difference_of_order_h_psi_tuples} that
		\begin{multline}
			\label{eq:second_differntial_to_finite_difference_of_order_h_psi_tuples}
			(\widetilde{D}_{\overline{z}_m} \circ D\circ \Delta_{\ell q}^h \psi) (k) \\  = \frac{1}{2} \bigg[H(z_m)  f_{z_m,\ell} (k) \, \eta_m + \frac{z_m}{n} \frac{\partial}{\partial z_m} \Big[H(z_m)  f_{z_m,\ell} (k)\Big] \, \eta_{2m} + \frac{z_m^2}{n^2} \frac{\partial^2}{\partial z_m^2} \Big[H(z_m)  f_{z_m,\ell} (k)\Big]\,  \eta_{3m} \bigg] \, ,
		\end{multline}
		where
		\[
		H(z) = \delta_{2L^\prime}(z/|z|)^3 \cdot \prod_{j=1}^{m-1} \Big[\delta_L(z/z_r) \delta_L(z/\overline z_r)\Big]^3 \, .
		\]
		Note that $H$ is a smooth function in a neighborhood of $z_m$. The goal now is to take suitable differences from~\eqref{eq:second_differntial_to_finite_difference_of_order_h_psi_tuples} to isolate the variable $\eta_{3m}$. Denote by $$\widetilde H (z) = (1-z^{\ell q})^h H(z) \, ,$$ which, as would be important in what follows, does not depend on $k$, and by
		\[
		F(k) = 2 \cdot (\widetilde{D}_{\overline{z}_m} \circ D\circ \Delta_{\ell q}^h \psi) (k) \, .
		\]
		Then~\eqref{eq:second_differntial_to_finite_difference_of_order_h_psi_tuples} reads
		\begin{align*}
			F(k) &= \widetilde H(z_m) z_m^k \eta_m + \frac{z_m}{n} \frac{\partial}{\partial z_m} \Big[\widetilde H(z_m)  z_m^k \Big] \, \eta_{2m} + \frac{z_m^2}{n^2} \frac{\partial^2}{\partial z_m^2} \Big[\widetilde H(z_m)  z_m^k \Big]\,  \eta_{3m} \\ &= \widetilde H(z_m) z_m^k \bigg(\frac{k(k-1)}{n^2} \eta_{3m} + \frac{k}{n} \mathsf{F}_1 + \mathsf{F}_2\bigg) \, .
		\end{align*}
		Here $\mathsf{F}_i$, $i=1,2$ are functions that may depend on the variables $\ell,q,n,\eta,\mathbf{z},h$ but not on $k$. This representation implies that
		\begin{multline}
			\label{eq:third_differntial_to_finite_difference_of_order_h_psi_tuples}
			F(k) - 2z_m^{-L^\prime} F(k+L^\prime) + z_m^{-2L^\prime} F(k+2L^\prime) = \frac{(L^\prime)^2}{n^2} \widetilde H(z_m) z_m^k \eta_{3m} \\ = \frac{(L^\prime)^2}{n^2} H(z_m)(1-z_m^{\ell q})^h  z_m^k \eta_{3m}\, .
		\end{multline}
		Taking the absolute value on both sides of~\eqref{eq:third_differntial_to_finite_difference_of_order_h_psi_tuples} and plugging in the formula for $H$, we identify the right-hand side exactly as the left-hand side of~\eqref{eq:taking_suitable_differences_to_isolate_large_variable_inequality}. The proof of the lemma now follows by applying the triangle inequality to the left-hand side of~\eqref{eq:third_differntial_to_finite_difference_of_order_h_psi_tuples} and applying Lemma~\ref{lemma:bounding_discrete_differential_for_tuples_in_terms_of_distance_to_integers} to each term in the sum generated by the difference operator $\widetilde D_{\overline{z}_m} \circ D$. 
	\end{proof}
	With Lemma~\ref{lemma:taking_suitable_differences_to_isolate_large_variable} at our disposal, we are ready to prove Proposition~\ref{prop:sum_of_distance_to_integers_psi_k_tuples_is_large} and conclude this section.
	
	\begin{proof}[Proof of Proposition~\ref{prop:sum_of_distance_to_integers_psi_k_tuples_is_large}]
		Recall our assumption that $|\eta_{3m}|\ge n^{-1/4}$ and that $\mathcal{J}\subset[0,n]$ is an interval of length at least $n/T^4$. Suppose that $\ell \ge 1$ is an integer such that $\ell h q \le |\mathcal{J}|/2$, where $h$ is as in Lemma~\ref{lemma:bounding_discrete_differential_for_tuples_in_terms_of_distance_to_integers} and $q$ is given by~\eqref{eq:simulatneous_dirichlet}. By Claim~\ref{claim:the_L_rescaling_for_tuples}, we can choose $L\asymp n/T^6 = o(|\mathcal{J}|)$ such that
		\[
		\qquad \qquad |z_j^L - z_{j^\prime}^L| \ge \gamma/T^6 \, , \qquad \forall 1\le j < j^\prime \le m \, .
		\] 
		Furthermore, since $z_m$ is $n^\kappa$-smooth, we can choose $L^\prime \asymp n/T^7$ such that
		\[
		\big|1-(z_m/|z_m|)^{2L^\prime}\big| > \gamma^2 \ge n^{-1/15m} \, .
		\]
		With these choices of $\ell , L , L^\prime$, we have that the left-hand side of~\eqref{eq:taking_suitable_differences_to_isolate_large_variable_inequality} is at least
		\[
		\gtrsim_{m} n^{-1/4} T^{-O_m(1)} |1-z_m^{\ell q}|^m \gamma^{6m} \gtrsim n^{-9/20} T^{-O_m(1)} |1-z_m^{\ell q}|^m \, .
		\]
		On the other hand, since we assume by contradiction that~\eqref{eq:sum_of_distance_to_integers_psi_k_tuples_is_small_by_contradiction} holds, the Cauchy-Schwarz inequality implies that
		\[
		\sum_{k=0}^{n} \|\psi(k) \|_{\bR/\bZ} \le \sqrt{n T} \, ,
		\] 
		and by pigeonholing we can find $k\in \mathcal{J}$ so that $[k,k+\ell h q + 3(m-1) L + 2L^\prime] \subset \mathcal{J}$ so that the right-hand side of~\eqref{eq:taking_suitable_differences_to_isolate_large_variable_inequality} is $\lesssim_{m,\kappa} n^{-1/2} T^{O(1)}$. Combining, both observations, Lemma~\ref{lemma:taking_suitable_differences_to_isolate_large_variable} implies that
		\[
		\big|1-z_m^{\ell q}\big| \le T^{O_m(1)} n^{-1/10m} \, .
		\]
		Since $q\in [1,n^\kappa]$ and since $z_m\in \Omega_K$, we get that for all $n$ large enough
		\begin{equation}
			\label{eq:proof_of_prop:sum_of_distance_to_integers_psi_k_tuples_is_large_conclusion_on_angle}
			\|\ell q \text{arg}(z_m) /\pi \|_{\bR/\bZ} \le \big|1-e\big(\ell q \text{arg}(z_m)\big) \big| \le n^{-1/20m} \, ,
		\end{equation}
		for all integers $\ell = 1,\ldots,\lfloor |\mathcal{J}|/qh \rfloor$. It remains to note that for all $\theta\in \bR$ with $\|\theta\|_{\bR/\bZ} \le \tfrac{1}{100}$, we have
		\[
		\|2 \theta \|_{\bR/\bZ} \le 2\| \theta \|_{\bR/\bZ} \, .
		\]
		Iterating this inequality with the observation~\eqref{eq:proof_of_prop:sum_of_distance_to_integers_psi_k_tuples_is_large_conclusion_on_angle} implies that
		\begin{equation*}
			\| q \text{arg}(z_m) /\pi \|_{\bR/\bZ} \lesssim q h |\mathcal{J}|^{-1} n^{-1/20m} = o(n^{\kappa-1})
		\end{equation*}
		and we get a contradiction to the fact that $z_m$ is $n^\kappa$-smooth, which completes the proof.
	\end{proof}
	\begin{remark}
		\label{remark:how_to_deal_with_other_cases_prop:sum_of_distance_to_integers_psi_k_tuples_is_large}
		We now explain how to handle the case where $|\eta_j| < n^{-1/4}$ for all $j\in \{2m+1,\ldots,3m\}$ but without loss of generality we have $|\eta_{2m}|\ge n^{-1/5}$. Then, we can show the following simpler analogue of Lemma~\ref{lemma:taking_suitable_differences_to_isolate_large_variable}: 
	\end{remark}
	\begin{lemma}
		\label{lemma:taking_suitable_differences_to_isolate_large_variable_case_2}
		For any positive integers $k,\ell, L, L^\prime$ such that $[k,k+h\ell q + 3(m-1)L + 2L^\prime] \subset \mathcal{J}$ with $h\ge 1$ as in Lemma~\ref{lemma:bounding_discrete_differential_for_tuples_in_terms_of_distance_to_integers}, we have
		\begin{multline*}
			\label{eq:taking_suitable_differences_to_isolate_large_variable_inequality_case_2}
			\frac{L^\prime}{n} \, \bigg|\eta_{2m} \big(1-z_m^{\ell q}\big)^{h} \big(1-(z_m/|z_m|)^{2L^\prime}\big)^3 \prod_{j=1}^{m-1} \Big(1-\Big(\frac{z_m}{z_r}\Big)^L\Big)^3\Big(1-\Big(\frac{z_m}{\overline z_r}\Big)^L\Big)^3 \bigg| \\ \lesssim_{\kappa,m,K_\ast} \big(1+o(n^{-1/20})\big)\sum_{s=0}^{h} \sum_{a=0}^{6(m-1)} \sum_{b=0}^{4} \| \psi(k+s\ell q + aL + bL^\prime) \|_{\bR/\bZ} \, .
		\end{multline*}
	\end{lemma}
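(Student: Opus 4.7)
[Proof plan for Lemma~\ref{lemma:taking_suitable_differences_to_isolate_large_variable_case_2}]
The strategy mirrors the proof of Lemma~\ref{lemma:taking_suitable_differences_to_isolate_large_variable}, but with one fewer step of twisted differentiation at the end, because now we only need to isolate the coefficient of $k/n$ rather than the coefficient of $k(k-1)/n^2$. First I would run the first stage of the earlier argument verbatim: apply $\Delta_{\ell q}^h$ to kill all contributions coming from $z_1,\ldots,z_{m-1}$ and $\overline z_1,\ldots,\overline z_{m-1}$ via the product operator $D = D_{z_1}\circ\cdots\circ D_{z_{m-1}}\circ D_{\overline z_1}\circ\cdots\circ D_{\overline z_{m-1}}\circ D_{\overline z_m}$. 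This yields, just as in~\eqref{eq:second_differntial_to_finite_difference_of_order_h_psi_tuples}, an identity of the form
\[
2(D\circ\Delta_{\ell q}^h\psi)(k) \;=\; z_m^k\, \widetilde H(z_m)\bigg\{\eta_m + \frac{\eta_{2m}}{n}\big(A+k\big) + \frac{\eta_{3m}}{n^2}\big(B+2Ak+k(k-1)\big)\bigg\},
\]
where $\widetilde H(z) = (1-z^{\ell q})^h H(z)$ with $H$ exactly as in the proof of Lemma~\ref{lemma:taking_suitable_differences_to_isolate_large_variable}, and where $A, B$ depend only on $\mathbf z, L, L', \ell, q, h$ (not on $k$).

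Next, instead of the \emph{second-order} twisted difference $\widetilde D_{\overline z_m}$ of step $L'$ used in the previous lemma, I would apply only a \emph{first-order} twisted difference with twist $z_m$ and step $L'$. A direct computation shows this produces
\[
z_m^k\widetilde H(z_m)\,\frac{L'}{n}\bigg\{\eta_{2m} \;+\; \frac{\eta_{3m}}{n}\big(2A + 2k + L' - 1\big)\bigg\}.
\]
The coefficient of $\eta_{2m}$ is exactly the quantity appearing on the left-hand side of the claimed inequality (after inserting the explicit formula for $\widetilde H(z_m)$). Since $|A|\lesssim_m n$ by bounding the logarithmic derivative of each factor in $H$, and since $|k|, L' \lesssim n$, the bracket in the correction term is $O_m(1)$ in units of $n$, so the $\eta_{3m}$ contribution is at most $O_m(|\eta_{3m}|) \le O_m(n^{-1/4})$, whereas the main term is of order $|\eta_{2m}| \ge n^{-1/5}$. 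This produces the multiplicative slack $1+o(n^{-1/20})$ in the statement.

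Finally, the first-order twisted difference of step $L'$ (applied after the second-stage operator $D$ of step $L$) turns $2(D\circ\Delta_{\ell q}^h \psi)(k)$ into a signed combination of at most $2$ shifts of its argument, so the overall left-hand side is bounded in absolute value by an $O_m(1)$ combination of terms of the form $(\Delta_{\ell q}^h \psi)(k + aL + bL')$ with $0\le a\le 6(m-1)$ and $0\le b\le 1$. Each such term is controlled, via Lemma~\ref{lemma:bounding_discrete_differential_for_tuples_in_terms_of_distance_to_integers}, by $\sum_{s=0}^h \|\psi(k + s\ell q + aL + bL')\|_{\bR/\bZ}$, provided $[k,k+h\ell q+3(m-1)L+2L'] \subset \mathcal J$ (which is the hypothesis, and in fact leaves a bit of room since $b\le 1$ is enough here). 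The main obstacle, and the only part requiring care beyond the template of Lemma~\ref{lemma:taking_suitable_differences_to_isolate_large_variable}, is the bookkeeping that shows the $\eta_{3m}$-correction is genuinely of relative size $n^{-1/20}$ uniformly in $k\in\mathcal J$; this relies on the quantitative bound $|A| = O_m(n)$ for the logarithmic derivatives of the factors $(1-(z_m/z_r)^L)^3$, $(1-(z_m/\overline z_r)^L)^3$, $(1-(z_m/|z_m|)^{2L'})^3$ and $(1-z_m^{\ell q})^h$ evaluated at $z_m\in\Omega_K$, none of which vanish by our choices of $L,L',\ell, q$.
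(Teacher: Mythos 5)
Your approach is the same as the paper's sketch in Remark~\ref{remark:how_to_deal_with_other_cases_prop:sum_of_distance_to_integers_psi_k_tuples_is_large}: apply the same twisted differences as in Lemma~\ref{lemma:taking_suitable_differences_to_isolate_large_variable}, then take only a first-order (rather than second-order) difference of $F$ with step $L'$ to isolate the coefficient of $k/n$, picking up the $\eta_{3m}$ correction as a leftover. Two points in the bookkeeping need attention, however. First, the shift range in $L'$ should be $b\le 4$, not $b\le 1$: you have implicitly applied $\widetilde D_{\overline z_m}$ (a third-order twisted difference of step $L'$, which is what produces the factor $(1-(z_m/|z_m|)^{2L'})^3$ in your $\widetilde H$), so the $L'$-shifts run up to $3L'+L'=4L'$; claiming only a single $L'$-shift from the final difference forgets this. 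Second, and more substantively, the claimed bound $|A|=O_m(n)$ for $A = z_m\widetilde H'(z_m)/\widetilde H(z_m)$ is not justified as stated and in fact appears to be too strong. The logarithmic-derivative contribution from the $L$ and $L'$ factors is of size $L/|1-(z_m/z_r)^L|$ and $L'/|1-(z_m/\overline z_m)^{L'}|$, and with the choices made in the proof of Proposition~\ref{prop:sum_of_distance_to_integers_psi_k_tuples_is_large} the denominators are only guaranteed to be $\gtrsim \gamma^{O(1)}$, so one can only conclude $|A|\lesssim_m n\gamma^{-O(1)}$. Worse, the contribution $h\ell q/|1-z_m^{\ell q}|$ from the $(1-z^{\ell q})^h$ factor is unbounded a priori, since the lemma's hypotheses place no lower bound on $|1-z_m^{\ell q}|$. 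Consequently the relative error you obtain is of order $n^{-1/20}\cdot\gamma^{-O(1)}$ (at best) rather than $o(n^{-1/20})$ as the lemma asserts; fixing this requires either separating the $(1-z^{\ell q})^h$ factor before dividing or arguing that in the regime where $|1-z_m^{\ell q}|$ is tiny the inequality is trivially satisfied because the left-hand side is itself tiny. To be fair, the paper's own remark is equally terse on this point, and for the application in the proof of Proposition~\ref{prop:sum_of_distance_to_integers_psi_k_tuples_is_large} only the weaker multiplicative slack $(1+o(1))$ is actually used, so this does not derail the argument—but you should not assert $|A|=O_m(n)$ without proof.
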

	
	Here the additional $\big(1+o(n^{-1/20})\big)$ term on the right-hand side is caused by estimating from above the contribution of the second-order differential in~\eqref{eq:second_differntial_to_finite_difference_of_order_h_psi_tuples} and showing it is negligible compared to the contribution from the first differential. The proof then proceeds the same, just that in~\eqref{eq:third_differntial_to_finite_difference_of_order_h_psi_tuples} we only need to take the different of $F(k)$ and $z_m^{-L^\prime} F(k+L^\prime)$ as we want to isolate the term multiplying $k/n$ (in particular, on the left-hand side we get the term $L^\prime/n$ instead of $(L^\prime)^2/n^2$, as we did in Lemma~\ref{lemma:taking_suitable_differences_to_isolate_large_variable}). Once Lemma~\ref{lemma:taking_suitable_differences_to_isolate_large_variable_case_2} is established, the proof of Proposition~\ref{prop:sum_of_distance_to_integers_psi_k_tuples_is_large} proceeds in the exact same way. 
	
	Finally, we need to deal with the case where $|\eta_m|\gtrsim_{m} n^{-1/6}$, while for all $j\in\{m+1,\ldots,3m\}$ we have $|\eta_j| \le n^{-1/5}$. This is the simplest case of the three, as we don't need to take the extra differences to cancel out unnecessary variables from~\eqref{eq:second_differntial_to_finite_difference_of_order_h_psi_tuples}. We state the lemma in this case without further comment. 
	\begin{lemma}
		\label{lemma:taking_suitable_differences_to_isolate_large_variable_case_3}
		For any positive integers $k,\ell, L, L^\prime $ such that $[k,k+h\ell q + 3(m-1)L + 2L^\prime] \subset \mathcal{J}$ with $h\ge 1$ as in Lemma~\ref{lemma:bounding_discrete_differential_for_tuples_in_terms_of_distance_to_integers}, we have
		\begin{multline*}
			\bigg|\eta_{m} \big(1-z_m^{\ell q}\big)^{h} \big(1-(z_m/|z_m|)^{2L^\prime}\big)^3 \prod_{j=1}^{m-1} \Big(1-\Big(\frac{z_m}{z_r}\Big)^L\Big)^3\Big(1-\Big(\frac{z_m}{\overline z_r}\Big)^L\Big)^3 \bigg| \\ \lesssim_{\kappa,m,K_\ast} \big(1+o(n^{-1/30})\big)\sum_{s=0}^{h} \sum_{a=0}^{6(m-1)} \sum_{b=0}^{4} \| \psi(k+s\ell q + aL + bL^\prime) \|_{\bR/\bZ} \, .
		\end{multline*}
	\end{lemma}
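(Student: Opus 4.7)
Following the template of the proof of Lemma~\ref{lemma:taking_suitable_differences_to_isolate_large_variable} up through the identity \eqref{eq:second_differntial_to_finite_difference_of_order_h_psi_tuples}, I apply the composite operator $\widetilde D_{\overline z_m} \circ D \circ \Delta^h_{\ell q}$ (with $h$ as in Lemma~\ref{lemma:bounding_discrete_differential_for_tuples_in_terms_of_distance_to_integers}) to $\psi$ at the chosen $k$. This produces
\[
F(k) := 2\big(\widetilde D_{\overline z_m}\circ D\circ \Delta^h_{\ell q}\psi\big)(k) = \widetilde H(z_m)\,z_m^k\Big(\mathsf F_2 + \tfrac{k}{n}\,\mathsf F_1 + \tfrac{k(k-1)}{n^2}\,\eta_{3m}\Big),
\]
exactly as in that proof. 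Here $\widetilde H(z_m)$ is precisely the product of factors multiplying $|\eta_m|$ on the left-hand side of the present lemma, and $\mathsf F_1, \mathsf F_2$ are $k$-independent. A direct unwinding of the product rule in \eqref{eq:discrete_h_difference_of_psi_tuples} yields
\[
\mathsf F_2 = \eta_m + \tfrac{z_m\widetilde H'(z_m)}{n\,\widetilde H(z_m)}\,\eta_{2m} + \tfrac{z_m^2\widetilde H''(z_m)}{n^2\,\widetilde H(z_m)}\,\eta_{3m}, \qquad \mathsf F_1 = \eta_{2m} + \tfrac{2z_m\widetilde H'(z_m)}{n\,\widetilde H(z_m)}\,\eta_{3m}.
\]

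The distinctive feature of Case 3 is that no further twisted differences are required: the constant-in-$k$ term $\eta_m$ inside $\mathsf F_2$ already dominates the bracket. Indeed, by hypothesis $|\eta_j|\le n^{-1/5}$ for every $j\in\{m+1,\ldots,3m\}$, while $|\eta_m|\gtrsim_m n^{-1/6}$. Since $|k/n|,|k(k-1)/n^2|\le 1$ on $[0,n]$, the terms $\tfrac{k}{n}\mathsf F_1$ and $\tfrac{k(k-1)}{n^2}\eta_{3m}$ contribute at most $O(n^{-1/5})$ in absolute value. The log-derivative corrections inside $\mathsf F_2$ contribute an additional $O(n^{-1/5})$ times the normalized quantities $|z_m\widetilde H'(z_m)|/(n|\widetilde H(z_m)|)$ and $|z_m^2\widetilde H''(z_m)|/(n^2|\widetilde H(z_m)|)$, which---by the product rule applied to the explicit factorization of $\widetilde H$, together with the denominator bounds available for $L \asymp n/T^6$ and $L' \asymp n/T^7$ used in the parent proof of Proposition~\ref{prop:sum_of_distance_to_integers_psi_k_tuples_is_large}---are $o(1)$. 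Since $n^{-1/5}/|\eta_m|\lesssim n^{-1/30}$, this yields
\[
|F(k)| \;\ge\; |z_m|^k\,|\widetilde H(z_m)|\,|\eta_m|\,\big(1-o(n^{-1/30})\big).
\]

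On the other hand, $F(k)$ expands as an $O_{m,\kappa,K_\ast}(1)$-coefficient linear combination of values $(\Delta^h_{\ell q}\psi)(k+aL+bL')$ over $0\le a\le 6(m-1)$ and $0\le b\le 4$, and each of these is in turn bounded, via Lemma~\ref{lemma:bounding_discrete_differential_for_tuples_in_terms_of_distance_to_integers}, by a sum over $0\le s\le h$ of $\|\psi(k+s\ell q+aL+bL')\|_{\bR/\bZ}$. Combining this with the lower bound on $|F(k)|$ and using $|z_m|^k = \Theta_K(1)$ for $z_m\in\Omega_K$ and $k\le n$ delivers the desired inequality.

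The main obstacle is the bookkeeping required to verify that the log-derivatives of $\widetilde H$ at $z_m$ are small enough after normalization. Each factor of $\widetilde H$---namely $(1-z^{\ell q})^h$, $(1-(z/|z|)^{2L'})^3$, and the $(1-(z/z_r)^L)^3$ and their conjugate analogues---produces, under $z$-differentiation at $z_m$, a logarithmic derivative of size comparable to the reciprocal of its modulus at $z_m$ multiplied by $\ell q$, $L'$, or $L$, respectively. With the choices $L\asymp n/T^6$ and $L'\asymp n/T^7$, and the lower bounds $|1-(z_m/|z_m|)^{2L'}|\gtrsim n^{-1/15m}$ and $|1-(z_m/z_r)^L|\gtrsim \gamma/T^6$ available in the parent proof, the corresponding normalized contributions are $o(1)$; the $(1-z^{\ell q})^h$ factor is handled by the same dichotomy on $|1-z_m^{\ell q}|$ that concludes the proof of Proposition~\ref{prop:sum_of_distance_to_integers_psi_k_tuples_is_large}. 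The net loss is comfortably absorbed into the $(1+o(n^{-1/30}))$ slack on the right-hand side.
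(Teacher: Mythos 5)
Your overall strategy matches the paper's stated intent for this case (use the identity \eqref{eq:second_differntial_to_finite_difference_of_order_h_psi_tuples} directly without any further $k$-differences and argue that the constant-in-$k$ coefficient $\mathsf F_2$ is dominated by $\eta_m$). The formulas you derive for $\mathsf F_1$ and $\mathsf F_2$ are correct. The gap is in the crucial unproved assertion that the normalized log-derivatives
\[
\frac{z_m\widetilde H'(z_m)}{n\,\widetilde H(z_m)}\,, \qquad \frac{z_m^2\widetilde H''(z_m)}{n^2\,\widetilde H(z_m)}
\]
are $o(1)$. Tracing the quantities $\alpha_L$ and $\beta_L$ from \eqref{eq:computaion_of_third_order_twissted_differential_for_sequences_a_b_c}: each factor $(1-(z_m/z_r)^L)^3$ contributes to the first log-derivative a term of magnitude $\asymp L/\big(n\,|1-(z_m/z_r)^L|\big)$. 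With $L\asymp n/T^6$ and only the lower bound $|1-(z_m/z_r)^L|\gtrsim \gamma/T^6$ from Claim~\ref{claim:the_L_rescaling_for_tuples}, the $T^6$ factors cancel and the contribution is $\lesssim 1/\gamma$, which for $\gamma$ near its permitted floor $n^{-1/30m}$ is $\asymp n^{1/30m}$ — polynomially growing, not $o(1)$. Similarly the second log-derivative picks up a term $\asymp 1/\gamma^2 \asymp n^{1/15m}$.

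This is not a harmless constant: for $m=2$ the resulting error in $\mathsf F_2$ is
\[
\Big|\tfrac{z_m^2\widetilde H''}{n^2\widetilde H}\Big|\,|\eta_{3m}| \;\lesssim\; n^{1/30}\cdot n^{-1/5} \;=\; n^{-1/6}\,,
\]
which is of the same order as $|\eta_m|\gtrsim_m n^{-1/6}$. In that regime the bracket $\mathsf F_2 + \tfrac{k}{n}\mathsf F_1 + \tfrac{k(k-1)}{n^2}\eta_{3m}$ could cancel to much smaller than $|\eta_m|$, so the lower bound $|F(k)|\ge |\widetilde H(z_m)|\,|z_m|^k\,|\eta_m|\,(1-o(n^{-1/30}))$ — indeed even $|F(k)|\gtrsim |\widetilde H(z_m)|\,|\eta_m|$ — does not follow from the estimate you invoke. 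The hand-wave at the end ("handled by the same dichotomy...") does not address this either; the cited proof uses the present lemma as a black box and has no such dichotomy. Note also that the $L'$-factor bound you quote, $|1-(z_m/|z_m|)^{2L'}|\gtrsim \gamma^2\ge n^{-1/15m}$, gives by the same reasoning a log-derivative contribution of size $\asymp n^{1/15m}/T^7$, which is again far from $o(1)$ for small $m$; you would need to upgrade this to a uniform lower bound $\gtrsim 1$ (achievable by a more careful choice of $L'$ using smoothness of $z_m$, but this needs to be said explicitly, and it does not cure the $1/\gamma$ issue coming from the $L$-twists).

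In short: the proof's structure is sound and the $\mathsf F_2$/$\mathsf F_1$ formulas are right, but the $o(1)$ claim on the log-derivatives — which is the entire content of "case 3 is easy" — is not verified and appears to fail at the boundary of the $\gamma$-spread regime, precisely where Claim~\ref{claim:the_L_rescaling_for_tuples} is tight. You would need a genuinely sharper lower bound on $|1-(z_m/z_r)^L|$, a different choice of step sizes in this case, or an argument that rules out the destructive cancellation in $\mathsf F_2$ when the log-derivatives are large.
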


	\printbibliography[heading=bibliography]

@article {Cook-Nguyen,
    AUTHOR = {Cook, Nicholas A. and Nguyen, Hoi H.},
     TITLE = {Universality of the minimum modulus for random trigonometric
              polynomials},
   JOURNAL = {Discrete Anal.},
  FJOURNAL = {Discrete Analysis},
      YEAR = {2021},
     PAGES = {Paper No. 20, 46},
      ISSN = {2397-3129},
   MRCLASS = {60G15 (60E15 60F10 60G55)},
  MRNUMBER = {4328790},
MRREVIEWER = {Marcus\ Michelen},
       DOI = {10.19086/da},
       URL = {https://doi.org/10.19086/da},
}

@book {GAFbook,
    AUTHOR = {Hough, J. Ben and Krishnapur, Manjunath and Peres, Yuval and
              Vir\'{a}g, B\'{a}lint},
     TITLE = {Zeros of {G}aussian analytic functions and determinantal point
              processes},
 PUBLISHER = {American Mathematical Society},
      YEAR = {2009},
      ISBN = {978-0-8218-4373-4},
   MRCLASS = {60G55 (30B20 30C15 60B20 60F10 60G15 65H04 82B31)},
  MRNUMBER = {2552864},
MRREVIEWER = {Dmitry Beliaev},
       DOI = {10.1090/ulect/051},
       URL = {https://doi.org/10.1090/ulect/051},
}

@article {Konyagin-Schlag,
    AUTHOR = {Konyagin, S. V. and Schlag, W.},
     TITLE = {Lower bounds for the absolute value of random polynomials on a
              neighborhood of the unit circle},
   JOURNAL = {Trans. Amer. Math. Soc.},
  FJOURNAL = {Transactions of the American Mathematical Society},
    VOLUME = {351},
      YEAR = {1999},
    NUMBER = {12},
     PAGES = {4963--4980},
      ISSN = {0002-9947,1088-6850},
   MRCLASS = {42A05 (30C10 42A61 60F05)},
  MRNUMBER = {1487621},
MRREVIEWER = {J.\ D.\ Stegeman},
       DOI = {10.1090/S0002-9947-99-02241-2},
       URL = {https://doi.org/10.1090/S0002-9947-99-02241-2},
}

@article{Michelen-Sahasrabudhe,
  title={Random polynomials: the closest roots to the unit circle},
  author={Michelen, Marcus and Sahasrabudhe, Julian},
  journal={arXiv preprint 2010.10869},
  year={2020},
	doi={10.48550/arXiv.2010.10869},
url={https://doi.org/10.48550/arXiv.2010.10869},
note={\url{https://doi.org/10.48550/arXiv.2010.10869}}
}

@article {Peled-Sed-Zeitouni,
    AUTHOR = {Peled, Ron and Sen, Arnab and Zeitouni, Ofer},
     TITLE = {Double roots of random {L}ittlewood polynomials},
   JOURNAL = {Israel J. Math.},
  FJOURNAL = {Israel Journal of Mathematics},
    VOLUME = {213},
      YEAR = {2016},
    NUMBER = {1},
     PAGES = {55--77},
      ISSN = {0021-2172,1565-8511},
   MRCLASS = {60G99 (05E05 11C08)},
  MRNUMBER = {3509468},
MRREVIEWER = {Nizar\ Demni},
       DOI = {10.1007/s11856-016-1328-3},
       URL = {https://doi.org/10.1007/s11856-016-1328-3},
}

@article {Feldheim-Sen,
    AUTHOR = {Feldheim, Ohad N. and Sen, Arnab},
     TITLE = {Double roots of random polynomials with integer coefficients},
   JOURNAL = {Electron. J. Probab.},
  FJOURNAL = {Electronic Journal of Probability},
    VOLUME = {22},
      YEAR = {2017},
     PAGES = {Paper No. 10, 23},
      ISSN = {1083-6489},
   MRCLASS = {60C05 (60G50)},
  MRNUMBER = {3613703},
       DOI = {10.1214/17-EJP24},
       URL = {https://doi.org/10.1214/17-EJP24},
}

@article {Ibragimov-Zeitouni,
    AUTHOR = {Ibragimov, Ildar and Zeitouni, Ofer},
     TITLE = {On roots of random polynomials},
   JOURNAL = {Trans. Amer. Math. Soc.},
  FJOURNAL = {Transactions of the American Mathematical Society},
    VOLUME = {349},
      YEAR = {1997},
    NUMBER = {6},
     PAGES = {2427--2441},
      ISSN = {0002-9947,1088-6850},
   MRCLASS = {60G99 (26C10 30B20 60F05)},
  MRNUMBER = {1390040},
MRREVIEWER = {Kambiz\ Farahmand},
       DOI = {10.1090/S0002-9947-97-01766-2},
       URL = {https://doi.org/10.1090/S0002-9947-97-01766-2},
}

@article {Tao-Vu-IMRN,
    AUTHOR = {Tao, Terence and Vu, Van},
     TITLE = {Local universality of zeroes of random polynomials},
   JOURNAL = {Int. Math. Res. Not. IMRN},
  FJOURNAL = {International Mathematics Research Notices. IMRN},
      YEAR = {2015},
    NUMBER = {13},
     PAGES = {5053--5139},
      ISSN = {1073-7928,1687-0247},
   MRCLASS = {60G99 (60B20 60F10)},
  MRNUMBER = {3439098},
       DOI = {10.1093/imrn/rnu084},
       URL = {https://doi.org/10.1093/imrn/rnu084},
}

@article {Cook-Nguyen-Yakir-Zeitouni,
    AUTHOR = {Cook, Nicholas A. and Nguyen, Hoi H. and Yakir, Oren and
              Zeitouni, Ofer},
     TITLE = {Universality of {P}oisson limits for moduli of roots of {K}ac
              polynomials},
   JOURNAL = {Int. Math. Res. Not. IMRN},
  FJOURNAL = {International Mathematics Research Notices. IMRN},
      YEAR = {2023},
    NUMBER = {8},
     PAGES = {6648--6683},
      ISSN = {1073-7928,1687-0247},
   MRCLASS = {60G15 (42A61 60F05)},
  MRNUMBER = {4574383},
MRREVIEWER = {Jos\'{e}\ Rafael\ Le\'{o}n},
       DOI = {10.1093/imrn/rnac021},
       URL = {https://doi.org/10.1093/imrn/rnac021},
}

@incollection {Ibragimov-Zaporozhets,
    AUTHOR = {Ibragimov, Ildar and Zaporozhets, Dmitry},
     TITLE = {On distribution of zeros of random polynomials in complex
              plane},
 BOOKTITLE = {Prokhorov and contemporary probability theory},
    SERIES = {Springer Proc. Math. Stat.},
     PAGES = {303--323},
      YEAR = {2013},
      ISBN = {978-3-642-33549-5; 978-3-642-33548-8},
   MRCLASS = {60G99 (30C15 60F15)},
  MRNUMBER = {3070481},
MRREVIEWER = {Zakhar\ Kabluchko},
       DOI = {10.1007/978-3-642-33549-5\_18},
       URL = {https://doi.org/10.1007/978-3-642-33549-5_18},
}

@book {Bhattacharya-Rao,
    AUTHOR = {Bhattacharya, Rabi N. and Rao, R. Ranga},
     TITLE = {Normal approximation and asymptotic expansions},
   VOLUME = {64},
 PUBLISHER = {Society for Industrial and Applied Mathematics (SIAM),
             Philadelphia, PA},
      YEAR = {2010},
     PAGES = {xxii+316},
      ISBN = {978-0-898718-97-3},
   MRCLASS = {60F05 (01A75)},
  MRNUMBER = {3396213},
       DOI = {10.1137/1.9780898719895.ch1},
       URL = {https://doi.org/10.1137/1.9780898719895.ch1},
}

@book {Kahane,
    AUTHOR = {Kahane, Jean-Pierre},
     TITLE = {Some random series of functions},
    VOLUME = {5},
   EDITION = {Second},
 PUBLISHER = {Cambridge University Press, Cambridge},
      YEAR = {1985},
     PAGES = {xiv+305},
      ISBN = {0-521-24966-X; 0-521-45602-9},
   MRCLASS = {60G99 (42A61 60G15 60G17)},
  MRNUMBER = {833073},
MRREVIEWER = {Michael\ Marcus},
}

@article {Shepp-Vanderbei,
    AUTHOR = {Shepp, Larry A. and Vanderbei, Robert J.},
     TITLE = {The complex zeros of random polynomials},
   JOURNAL = {Trans. Amer. Math. Soc.},
  FJOURNAL = {Transactions of the American Mathematical Society},
    VOLUME = {347},
      YEAR = {1995},
    NUMBER = {11},
     PAGES = {4365--4384},
      ISSN = {0002-9947,1088-6850},
   MRCLASS = {30C15 (60G99)},
  MRNUMBER = {1308023},
MRREVIEWER = {Kambiz\ Farahmand},
       DOI = {10.2307/2155041},
       URL = {https://doi.org/10.2307/2155041},
}

@article {Esseen,
    AUTHOR = {Esseen, C. G.},
     TITLE = {On the {K}olmogorov-{R}ogozin inequality for the concentration
              function},
   JOURNAL = {Z. Wahrscheinlichkeitstheorie und Verw. Gebiete},
  FJOURNAL = {Zeitschrift f\"{u}r Wahrscheinlichkeitstheorie und Verwandte
              Gebiete},
    VOLUME = {5},
      YEAR = {1966},
     PAGES = {210--216},
   MRCLASS = {60.30 (60.20)},
  MRNUMBER = {205297},
MRREVIEWER = {J.\ E.\ Cigler},
       DOI = {10.1007/BF00533057},
       URL = {https://doi.org/10.1007/BF00533057},
}

@article {Erdos-Offord,
    AUTHOR = {Erd\"{o}s, Paul and Offord, A. C.},
     TITLE = {On the number of real roots of a random algebraic equation},
   JOURNAL = {Proc. London Math. Soc. (3)},
  FJOURNAL = {Proceedings of the London Mathematical Society. Third Series},
    VOLUME = {6},
      YEAR = {1956},
     PAGES = {139--160},
      ISSN = {0024-6115,1460-244X},
   MRCLASS = {60.0X},
  MRNUMBER = {73870},
MRREVIEWER = {H.\ P.\ Edmundson},
       DOI = {10.1112/plms/s3-6.1.139},
       URL = {https://doi.org/10.1112/plms/s3-6.1.139},
}

@book {Kallenberg,
    AUTHOR = {Kallenberg, Olav},
     TITLE = {Random measures, theory and applications},
    SERIES = {Probability Theory and Stochastic Modelling},
    VOLUME = {77},
 PUBLISHER = {Springer, Cham},
      YEAR = {2017},
     PAGES = {xiii+694},
      ISBN = {978-3-319-41596-3; 978-3-319-41598-7},
   MRCLASS = {60G55 (60G57 60J68)},
  MRNUMBER = {3642325},
MRREVIEWER = {Frank\ Aurzada},
       DOI = {10.1007/978-3-319-41598-7},
       URL = {https://doi.org/10.1007/978-3-319-41598-7},
}

@book {Vershynin,
    AUTHOR = {Vershynin, Roman},
     TITLE = {High-dimensional probability},
    VOLUME = {47},
 PUBLISHER = {Cambridge University Press, Cambridge},
      YEAR = {2018},
     PAGES = {xiv+284},
      ISBN = {978-1-108-41519-4},
   MRCLASS = {60-01 (60B05 60B20 60E15 60Fxx 62H25)},
  MRNUMBER = {3837109},
MRREVIEWER = {Sasha\ Sodin},
       DOI = {10.1017/9781108231596},
       URL = {https://doi.org/10.1017/9781108231596},
}

@article {Konyagin,
    AUTHOR = {Konyagin, S. V.},
     TITLE = {On the minimum modulus of random trigonometric polynomials
              with coefficients {$\pm1$}},
   JOURNAL = {Mat. Zametki},
  FJOURNAL = {Matematicheskie Zametki},
    VOLUME = {56},
      YEAR = {1994},
    NUMBER = {3},
     PAGES = {80--101, 158},
      ISSN = {0025-567X,2305-2880},
   MRCLASS = {42A32},
  MRNUMBER = {1309842},
MRREVIEWER = {F.\ M\'oricz},
       DOI = {10.1007/BF02362411},
       URL = {https://doi.org/10.1007/BF02362411},
}

@article {Esseen1968,
    AUTHOR = {Esseen, C. G.},
     TITLE = {On the concentration function of a sum of independent random
              variables},
   JOURNAL = {Z. Wahrscheinlichkeitstheorie und Verw. Gebiete},
  FJOURNAL = {Zeitschrift f\"ur Wahrscheinlichkeitstheorie und Verwandte
              Gebiete},
    VOLUME = {9},
      YEAR = {1968},
     PAGES = {290--308},
   MRCLASS = {60.20},
  MRNUMBER = {231419},
MRREVIEWER = {J.\ E.\ Cigler},
       DOI = {10.1007/BF00531753},
       URL = {https://doi.org/10.1007/BF00531753},
}

@article {Campos-Jenssen-Michelen-Sahasrabudhe,
    AUTHOR = {Campos, Marcelo and Jenssen, Matthew and Michelen, Marcus and
              Sahasrabudhe, Julian},
     TITLE = {The least singular value of a random symmetric matrix},
   JOURNAL = {Forum Math. Pi},
  FJOURNAL = {Forum of Mathematics. Pi},
    VOLUME = {12},
      YEAR = {2024},
     PAGES = {Paper No. e3, 69},
      ISSN = {2050-5086},
   MRCLASS = {60B20 (15B52)},
  MRNUMBER = {4695501},
       DOI = {10.1017/fmp.2023.29},
       URL = {https://doi.org/10.1017/fmp.2023.29},
}

@book {Folland,
    AUTHOR = {Folland, Gerald B.},
     TITLE = {Real analysis},
    SERIES = {Pure and Applied Mathematics (New York)},
   EDITION = {Second},
      NOTE = {Modern techniques and their applications},
 PUBLISHER = {John Wiley \& Sons, Inc., New York},
      YEAR = {1999},
     PAGES = {xvi+386},
      ISBN = {0-471-31716-0},
   MRCLASS = {00A05 (26-01 28-01 46-01)},
  MRNUMBER = {1681462},
}

@ARTICLE{Picinbono,
  author={Picinbono, B.},
  journal={IEEE Transactions on Signal Processing}, 
  title={Second-order complex random vectors and normal distributions}, 
  year={1996},
  volume={44},
  number={10},
  pages={2637-2640},
  keywords={Covariance matrix;Symmetric matrices;Probability density function;Vectors;Array signal processing;Statistics;Random variables;Gaussian distribution;Statistical distributions;Spectral analysis},
  doi={10.1109/78.539051}}

@book {Cassels,
    AUTHOR = {Cassels, J. W. S.},
     TITLE = {An introduction to {D}iophantine approximation},
    SERIES = {Cambridge Tracts in Mathematics and Mathematical Physics},
    VOLUME = {No. 45},
 PUBLISHER = {Cambridge University Press, New York},
      YEAR = {1957},
     PAGES = {x+166},
   MRCLASS = {10.3X},
  MRNUMBER = {87708},
MRREVIEWER = {H.\ Davenport},
}

@article {Kac,
    AUTHOR = {Kac, M.},
     TITLE = {On the average number of real roots of a random algebraic
              equation},
   JOURNAL = {Bull. Amer. Math. Soc.},
  FJOURNAL = {Bulletin of the American Mathematical Society},
    VOLUME = {49},
      YEAR = {1943},
     PAGES = {314--320},
      ISSN = {0002-9904},
   MRCLASS = {41.1X},
  MRNUMBER = {7812},
MRREVIEWER = {P.\ Erd\H os},
       DOI = {10.1090/S0002-9904-1943-07912-8},
       URL = {https://doi.org/10.1090/S0002-9904-1943-07912-8},
}

@article {Bloch-Polya,
    AUTHOR = {Bloch, A. and P\'olya, G.},
     TITLE = {On the {R}oots of {C}ertain {A}lgebraic {E}quations},
   JOURNAL = {Proc. London Math. Soc. (2)},
  FJOURNAL = {Proceedings of the London Mathematical Society. Second Series},
    VOLUME = {33},
      YEAR = {1931},
    NUMBER = {2},
     PAGES = {102--114},
      ISSN = {0024-6115},
   MRCLASS = {99-04},
  MRNUMBER = {1576817},
       DOI = {10.1112/plms/s2-33.1.102},
       URL = {https://doi.org/10.1112/plms/s2-33.1.102},
}

@article {Littlewood-Offord,
    AUTHOR = {Littlewood, J. E. and Offord, A. C.},
     TITLE = {On the {N}umber of {R}eal {R}oots of a {R}andom {A}lgebraic
              {E}quation},
   JOURNAL = {J. London Math. Soc.},
  FJOURNAL = {The Journal of the London Mathematical Society},
    VOLUME = {13},
      YEAR = {1938},
    NUMBER = {4},
     PAGES = {288--295},
      ISSN = {0024-6107,1469-7750},
   MRCLASS = {99-04},
  MRNUMBER = {1574980},
       DOI = {10.1112/jlms/s1-13.4.288},
       URL = {https://doi.org/10.1112/jlms/s1-13.4.288},
}

@article {Erdos-Turan,
  title={On the distribution of roots of polynomials},
  author={Erd\H{o}s, Paul and Tur{\'a}n, P{\'a}l},
  journal={Annals of mathematics},
  volume={51},
  number={1},
  pages={105--119},
  year={1950},
  publisher={JSTOR}
}

@article {Sparo-Sur,
    AUTHOR = {Sparo, D. I. and  Sur, M. G.},
     TITLE = {On the distribution of roots of random polynomials},
   JOURNAL = {Vestnik Moskov. Univ. Ser. I Mat. Meh.},
  FJOURNAL = {Vestnik Moskovskogo Universiteta. Serija I. Matematika,
              Mehanika},
    VOLUME = {1962},
      YEAR = {1962},
    NUMBER = {3},
     PAGES = {40--43},
      ISSN = {0201-7385},
   MRCLASS = {60.40},
  MRNUMBER = {139199},
MRREVIEWER = {R.\ G.\ Laha},
}

@article {Peres-Virag,
    AUTHOR = {Peres, Yuval and Vir\'ag, B\'alint},
     TITLE = {Zeros of the i.i.d.\ {G}aussian power series: a conformally
              invariant determinantal process},
   JOURNAL = {Acta Math.},
  FJOURNAL = {Acta Mathematica},
    VOLUME = {194},
      YEAR = {2005},
    NUMBER = {1},
     PAGES = {1--35},
      ISSN = {0001-5962,1871-2509},
   MRCLASS = {60G99 (60G15)},
  MRNUMBER = {2231337},
MRREVIEWER = {Tomohiro\ Sasamoto},
       DOI = {10.1007/BF02392515},
       URL = {https://doi.org/10.1007/BF02392515},
}

@article {Littlewood,
    AUTHOR = {Littlewood, J. E.},
     TITLE = {On polynomials {$\sum \sp{n}\pm z\sp{m}$}, {$\sum
              \sp{n}e\sp{\alpha \sb{m}i}z\sp{m}$}, {$z=e\sp{\theta
              \sb{i}}$}},
   JOURNAL = {J. London Math. Soc.},
  FJOURNAL = {The Journal of the London Mathematical Society},
    VOLUME = {41},
      YEAR = {1966},
     PAGES = {367--376},
      ISSN = {0024-6107,1469-7750},
   MRCLASS = {30.10},
  MRNUMBER = {196043},
MRREVIEWER = {J.\ Clunie},
       DOI = {10.1112/jlms/s1-41.1.367},
       URL = {https://doi.org/10.1112/jlms/s1-41.1.367},
}

@article {Salem-Zygmund,
    AUTHOR = {Salem, R. and Zygmund, A.},
     TITLE = {Some properties of trigonometric series whose terms have
              random signs},
   JOURNAL = {Acta Math.},
  FJOURNAL = {Acta Mathematica},
    VOLUME = {91},
      YEAR = {1954},
     PAGES = {245--301},
      ISSN = {0001-5962,1871-2509},
   MRCLASS = {42.4X},
  MRNUMBER = {65679},
MRREVIEWER = {H.\ R.\ Pitt},
       DOI = {10.1007/BF02393433},
       URL = {https://doi.org/10.1007/BF02393433},
}

@inproceedings {Hammersley,
    AUTHOR = {Hammersley, J. M.},
     TITLE = {The zeros of a random polynomial},
 BOOKTITLE = {Proceedings of the {T}hird {B}erkeley {S}ymposium on
              {M}athematical {S}tatistics and {P}robability, 1954--1955,
              vol. {II}},
     PAGES = {89--111},
      YEAR = {1956},
   MRCLASS = {60.0X},
  MRNUMBER = {84888},
MRREVIEWER = {Hale\ F.\ Trotter},
}

@article {Shiffman-Zelditch-IMRN,
    AUTHOR = {Shiffman, Bernard and Zelditch, Steve},
     TITLE = {Equilibrium distribution of zeros of random polynomials},
   JOURNAL = {Int. Math. Res. Not.},
  FJOURNAL = {International Mathematics Research Notices},
      YEAR = {2003},
    NUMBER = {1},
     PAGES = {25--49},
      ISSN = {1073-7928,1687-0247},
   MRCLASS = {60G99 (30C10 30C15)},
  MRNUMBER = {1935565},
MRREVIEWER = {Kambiz\ Farahmand},
       DOI = {10.1155/S1073792803206073},
       URL = {https://doi.org/10.1155/S1073792803206073},
}

@article {Bogomolny-Bohigas-Leboeuf-JSP,
    AUTHOR = {Bogomolny, E. and Bohigas, O. and Leboeuf, P.},
     TITLE = {Quantum chaotic dynamics and random polynomials},
   JOURNAL = {J. Statist. Phys.},
  FJOURNAL = {Journal of Statistical Physics},
    VOLUME = {85},
      YEAR = {1996},
    NUMBER = {5-6},
     PAGES = {639--679},
      ISSN = {0022-4715,1572-9613},
   MRCLASS = {81Q50 (82B44)},
  MRNUMBER = {1418808},
MRREVIEWER = {Christian\ Grosche},
       DOI = {10.1007/BF02199359},
       URL = {https://doi.org/10.1007/BF02199359},
}

@incollection {Nguyen-Vu-survey,
    AUTHOR = {Nguyen, Hoi H. and Vu, Van H.},
     TITLE = {Small ball probability, inverse theorems, and applications},
 BOOKTITLE = {Erd\"os centennial},
    SERIES = {Bolyai Soc. Math. Stud.},
    VOLUME = {25},
     PAGES = {409--463},
 PUBLISHER = {J\'anos Bolyai Math. Soc.},
      YEAR = {2013},
      ISBN = {978-963-9453-18-0; 978-3-642-39285-6},
   MRCLASS = {60E15 (60C05 60F10)},
  MRNUMBER = {3203607},
MRREVIEWER = {Sho\ Matsumoto},
       DOI = {10.1007/978-3-642-39286-3\_16},
       URL = {https://doi.org/10.1007/978-3-642-39286-3_16},
}

@article {Halasz,
    AUTHOR = {Hal\'asz, G.},
     TITLE = {Estimates for the concentration function of combinatorial
              number theory and probability},
   JOURNAL = {Period. Math. Hungar.},
  FJOURNAL = {Periodica Mathematica Hungarica. Journal of the J\'anos Bolyai
              Mathematical Society},
    VOLUME = {8},
      YEAR = {1977},
    NUMBER = {3-4},
     PAGES = {197--211},
      ISSN = {0031-5303,1588-2829},
   MRCLASS = {60G50 (10K20 60C05)},
  MRNUMBER = {494478},
MRREVIEWER = {H.\ Kesten},
       DOI = {10.1007/BF02018403},
       URL = {https://doi.org/10.1007/BF02018403},
}

@article {Tao-Vu-RSA,
    AUTHOR = {Tao, Terence and Vu, Van},
     TITLE = {A sharp inverse {L}ittlewood-{O}fford theorem},
   JOURNAL = {Random Structures Algorithms},
  FJOURNAL = {Random Structures \& Algorithms},
    VOLUME = {37},
      YEAR = {2010},
    NUMBER = {4},
     PAGES = {525--539},
      ISSN = {1042-9832,1098-2418},
   MRCLASS = {60G50 (05D40 60B15)},
  MRNUMBER = {2760363},
MRREVIEWER = {Michael\ Stolz},
       DOI = {10.1002/rsa.20327},
       URL = {https://doi.org/10.1002/rsa.20327},
}

\end{document}